\theoremstyle{plain}
\newtheorem{thm}{Theorem}
\newtheorem{prop}{Proposition}[section]
\newtheorem{lemma}[prop]{Lemma}
\newtheorem{cor}[prop]{Corollary}
\newtheorem*{thm1}{Theorem}
\newtheorem*{fact}{Fact}
\theoremstyle{remark}
\newtheorem*{claim}{Claim}
\newtheorem*{remark}{Remark}
\theoremstyle{definition}
\newtheorem{dfn}[prop]{Definition}
\newtheorem{dfns}[prop]{Definitions}
\newcommand{\co}{\colon\thinspace}
\newcommand{\inv}[1]{#1^{-1}}
\newcommand{\bound}{\partial}
\newcommand{\AND}{\qquad \mathrm{and} \qquad}
\newcommand{\C}{\mathbb{C}}
\newcommand{\tr}{\mathrm{Tr}}
\renewcommand{\c}{{\sf c}}
\newcommand{\f}{{\sf f}}
\newcommand{\g}{{\sf g}}
\newcommand{\h}{{\sf h}}
\renewcommand{\k}{{\sf k}}
\newcommand{\p}{{\sf p}}
\renewcommand{\r}{{\sf r}}
\newcommand{\s}{{\sf s}}
\renewcommand{\t}{{\sf t}}
\newcommand{\meetcute}{meet cute }
\newcommand\calb{\mathcal{B}}
\newcommand\calf{\mathcal{F}}
\newcommand\calh{\mathcal{H}}
\newcommand\calk{\mathcal{K}}
\newcommand\calp{\mathcal{P}}
\newcommand\cals{\mathcal{S}}
\newcommand\calt{\mathcal{T}}
\newcommand\calv{\mathcal{V}}
\newcommand\sfa{{\sf a}}
\newcommand\sfb{{\sf b}}
\newcommand\sfc{{\sf c}}
\newcommand\sfm{{\sf m}}
\newcommand\sfq{{\sf q}}
\newcommand\sfh{{\sf h}}
\newcommand\sfn{{\sf n}}
\newcommand\sfu{{\sf u}}
\newcommand\sfw{{\sf w}}
\newcommand\sfx{{\sf x}}
\newcommand\bv{\mathbf{v}}
\newcommand\bw{\mathbf{w}}
\definecolor{gray1}{gray}{0.6}
\begin{document}

\title[Commensurability of link complements]{Algebraic invariants, mutation, and commensurability of link complements}

\author{Eric Chesebro}
\address{Department of Mathematical Sciences, University of Montana} 
\email{Eric.Chesebro@mso.umt.edu} 

\author{Jason DeBlois}
\address{Department of Mathematics, University of Pittsburgh} \email{jdeblois@pitt.edu}
\thanks{Second author partially supported by NSF grant DMS-1240329}

\begin{abstract}  We construct a family of hyperbolic link complements by gluing tangles along totally geodesic four-punctured spheres, then investigate the commensurability relation among its members.  Those with different volume are incommensurable, distinguished by their scissors congruence classes.  Mutation produces arbitrarily large finite subfamilies of nonisometric manifolds with the same volume and scissors congruence 
%commensurability 
class.  Depending on the choice of mutation, these manifolds may be commensurable or incommensurable, distinguished in the latter case by cusp parameters.  All have trace field $\mathbb{Q}(i,\sqrt{2})$; some have integral traces while others do not.  \end{abstract}

\maketitle

\section{Introduction}

Manifolds are \emph{commensurable} if they have a common cover, of finite degree over each.  W.P. Thurston first studied the commensurability relation among hyperbolic knot and link complements in $S^3$, describing commensurable and incommensurable examples in Chapter 6 of his notes \cite{Th}.  The families of chain link complements \cite{NeR2}, two-bridge knot complements \cite{Reid_Walsh}, and certain pretzel knot complements \cite{MacMat} have since been further explored.  Here we construct another infinite family of hyperbolic link complements and explore the commensurability relation among its members.

We compute the following invariants on members of our family.  For $\Gamma < \mathrm{PSL}_2(\mathbb{C})$ the \textit{trace field} of $M = \mathbb{H}^3/\Gamma$ is the smallest field containing the traces of elements of $\Gamma$.  If each such trace is an algebraic integer we say $M$ has \textit{integral traces}.  The \textit{cusp parameters} of $M$, used in \cite{Th} and \cite{NeR2}, are algebraic invariants of the Euclidean structures on horospherical cross sections of the cusps of $M$.  The \textit{Bloch invariant} \cite{NeYang} is determined by a polyhedral decomposition.  %(It is related to \textit{scissors congruence} (cf. \cite{Dup}, \cite{Ne}).

\begin{thm} \label{omnibus1}  For each $n \in \mathbb{N}$, there is a link $L_n \subset S^3$ such that $M_n = S^3 - L_n$ is hyperbolic with trace field $\mathbb{Q}(i,\sqrt{2})$ and integral traces.  If $m \neq n$ then $M_m$ and $M_n$ are incommensurable, distinguished by their  Bloch invariants and cusp parameters.  
\end{thm}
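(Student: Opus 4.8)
The plan is to build the links $L_n$ explicitly by gluing $n$ copies of a basic tangle end‑to‑end along totally geodesic four‑punctured spheres, capping off with suitable end tangles, so that $M_n$ decomposes as a union of geometric pieces and $M_n$ fibers over (or is at least built combinatorially from) a linear pattern of length $n$. Hyperbolicity should follow from the usual criterion for such gluings: each tangle complement is hyperbolic with totally geodesic four‑punctured‑sphere boundary, and one glues along isometries of these spheres. Once this is in hand, the trace field computation comes down to showing each building block has trace field contained in $\mathbb{Q}(i,\sqrt{2})$ and that the gluing isometries are defined over this field, so $\mathbb{Q}(i,\sqrt{2})$ contains the trace field of $M_n$; integrality of traces follows by checking that the generators of each $\Gamma_n$ can be taken in $\mathrm{PSL}_2$ of the ring of integers $\mathbb{Z}[i]$ (or $\mathbb{Z}[i,\sqrt 2]$), which is preserved under the gluing since the gluing matrices are integral. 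The reverse containment (that the trace field is exactly $\mathbb{Q}(i,\sqrt 2)$, not a proper subfield) will follow once we exhibit specific traces generating it, e.g.\ from the cusp shapes.

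For the separation statement I would use two independent invariants. First, the Bloch invariant: decompose $M_n$ into ideal tetrahedra compatibly with the tangle decomposition, so that the Bloch invariant is a sum of $n$ equal contributions from the repeated block plus a bounded correction from the end pieces; concretely $\beta(M_n) = n\,\beta_0 + c$ in $\mathcal{B}(\mathbb{C})$ (or its relevant summand), where $\beta_0 \neq 0$. Then $\beta(M_m) = \beta(M_n)$ would force $(m-n)\beta_0 = 0$, and since $\beta_0$ has infinite order — which one checks via the Dupont–Sah / regulator map, i.e.\ the imaginary part of the dilogarithm is a nonzero real multiple of the volume of the block and grows linearly — we get $m = n$. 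This simultaneously records that the volumes $\mathrm{vol}(M_n) = n\,v_0 + \mathrm{const}$ are distinct. Second, the cusp parameters: identify a cusp of $M_n$ (say one coming from an end tangle, or a "through" cusp) whose Euclidean structure is assembled from $n$ translations coming from the repeated block, so that its modulus or the associated cusp field data depends on $n$ in a strictly monotone fashion; distinctness of cusp parameters then also gives $m \neq n \Rightarrow M_m \not\cong M_n$, independently of the Bloch invariant argument, as the theorem asserts both.

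The main obstacle I expect is the linear-growth-plus-nonvanishing analysis of the Bloch invariant: one must produce an honest ideal triangulation of $M_n$ that is genuinely "periodic" along the gluing axis — so that the Bloch invariant splits as $n\beta_0 + c$ with the \emph{same} $\beta_0$ for every $n$ — and then prove $\beta_0$ is non‑torsion in the Bloch group. Getting a periodic triangulation compatible with totally geodesic four‑punctured‑sphere cutting surfaces is delicate because such surfaces are typically not subcomplexes of an arbitrary triangulation; one fixes this by triangulating the cut‑open tangle pieces with ideal vertices on the four‑punctured spheres and matching triangulations across the gluing, possibly after a controlled subdivision. Non‑torsion‑ness is then handled by the volume regulator, which is the easy half once the combinatorics is set up. The trace field and integrality claims, and the cusp parameter computation, are comparatively routine linear‑algebra bookkeeping with the explicit gluing matrices, modulo carefully choosing the end tangles so that everything stays defined over $\mathbb{Z}[i,\sqrt 2]$.
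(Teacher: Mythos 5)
Your plan is essentially the paper's own argument: glue tangle complements with totally geodesic four-punctured-sphere boundary (an ideal octahedron piece and a cuboctahedron piece) along isometries, read off the trace field and integrality from explicit matrices over $\mathbb{Z}[i,\sqrt{2}]$, get a boundary-compatible ``periodic'' ideal triangulation so that $\beta(M_n)=\beta_1-\bar{\beta}_1+n\beta_2$ and apply the regulator, and compute a cusp modulus $i(2+4n\sqrt{2})$ growing linearly in $n$. The only point you leave implicit --- that the cusp modulus is only defined up to $\mathrm{PGL}_2(\mathbb{Z})$ (indeed the paper works up to $\mathrm{PGL}_2(\mathbb{Q})$), so ``strictly monotone'' must be supplemented by a short lemma ruling out equivalence of $i(m+n\sqrt{2})$ and $i(m+n'\sqrt{2})$ --- is handled in the paper by an elementary computation and is indeed routine.
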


Having integral traces is commensurability-invariant \cite[\S 5.2]{MaR}, and the trace field is a commensurability invariant of link complements \cite[Cor.~4.2.2]{MaR}.  Commensurable manifolds have $\mathbb{Q}$-dependent Bloch invariants and $\mathrm{PGL}_2(\mathbb{Q})$-dependent cusp parameters, but the $M_n$ have neither (see Proposition \ref{M_n incomm} and Lemma \ref{PGL action}).  

Figure \ref{linkspic} depicts $L_2$.  The grey lines there indicate the presence of $2$-spheres that each meet $L_2$ in $4$ points, separating it left-to-right into a tangle $S$ in the three-ball $B^3$, two copies of a tangle $T \subset S^2 \times I$, and the mirror image $\overline{S}$ of $S$.
For arbitrary $n \in \mathbb{N}$, the link $L_n$ is constructed analogously, using $S$, $\overline{S}$, and $n$ copies of $T$.  We number the corresponding $2$-spheres for $L_n$ as $S^{(i)}$ for $0\leq i\leq n$, so that $S^{(0)}$ bounds $S$, $S^{(n)}$ bounds $\overline{S}$, and $S^{(i)}$ bounds a copy of $T$ with $S^{(i-1)}$ for $0<i\leq n$.

\begin{figure}[ht]

\setlength{\unitlength}{.1in}

\begin{picture}(40,10)
\put(0,0) {\includegraphics[width=4in]{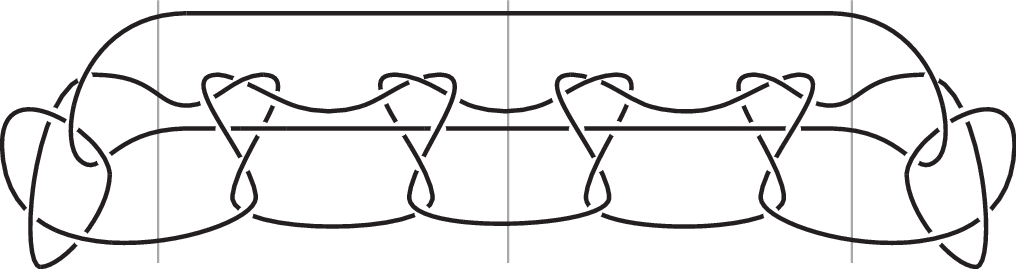}}
\put(12.5,0.25) {$T_1$}
\put(36,9.5) {$T_2$}
\end{picture}

\caption{The link $L_2$} \label{linkspic}

\end{figure}

We also describe the commensurability relation among the complements of links related to the $L_n$ by \textit{mutation} along the $S^{(i)}$: cutting along $S^{(i)}$ and re-gluing by an order-two mapping class that preserves $S^{(i)}\cap L_n$ and acts on it as an even permutation.  With $L_n$ projected as in Figure \ref{linkspic}, for each $i$ we mark the points of $S^{(i)} \cap L_n$ by $2$, $3$, $4$, and $1$, reading top to bottom, and refer to a mutation homeomorphism of $S^{(i)}$ by its permutation representation.

Below, for $n \in \mathbb{N}$ and $I \in \{0,1,2\}^{n+1}$ let $L_I$ be the link  obtained from $L_n$ by the mutation $(13)(24)$ (respectively, $(12)(34)$) along $S^{(i)}$, for each $i$ such that the $i$th entry of $I$ is $1$ (respectively, $2$).  Write $M_I = S^3-L_I$ for each such $I$.

\begin{thm}\label{omnibus1.5}  For $n \in \mathbb{N}$ and $I =(a_0,\hdots,a_n)\in\{0,1\}^{n+1}$, $M_I$ is commensurable to $M_n$.  For $J = (b_0,\hdots,b_n)\in\{0,1\}^{n+1}$, $M_J$ is isometric to $M_I$ if and only if either $b_i = a_i$ for each $i\in\{1,\hdots,n-1\}$ or $b_i = a_{n-i}$ for each such $i$.\end{thm}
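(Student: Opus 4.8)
The plan is to prove the "commensurable" half and the "isometric iff" half by quite different means. For the first assertion, I would exhibit a single manifold that covers every $M_I$ with $I\in\{0,1\}^{n+1}$. The key observation is that the mutations $(13)(24)$ along the four-punctured spheres $S^{(i)}$ are realized by isometric involutions of the totally geodesic surfaces $\Sigma^{(i)}\subset M_n$ into which the $S^{(i)}$ are isotoped by the hyperbolic structure; indeed a four-punctured sphere with its complete hyperbolic metric has an isometry group containing a Klein four-group acting as the even permutations of the punctures, and cutting and regluing along such an isometry does not change the local geometry. So $M_n$ and all the $M_I$ are built from the same geometric pieces (the geodesic tangles $\mathcal{S}$, $\overline{\mathcal{S}}$, and copies of $\mathcal{T}$) glued along isometric copies of the same geodesic surface, with gluing maps differing by elements of a fixed finite group $G$ of isometries of that surface. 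A standard argument then produces a common finite cover: pass to the cover of each $M_I$ dual to the subgroup generated by the pieces, or more concretely, build the orbifold obtained by adjoining to $\pi_1(M_n)$ the symmetry group $G$ acting on each surface and check that all the $M_I$ cover a common such orbifold — equivalently, use that two manifolds glued from the same pieces along maps differing by a finite group have a common cover (this is the usual "gluing up to a finite group" lemma, cf. the chain-link analysis in \cite{NeR2}).

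For the "isometric iff" half, the strategy is Mostow rigidity plus an analysis of the symmetries of $M_n$. First I would establish the "if" direction: the two stated conditions correspond to two symmetries of the setup. The condition $b_i = a_{n-i}$ reflects the orientation-reversing symmetry of $L_n$ that exchanges $\mathcal{S}\leftrightarrow\overline{\mathcal{S}}$ and reverses the order of the $T$-blocks, which conjugates the mutation along $S^{(i)}$ to the mutation along $S^{(n-i)}$ (one must check the marking convention makes this a conjugacy by an actual homeomorphism of $S^3$ — this is where the choice of $(13)(24)$ matters, since it is the "diagonal" permutation preserved by the relevant reflection). The condition that $a_0,a_n$ are irrelevant reflects that mutation along the outermost spheres $S^{(0)}$ and $S^{(n)}$, which bound the single tangles $\mathcal{S}$ and $\overline{\mathcal{S}}$, has no effect up to isometry — because $\mathcal{S}$ itself admits an isometric involution inducing $(13)(24)$ on its boundary (one has to verify this symmetry of the tangle $\mathcal{S}$, presumably visible from its description). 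So I would first dispose of the end coordinates, then exhibit the order-reversal symmetry.

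The "only if" direction is the crux and the main obstacle. Here I would use that an isometry $M_I\to M_J$, by Mostow rigidity, is homotopic to one taking the canonical (Epstein–Penner) decomposition of $M_I$ to that of $M_J$; since the $\Sigma^{(i)}$ are the unique totally geodesic closed-or-cusped surfaces of their type (or can be pinned down as a canonical family from the cell decomposition), an isometry must permute them, hence permute the geometric blocks, hence — because the blocks $\mathcal{S}$, $\overline{\mathcal{S}}$, copies of $\mathcal{T}$ are identifiable by their intrinsic geometry and their adjacency pattern is a path — either fix or reverse the linear order of blocks. Restricting attention to the gluing data, an isometry then induces an identification of the gluing maps up to the finite symmetry group of the surface and of the individual blocks, and I would track exactly which relabelings of the middle coordinates $a_1,\dots,a_{n-1}$ this allows; the point is that mutation along $S^{(i)}$ for $0<i<n$ genuinely changes the isometry type unless undone by the global order-reversal, which one proves by computing a discriminating invariant — the cusp parameters or the cusp shapes, exactly as Theorem 1 uses them — and showing the multiset of cusp parameters, together with the combinatorial pattern of which cusps meet which surfaces, records the string $(a_1,\dots,a_{n-1})$ up to reversal. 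The hard part is precisely this rigidity/uniqueness step: proving the geodesic surfaces and blocks are canonical (so that an isometry is forced to respect the block structure) and then extracting a combinatorial invariant fine enough to see the individual middle mutations but coarse enough to be invariant under the two allowed symmetries.
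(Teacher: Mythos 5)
Your ``if'' direction is fine (the rotational symmetry of the tangle $S$ disposing of $a_0,a_n$, and the order-reversing reflection giving $M_I\cong M_{\bar I}$ are exactly Lemmas \ref{mutation extension} and \ref{reflection isometry}), but both halves of the rest rest on steps that fail.  For commensurability, the ``usual gluing up to a finite group lemma'' you invoke is not a theorem, and this very paper is the counterexample: the manifolds $M_I$ with $I\in\{0,2\}^{n+1}$ are assembled from the same geodesic blocks along gluings differing from those of $M_n$ by the isometry of $F^{(0)}$ realizing $(12)(34)$, yet they have a nonintegral trace (Proposition \ref{nonintegral trace}) while $M_n$ has integral traces, so they are \emph{not} commensurable with $M_n$.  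What makes $(13)(24)$ special is not that it is realized by an isometry of the four-punctured sphere (both mutations are, by Ruberman), but that its realizing element $\sfm_1$ lies, together with $\Gamma_n$ and all its conjugates $\sfm_1^{(i)}$, in a single discrete group: the paper constructs an explicit reflection group $G_n$ by Poincar\'e's polyhedron theorem (Lemma \ref{Gamma_n commensurator}) and verifies $\Gamma_n<G_n$ and $\sfm_1^{(i)}\in G_n$ (Proposition \ref{G_n is hungry}), whence every $\Gamma_I$, $I\in\{0,1\}^{n+1}$, lies in $G_n$.  Your sketch never addresses the discreteness of the enlarged group, which is the entire content of this half.

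For the ``only if'' direction the proposed discriminating invariant cannot work: since all these $M_I$ are commensurable their cusp parameters agree up to $\mathrm{PGL}_2(\mathbb{Q})$, and in fact the cusp shapes and the incidence pattern of cusps with the surfaces $F^{(i)}$ are literally identical for all $I\in\{0,1\}^{n+1}$, because regluing by $(13)(24)$ merely interchanges the annuli $DB_1^{(i)}\leftrightarrow DB_3^{(i)}$ and $DB_2^{(i)}\leftrightarrow DB_4^{(i)}$, which have equal moduli (Lemma \ref{real M_T moduli}), and $\sfm_1\in\mathrm{PSL}_2(\mathbb{Z})$ preserves the Farey triangulation so no additional twisting appears.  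The paper's argument is necessarily finer: after identifying $G_n=\mathrm{Comm}(\Gamma_n)$ via the Epstein--Penner tiling and the Goodman--Heard--Hodgson convexity criterion (Lemma \ref{tiling}, Proposition \ref{mutants' commensurator}), any isometry $M_J\to M_I$ is induced by conjugation by an element of $G_n$; a tile-orbit and face-pairing analysis inside $\calp_1$ then forces, after possibly composing with the reflection of Lemma \ref{reflection isometry}, that $\Gamma_J=\Gamma_I$ as subgroups; and distinct middle strings give distinct subgroups because $\Gamma_{T_0}^{\sfm_1}\neq\Gamma_{T_0}$, i.e.\ the mutation does \emph{not} extend over the $T$-blocks.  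That last fact is the crux of Proposition \ref{isometric mutants}, and it is the one point your outline never engages (you only verify extension over $S$, which handles the end coordinates).
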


We will show in a future paper that Theorem \ref{omnibus1.5} reflects the fact that $M_n$ has a \textit{hidden symmetry} (see eg.~\cite{NeR2}) arising from a \textit{hidden extension} of the mutation $(13)(24)$, an extension of a lift of $(13)(24)$ over a finite cover of $(S^2\times I) - T$.

Here we prove Theorem \ref{omnibus1.5} more directly, identifying an orbifold $O_n$ jointly covered by $M_n$ and the $M_I$, see Proposition \ref{commensurable mutants}.  The key advantage of this approach is that we can also prove the isometry classification above (see Proposition \ref{isometric mutants}) using the fact that $O_n$ is minimal in the commensurability class of $M_n$ (Corollary \ref{mutants' commensurator}).

Corollary \ref{mutants' commensurator} is proved following Goodman--Heard--Hodgson \cite{GHH}.  The key step, for each $n$, is to construct a tiling $\calt_n$ of $\mathbb{H}^3$ by convex polyhedra that is \textit{canonical} in the sense of \cite[\S 2]{GHH}. See Theorem \ref{tiling}.  This is of independent interest, as there are few infinite families for which canonical tilings have been identified.

The mutation $(12)(34)$ has a very different effect than $(13)(24)$.

\begin{thm} \label{omnibus2} For $n \in \mathbb{N}$, let $\mathcal{L}_n = \{L_I \,|\, I \in \{0,2\}^{n+1}\}$.  Then:
\begin{enumerate}
\item  For each $I \in \{0,2\}^{n+1} - \{(0,\hdots,0)\}$,  $M_I$ has the same volume, Bloch invariant, and trace field as $M_n$, but has a nonintegral trace.
\item  There is a subfamily of $\mathcal{L}_n$ with at least $n/2$ mutually incommensurable members, distinguished by their cusp parameters.
\item  There is a subfamily of $\mathcal{L}_n$ with $n$ members which all share cusp parameters.  \end{enumerate}
\end{thm}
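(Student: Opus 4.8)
The plan is to analyze the effect of the $(12)(34)$-mutation on the geometric pieces. Since mutation along a totally geodesic four-punctured sphere preserves volume and, by the work of Neumann–Yang and others, the Bloch invariant (the cutting-and-regluing can be realized so that the induced ideal triangulations of the pieces agree), parts of the volume and Bloch-invariant claims in (1) follow formally; the trace field is preserved because the mutated manifold is built from the same tangle complements, whose invariant trace fields all lie in $\mathbb{Q}(i,\sqrt{2})$, and one checks the mutation does not shrink the field (e.g.\ a cusp shape or a particular trace already generating $\mathbb{Q}(i,\sqrt{2})$ survives). So the real content of (1) is producing a \emph{nonintegral} trace whenever $I\neq(0,\dots,0)$. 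First I would locate, in an explicit Dirichlet-type or triangulation-based presentation of $\pi_1(M_n)$, an element whose trace is an algebraic integer, such that after a single $(12)(34)$-mutation along some $S^{(i)}$ the corresponding element of $\pi_1(M_I)$ has trace with a denominator divisible by a prime over $2$ (these link complements sit over $\mathbb{Q}(i,\sqrt{2})$, where $2$ ramifies, so $\sqrt{2}$ in a denominator is the expected obstruction). Concretely, the mutation changes the gluing of the two cusped pieces meeting along $S^{(i)}$ by an element of order two in the mapping class group of the four-punctured sphere, which in $\mathrm{PSL}_2(\mathbb{C})$ is conjugation by a specific parabolic-free involution; tracking a loop that crosses $S^{(i)}$ through this new gluing and taking a trace produces the nonintegrality, and iterating over the nonzero entries of $I$ cannot cancel it. I would set this up once and for all for $L_1$ (the $n=1$ case, with a single mutable sphere $S^{(0)}$ or $S^{(n)}$—wait, only the interior spheres are mutated; so the smallest interesting case is $n=2$) and then argue the local computation is unaffected by what happens at the other spheres.

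For part (2), the idea is that the cusp parameters of $M_I$ are built from the gluing data along the spheres, and a $(12)(34)$-mutation at $S^{(i)}$ changes the way the cusp cross-sections of $M_I$ are assembled. I would compute the cusp shapes of the tangle pieces $S$, $T$, $\overline S$ with their four marked boundary points, determine how the permutation $(12)(34)$ permutes the four boundary cusps' horospherical data, and read off a combinatorial invariant of $I$—for instance the cusp parameter of a cusp that "threads" several of the spheres will record the pattern of $2$'s in $I$ along a sub-run. The goal is to show that for a suitable choice of $\lfloor n/2\rfloor$ elements $I$ (say, those with a single $2$ in position $2k$ for $k=1,\dots,\lfloor n/2\rfloor$, chosen so the affected cusps are geometrically distinguishable), the resulting cusp-parameter multisets are pairwise distinct, hence the manifolds are pairwise incommensurable since cusp parameters are commensurability invariants.

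For part (3), I would exhibit $n$ distinct $I\in\{0,2\}^{n+1}$ whose mutations are related by symmetries of the cusp gluing pattern—translating a single $2$ through the $n$ interior-plus-endpoint positions, or cyclic-type moves—so that although the $M_I$ are generally nonisometric (and by (1) not equal to $M_n$), the local rearrangement at one sphere does not alter any cusp shape. Since $(12)(34)$ acts on the four boundary punctures of a four-punctured sphere by a mapping class that, on a suitable horospherical torus decomposition, merely swaps two pairs of cusp neighborhoods of equal shape, I expect the cusp parameters to be genuinely insensitive to the position of an isolated $2$, giving $n$ manifolds with identical cusp parameters.

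The main obstacle will be part (1), specifically making the nonintegrality computation rigorous: one must pin down an actual matrix representative for the mutation involution and for a curve crossing $S^{(i)}$, verify the product has a trace with a $2$-denominator, and—crucially—show this denominator is not killed by interactions with the other mutations or by a clever change of basis (i.e.\ that the trace field's ring of integers genuinely fails to contain all traces). This is where a careful, explicit hold on the $\mathrm{PSL}_2(\mathbb{C})$-representation coming from the tangle decomposition is indispensable; the cusp-parameter bookkeeping in (2) and (3), while intricate, is comparatively mechanical once the gluing combinatorics are set up.
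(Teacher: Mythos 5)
Your proposal is an outline rather than a proof, and two of its load-bearing claims are wrong. Most seriously, part (3) as you set it up fails: translating an isolated $2$ through the positions does \emph{not} leave the cusp parameters unchanged. The cusp modulus of $M_I$ is a sum of annulus moduli contributed piece by piece, and which annulus ($2\sqrt{2}$ versus $2\sqrt{2}/5$) each copy of $C(\Gamma_T)$ contributes depends on the running parity $c_{j}=\sum_{k\le j}t_k/2 \bmod 2$; for a single $2$ in position $k$ this gives $m(T_1)=i\bigl[\tfrac{6}{5}+\tfrac{4}{5}(n+4k)\sqrt{2}\bigr]$, which visibly depends on $k$. Indeed the single-$2$ family is exactly the family that yields the incommensurable manifolds of part (2) --- so your proposals for (2) and (3) contradict each other. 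The family that actually shares cusp parameters consists of mutations along two \emph{adjacent} spheres ($t_k=t_{k+1}=2$), for which the parity pattern, and hence the sum, is independent of $k$. Relatedly, for (2) pairwise distinctness of the moduli is not enough: cusp parameters are commensurability invariants only up to the $\mathrm{PGL}_2(\mathbb{Q})$-action, so you need the explicit modulus formula together with a lemma showing $i(m+n\sqrt{2})$ and $i(m+n'\sqrt{2})$ are $\mathrm{PGL}_2(\mathbb{Q})$-equivalent only when $n'=\pm n$.

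In part (1), your justification of Bloch-invariance rests on a false premise: the $(12)(34)$ mutation is realized by $\phi_{\sfm_2}$, which (unlike $\phi_{\sfm_1}$) does \emph{not} preserve the induced boundary triangulation, so the octahedron/cuboctahedron decomposition of $M_I$ is not a genuine ideal polyhedral decomposition. One must either cite the general mutation-invariance of the Bloch invariant or, as in the paper, interpolate between the two boundary triangulations by inserting six flat tetrahedra, each with cross-ratio parameter $[2]$, and use $2\cdot[2]=0$ in the Bloch group so that they contribute nothing. Finally, the nonintegral trace is never actually produced in your sketch, and the expected obstruction is misidentified: the nonintegrality occurs at the prime $5$, not $2$, because the mutation involution $\sfm_2$ involves $\sqrt{5}$ (the parabolic $\p_2$ has translation length $5$); e.g. $\mathrm{tr}\bigl(\bar{\h}\,\sfm_2\h\sfm_2^{-1}\bigr)=204/5$, which is not an algebraic integer in $\mathbb{Z}[i,\sqrt{2}]$. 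Your worry about cancellation among several mutations is then beside the point: one exhibits a single explicit element of $\Gamma_I$ supported near one mutated sphere, which suffices for every $I$ with some entry equal to $2$.
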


\noindent {\it Remarks.}\ \ 
1.~Mutation along $4$-punctured spheres preserves hyperbolic volume \cite{Ru}, the trace field \cite{NeR1}, and the Bloch invariant \cite[Theorem 2.13]{Neumann}.  While unaware of the Bloch invariant reference we proved our case directly.  It is Proposition \ref{mutators Bloch}.  \smallskip \\
  2.~$L_n = L_{(0,\hdots,0)}$, which accounts for the gap in statement (1) of the theorem.\smallskip  \\
  3.~Corollaries \ref{different mods} and \ref{same moduli} describe the subfamilies from (2) and (3) above.  We do not know the commensurability relation among members of the latter subfamily.  \bigskip

Theorems \ref{omnibus1.5} and \ref{omnibus2} comprise the first study (to our knowledge) of commensurability among an infinite family of link complements related by mutation.  Mutants have a longstanding reputation for being difficult to distinguish, although the algorithm of \cite{GHH} can now be used to check particular examples.  (For instance, the complement of the ``Kinoshita--Terasaka knot,'' 11n42 in the knot tables, is incommensurable with that of its mutant, the ``Conway knot'' 11n34.)

Theorem \ref{omnibus1.5} further gives some evidence counter to the following conjecture of Reid--Walsh \cite{Reid_Walsh}: the commensurability class of a hyperbolic knot complement in $S^3$ contains at most two others.  This implies in particular that any hyperbolic knot complement is incommensurable with all but two of its (non-isometric) mutants.

We now outline the remainder of the paper.  We name the tangle complements $M_S \doteq B^3 - S$ and $M_T \doteq S^2 \times I - T$, and note that $M_T$ is the double of $M_{T_0} \doteq M_T \cap (S^2 \times [0,1/2])$ across a single boundary component.  Section \ref{sec:tangles} describes hyperbolic structures with totally geodesic boundary on $M_S$ and $M_{T_0}$ as identification spaces of the regular ideal octahedron and the right-angled ideal cuboctahedron, respectively.

The totally geodesic boundary $\partial M_S$ is isometric to the component of $\partial M_{T_0}$ contained in $\partial M_T$, and the reflective symmetry of $M_T$ ensures that its totally geodesic boundary components are orientation-reversing isometric.  In forming $M_n$ we glue $\partial B^3 - S$ to $S^2 \times \{0\} - T$ by a map isotopic to an isometry, so that the separating four-punctured spheres $F^{(i)} = S^{(i)} - T$ are totally geodesic in $M_n$ for $0 \leq i \leq n$.  Section \ref{sec:combo-nation!} describes this assembly.

Because the $F^{(i)}$ are totally geodesic, each copy of $M_S$ and $M_T$ in $M_n$ inherits its structure with totally geodesic boundary from the ambient hyperbolic structure.  This in turn makes it possible to compute the commensurability invariants of Theorem \ref{omnibus1} on the $M_n$.  We carry this out in Section \ref{sec:invariants}.  Few other link complements are known to contain a surface that is totally geodesic without some topological constraint forcing it so; see eg.~\cite{MaR2} and \cite{AR2}.  For related results see \cite{MeR}, \cite{Le}, \cite{Adams_etal_1}, \cite{Adams_etal_2}.

Our method of construction owes a debt to one that Adams \cite{Adams1} and Neumann--Reid used to produce families of hyperbolic $3$-manifolds, gluing together manifolds with $3$-punctured sphere boundary.  (However unlike the $4$-punctured sphere, a $3$-punctured sphere is totally geodesic in any hyperbolic $3$-manifold that contains it \cite[Theorem 3.1]{Adams1}.)  The work of Neumann--Reid can be used to show that for each imaginary biquadratic extension $k$ of $\mathbb{Q}$, there are infinitely many commensurability classes of hyperbolic $3$-manifolds with trace field $k$ (cf. \cite[\S 5.6]{MaR}).

In every hyperbolic $4$-punctured sphere, each mutation determines a homeomorphism properly isotopic to an isometry \cite{Ru}.  In Section \ref{sec:mutants} we describe the isometries determined by $(13)(24)$ and $(12)(34)$ and the hyperbolic structures on mutants of the $M_n$.  We prove Theorem \ref{omnibus1.5} in Section \ref{sec:commensurablo!} and Theorem \ref{omnibus2} in Section \ref{sec:in-commensurablo!}.

%%%%%%%%%%%%%%%%%%%
\subsection*{Acknowledgements}
\color{black}

The authors thank Ian Agol, Richard Kent, Chris Leininger, Peter Shalen, and Christian Zickert for helpful conversations, Joe Masters for suggesting the cusp parameter, and Dick Canary for helping us with Lemma \ref{convexcore}.  A referee on an earlier version of this paper pointed us to the Bloch invariant and motivated several major changes in this paper.  We also appreciate the thoughtful editorial comments from a second referee.  We want to especially thank Alan Reid for suggesting these questions to us and for many helpful conversations and suggestions.  The second author is grateful to the Clay Mathematics Institute for support during part of this project.  The authors also thank the University of Montana's Faculty Development Committee for their support.

%%%%%%%%%%%%%%%%%%%%%%%%
\section{A pair of tangles}  \label{sec:tangles}
%%%%%%%%%%%%%%%%%%%%%%%%

This section is devoted to describing hyperbolic structures with totally geodesic boundary on the complements of the tangles $S$, in $B^3$, and $T_0$, in $S^2 \times I$, depicted in Figure \ref{SandT_0}.  For a manifold $M$ with boundary, we refer by a \textit{tangle} in $M$ to a pair $(M,T)$, where $T$ is the  image of a disjoint union of circles and closed intervals, embedded in $M$ by a map taking each circle into the interior of $M$ and restricting on each interval to a proper embedding.

\begin{figure}

\setlength{\unitlength}{.1in}

\begin{picture}(40,15)
\put(0,0) {\includegraphics[height= 1.45in]{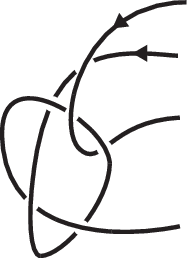}}
\put(4.75,13.5){$e$}
\put(6.75,4.25){$w$}
\put(8.75, .5){$a$}
\put(8.75, 9.75){$v$}
\put(8.75, 6.25){$u$}

\put(26,0) {\includegraphics[height= 1.45in]{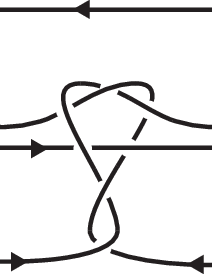}}
\put(32,15){$e$}
\put(29,11){$y$}
\put(36.25, 1.75){$t$}
\put(36.25, 8.75){$q$}
\put(36.25, 5.25){$z$}
\put(26.5, 1.75){$a$}
\put(26.5, 8.5){$v$}
\put(26.5, 5.25){$u$}
\end{picture}

\caption{Tangles $S$ and $T_0$, labeled with Wirtinger generators}  \label{SandT_0}

\end{figure}

We will prove there is a homeomorphism taking $B^3-S$ to a hyperbolic manifold with totally geodesic boundary which is an identification space of an ideal octahedron by pairing certain  faces.  This was previously known, and it follows from results in \cite{ZP} upon taking a geometric limit, but we do not know a reference for a direct proof.  We also prove there is a homeomorphism taking $S^2 \times I - T_0$ to a certain identification space of the right-angled ideal cuboctahedron.  As far as we are aware, this description was not previously known.

We prove existence of homeomorphisms using faithful representations, from the fundamental groups of tangle complements onto Kleinian groups generated by face pairings.  Our main tools drawing connections between the geometric, algebraic, and topological objects involved are Lemma \ref{combinatorics n cores}, which relates a hyperbolic $3$-manifold with totally geodesic boundary produced by pairing some faces of a right-angled polyhedron to the Kleinian group generated by the face pairing isometries, and Lemma \ref{convexcore}, which describes a homeomorphism between a pared manifold and the convex core of a Kleinian group to which its fundamental group represents.

In the remainder of the paper, we will let $\mathbb{H}^3 = \{(z,t)\,|\, z \in \mathbb{C}, t \in (0,\infty) \}$, the upper half space model of hyperbolic space, equipped with the complete Riemannian metric of constant sectional curvature $-1$.  In this model, the group of orientation--preserving isometries, $\mathrm{PSL}_2(\mathbb{C})$, acts by extending its action by M\"obius transformations on the \textit{ideal boundary} or \textit{sphere at infinity} $\mathbb{C} \cup \{\infty\}$.

The \textit{horosphere of height $t$ centered at $\infty$} is $\mathbb{C} \times \{t\} \subset \mathbb{H}^3$.  This inherits the Euclidean metric, scaled by $1/t$, from the ambient hyperbolic metric.  For $v \in \mathbb{C} \times \{0\}$, a \textit{horosphere centered at $v$} is a Euclidean sphere in $\mathbb{C} \times \mathbb{R}$ centered at a point in $\mathbb{H}^3$ and tangent to $\mathbb{C} \times \{0\}$ at $(v,0)$.  It is a standard fact that isometries of $\mathbb{H}^3$ take horospheres to horospheres.

A \textit{hyperplane} of $\mathbb{H}^3$ is a totally geodesic subspace of the form $\ell \times \mathbb{R}^+$ for a line $\ell \subset \mathbb{C}$, or the intersection with $\mathbb{H}^3$ of a Euclidean sphere centered at a point in $\mathbb{C} \times \{0\}$.  A \textit{half space} is the closure of a component of the complement in $\mathbb{H}^3$ of a hyperplane, and a \textit{polyhedron} is the nonempty intersection of a collection of half-spaces with the property that the corresponding collection of defining hyperplanes is locally finite.  A \textit{face} of a polyhedron is its intersection with one of its defining hyperplanes.  A polyhedron is \textit{right-angled} if its defining hyperplanes meet at right angles (if at all) and \textit{ideal} if any point at which more than two of its defining hyperplanes meet is on the sphere at infinity.  Such points are \textit{ideal vertices}.

We say a polyhedron $\calp \subset \mathbb{H}^3$ is \textit{checkered} if its set of faces is partitioned into sets $\cals_i$ and $\cals_e$ of  \textit{internal} and \textit{external} faces, respectively, so that each $f \in \cals_i$ intersects only faces in $\cals_e$ and vice--versa.  For a face $f$ of a checkered, right-angled ideal 
polyhedron $\calp$, let $\calh_f$ be the geodesic hyperplane in $\mathbb{H}^3$ containing $f$ and let  $U_f$  be the half-space bounded by $\calh_f$ that contains $\calp$.  Let the \textit{expansion} of $\calp$ be
$$  E(\calp) = \bigcap_{f \in \cals_i}  U_f.  $$ 
The expansion $E(\calp)$ is a polyhedron of infinite volume that contains $\calp$, and the components of the frontier of $\calp$ in $E(\calp)$ are the external faces of $\calp$.

An \textit{internal face pairing} for a checkered polyhedron $\calp\subset\calh^3$ is a collection $\{ \phi_f\, |\, f \in \cals_i\}$ of isometries, such that for each $f \in \cals_i$ there exists $f'\in\cals_i$ with $\phi_f(f) = f'$, $\phi_f(\calp) \cap \calp = f'$, and $\phi_{f'} = \phi_f^{-1}$.  It is \textit{proper} if $f'\neq f$ for all $f\in \cals_i$.    A proper internal face pairing determines a \textit{proper $\mathrm{Isom}(\mathbb{H}^3)$-side-pairing} of the expansion $E(\calp)$, in the sense of \cite[\S 10.1]{Ra}.  (In \cite{Ra}, \textit{faces} are called \textit{sides}.) 

Given a proper internal face pairing $\{\phi_f\}$ of a checkered polyhedron $\calp$, Theorem 10.1.2 of \cite{Ra} implies the identification space $E(\calp)/\{\phi_f\}$, determined by setting $x\sim\phi_f(x)$ for all $f\in\cals_i$ and $x\in f$, is a hyperbolic manifold.  The inclusion $\calp\hookrightarrow E(\calp)$ induces an inclusion from $M_{\calp}\doteq \calp/\{\phi_f\}$ to $E(\calp)/\{\phi_f\}$.  For each edge $e$ of each $g \in \cals_e$, there is a unique $f \in \cals_i$ such that $e \subset f \cap g$.  Since $f' = \phi_f(f)$ intersects a unique $g'\in\cals_i$ along $\phi_f(e)$, the internal face pairing for $\calp$ determines an edge pairing for the disjoint union of external faces of $\calp$.  Thus $M_{\calp}\doteq\calp/\{\phi_f\}$ is an isometrically embedded submanifold of $E(\calp)/\{\phi_f\}$, where $\partial M_{\calp}$ is the quotient of the disjoint union of the external faces by the edge pairing induced by $\{\phi_f\}$.  

Given an edge pair $\{ e, e'\}$ for $\bound M_\calp$, the total angle around this edge in $M_\calp$ is the sum of the dihedral angles for
$e$ and $e'$ in $\calp$.  Therefore,  if $\calp$ is right-angled, $\partial M_{\calp}$ is totally geodesic.  

If $\Gamma$ is a Kleinian group, we refer to the convex core of $\mathbb{H}^3/\Gamma$ as $C(\Gamma)$.  This is the convex submanifold of $\mathbb{H}^3/\Gamma$, minimal with respect to inclusion, with the property that the inclusion-induced homomorphism $\pi_1 C(\Gamma) \to \mathbb{H}^3/\Gamma$ is surjective.  (See \cite{Mo} for background on Kleinian groups.  The beginning of \S 6 there covers convex cores.)

\begin{lemma} \label{combinatorics n cores}  Let $\calp \subset \mathbb{H}^3$ be a finite-sided, checkered right-angled ideal polyhedron, with a proper internal face pairing $\{\phi_f\, |\, f \in \cals_i \}$.  Then $\Gamma \doteq \langle \phi_f \,|\, f \in \cals_i \rangle$ is a free Kleinian group, and the inclusion $\calp \hookrightarrow \mathbb{H}^3$ induces an isometry $p \co M_{\calp} \to C(\Gamma)$.  If $\calh$ is the hyperplane containing $g\in\cals_e$ then $\calh\to\mathbb{H}^3$ induces an isometric embedding of $\calh/\mathrm{Stab}_{\Gamma}(\calh)$ to the component of $\partial C(\Gamma)$ containing $p(g)$.\end{lemma}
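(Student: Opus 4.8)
The plan is to argue in three stages: first establish that $\Gamma$ acts properly discontinuously with a fundamental domain built from copies of the expansion $E(\calp)$, then identify the convex core, and finally read off the boundary statement from the same tiling.

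\textbf{Step 1: Discreteness and freeness via Poincaré/ping-pong on $E(\calp)$.} I would apply a Poincaré-type polyhedron theorem to $E(\calp)$, viewed as a polyhedron with the internal faces $\cals_i$ as its side-pairing faces. The external faces of $\calp$ are not faces of $E(\calp)$ — they lie in its interior — so $E(\calp)$ really is ``unbounded in the external directions.'' The face-pairing transformations $\{\phi_f\}$ pair the sides of $E(\calp)$ in pairs by hypothesis. Since $\calp$ is right-angled and ideal, the internal faces meet only external faces, so no two internal faces share an edge; hence there are no edge cycles to check (every edge of $E(\calp)$ lies on at most one internal face), and the only cusped-vertex conditions come from ideal vertices, which are automatically parabolic fixed points of the appropriate cycle transformations because $\calp$ is ideal and the gluings match ideal vertices to ideal vertices. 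The cycle/completeness conditions of Poincaré's theorem are therefore vacuous or immediate, so $\Gamma$ is discrete, $E(\calp)$ is a fundamental domain, and the group has the presentation with generators $\phi_f$ and only the relations $\phi_{f'}=\phi_f^{-1}$ — i.e., it is free of rank $\tfrac12|\cals_i|$. (Equivalently one can run a direct ping-pong argument using the half-spaces $U_f$; the right-angled structure guarantees the relevant images are nested.)

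\textbf{Step 2: Identifying $M_{\calp}$ with the convex core.} The $\Gamma$-translates of $E(\calp)$ tile $\mathbb{H}^3$, and within each translate the sub-polyhedron $\calp$ is a convex piece whose frontier in the translate consists of external faces. The union $\bigcup_{\gamma\in\Gamma}\gamma\calp$ is therefore a closed, $\Gamma$-invariant subset; I would show it is convex (any geodesic segment between two of its points stays inside, because crossing from one translate of $\calp$ to an adjacent one only happens through an internal face, and beyond an internal face one is outside the limit-set-convex-hull side), and that it equals the convex hull of the limit set $\Lambda_\Gamma$. One inclusion is clear since the convex hull is the minimal convex invariant set with full fundamental group; for the reverse, the complement $\mathbb{H}^3 - \bigcup\gamma\calp$ is a disjoint union of half-spaces (the components cut off by single external faces), each of which misses $\Lambda_\Gamma$ because its bounding hyperplane is a lift of a boundary component and $\Gamma$ has a fundamental domain $E(\calp)$ disjoint from its interior. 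Quotienting, $\bigcup\gamma\calp/\Gamma = \calp/\{\phi_f\} = M_{\calp}$ maps isometrically onto $C(\Gamma)$, and this map is exactly the one induced by $\calp\hookrightarrow\mathbb{H}^3$. Here I may invoke Lemma \ref{convexcore} if it is convenient to phrase the final identification through the pared-manifold picture, but the tiling argument is self-contained.

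\textbf{Step 3: The boundary components.} Fix an external face $g\in\cals_e$ with containing hyperplane $\calh$. In the tiling, $\calh$ is a boundary hyperplane of $\bigcup\gamma\calp$, so its image in $\partial C(\Gamma)$ is a totally geodesic surface. The stabilizer $\mathrm{Stab}_\Gamma(\calh)$ acts on $\calh$, and $\calh \cap \bigcup\gamma\calp$ is tiled by the $\mathrm{Stab}_\Gamma(\calh)$-translates of $g$ — precisely because adjacency of external faces across the tiling is controlled by the induced edge-pairing on $\bigsqcup\cals_e$ described before the lemma. Thus $\calh/\mathrm{Stab}_\Gamma(\calh)$ is a finite-area hyperbolic surface isometric to $g/(\text{edge pairing})$, which is one component of $\partial M_{\calp}$, and the inclusion-induced map sends it isometrically onto the component of $\partial C(\Gamma)$ through $p(g)$. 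That this embedding is injective on the quotient follows from $E(\calp)$ being a fundamental domain: two points of $g$ identified in $\partial C(\Gamma)$ differ by an element of $\Gamma$ preserving $\calh$, hence by an element of $\mathrm{Stab}_\Gamma(\calh)$.

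\textbf{Main obstacle.} The delicate point is Step 2 — verifying that $\bigcup_{\gamma}\gamma\calp$ is genuinely the convex hull of $\Lambda_\Gamma$, i.e., simultaneously convex and exactly hull-sized. Convexity hinges on the right-angled hypothesis (so that where translates of $\calp$ meet along internal faces the union does not develop a ``reflex'' dihedral angle on the $\calp$ side), and the hull identification hinges on showing each complementary half-space is disjoint from $\Lambda_\Gamma$, which is where finite-sidedness and the fundamental-domain property of $E(\calp)$ are used. I would structure this carefully, possibly citing standard facts about convex cores from the reference given after the definition of $C(\Gamma)$, rather than re-deriving them.
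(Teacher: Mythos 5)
Your outline follows the same route as the paper (Poincar\'e's theorem applied to $E(\calp)$, ping-pong for freeness, identification of $p(M_{\calp})$ with $C(\Gamma)$, and the boundary statement read off from the tiling), but your Step 1 has a genuine gap: you dismiss the completeness hypothesis of Poincar\'e's theorem as ``vacuous or immediate,'' on the grounds that the vertex cycle transformations are ``automatically parabolic \dots because $\calp$ is ideal and the gluings match ideal vertices to ideal vertices.'' That implication is false in general. A cycle transformation can fix an ideal vertex and fail to be parabolic; equivalently, the induced similarity structure on the cusp link can have nontrivial dilation holonomy even though ideal vertices are matched to ideal vertices --- this is exactly the phenomenon of incomplete gluings familiar from hyperbolic Dehn surgery. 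The version of Poincar\'e's theorem in play (e.g.\ Ratcliffe, Theorem 11.2.2) takes completeness of the identification space as a hypothesis, so you cannot conclude discreteness and the fundamental-domain property before completeness is verified. The paper's proof spends most of its effort on precisely this point: because $\calp$ is right-angled and checkered, each ideal vertex link $\calb_j\cap\calp$ is a Euclidean rectangle whose two internal sides are opposite and hence of equal length, and a shortest-side argument produces a family of horoballs permuted by the face pairings, after which Ratcliffe's criteria (11.1.4 and 11.1.6) give completeness of every cusp link. Some argument of this kind, using right-angledness in an essential way, must be supplied in your Step 1.

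Your Step 2 is also thinner than what is needed, though you flag it yourself. As written, your convexity claim for $\bigcup_{\gamma\in\Gamma}\gamma\calp$ is a heuristic, and both of your hull inclusions really only address $\mathrm{Hull}(\Gamma)\subseteq\bigcup\gamma\calp$; the inclusion $\bigcup\gamma\calp\subseteq\mathrm{Hull}(\Gamma)$ requires knowing that the ideal vertices of $\calp$ are limit points of $\Gamma$, which the paper gets by exhibiting each vertex as the fixed point of a nontrivial (parabolic) cycle element. The paper then avoids your delicate upstairs convexity argument entirely by working downstairs: $p(M_{\calp})$ is convex because it has totally geodesic boundary, and it contains $C(\Gamma)$ because it carries $\pi_1(\mathbb{H}^3/\Gamma)$, which is seen from the product structure on $E(\calp)-\calp$ coming from nearest-point retraction onto $\calp$. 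Closing your Step 2 would require detail at roughly this level; with the completeness gap repaired and Step 2 fleshed out along either line, the remainder of your plan (freeness via ping-pong using disjointness of the internal hyperplanes, and Step 3 on the boundary components) matches the paper and is sound.
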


\begin{proof}   We will continue to use some terminology and results from \cite{Ra}.  With these hypotheses the inclusion $\calp \to E(\calp)$ induces an isometric embedding $M_{\calp} \to E(\calp)/\{\phi_f\}$, and $\partial M_{\calp}$ is totally geodesic.  If $E(\calp)/\{\phi_f\}$ is complete as a hyperbolic $3$-manifold, then by Poincar\'e's polyhedron theorem (see eg. \cite[Theorem 11.2.2]{Ra}), $\Gamma = \langle \phi_f\,|\,f \in \cals_i \rangle$ is discrete and $E(\calp)$ is a fundamental domain for $\Gamma$.

By \cite[Theorem 11.1.6]{Ra}, to show completeness it suffices to check that the link of any cusp is a complete Euclidean surface.  Let $\lfloor v \rfloor = \{v_0,v_1,\hdots,v_{n-1}\}$ be an equivalence class of ideal vertices of $\calp$ under the relation generated by $x \sim \phi_f(x)$, $f \in \cals_i$, enumerated so that for each $j$ there exists $f_j \in \cals_i$ with $\phi_{f_j}(v_j) = v_{j+1}$ (taken modulo $n$).  In particular, $v_j$ is an ideal vertex of $f_j$ and also of $f'_j \doteq \phi_{j-1}(f_{j-1})$.  

For each $j$, let $\calb_j$ be a horosphere centered at $v_j$, chosen small enough that $\calb_j \cap \calb_{j'} = \emptyset$ for $j \neq j'$.  Since $\calp$ is right-angled, $\calb_j \cap \calp$ is a Euclidean rectangle for each $j$.  We may assume, by renumbering if necessary, that $\calb_0 \cap f_0$ has shortest length of all the arcs $\calb_j \cap f_j$.  Then since $\phi_0(\calb_0) \cap f'_1$ is parallel to $\phi_0(\calb_0) \cap f_1$ in $\phi_0(\calb_0) \cap \calp$, they have the same length: that of $\calb_0 \cap f_0$.  Since this is less than the length of $\calb_1 \cap f_1$, we have $\phi_0(\calb_0) \subset \calb_1$.

We may replace $\calb_1$ by $\phi_0(\calb_0)$, then replace $\calb_2$ with $\phi_1(\calb_1)$ and so on, yielding a new collection of horospheres which are pairwise disjoint and have the additional property that they are interchanged by the face pairings of $\calp$.  Equivalence classes of ideal vertices of $E(\calp)$ are the same as those of $\calp$; thus this collection satisfies the hypotheses of \cite[Theorem 11.1.4]{Ra}, and the link of $\lfloor v \rfloor$ is complete.  It follows that $E(\calp)/\{\phi_f\}$ is a complete hyperbolic $3$-manifold.

Now by the polyhedron theorem, $\Gamma$ is discrete and $E(\calp)$ is a fundamental domain for $\Gamma$.  It follows from a ping-pong argument that $\Gamma$ is free, since the fact that $\calp$ is right-angled implies that the hyperplanes containing its internal faces are mutually disjoint.  The inclusion $E(\calp) \to \mathbb{H}^3$ induces an isometry $E(\calp)/\{\phi_f\} \to \mathbb{H}^3/\Gamma$, so the inclusion $\calp \to \mathbb{H}^3$ induces an isometric embedding $p\co M_{\calp} \to \mathbb{H}^3/\Gamma$.

That $p(M_{\calp})\subseteq C(\Gamma)$ will follow from the fact that $\calp$ is contained in the convex hull of the limit set $\mathrm{Hull}(\Gamma)$ of $\Gamma$, since this is well known to be the universal cover of $C(\Gamma)$.  Fixed points of parabolic elements of $\Gamma$ lie in $\mathrm{Hull}(\Gamma)$, so since $\calp$ is the convex hull of its ideal vertices we show that it is in $\mathrm{Hull}(\Gamma)$ by observing that each such vertex is a parabolic fixed point of $\Gamma$.  Indeed, if $\{v_0,v_1,\hdots,v_{n-1}\}$ is an equivalence class of ideal vertices enumerated as we described above, then $v_0$ is fixed by $\phi_{f_{n-1}}\circ\hdots\circ\phi_{f_1}\circ\phi_{f_0}\in\Gamma$.

Since $p(M_{\calp})$  has totally geodesic boundary it is convex (cf.~\cite[Corollary I.1.3.7]{CEG}).  Thus if $p(M_{\calp})$ carries $\pi_1(\mathbb{H}^3/\Gamma)$ then $C(\Gamma)\subseteq p(M_{\calp})$.  To show this we use the nearest-point retraction $r\co E(\calp)\to\calp$ to produce a homeomorphism $M_{\calp}\cup_{\partial M_{\calp}}(\partial M_{\calp}\times[0,\infty))\to\mathbb{H}^3/\Gamma$ that restricts to $p$ on $M_{\calp}$.  The closure of each component of $E(\calp)-\calp$ intersects $\calp$ in a unique $g\in\cals_e$, and the map $x \mapsto (r(x),d(x,r(x)))$ determines a homeomorphism to $g\times [0,\infty)$.  The inverses of these homeomorphisms, taken over the disjoint union of all $g\in\cals_e$, combine to induce the map in question.  

The two paragraphs above combine to prove that $C(\Gamma)=p(M_{\calp})$.  In particular, $C(\Gamma)$ has totally geodesic boundary, so its preimage in $\mathbb{H}^3$ under the universal cover is a disjoint union of geodesic planes.  Since $p$ takes $g\in\cals_e$ to $\partial C(\Gamma)$, the hyperplane $\calh$ containing $g$ is a component of the preimage of $\partial C(\Gamma)$.  The final claim of the lemma follows.
\end{proof}

\begin{figure}%[ht]

\setlength{\unitlength}{.1in}

\begin{picture}(40,25)

\put(3,0) {\includegraphics[height= 2.5in]{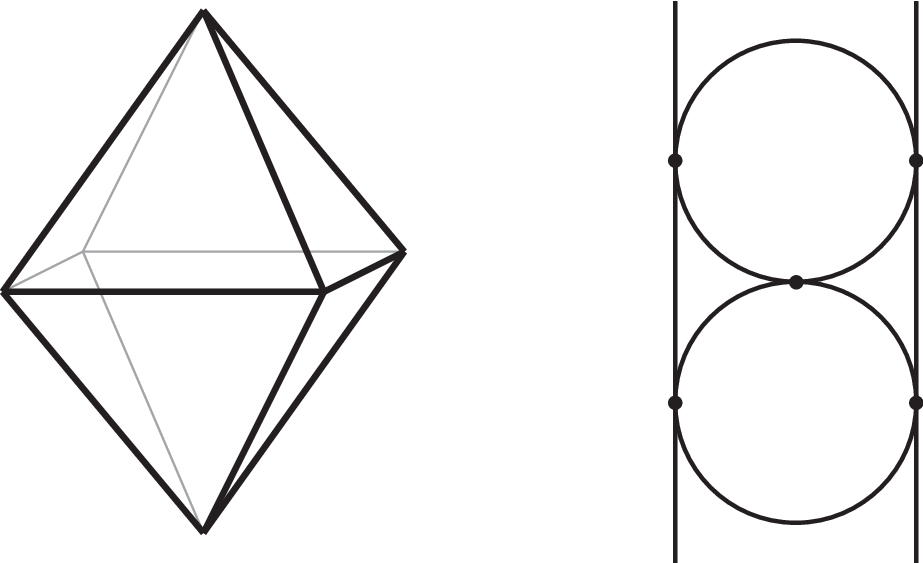}}

\put(17,15){$B$}

\put(10, 15){$X_4$}

\put(10, 9){$A$}

\put(19.8,9.2){\vector(-1,1){2}}

\put(20.3,8){$X_1$}

\put(1, 5){$X_2$}

\put(3,6){\vector(1,1){2}}

\put(20.5,21){$X_3$}

\put(20,21){\vector(-1,-1){2}}

\put(1,12){$\scriptstyle{\infty}$}

\put(15.5,12.6){$\scriptstyle{1}$}

\put(11.5,0){$\scriptstyle{0}$}

\put(30,12){$X_2$}

\put(45.5, 12){$X_4$}

\put(37.5, 6.5){$X_1$}

\put(37.5, 17){$X_3$}

\put(34,17.5){$\scriptstyle{i}$}

\put(34, 6.8){$\scriptstyle{0}$}

\put(41,17.5){$\scriptstyle{1+i}$}

\put(42, 6.8){$\scriptstyle{1}$}

\end{picture}

\caption{The regular ideal octahedron $\calp_1$, and its expansion $E(\calp_1)$}  \label{cap}

\label{fig:idealoct}

\end{figure}

\begin{cor} \label{M_SandG} Let $\calp_1$ be the regular ideal octahedron in $\mathbb{H}^3$, embedded as indicated in Figure \ref{fig:idealoct}, and checkered by declaring the face $A$ to be external.  The collection $\{\s^{\pm 1},\t^{\pm 1}\}$ is an internal face pairing for $\calp_1$, where 
\[ \s= \left(\begin{array}{cc} 1 & 0 \\ -1 & 1 \end{array}\right) \AND
   \t= \left(\begin{array}{cc} 2i & 2-i \\ i & 1-i \end{array}\right). \]
Let $M_S = \calp_1/\{\s^{\pm1},\t^{\pm 1}\}$, and let $\Gamma_S = \langle \s,\t \rangle$.  Then the inclusion $\calp_1 \to \mathbb{H}^3$ induces an isometry $p_S \co M_S \to C(\Gamma_S)$.  \end{cor}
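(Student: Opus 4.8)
The plan is to apply Lemma \ref{combinatorics n cores} directly, once its hypotheses are checked for $\calp_1$. First I would record the standard facts that the regular ideal octahedron is finite-sided, that all of its vertices are ideal, and that it is right-angled: at each of its six ideal vertices exactly four faces meet, and in the regular octahedron their link is a Euclidean square, so every dihedral angle equals $\pi/2$. Next, declaring $A$ external determines a checkering, because the face-adjacency graph of the octahedron is the $1$-skeleton of a cube, hence bipartite, so there is a unique partition of the eight faces into two classes with adjacent faces in opposite classes; taking the class of $A$ to be $\cals_e$ gives four external and four internal faces, with each internal face meeting only external faces and conversely. Granting that $\{\s^{\pm1},\t^{\pm1}\}$ is an internal face pairing, Lemma \ref{combinatorics n cores} then immediately yields that $\Gamma_S=\langle\s,\t\rangle$ is free and Kleinian and that $\calp_1\hookrightarrow\mathbb{H}^3$ induces an isometry $p_S\co M_S\to C(\Gamma_S)$, which is exactly the assertion of the corollary.

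So the substance is verifying that $\{\s^{\pm1},\t^{\pm1}\}$ is an internal face pairing, and this is a finite computation pinned to the embedding of $\calp_1$ in Figure \ref{fig:idealoct}. From the figure I would read off coordinates in $\mathbb{C}\cup\{\infty\}$ for the six ideal vertices (one of them being $\infty$), and hence the three-point vertex set of each of the four internal faces. For each internal face $f$ I would then evaluate the appropriate M\"obius transformation --- $\s^{\pm1}$ for one pair, $\t^{\pm1}$ for the other --- on the three vertices of $f$ and check that the image is precisely the vertex set of another internal face $f'$. Since a hyperplane of $\mathbb{H}^3$ is determined by three distinct ideal points, this already gives $\phi_f(\calh_f)=\calh_{f'}$ and $\phi_f(f)=f'$. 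To see that $\phi_f(\calp_1)\cap\calp_1=f'$, rather than $\phi_f$ folding $\calp_1$ back onto itself, it suffices to evaluate $\phi_f$ at a single interior point of $\calp_1$ and confirm the image lies in the half-space bounded by $\calh_{f'}$ that does \emph{not} contain $\calp_1$; then $\phi_f(\calp_1)\cap\calp_1\subseteq\calh_{f'}\cap\phi_f(\calp_1)=\phi_f(f)=f'$, while $f'\subseteq\calp_1$ and $f'=\phi_f(f)\subseteq\phi_f(\calp_1)$ give the reverse containment. The requirement $\phi_{f'}=\phi_f^{-1}$ holds by construction, since $\s^{-1}$ and $\t^{-1}$ are included in the collection, but the vertex computations should be checked to be consistent with it.

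The main obstacle I anticipate is the bookkeeping: arranging the four vertex evaluations so that $\s$ and $\t$ as written really do pair the internal faces of $\calp_1$ \emph{as placed in Figure \ref{fig:idealoct}} --- with the correct faces matched, with compatible orientations --- and so that the two pairings together exhaust all four internal faces. A labelling or placement that is off by a symmetry of the octahedron would still produce a discrete group, but a potentially different quotient manifold, so this is where an error would most easily hide. Conceptually, though, there is nothing beyond evaluating two explicit fractional linear maps on a handful of points of $\mathbb{C}\cup\{\infty\}$, together with one side-check per map, after which the corollary follows from Lemma \ref{combinatorics n cores}.
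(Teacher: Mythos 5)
Your proposal is correct and follows essentially the same route as the paper: verify by direct computation on the embedded octahedron that $\{\s^{\pm1},\t^{\pm1}\}$ pairs the internal faces, then invoke Lemma \ref{combinatorics n cores}. The only cosmetic difference is that the paper pins down the vertex coordinates by noting $\calp_1$ is a tile of Hatcher's $\mathrm{PSL}_2(\mathbb{Z}[i])$-invariant tessellation (a fact it reuses later), whereas you read them from Figure \ref{fig:idealoct} and spell out the checkering and half-space checks in more detail.
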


\begin{proof}  With the indicated embedding, $\mathcal{P}_1$ is a tile of the $\mathrm{PSL}_2(\mathcal{O}_1)$--invariant tesselation $\mathcal{T}_1$ constructed in \cite{Hatcher}.  Here $\mathcal{O}_1 = \mathbb{Z}[i]$ is the ring of integers of the field $\mathbb{Q}(i)$.  In particular, the face $A$ shown on the left in Figure \ref{fig:idealoct} has ideal vertices $0$, $1$, and $\infty$, and all other ideal vertices of $\mathcal{P}_1$ have positive imaginary part.  

Since $A$ is external, the faces $X_1$, $X_2$, $X_3$, and $X_4$ of $\mathcal{P}_1$ indicated on the left in Figure \ref{fig:idealoct} are internal.  Direct computation reveals that $\s$ takes $X_1$ to $X_2$, fixing the ideal vertex they share, and $\t$ takes $X_3$ to $X_4$ so that the vertex they share goes to the vertex shared by $X_4$ and $X_2$.  Hence $\{\s^{\pm 1},\t^{\pm 1}\}$ is an internal face pairing for $\calp_1$.  The corollary now follows from Lemma \ref{combinatorics n cores}.  \end{proof}

The external faces of $\calp_1$ triangulate $\partial M_S$, and their images under $p_S$ determine a triangulation of $\partial C(\Gamma_S)$, which we will denote by $\Delta_S$. 

\begin{cor} \label{M_T0andH0} Let $\mathcal{P}_2$ be the right-angled ideal cuboctahedron in $\mathbb{H}^3$, embedded as indicated in Figure \ref{middle}, and checkered by declaring triangular faces external.  The collection $\{\f^{\pm 1},\g^{\pm 1},\h^{\pm 1}\}$ is an internal face pairing for $\calp_2$, where \begin{align*} 
  & \f\ =\ \left( \begin{smallmatrix} 1 & 0 \\ -1 & 1 \end{smallmatrix} \right) &%\\ 
  & \g\ =\ \left( \begin{smallmatrix} -1+i\sqrt{2} &  1-2i\sqrt{2} \\ -2 & 3-i\sqrt{2} \end{smallmatrix} \right) &%\\ 
  & \h\ =\ \left( \begin{smallmatrix} 2i\sqrt{2} & -3-i\sqrt{2} \\ -3+i\sqrt{2} & -3i\sqrt{2} \end{smallmatrix} \right). 
\end{align*}
Let $M_{T_0} = \mathcal{P}_2/\{\f^{\pm 1},\g^{\pm 1},\h^{\pm 1}\}$, and let $\Gamma_{T_0} = \langle \f,\g,\h \rangle$.  The inclusion $\calp_2 \to \mathbb{H}^3$ induces an isometry $p_{T_0} \co M_{T_0} \to C(\Gamma_{T_0})$.  \end{cor}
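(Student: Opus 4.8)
The plan is to apply Lemma~\ref{combinatorics n cores} exactly as in the proof of Corollary~\ref{M_SandG}, reducing the corollary to two verifications about the concrete data supplied: first, that the embedding of $\calp_2$ depicted in Figure~\ref{middle} really is a right-angled ideal cuboctahedron with the triangular faces forming one side of a legitimate checkering; and second, that $\{\f^{\pm1},\g^{\pm1},\h^{\pm1}\}$ is an internal face pairing in the sense defined above, i.e.\ that each of $\f$, $\g$, $\h$ carries one square (internal) face of $\calp_2$ to another so that the image of $\calp_2$ meets $\calp_2$ exactly along that face. Once both are checked, Lemma~\ref{combinatorics n cores} immediately gives that $\Gamma_{T_0}=\langle\f,\g,\h\rangle$ is a free Kleinian group and that the inclusion $\calp_2\hookrightarrow\mathbb{H}^3$ induces an isometry $p_{T_0}\co M_{T_0}\to C(\Gamma_{T_0})$, which is the assertion.

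For the combinatorial input I would first record the ideal vertices of $\calp_2$ in the coordinates of Figure~\ref{middle}; as in Corollary~\ref{M_SandG} one expects the cuboctahedron to sit as a tile in a known $\mathrm{PSL}_2(\mathcal{O})$-invariant tessellation (here governed by $\mathbb{Q}(i\sqrt2)$, matching the trace field $\mathbb{Q}(i,\sqrt2)$ of the paper), so that right-angledness and ideality are inherited from that tessellation rather than checked by hand. The cuboctahedron has $8$ triangular and $6$ square faces; since each edge of an ideal cuboctahedron separates a triangle from a square, declaring the triangles external and the squares internal does give a checkering, with the squares (the internal faces) pairwise non-adjacent — precisely the condition that feeds the ping-pong step in Lemma~\ref{combinatorics n cores}.

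For the face-pairing verification I would compute the action of $\f$, $\g$, and $\h$ on the relevant ideal vertices and on the hyperplanes carrying the six square faces. The matrix $\f=\left(\begin{smallmatrix}1&0\\-1&1\end{smallmatrix}\right)$ is the same parabolic used for $\s$ in Corollary~\ref{M_SandG} and should again identify an adjacent pair of squares sharing a fixed ideal vertex; for $\g$ and $\h$ I would check that each maps its source square onto a target square, sends a distinguished ideal vertex to the prescribed one, and satisfies $\g(\calp_2)\cap\calp_2=\g(\text{source square})$ and similarly for $\h$ — the last being the geometric condition that $\g$ and $\h$ "fold" $\calp_2$ onto its neighbor across that face with no overlap. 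I would also confirm the bookkeeping $\f^{-1},\g^{-1},\h^{-1}$ pair up the three remaining squares, so all six internal faces are accounted for in three pairs. The main obstacle is purely computational stamina rather than conceptual: one must carry out the $\mathrm{PSL}_2(\mathbb{C})$ arithmetic over $\mathbb{Z}[i\sqrt2]$ carefully enough to be certain that the images of $\calp_2$ under $\f^{\pm1},\g^{\pm1},\h^{\pm1}$ tile a neighborhood of $\calp_2$ meeting it only in faces — i.e.\ that no matrix accidentally overlaps $\calp_2$ in its interior — after which Lemma~\ref{combinatorics n cores} does all the remaining work.
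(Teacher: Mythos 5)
Your proposal is correct and follows essentially the same route as the paper: place $\calp_2$ as a tile of Hatcher's $\mathrm{PSL}_2(\mathbb{Z}[i\sqrt{2}])$-invariant tessellation, verify by direct matrix computation that $\f$, $\g$, $\h$ pair the square (internal) faces as required, and then invoke Lemma~\ref{combinatorics n cores}. The paper's proof is exactly this, recording the specific pairings ($\f\co Y_2\to Y_1$, $\g\co Y_3\to Y_1'$, $\h\co Y_2'\to Y_3'$) and leaving the non-overlap condition to the tessellation structure, as you anticipated.
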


\begin{figure}%[ht]

\setlength{\unitlength}{.1in}

\begin{picture}(40,25)

\put(3.5,0) {\includegraphics[height= 2.5in]{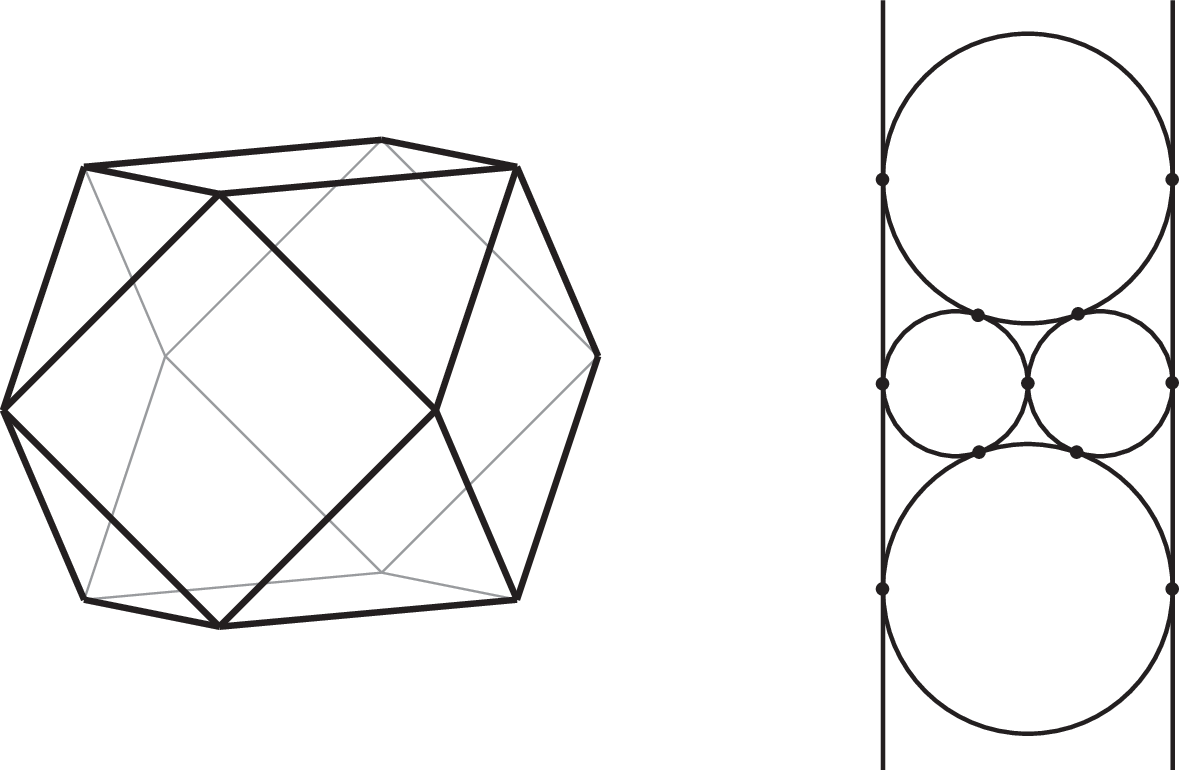}}

\put(15.5,16){$C$}

\put(10.5, 12){$Y_1$}

\put(6.65, 6.25){$D$}

\put(9.5,16){$\textcolor{gray1}{E}$}

\put(19.5,8.25){$\textcolor{gray1}{F}$}

\put(13,21){$Y_2$}

\put(19.5,12){$Y_3$}

\put(43, 2){$Y_3$}

\put(29, 2){$Y_1$}

\put(36, 18.5){$Y_2$}

\put(36, 5.5){$Y'_2$}

\put(33.75,12.25){$Y'_3$}

\put(38.5,12.25){$Y'_1$}

\put(28.5,12.5){$\scriptstyle{-i\frac{\sqrt{2}}{2}}$}

\put(42,19){$\scriptstyle{1}$}

\put(28.5, 5.5){$\scriptstyle{-i\sqrt{2}}$}

\put(30.75,19){$\scriptstyle{0}$}

\put(42, 5.5){$\scriptstyle{1-i\sqrt{2}}$}

\put(42,12.5){$\scriptstyle{1-i\frac{\sqrt{2}}{2}}$}

\end{picture}

\caption{The right-angled ideal cuboctahedron $\calp_2$, and $E(\calp_2)$}  \label{middle}

\label{fig:cuboct}

\end{figure}

\begin{proof}  With the indicated embedding, $\mathcal{P}_2$ is a tile of the $\mathrm{PSL}_2(\mathcal{O}_2)$--invariant tesselation $\mathcal{T}_2$ of $\mathbb{H}^3$ defined in \cite{Hatcher}, where $\mathcal{O}_2 = \mathbb{Z}[i\sqrt{2}]$ is the ring of integers of $\mathbb{Q}(i\sqrt{2})$.  In particular, the face $C$ labeled in the figure has ideal vertices $0$, $1$, and $\infty$.

Label the internal faces $Y_i$ as indicated on the left in Figure \ref{fig:cuboct}, and label the square face opposite $Y_i$ as $Y_i'$.  Direct computation reveals that $\f$ takes $Y_2$ to $Y_1$, fixing the ideal vertex they share, $\g$ takes $Y_3$ to $Y'_1$, fixing the ideal vertex they share, and $\h$ takes $Y'_2$ to $Y'_3$, taking the vertex they share to the opposite vertex on $Y_3'$.  Hence $\{\f^{\pm 1},\g^{\pm 1}, \h^{\pm 1}\}$ is an internal face pairing for $\calp_2$.  The corollary now follows from Lemma \ref{combinatorics n cores}.  \end{proof}  

The external faces of $\calp_2$ triangulate $\partial M_{T_0}$.  This has two components that we will call $\partial_+ M_{T_0}$ and $\partial_- M_{T_0}$, with the latter triangulated by the letter-labeled faces of Figure \ref{fig:cuboct}.  Let $\bound_{\pm} C(\Gamma_{T_0}) = p_{T_0}(\partial_{\pm} M_{T_0})$ and let $\Delta^{\pm}_{T_0}$ refer to the triangulation for $\bound_\pm C(\Gamma_{T_0})$ determined by the images under $p_{T_0}$ of the external faces of $\calp_2$.

In the remainder of the paper, if $\g$ and $\h$ are elements of a group and $G$ is a subgroup, we let $\g^\h$ denote the conjugate of $\g$ by $\h$, ${\sf hgh}^{-1}$, and $G^\h=\h G\h^{-1}$.  Below we describe parabolic isometries $\p_1$, $\p_2$ and $\p_3$ which lie in $\Gamma_S \cap \Gamma_{T_0}$.  
\[ \begin{array}{rcccccc}

\p_1 & = & \s^{-1} & = & \f^{-1} & = &  \begin{pmatrix} 1 & 0 \\ 1 & 1 \end{pmatrix}  \\ 

\p_2 & = &  {\sf stst}^{-2} & = & {\sf fg}^{-1}\f^{-1}\h^{-1}\g & = & \begin{pmatrix} -1 & 5 \\ 0 & -1 \end{pmatrix}  \\ 

\p_3 & = & (\s^{-1})^{{\sf tst}} & = & (\g^{-1})^{\g^{-1}\f^{-1}\h} & = & \begin{pmatrix} -14 & 25 \\ -9 & 16 \end{pmatrix}. \end{array} \]
Since these are in $\mathrm{PSL}_2(\mathbb{R})$, they stabilize the hyperplane $\calh$ with boundary $\mathbb{R}\cup\{\infty\}$.
 
\begin{lemma} \label{M_Sboundary}  The polygon $\mathcal{F}$ of Figure \ref{Sface} is a fundamental domain for the action of $\Lambda \doteq \langle \p_1,\p_2,\p_3 \rangle < \mathrm{PSL}_2(\mathbb{R})$ on $\calh$, and $F^{(0)} = \calh/\Lambda$ is a $4$-punctured sphere.  Also:  \begin{enumerate}
\item $\Lambda = \mathrm{Stab}_{\Gamma_S}(\mathcal{H}) = \mathrm{Stab}_{\Gamma_{T_0}}(\calh)$, 
\item the inclusion $\mathcal{H} \hookrightarrow \mathbb{H}^3$ induces an isometry $\iota_-^{(0)} \co F^{(0)} \rightarrow \bound C(\Gamma_S)$ and an isometry $\iota^{(0)}_+ \co F^{(0)} \to \partial_- C(\Gamma_{T_0})$, and
\item  the triangulation of $\mathcal{F}$ pictured in Figure \ref{Sface} projects to a triangulation $\Delta_F$ of $F^{(0)}$ taken by $\iota^{(0)}_-$ and $\iota^{(0)}_+$, respectively, to $\Delta_S$ and $\Delta^{-}_{T_0}$. 
\end{enumerate}
\end{lemma}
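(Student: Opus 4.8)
### Proof Proposal

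The plan is to establish the three assertions about $F^{(0)}$ by combining the explicit matrix identities given before the lemma with the structural information furnished by Lemma~\ref{combinatorics n cores} and Corollaries~\ref{M_SandG} and~\ref{M_T0andH0}. The unifying observation is that $\calh$ (the hyperplane over $\mathbb{R}\cup\{\infty\}$) is the hyperplane containing the external face $A$ of $\calp_1$ and also the hyperplane containing one of the external (letter-labeled) faces of $\calp_2$, since in both tessellations $\calt_1$ and $\calt_2$ the face with ideal vertices $0,1,\infty$ lies in $\calh$. So the three claims will all be consequences of the final sentence of Lemma~\ref{combinatorics n cores}, once we check that $\Lambda$ is exactly the relevant stabilizer and that $\calf$ is a fundamental domain for its action.

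First I would verify, by direct matrix computation, the displayed identities $\p_1 = \s^{-1} = \f^{-1}$, $\p_2 = \sf{stst}^{-2} = \sf{fg}^{-1}\f^{-1}\h^{-1}\g$, and $\p_3 = (\s^{-1})^{\sf tst} = (\g^{-1})^{\g^{-1}\f^{-1}\h}$; these place $\Lambda = \langle\p_1,\p_2,\p_3\rangle$ simultaneously inside $\Gamma_S$ and $\Gamma_{T_0}$, and each generator visibly lies in $\mathrm{PSL}_2(\mathbb{R})$, hence stabilizes $\calh$. Next, to prove part~(1), I would argue that $\mathrm{Stab}_{\Gamma_S}(\calh)$ is generated by the face-pairing isometries of $\calp_1$ that appear along the edges of the external face $A$: by Lemma~\ref{combinatorics n cores}, $\calh$ covers the component of $\partial C(\Gamma_S)$ containing $p_S(A)$, so $\mathrm{Stab}_{\Gamma_S}(\calh)$ acts on $\calh$ with $A$-side (equivalently, the corresponding piece of $E(\calp_1)\cap\calh$) as a fundamental domain, and the side-pairing transformations of that two-dimensional fundamental domain — read off from how $\s^{\pm1},\t^{\pm1}$ identify the edges of $A$ with edges of its $\Gamma_S$-translates lying in $\calh$ — are precisely $\p_1,\p_2,\p_3$ (and their inverses). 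The same analysis applied to $\calp_2$ and the letter-labeled external face in $\calh$ shows $\mathrm{Stab}_{\Gamma_{T_0}}(\calh) = \langle\p_1,\p_2,\p_3\rangle$ as well. Simultaneously this identifies $\calf$ (which is the union of the relevant $A$-copies in $\calh$, or equivalently $E(\calp_1)\cap\calh$ cut down appropriately) as a fundamental domain for $\Lambda$; that $\calf/\Lambda$ is a $4$-punctured sphere then follows from counting the cusps, i.e. the $\Lambda$-orbits of ideal vertices of $\calf$, which the picture in Figure~\ref{Sface} exhibits as four. Part~(2) is then immediate: the last sentence of Lemma~\ref{combinatorics n cores} gives an isometric embedding $\calh/\mathrm{Stab}_{\Gamma_S}(\calh) \to \partial C(\Gamma_S)$, and since by Corollary~\ref{M_SandG} $\partial C(\Gamma_S)$ is connected (it is $p_S(\partial M_S)$ and $\partial M_S$ is a single $4$-punctured sphere, the quotient of the external faces $X$-... wait, of $A$), this embedding is onto, giving $\iota_-^{(0)}$; likewise $\calh/\mathrm{Stab}_{\Gamma_{T_0}}(\calh) \to \partial_-C(\Gamma_{T_0})$ gives $\iota_+^{(0)}$. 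Part~(3) follows because $p_S$ (resp. $p_{T_0}$) carries the external-face triangulation of $\calp_1$ (resp. $\calp_2$) to $\Delta_S$ (resp. $\Delta^-_{T_0}$) by definition, and the triangulation of $\calf$ drawn in Figure~\ref{Sface} is, by construction, the restriction to $\calh$ of those same external-face triangulations — so it descends to a single triangulation $\Delta_F$ of $F^{(0)}$ matching both.

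The main obstacle I anticipate is part~(1): proving that $\mathrm{Stab}_{\Gamma_S}(\calh)$ is no larger than $\Lambda$. The containment $\Lambda \subseteq \mathrm{Stab}_{\Gamma_S}(\calh)\cap\mathrm{Stab}_{\Gamma_{T_0}}(\calh)$ is the easy matrix bookkeeping above, but the reverse requires knowing that the tiles of $E(\calp_1)$ meeting $\calh$ are exactly the $\Lambda$-translates of the $A$-side tile, with no "hidden" element of $\Gamma_S$ preserving $\calh$ but permuting these tiles non-trivially. The clean way to see this is to use that $E(\calp_1)$ is a fundamental domain for $\Gamma_S$ (Poincaré, as in the proof of Lemma~\ref{combinatorics n cores}): an element $\gamma\in\Gamma_S$ fixing $\calh$ setwise must send the $A$-face of $E(\calp_1)$ that lies in $\calh$ to another face of $\gamma E(\calp_1)$ lying in $\calh$, and tracking this through the face-pairing pattern — i.e. running the side-pairing/edge-cycle bookkeeping for the induced tessellation of $\calh$ by copies of $A$ — forces $\gamma$ into the group generated by the edge-pairings, namely $\Lambda$. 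One must also check the convex hull point: $\calh$ genuinely meets $\mathrm{Hull}(\Gamma_S)$ (it does, since its ideal endpoints $0,1,\infty$ are parabolic fixed points of $\Gamma_S$), so that $\calh/\Lambda$ really is a boundary component of $C(\Gamma_S)$ and not something in the complement. Modulo this careful-but-routine two-dimensional Poincaré-theorem argument in $\calh$, everything else is either a finite matrix check or a direct appeal to Lemma~\ref{combinatorics n cores}.
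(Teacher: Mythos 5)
Your proposal is essentially correct, and much of it runs parallel to the paper's proof: the matrix identities placing $\p_1,\p_2,\p_3$ in $\Gamma_S\cap\Gamma_{T_0}\cap\mathrm{PSL}_2(\mathbb{R})$, the two-dimensional Poincar\'e theorem to see that $\calf$ with the pictured side pairings is a fundamental domain for $\Lambda$ and that $\calh/\Lambda$ is a four-punctured sphere, connectivity of $\partial C(\Gamma_S)$ from the combinatorics of $M_S$, and the final clause of Lemma \ref{combinatorics n cores}. Where you genuinely diverge is at the crux, the reverse inclusion $\mathrm{Stab}_{\Gamma_S}(\calh)\subseteq\Lambda$: you propose a tile-tracking, edge-cycle argument in the tessellation of $\calh$ by translates of external faces, whereas the paper sidesteps all such bookkeeping with a softer argument --- Lemma \ref{combinatorics n cores} identifies $\calh/\mathrm{Stab}_{\Gamma_S}(\calh)$ isometrically with $\partial C(\Gamma_S)$, which the octahedron combinatorics show is a four-punctured sphere; since $\calh/\Lambda$ is also a four-punctured sphere and covers it, the covering has degree one and $\Lambda=\mathrm{Stab}_{\Gamma_S}(\calh)$ (same for $\Gamma_{T_0}$). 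The paper's route buys you parts (1) and (2) simultaneously with no ``hidden element'' analysis; your route can be completed, but the step you label careful-but-routine does need the supporting facts spelled out (that the translates of external faces lying in $\calh$ tile it with well-defined face type, interior-disjointness of $\Gamma_S$-translates of $\calp_1$, and torsion-freeness of the free group $\Gamma_S$ to rule out a nontrivial element preserving a single ideal triangle of $\calh$). One further soft spot: your part (3) asserts that the pictured triangulation of $\calf$ is ``by construction'' the common restriction to $\calh$ of the external-face triangulations coming from $\Gamma_S$ and from $\Gamma_{T_0}$; this coincidence is exactly what the paper verifies concretely, using that $\Gamma_S$-translates of $\calp_1$ lie in Hatcher's tessellation $\mathcal{T}_1$, $\Gamma_{T_0}$-translates of $\calp_2$ lie in $\mathcal{T}_2$, and both meet $\calh$ in the Farey tessellation, which pins down the identities $A=C$, $\t(B')=\g^{-1}(E)$, $\t\s(B)=(\f\g)^{-1}(D)$, and $\t\s\t(A')=\g^{-1}\f^{-1}\h(F)$; so treat that as a computation to be done rather than a formality.
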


\begin{figure}[ht]
\begin{center}
\input{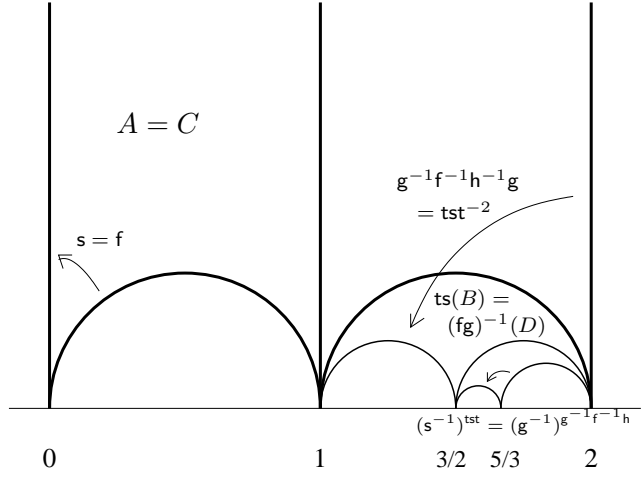}
\end{center}
\caption{A triangulated fundamental domain $\mathcal{F}$ for the action of $\Lambda$ on $\calh$, and side pairings.}
\label{Sface}
\end{figure}

\begin{proof}  With $\calp_1$ and $\calp_2$ embedded as prescribed in Figures \ref{fig:idealoct} and \ref{fig:cuboct}, respectively, their faces $A$ and $C$ coincide and lie in $\calh$ as described in Figure \ref{Sface}.  As noted in the proofs of Corollaries \ref{M_SandG} and \ref{M_T0andH0}, $\Gamma_S$-translates of $\calp_1$ lie in the tesselation $\mathcal{T}_1$ described in \cite{Hatcher}, and $\Gamma_{T_0}$-translates of $\calp_2$ lie in $\mathcal{T}_2$.  The Farey tesselation is $\mathcal{T}_1 \cap\calh = \mathcal{T}_2\cap\calh$, so this contains any $\Gamma_S$-translate of any face of $\calp_1$ and any $\Gamma_{T_0}$-translate of any face of $\calp_2$.  

Let $A'$ be the external face of $\calp_1$ which shares the ideal vertex $0$ with $A$, and let $B'$ be the external face which shares the vertex $\infty$ with $A$ and $1+i$ with $B$.  Since $\t$ takes $X_3$ to $X_4$, with the edge $X_3 \cap B'$ taken to $X_4 \cap A$, it follows that $\t(B')$ lies in $\calh$, abutting $A$ along the geodesic between $1$ and $\infty$.  Since $\t(B')$ is a Farey triangle it has its other ideal vertex at $2$.  It follows similarly that $\g^{-1}(E) = \t(B')$ (where $E$ is as labeled in Figure \ref{fig:cuboct}), that $\t\s(B) = (\f\g)^{-1}(D)$, as indicated in Figure \ref{Sface}, and that $\t\s\t(A') = \g^{-1}\f^{-1}\h(F)$ has vertices at $3/2$, $2$, and $5/3$.

Combinatorial considerations or direct calculation establish that $\s = \f$, $\t\s\t^{-2} = \g^{-1}\f^{-1}\h^{-1}\g$, and $(\s^{-1})^{\t\s\t} = (\g^{-1})^{\g^{-1}\f^{-1}\h}$, and that each stabilizes $\calh$ and pairs edges of $\calf$ as indicated in Figure \ref{Sface}.  By inspection the quotient is a $4$-punctured sphere $F^{(0)}$.  By the polyhedron theorem $\calf$ is a fundamental domain for the group that they generate, which acts on $\calh$ with quotient $F^{(0)}$.  Since $\p_1$, $\p_2$, and $\p_3$ are easily obtained from the edge pairings above and vice-versa, it follows that 
$$ \Lambda = \left\langle \s, \t\s\t^{-2}, (\s^{-1})^{\t\s\t} \right\rangle = \left\langle \f,\g^{-1}\f^{-1}\h^{-1}\g,(\g^{-1})^{\g^{-1}\f^{-1}\h} \right\rangle. $$
Therefore $\calf$ is a fundamental domain for $\Lambda$, and $\calh/\Lambda=F^{(0)}$.  

It is easy to see from its combinatorics that $\partial M_S$ is a four-punctured sphere, as is the component of $\partial M_{T_0}$ containing $C$.  Thus by Corollaries \ref{M_SandG} and \ref{M_T0andH0} the same holds true for $\partial C(\Gamma_S)$ and the component of $\partial C(\Gamma_{T_0})$ containing the image of $C$.   Lemma \ref{combinatorics n cores} implies that $\partial C(\Gamma_S)$ is the image of $\calh/\mathrm{Stab}_{\Gamma_S}(\calh)$ under the inclusion-induced map.  Since it is clear from the above that $\Lambda<\mathrm{Stab}_{\Gamma_S}(\calh)$, and since $\calh/\Lambda$ is itself a four-punctured sphere, the conclusions of assertions (1) and (2) above follow for $\Gamma_S$.  A similar argument implies the same for $\Gamma_{T_0}$.  The conclusions of (3) follow from the description above of the triangulation of $\calf$.\end{proof}

\noindent {\it Remarks.} \smallskip \\
1.  The parabolic elements of $\Lambda$ fixing the ideal points $0, \, \infty,$ and $5/3$ of $\mathcal{H}$ are $\p_1, \, \p_2$, and $\p_3$.  The final conjugacy class of parabolic elements in $\Lambda$ is represented by: 
$$ \p_4 \ = \  \p_1 \p_2 \p_3^{-1} \ = \ ({\sf sts t}^{-2})^{{\sf tst}^{-1}} = \begin{pmatrix} 29 & -45 \\ 20 & -31 \end{pmatrix}. $$
Evidently $\p_1$ and $\p_3$ are conjugate in $\Gamma_S$, as are $\p_2$ and $\p_4$.  The combinatorial considerations of Lemma \ref{lem:M_S_moduli} will show that $C(\Gamma_S)$ has exactly two  cusps, each of rank one, so every parabolic element of $\Gamma_S$ is conjugate to one of $\p_1$ or $\p_2$.  \smallskip \\
2.  There exists $\k \in \mathrm{PSL}_2(\mathbb{C})$, with order $2$, which normalizes $\Gamma_{T_0}$:  \begin{align} \label{sigma}
  \k \ = \ \begin{pmatrix} i & i-\sqrt{2} \\ 0 & -i \end{pmatrix}.  \end{align}
The action of $\k$ on the generators $\f,$ $\g,$ and $\h$ is given by 
\[ \f^\k = \g^{\f\g^{-1}}, \qquad \g^\k = \f^{{\sf fg}^{-1}}, \qquad \text{and} \qquad \h^\k = (\h^{-1})^{{\sf fg}^{-1}}. \]
   \smallskip

If $\Gamma$ is a Kleinian group and ${\sf u} \in \text{Isom}(\mathbb{H}^3)$, we write $\phi_{\sf u} \co C(\Gamma) \rightarrow C(\Gamma^{\sf u})$ for the restriction to $C(\Gamma)$ of the isometry $\mathbb{H}^3/\Gamma\to\mathbb{H}^3/\Gamma^{\sf u}$ induced by ${\sf u}$.  %Note that $\phi_{\sf u} \phi_{\sf v} = \phi_{{\sf uv}} \co C(\Gamma) \rightarrow C(\Gamma^{{\sf uv}})$.
Since ${\sf k}$ normalizes $\Gamma_{T_0}$, $\phi_\k \co C(\Gamma_{T_0}) \to C(\Gamma_{T_0})$ is an orientation-preserving involution.  The elements $\p_i^\k,$ $i \in \{1,2,3\},$ all preserve the geodesic hyperplane $\k (\mathcal{H})$, which lies over the line $\mathbb{R} - i\sqrt{2}$ and contains an external face of $\mathcal{P}_2$ projecting to $\partial_+ C(\Gamma_{T_0})$.  The lemma below follows and, together with Lemma \ref{M_Sboundary}, completely describes $\partial C(\Gamma_{T_0})$.

\begin{lemma}\label{M_T0boundary}  $\Lambda^{\k} = \mathrm{Stab}_{\Gamma_T}(\k(\calh))$, and the inclusion $\k(\calh) \to \mathbb{H}^3$ induces an isometry from $F'\doteq \k(\calh)/\Lambda^{\k}$ to $\partial_+ C(\Gamma_{T_0})$.  \end{lemma}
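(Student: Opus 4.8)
The plan is to derive both assertions from the normalization property of $\k$ recorded in Remark~2 following Lemma~\ref{M_Sboundary}, together with Lemmas~\ref{combinatorics n cores} and~\ref{M_Sboundary} (here $\Gamma_T$ should be read as $\Gamma_{T_0}$, the only tangle group available at this stage). The equality $\Lambda^\k = \mathrm{Stab}_{\Gamma_{T_0}}(\k(\calh))$ is then formal: since $\k^{-1}\Gamma_{T_0}\k = \Gamma_{T_0}$, an element $\gamma \in \mathrm{PSL}_2(\mathbb{C})$ lies in $\Gamma_{T_0}$ if and only if $\k^{-1}\gamma\k$ does, while $\gamma$ fixes $\k(\calh)$ if and only if $\k^{-1}\gamma\k$ fixes $\calh$; hence $\mathrm{Stab}_{\Gamma_{T_0}}(\k(\calh)) = \k\,\mathrm{Stab}_{\Gamma_{T_0}}(\calh)\,\k^{-1}$, which equals $\k\Lambda\k^{-1} = \Lambda^\k$ by Lemma~\ref{M_Sboundary}(1).

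For the isometry I would argue as follows. By Lemma~\ref{combinatorics n cores}, applied to $\calp_2$ as in Corollary~\ref{M_T0andH0}, the preimage of $\partial C(\Gamma_{T_0})$ in $\mathbb{H}^3$ under the universal covering is a disjoint union of geodesic planes, one of which is the hyperplane $\calh$ containing the external face $C$ of $\calp_2$. Since $\k$ normalizes $\Gamma_{T_0}$ it descends to the isometry $\phi_\k$ of $C(\Gamma_{T_0})$, which preserves $\partial C(\Gamma_{T_0})$; lifting, $\k$ permutes the geodesic planes in that preimage, so $\k(\calh)$ is again one of them. Next I would record the observation from the discussion preceding the lemma --- to be checked directly from the matrix $\k$ of \eqref{sigma} and the embedding of $\calp_2$ in Figure~\ref{middle} --- that $\k(\calh)$, the plane lying over the line $\mathbb{R} - i\sqrt2$, contains an external face of $\calp_2$ whose image under $p_{T_0}$ lies in $\partial_+ C(\Gamma_{T_0})$. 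Granting this, the final assertion of Lemma~\ref{combinatorics n cores}, applied to that face and the hyperplane $\k(\calh)$ it spans, yields that $\k(\calh) \hookrightarrow \mathbb{H}^3$ induces an isometry from $\k(\calh)/\mathrm{Stab}_{\Gamma_{T_0}}(\k(\calh))$ onto the component of $\partial C(\Gamma_{T_0})$ containing that face's image, namely $\partial_+ C(\Gamma_{T_0})$. By the first part the source of this isometry is $F' = \k(\calh)/\Lambda^\k$, which completes the argument.

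The step I expect to be the real obstacle is the geometric bookkeeping behind that observation --- equivalently, confirming that $\phi_\k$ interchanges the two boundary components of $C(\Gamma_{T_0})$ rather than preserving each. This is a finite but fiddly check: one locates the ideal vertices of $\k(\calh)$, which lie in $(\mathbb{R} - i\sqrt2) \cup \{\infty\}$, among the ideal vertices of $\calp_2$ in Figure~\ref{middle}, and then determines which of the two families of triangular faces of $\calp_2$ --- those in $\partial_+ M_{T_0}$, those in $\partial_- M_{T_0}$ --- the face they span belongs to. Since $\calh$ projects onto $\partial_- C(\Gamma_{T_0})$ by Lemma~\ref{M_Sboundary} and $\partial_\pm C(\Gamma_{T_0})$ are the only two components of $\partial C(\Gamma_{T_0})$, it suffices for this purpose to know that $\k(\calh)$ is not a $\Gamma_{T_0}$-translate of $\calh$; this too can be read off from the explicit data in Figure~\ref{middle} and \eqref{sigma}.
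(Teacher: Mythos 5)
Your proposal is correct and follows essentially the same route as the paper, which offers as justification only the sentence preceding the lemma: $\k$ normalizes $\Gamma_{T_0}$ (so the stabilizer identity is the conjugation of Lemma \ref{M_Sboundary}(1)), and $\k(\calh)$ is the plane over $\mathbb{R}-i\sqrt{2}$ containing an external face of $\calp_2$ that projects to $\partial_+ C(\Gamma_{T_0})$, after which the final claim of Lemma \ref{combinatorics n cores} gives the isometry. Your reading of $\Gamma_T$ as $\Gamma_{T_0}$ matches how the paper later cites the lemma, and the ``fiddly check'' you isolate is exactly the unproved assertion the authors make before the statement.
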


It is easy to see that $\p_1^{\k}$ is conjugate in $\Gamma_{T_0}$ to $\p_3$ and that $\p_2^{\k} = \p_2^{-1}$.  The combinatorial considerations of Lemma \ref{lem:M_T_moduli} will imply that $M_{T_0}$ has four cusps.  Hence by Lemma \ref{M_T0boundary}, each of the cusps of $C(\Gamma_{T_0})$ joins $\partial_- C(\Gamma_{T_0})$ to $\partial_+ C(\Gamma_{T_0})$, and each parabolic in $\Gamma_{T_0}$ is conjugate to exactly one $\p_i$, $i \in \{1,2,3,4\}$.

%%%%%%%%%%%%%%%

Our second main tool in this section is Lemma \ref{convexcore} below.  We refer to \cite[Definition 4.8]{Mo} for the definition of a pared manifold.

\begin{lemma}  Let $(M,P)$ be a pared manifold, and suppose that $\rho: \pi_1 M \rightarrow \Gamma < \mathrm{PSL}_2 (\mathbb{C})$ is a faithful representation onto a non-Fuchsian geometrically finite Kleinian group $\Gamma$, where $C(\Gamma)$ has totally geodesic boundary.  If $\rho$ determines a one--to--one correspondence between conjugacy classes of subgroups of $\pi_1(M)$ corresponding to components of $P$ and conjugacy classes of maximal parabolic subgroups of $\Gamma$, then $\rho$ is induced by a homeomorphism of $M - P$ to $C(\Gamma)$.  \label{convexcore} \end{lemma}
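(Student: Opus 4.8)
The plan is to identify both $M - P$ and $C(\Gamma)$ with the common space obtained from a cleverly chosen compact core, and then combine standard results — Waldhausen-type rigidity for Haken manifolds together with the Marden/Scott theory of ends of geometrically finite Kleinian groups — to upgrade the homotopy equivalence $\rho$ into to a homeomorphism. First I would recall that, since $\Gamma$ is geometrically finite and non-Fuchsian with totally geodesic convex core boundary, $\mathbb{H}^3/\Gamma$ is homeomorphic to the interior of a compact manifold $N$, and $C(\Gamma)$ is homeomorphic to $N$ minus a collection of disjoint annuli and tori in $\partial N$ — precisely, $C(\Gamma) \cong N - P_\Gamma$, where $P_\Gamma \subset \partial N$ is a union of annuli and tori whose components are in one-to-one correspondence with conjugacy classes of maximal parabolic subgroups of $\Gamma$ (rank-one parabolics give annuli, rank-two give tori). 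Moreover $(N, P_\Gamma)$ is a pared manifold: this is the content of the relation between geometrically finite groups and pared manifolds (see \cite{Mo}, \cite{CEG}). The remaining boundary $\partial N - P_\Gamma$ is exactly $\partial C(\Gamma)$, which by hypothesis is totally geodesic, hence incompressible, so $(N,P_\Gamma)$ is in fact a pared manifold with incompressible (non-paring) boundary.

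Next I would show $(M,P)$ and $(N, P_\Gamma)$ are homeomorphic as pared manifolds. The map $\rho\co \pi_1 M \to \Gamma = \pi_1 N$ is an isomorphism, and by hypothesis it carries the peripheral structure of $(M,P)$ — the conjugacy classes of the $\pi_1$ of components of $P$ — bijectively onto the peripheral structure of $(N,P_\Gamma)$, i.e.\ the conjugacy classes of maximal parabolic subgroups. So $\rho$ is an isomorphism of pared-manifold fundamental-group-with-peripheral-structure pairs. One then invokes the relevant rigidity theorem: a pared manifold with incompressible boundary is determined up to homeomorphism by its fundamental group together with its peripheral structure (Johannson/Waldhausen; see the discussion in \cite[\S5]{CEG} or \cite{Mo}). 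To apply this cleanly I would first note that $\partial C(\Gamma)$ being totally geodesic forces it to be incompressible in $C(\Gamma) \cong N - P_\Gamma$, hence $\partial N - P_\Gamma$ is incompressible in $N$; and $M - P$ is homotopy equivalent (via $\rho$) to an aspherical space, so standard arguments show $M - P$ is also irreducible with incompressible boundary. Since $\Gamma$ is free (as in the applications) or at any rate torsion-free and non-elementary, neither manifold is exceptional, and the rigidity theorem produces a homeomorphism $h\co (M,P) \to (N, P_\Gamma)$ realizing $\rho$ on $\pi_1$.

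Finally, composing $h|_{M-P}\co M - P \to N - P_\Gamma$ with the homeomorphism $N - P_\Gamma \to C(\Gamma)$ from the first step gives a homeomorphism $M - P \to C(\Gamma)$ inducing $\rho$, which is the claim. The main obstacle I anticipate is the bookkeeping needed to verify the hypotheses of the topological rigidity theorem — in particular confirming that $(M - P)$ is irreducible and has incompressible boundary, and matching up the peripheral structures so that annular versus toral paring components correspond correctly to rank-one versus rank-two parabolics; once incompressibility of $\partial C(\Gamma)$ is in hand (which is immediate from total geodesicity) this is largely formal, but it does require care because $\rho$ is only assumed to be a correspondence of conjugacy classes of parabolic subgroups, not given by a map a priori. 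A secondary point to be careful about is that $\rho$ being onto a \emph{geometrically finite} group is what guarantees $\mathbb{H}^3/\Gamma$ is tame, so that the core $N$ and the identification $C(\Gamma) \cong N - P_\Gamma$ exist in the first place; I would cite \cite{Mo} and \cite{CEG} for this.
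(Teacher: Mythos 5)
There is a genuine gap, and it sits exactly where the real work of the lemma lies. Your argument hinges on the assertion that ``a pared manifold with incompressible boundary is determined up to homeomorphism by its fundamental group together with its peripheral structure,'' where by peripheral structure you mean only the conjugacy classes carried by the pared locus (the parabolic classes). No such theorem holds: a pared homotopy equivalence between pared manifolds with incompressible boundary need not be homotopic to a homeomorphism. Indeed, Canary--McCullough (Example 1.4.5 of \cite{CM}) exhibit homotopy equivalent, non-Fuchsian, geometrically finite manifolds with incompressible convex core boundary that are \emph{not} homeomorphic; this is precisely the phenomenon the lemma must rule out. Note also that your hypothesis only matches up the annulus/torus components of $P$ with maximal parabolic subgroups -- nothing matches the higher-genus components of $\partial M - P$ with the components of $\partial C(\Gamma)$ -- so even a Waldhausen-type statement requiring the full peripheral structure of the boundary could not be applied as you invoke it.

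What is missing is the use of the hypotheses ``non-Fuchsian'' and ``totally geodesic convex core boundary'' beyond mere incompressibility: they force the pared manifold $(N,P_\Gamma)$ obtained by truncating $C(\Gamma)$ to be \emph{acylindrical}. Concretely, an essential annulus properly embedded in $N-P_\Gamma$ would double to an essential torus in the (hyperbolic) double of $C(\Gamma)$ across its totally geodesic boundary, and an accidental parabolic is impossible because every essential non-peripheral curve on a totally geodesic boundary component is homotopic to a closed geodesic of positive length. From this one shows the characteristic submanifold of $(N,P_\Gamma)$ consists only of regular neighborhoods of the components of $P_\Gamma$ (no $I$-bundle or nontrivial Seifert-fibered pieces), transfers this structure to $(M,P)$ via the homotopy equivalence, and then applies Johannson's Classification Theorem, which deforms a pared homotopy equivalence to a homeomorphism off the characteristic submanifolds -- hence to a homeomorphism $M-P \to C(\Gamma)$. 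Your first paragraph (truncating the cusps, pairing annuli/tori with rank-one/rank-two parabolic classes) is fine and matches the setup, but without the acylindricity step the rigidity you cite is simply unavailable.
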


This is well known to experts in Kleinian groups, but we do not know of a reference for a written proof.  It seems worth writing down as it may fail if $C(\Gamma)$ does not have totally geodesic boundary (see \cite{CM} for a thorough exploration of this phenomenon).  The proof follows easily from results in \cite{CM} for example, but requires introduction of the characteristic submanifold machinery.  Since this falls outside the scope of the rest of the paper, we defer the proof to Appendix \ref{appendix:convexcore}.

Let $(B^3,S)$ be the tangle pictured on the left in Figure \ref{SandT_0}.  Take a base point for $\pi_1(B^3-S)$ on $\partial (B^3-S)$ high above the projection plane, and let its Wirtinger generators correspond in the usual way to labeled arcs of the diagram. 

\begin{prop} \label{Smap}  There is a homeomorphism $f_S \co B^3 - S \to C(\Gamma_S)$, such that  \[f_{S\ast} \co \pi_1(B^3-S) \rightarrow \Gamma_S\] is given by $f_{S\ast}(a) = \p_1^{-1}$, $f_{S\ast}(e) = \p_2$, and $f_{S\ast}(v) = \p_3^{-1}$.  \label{Srep} \end{prop}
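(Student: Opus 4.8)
The plan is to recognize $B^3-S$ as the manifold $M-P$ underlying an explicit pared manifold $(M,P)$, to build a faithful representation $\rho\co\pi_1(B^3-S)\to\Gamma_S$ realizing the prescribed assignment on Wirtinger generators, and then to quote Lemma~\ref{convexcore}. Concretely, let $M$ be the exterior of $S$ in $B^3$ --- remove an open regular neighbourhood of $S$ --- and let $P\subset\partial M$ be the union of the two meridional annuli $\partial M\cap N(S)$, one for each strand of $S$. Then $M-P$ is homeomorphic to $B^3-S$, with $\partial M-P$ the four-punctured sphere $\partial B^3-S$. Before anything else one must verify that $(M,P)$ is a pared manifold: $M$ is irreducible and $\partial M-P$ incompressible because the tangle $S$ is prime, which can be read from Figure~\ref{SandT_0}, and one must rule out essential annuli and tori meeting $P$. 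Ruling out essential annuli is the genuinely delicate step, and I expect it to be the main obstacle of the proof.

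Next I would write down the Wirtinger presentation of $\pi_1(B^3-S)$ from the labelled diagram of Figure~\ref{SandT_0}, with the five generators $a,e,u,v,w$ and one relator per crossing, and define $\rho$ on generators by $\rho(a)=\p_1^{-1}$, $\rho(e)=\p_2$, $\rho(v)=\p_3^{-1}$, the images of $u$ and $w$ being forced (as conjugates of other generators) by the crossing relations in which they are the overstrand. Checking that $\rho$ is a well-defined homomorphism then amounts to verifying that each Wirtinger relator maps to the identity of $\mathrm{PSL}_2(\C)$; this is a direct matrix computation using the explicit forms of $\p_1,\p_2,\p_3$ and of $\s,\t$ recorded earlier.

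To apply Lemma~\ref{convexcore} I would then establish faithfulness, surjectivity onto $\Gamma_S$, and the peripheral correspondence. A Tietze reduction of the Wirtinger presentation shows $\pi_1(B^3-S)$ is free of rank two; since the images of the generators are all words in $\s,\t$, with $\s=\rho(a)^{-1}$ and a conjugate of $\t^{\pm1}$ occurring among them (this is where $\rho(w)$ matters, the images of $a,e,v$ alone generating only the Fuchsian group $\Lambda$ of Lemma~\ref{M_Sboundary}), the image is exactly $\Gamma_S=\langle\s,\t\rangle$, which is itself free of rank two by Corollary~\ref{M_SandG}; a surjection between free groups of the same finite rank is an isomorphism, so $\rho$ is faithful. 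By Corollary~\ref{M_SandG} and Lemma~\ref{combinatorics n cores}, $\Gamma_S$ is geometrically finite and non-Fuchsian with $C(\Gamma_S)$ totally geodesic, the regular ideal octahedron $\calp_1$ being right-angled. Finally, the two components of $P$ carry the meridional conjugacy classes of $\pi_1(B^3-S)$, generated respectively by $a$ and by $e$, which lie on distinct strands of $S$; their images $\p_1^{-1}$ and $\p_2$ are parabolic and generate distinct maximal parabolic subgroups of $\Gamma_S$, since $\Gamma_S$ is free (so every maximal parabolic is infinite cyclic) and inspection of the ideal-vertex identifications of $\calp_1$ (cf.~Lemma~\ref{lem:M_S_moduli}) gives exactly two conjugacy classes of maximal parabolics, represented by $\p_1$ and $\p_2$. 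Hence $\rho$ induces a one-to-one correspondence between conjugacy classes of peripheral subgroups of $(M,P)$ and conjugacy classes of maximal parabolic subgroups of $\Gamma_S$.

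With all hypotheses of Lemma~\ref{convexcore} verified, $\rho$ is induced by a homeomorphism $f_S\co B^3-S\to C(\Gamma_S)$, which by construction has $f_{S\ast}(a)=\p_1^{-1}$, $f_{S\ast}(e)=\p_2$, and $f_{S\ast}(v)=\p_3^{-1}$. The matrix check of the relators and the algebraic identification $\pi_1(B^3-S)\cong\Gamma_S$ make up the bulk of the work, but the conceptually hard point, as noted, is showing that $(M,P)$ is pared --- in particular that it admits no essential annulus.
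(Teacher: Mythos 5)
Your overall skeleton is the paper's: pass to the pared manifold $(B^3-N(S),\partial N(S))$, produce a faithful representation onto $\Gamma_S$ with the stated values on $a,e,v$ and the correct parabolic correspondence, and invoke Lemma \ref{convexcore}. But the way you construct $\rho$ has a genuine gap. The images of $u$ and $w$ are \emph{not} forced by the crossing relations once $\rho(a),\rho(e),\rho(v)$ are assigned: $a$, $e$, $v$ generate only the peripheral four-punctured-sphere subgroup, which (as you yourself note) maps into the proper Fuchsian subgroup $\Lambda$, whereas $a$, $w$, $e$ already generate all of $\pi_1(B^3-S)$. Hence $w$ is not a word in $a,e,v$, and the Wirtinger relations can constrain $\rho(w)$ only up to composition with the one-parameter centralizer of a parabolic; they cannot determine it. A genuine choice must be made here, and this is exactly where the paper's argument differs: it Tietze-reduces the Wirtinger presentation to exhibit $\pi_1(B^3-S)$ as free on $a$ and $b=e^{-1}aw$, defines the map on this free basis by $a\mapsto\s$, $b\mapsto\t$ (so there are no relators to check and bijectivity is immediate, with no appeal to Hopficity), and only then computes that $e\mapsto\p_2$ and $v\mapsto\p_3^{-1}$.

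Your surjectivity step also rests on a false premise: no Wirtinger generator maps to a conjugate of $\t^{\pm1}$. Every Wirtinger generator is a meridian, so its image is parabolic (indeed $\rho(w)=\t\s\t^{-1}$, a conjugate of $\s$), while $\t$ has trace $1+i$ and is loxodromic. Surjectivity is nevertheless true, but it needs an explicit word computation rather than the stated principle; for instance $\rho(w)^{-1}\rho(a)^{-1}\rho(e)=\t\s^{-1}\t^{-1}\cdot\s^{-1}\cdot\s\t\s\t^{-2}=\t^{-1}$ exhibits $\t$ in the image --- and in any case containing $\s$ together with \emph{some} conjugate of $\t^{\pm1}$ would not by itself force the image to be all of $\langle\s,\t\rangle$. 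Finally, the step you single out as the main obstacle --- verifying that $(B^3-N(S),\partial N(S))$ is pared, in particular excluding essential annuli --- is simply asserted in the paper; it does need to hold, but the substantive content of the proof is the algebraic identification above together with the peripheral/parabolic correspondence, which your last paragraph handles in essentially the paper's way.
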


\begin{proof}  

Reducing a standard Wirtinger presentation for $\pi_1(B^3-S)$, we obtain 

$$\left \langle a, w, e \,\Big| \, ew\inv{e}a = awa\inv{w} \right \rangle 
  = \left\langle a, w, e\, \Big|\, w(e^{-1}aw) = (e^{-1}aw) a \right\rangle$$
Thus taking $b = e^{-1}aw$, one finds that $\pi_1(B^3 - S)$ is freely generated by $a$ and $b$.

By Lemma \ref{combinatorics n cores} and Corollary \ref{M_SandG}, $\Gamma_S$ is free on the generators $\s$ and $\t$.  Hence, the map $f_{S\ast} \co \pi_1(B^3-S) \longrightarrow \Gamma_S$ given by $a \mapsto \s$ and $b \mapsto \t$ is an isomorphism.  Notice that the subgroup of $\pi_1(B^3-S)$ corresponding to the 4-punctured sphere $\bound B^3-\bound S$ is freely generated by $a,$ $v,$ and $e.$  It is easily checked that

$$\begin{array}{ccccc}

f_{S\ast}(v) & = & \begin{pmatrix} 16 & -25 \\ 9 & -14 \end{pmatrix}  & = & \p_3^{-1} \\ \\

&& \mathrm{and} \\ \\

f_{S\ast}(e) & = & \begin{pmatrix} -1 & 5 \\ 0 & -1 \end{pmatrix} & = & \p_2 .

\end{array}$$

$f_{S\ast}$ takes $\pi_1(\partial B^3 - S)$ isomorphically to $\Lambda$, since $a$, $v$, and $e$ generate $\pi_1(\partial B^3 - S)$ and their images in $\Gamma_S$ generate $\Lambda$.  Since any meridian of $S$ is conjugate in $\pi_1(B^3-S)$ to either $a$ or $e$, and these are taken to $\p_1$ and $\p_2$ respectively, meridians are taken to parabolic elements of $\Gamma_S$. 

Now let $N(S)$ be a small open tubular neighborhood of $S$ in $B^3$.  Then $B^3-N(S)$ is a compact manifold with genus two boundary, and the pair $(B^3 - N(S),\partial N(S))$ is a pared manifold.  The proposition follows from Lemma \ref{convexcore}, after noting that $(B^3-N(S))-\partial N(S)$ is homeomorphic to $B^3-S$.  \end{proof}

Let $(S^2\times I,T_0)$ be the tangle pictured on the right side of Figure \ref{SandT_0}, where $I$ is oriented so that $\partial_- T_0 \doteq T_0 \cap S^2 \times \{0\}$ contains the endpoints labeled $a$, $u$, and $v$.  Take a base point for $\pi_1 (S^2 \times I - T_0)$ on $S^2 \times \{0\}$ high above the projection plane and let Wirtinger generators correspond to the labeled arcs of Figure \ref{SandT_0}.

The proposition below is the analog for $T_0$ of Proposition \ref{Smap}.

\begin{prop} \label{T0map}    There is a homeomorphism $f_{T_0} \co S^2 \times I - T_0 \longrightarrow C(\Gamma_{T_0})$ such that
$$  f_{T_0 \ast} \co \pi_1 (S^2 \times I - T_0) \longrightarrow \Gamma_{T_0}  $$
is given by $f_{T_0 \ast}(a)=\p_1^{-1}$, $f_{T_0 \ast}(e)=\p_2$, and $f_{T_0 \ast}(v)=\p_3^{-1}$. \end{prop}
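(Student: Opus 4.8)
The plan is to follow essentially the same template as the proof of Proposition \ref{Smap}, replacing $B^3-S$ by $S^2\times I-T_0$ throughout and invoking the machinery of Lemma \ref{combinatorics n cores}, Corollary \ref{M_T0andH0}, and Lemma \ref{convexcore}. First I would write down a Wirtinger presentation for $\pi_1(S^2\times I-T_0)$ from the right-hand diagram in Figure \ref{SandT_0}, using the labeled arcs $a,u,v,y,z,q,t,e$ as generators. After performing Tietze moves to eliminate redundant generators (the relations at the crossings let one solve for $u,y,z,q,t$ in terms of a smaller set), I expect the group to be free of rank three; the natural guess, matching the Wirtinger count, is that it is freely generated by three of the arc-classes, and I would identify an explicit free basis. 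By Lemma \ref{combinatorics n cores} and Corollary \ref{M_T0andH0}, $\Gamma_{T_0}=\langle\f,\g,\h\rangle$ is free of rank three, so any bijection between a free basis of $\pi_1(S^2\times I-T_0)$ and $\{\f,\g,\h\}$ extends to an isomorphism $f_{T_0\ast}$; I would choose the basis so that this isomorphism is the one sending $a\mapsto\p_1^{-1}$, $e\mapsto\p_2$, $v\mapsto\p_3^{-1}$ (the matrix identities $\p_1=\f^{-1}$, $\p_2={\sf fg}^{-1}\f^{-1}\h^{-1}\g$, $\p_3=(\g^{-1})^{\g^{-1}\f^{-1}\h}$ from the displayed array before Lemma \ref{M_Sboundary} make these elements manifestly words in $\f,\g,\h$).

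Next I would verify that $f_{T_0\ast}$ carries the peripheral structure correctly. The boundary $S^2\times I-T_0$ has two four-punctured-sphere components, $\partial_-$ (containing the endpoints $a,u,v$) and $\partial_+$. I would check that the subgroup of $\pi_1(S^2\times I-T_0)$ carried by $\partial_-(S^2\times I-T_0)$ is generated by (conjugates of) $a,v,e$, hence maps onto $\Lambda=\langle\p_1,\p_2,\p_3\rangle=\mathrm{Stab}_{\Gamma_{T_0}}(\calh)$ by Lemma \ref{M_Sboundary}(1); and similarly that the $\partial_+$ subgroup maps onto $\Lambda^\k=\mathrm{Stab}_{\Gamma_{T_0}}(\k(\calh))$ by Lemma \ref{M_T0boundary}. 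Since each meridian of $T_0$ is conjugate in $\pi_1(S^2\times I-T_0)$ to one of the Wirtinger generators, and each of those is sent to a parabolic (every generator lies in one of the two boundary subgroups, which consist of parabolics and their products), $f_{T_0\ast}$ sends meridians to parabolics. Conversely, the remark following Lemma \ref{M_T0boundary} records that $\Gamma_{T_0}$ has exactly four conjugacy classes of maximal parabolic subgroups, represented by $\p_1,\p_2,\p_3,\p_4$, and I would match these against the four meridian classes of $T_0$, using that $\p_4=\p_1\p_2\p_3^{-1}$ to account for the fourth.

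Finally, I would set $N(T_0)$ to be a small open tubular neighborhood of $T_0$ in $S^2\times I$; then $(S^2\times I)-N(T_0)$ is a compact manifold and the pair $\big((S^2\times I)-N(T_0),\partial N(T_0)\big)$ is a pared manifold, with $\big((S^2\times I)-N(T_0)\big)-\partial N(T_0)$ homeomorphic to $S^2\times I-T_0$. Since $f_{T_0\ast}$ is a faithful representation onto the geometrically finite, non-Fuchsian group $\Gamma_{T_0}$, whose convex core has totally geodesic boundary (Corollary \ref{M_T0andH0}), and since the paragraph above establishes the required bijection between conjugacy classes of peripheral subgroups and conjugacy classes of maximal parabolic subgroups, Lemma \ref{convexcore} produces the desired homeomorphism $f_{T_0}\co S^2\times I-T_0\to C(\Gamma_{T_0})$ inducing $f_{T_0\ast}$.

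The main obstacle I anticipate is the bookkeeping in the first paragraph: carrying out the Tietze reduction of the Wirtinger presentation correctly, confirming freeness of rank three, and — most delicately — choosing the free basis so that the induced isomorphism realizes the \emph{specified} assignments $a\mapsto\p_1^{-1}$, $e\mapsto\p_2$, $v\mapsto\p_3^{-1}$ simultaneously, rather than merely \emph{some} isomorphism. This is a finite but fiddly computation in $\mathrm{PSL}_2(\C)$, entirely analogous to the one verifying $f_{S\ast}(v)=\p_3^{-1}$ and $f_{S\ast}(e)=\p_2$ in the proof of Proposition \ref{Smap}, and I would present it as a direct check rather than belaboring the details.
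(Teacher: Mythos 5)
Your overall strategy is exactly the paper's: identify $\pi_1(S^2\times I-T_0)$ as a free group of rank three with an explicit basis of arc classes, define the isomorphism to $\Gamma_{T_0}$ on that basis, verify the stated images of $a$, $e$, $v$ and the peripheral/parabolic bookkeeping, and then invoke Lemma \ref{convexcore} for the pared manifold $\big((S^2\times I)-N(T_0),\partial N(T_0)\big)$. (The paper gets freeness by noting the tangle complement is a genus-3 handlebody rather than by Tietze reduction, and it also records the $\k$-equivariant generating set $\{\f,\ \f\g\f^{-1},\ \p_2\}$ for later use, but these are cosmetic differences.)

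One concrete step in your first paragraph would fail as written: you cannot ``solve for $u,y,z,q,t$'' and be left with $\{a,v,e\}$ as a generating set. Those three classes generate only the subgroup carried by the bottom four-punctured sphere $S^2\times\{0\}-\partial_- T_0$, which maps onto $\Lambda=\mathrm{Stab}_{\Gamma_{T_0}}(\calh)$; this is a \emph{proper} subgroup of $\Gamma_{T_0}$ (were it all of $\Gamma_{T_0}$, the group would be Fuchsian, contradicting that $C(\Gamma_{T_0})$ has two distinct totally geodesic boundary components). Any free basis of arc classes must therefore include a generator from the other end of the tangle; the paper keeps $t$, takes the basis $\{a,e,t\}$, sends it to the free generating set $\{\f,\ \p_2,\ \f\g\f^{-1}\}$, and only then \emph{verifies} (rather than chooses) that $v\mapsto\p_3^{-1}$ and $u=a^{-1}ev\mapsto\p_4$. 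Relatedly, the assignments $a\mapsto\p_1^{-1}$, $e\mapsto\p_2$, $v\mapsto\p_3^{-1}$ cannot by themselves pin down $f_{T_0\ast}$, since $a,e,v$ do not generate; your closing paragraph shows you sense this, but the proof must be organized so that one of the three named images is a computed consequence of the definition on a genuine basis. With that correction the rest of your plan (meridians to parabolics, matching the four meridian classes to $\p_1,\p_2,\p_3,\p_4$, and the appeal to Lemma \ref{convexcore}) goes through just as in the paper.
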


\begin{proof}  $(S^2 \times I - N(T_0))$ may be isotoped in $S^3$ to a standard embedding of a genus-3 handlebody.  Thus $\pi_1(S^2 \times I -T_0)$ is free on three generators.  We claim that the group is generated by $a$, $e$, and $t$.  This follows after noticing that $v=\inv{y}xy$ where $y=\inv{(ta)}a(ta)$ and $x=\inv{(azq)}t(azq)=\inv{(ate)}t(ate)$.   (The relation $zq=te$ used in the last equality comes from the relationship between four peripheral elements in a 4-punctured sphere group.)  So far, we have established that $v, y \in \langle a, e, t \rangle.$  Now using the other punctured sphere relation, we have $u = a^{-1}ev \in \langle a, e, t \rangle.$  Finally, $z=yuy^{-1}$ and $q=z^{-1}te.$  Therefore $a,$ $e,$ and $t$ generate the group as claimed.

By Lemma \ref{combinatorics n cores} and Corollary \ref{M_T0andH0}, $\Gamma_{T_0}$ is freely generated by $\f$, $\g$, and $\h$.  For our purposes, a more convenient free generating set for $\Gamma_{T_0}$ is $\{\f,\, {\sf fgf}^{-1},\, \p_2  \}.$  Note that all of these generators are parabolic and peripheral, and conjugation by $\k$ interchanges the first two and takes the third to its inverse.  The representation of $\pi_1(S^2 \times [0,1/2] - T_0)$ given by \begin{align*}
& a \mapsto \f & & t \mapsto {\sf fgf}^{-1} & & e \mapsto \p_2
\end{align*}
is clearly faithful, and it is easily checked that $v$ maps to $\p_3^{-1}$.  Because $u = a^{-1}ev$ is mapped to $\p_1\p_2\p_3^{-1} = \p_4$, we conclude that meridians are mapped to parabolic elements and that $\pi_1 (S^2 \times \{0\}-\bound_- T_0)$ is taken to $\Lambda$.  The result now follows from Lemma \ref{convexcore} as previously.  \end{proof}

There is a visible involution of $S^2 \times I-T_0$ which is a rotation by $\pi$ around a circle in $S^2 \times \{ 1/2\}$.  This involution exchanges the two boundary components.  With a proper choice of path between our basepoint and its image under this involution, the corresponding action on $\pi_1(S^2 \times I-T_0)$ is given by \begin{align*}
  & a \leftrightarrow t & & e \leftrightarrow e^{-1} \end{align*}
This commutes with the action of the element $\k$ defined in (\ref{sigma}) on $\Gamma_{T_0}$, under the representation $f_{T_0*}$.  Hence this involution is isotopic to the pullback of $\phi_{\k}$ by $f_{T_0}$.

Recall from Lemma \ref{M_Sboundary} that $\Lambda = \mathrm{Stab}_{\Gamma_{T_0}}(\calh)$, and from Lemma \ref{M_T0boundary} that $\Lambda^{\k} = \mathrm{Stab}_{\Gamma_{T_0}}(\k(\calh))$.  By its definition in Proposition \ref{T0map}, it is clear that $f_{T_0*}$ maps $\pi_1 \left(S^2 \times \{0\} - \partial_- T_0\right)$ isomorphically to $\Lambda$.  Since $\calh$ projects to $\partial_- C(\Gamma_{T_0})$, using the involution equivariance of $f_{T_0}$ we obtain the corollary below.

\begin{cor}\label{T0boundary}  Let $\partial_+ T_0 = T_0 \cap S^2 \times \{1\}$.  Then $f_{T_0}(S^2 \times \{0\} - \partial_- T_0) = \partial_- C(\Gamma_{T_0})$, and $f_{T_0}(S^2 \times \{1\} - \partial_+ T_0) = \partial_+ C(\Gamma_{T_0})$.  \end{cor}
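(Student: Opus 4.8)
The plan is to bootstrap the corollary from Proposition \ref{T0map} together with the involution equivariance of $f_{T_0}$ just established. Proposition \ref{T0map} already identifies $f_{T_0*}$ on $\pi_1(S^2\times\{0\}-\partial_- T_0)$: the Wirtinger generators $a$, $e$, $v$ of this subgroup go to $\p_1^{-1}$, $\p_2$, $\p_3^{-1}$, which by Lemma \ref{M_Sboundary}(1) generate $\Lambda = \mathrm{Stab}_{\Gamma_{T_0}}(\calh)$. So $f_{T_0*}$ carries $\pi_1(S^2\times\{0\}-\partial_- T_0)$ isomorphically onto $\Lambda$. Since the four-punctured sphere $S^2\times\{0\}-\partial_- T_0$ is an incompressible boundary component of $S^2\times I - T_0$, its image under the boundary homeomorphism $f_{T_0}$ is a boundary component of $C(\Gamma_{T_0})$ whose fundamental group is (a conjugate of) $\Lambda$; by Lemma \ref{M_Sboundary}(2) and the description of $\partial C(\Gamma_{T_0})$ following Lemma \ref{M_T0boundary}, the only such component is $\partial_- C(\Gamma_{T_0})$. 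This gives the first equality, $f_{T_0}(S^2\times\{0\}-\partial_- T_0) = \partial_- C(\Gamma_{T_0})$.

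For the second equality I would apply the visible involution $\tau$ of $S^2\times I - T_0$ described above, which exchanges the two boundary components and in particular carries $S^2\times\{0\}-\partial_- T_0$ to $S^2\times\{1\}-\partial_+ T_0$. By the preceding discussion, $\tau$ is properly isotopic to the pullback of $\phi_{\k}$ under $f_{T_0}$; that is, $f_{T_0}\circ\tau$ is properly isotopic to $\phi_{\k}\circ f_{T_0}$. Hence
\[
  f_{T_0}(S^2\times\{1\}-\partial_+ T_0) \;=\; f_{T_0}(\tau(S^2\times\{0\}-\partial_- T_0)) \;=\; \phi_{\k}(\partial_- C(\Gamma_{T_0})),
\]
where the last equality uses that the proper isotopy class of $f_{T_0}\circ\tau$ determines the image boundary component and that $\phi_{\k}$ is an isometry of $C(\Gamma_{T_0})$. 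It then remains to see that $\phi_{\k}$ swaps the two boundary components of $C(\Gamma_{T_0})$. This follows from Lemma \ref{M_T0boundary}: $\k(\calh)$ lies over the line $\mathbb{R}-i\sqrt{2}$ and projects to $\partial_+ C(\Gamma_{T_0})$, so $\phi_{\k}$ carries the image of $\calh$ (namely $\partial_- C(\Gamma_{T_0})$) to the image of $\k(\calh)$ (namely $\partial_+ C(\Gamma_{T_0})$). Therefore $\phi_{\k}(\partial_- C(\Gamma_{T_0})) = \partial_+ C(\Gamma_{T_0})$, and the second equality follows.

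The main obstacle is purely bookkeeping: making precise the claim that a boundary component of $S^2\times I - T_0$ is carried by the homeomorphism $f_{T_0}$ to the boundary component of $C(\Gamma_{T_0})$ with the matching peripheral subgroup, and that the proper isotopy relating $\tau$ to the pullback of $\phi_{\k}$ respects this correspondence. Both points are standard once one observes that $f_{T_0}$ restricts to a homeomorphism $\partial(S^2\times I - N(T_0))\to\partial C(\Gamma_{T_0})$ (from the proof of Proposition \ref{T0map} via Lemma \ref{convexcore}) and that the totally geodesic boundary components of $C(\Gamma_{T_0})$ are distinguished among themselves by their peripheral structure, as recorded after Lemma \ref{M_T0boundary}. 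No genuinely new computation is needed beyond what Propositions \ref{T0map}, \ref{Smap} and Lemmas \ref{M_Sboundary}, \ref{M_T0boundary} already supply.
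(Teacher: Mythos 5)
Your proof is correct and follows essentially the same route as the paper: it pins down $f_{T_0}(S^2\times\{0\}-\partial_- T_0)=\partial_- C(\Gamma_{T_0})$ by noting that $f_{T_0*}$ carries the peripheral subgroup onto $\Lambda=\mathrm{Stab}_{\Gamma_{T_0}}(\calh)$ and that $\calh$ projects to $\partial_- C(\Gamma_{T_0})$, then transfers this to the other boundary component via the visible involution being isotopic to the pullback of $\phi_{\k}$, with $\phi_{\k}$ exchanging $\partial_\pm C(\Gamma_{T_0})$ by Lemma \ref{M_T0boundary}. This is precisely the argument in the paragraph preceding the corollary, just written out in more detail.
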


%%%%%%%%%%%%%%%%%%%%%%%%%%
\section{Combination}  \label{sec:combo-nation!}
%%%%%%%%%%%%%%%%%%%%%%%%%%

In this section, we will describe how to join copies of the tangles $S$ and $T_0$ to construct links in $S^3$ whose complements are uniformized by combinations of $\Gamma_S$ and $\Gamma_{T_0}$.  The main tool in this section is a corollary of Maskit's combination theorem for free products with amalgamation \cite{Mask}.  Denote the convex hull of the limit set for a Kleinian group $\Gamma$ by $\text{Hull}(\Gamma)$. %Below, let $\text{Hull}(\Gamma)$ denote the convex hull of the limit set for a Kleinian group $\Gamma$.

\begin{dfn}  Kleinian groups $\Gamma_0$ and $\Gamma_1$ \textit{\meetcute}along a hyperplane $\mathcal{K} \subset \mathbb{H}^3$ if $\mathcal{K} = \text{Hull}(\Gamma_0) \cap \text{Hull}(\Gamma_1)$ 
and $\mathrm{Stab}_{\Gamma_0}(\calk) = \mathrm{Stab}_{\Gamma_1}(\calk)$.  \end{dfn}

The fact below follows easily from this definition, and accounts for its utility.  

\begin{fact} If $\Gamma_0$ and $\Gamma_1$ \meetcute along $\calk$ then $\mathrm{Stab}_{\Gamma_0}(\calk) = \Gamma_0\cap\Gamma_1 = \mathrm{Stab}_{\Gamma_1}(\calk)$.  Furthermore, $\calk$ divides $\mathbb{H}^3$ into $\mathcal{B}_0$ and $\mathcal{B}_1$ such that for $i \in \{0,1\}$, if $\g_i \in \Gamma_i$ satisfies $\g_i(\mathcal{B}_{1-i}) \cap \mathcal{B}_{1-i} \neq \emptyset$, then $\g_i \in \Gamma_0 \cap \Gamma_1$. \end{fact}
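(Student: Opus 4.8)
The plan is to unwind the definition of \meetcute and use basic facts about convex hulls and stabilizers, together with the fact that $\calk$ is a hyperplane that is carried to itself by precisely the common subgroup.

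First I would prove the equality $\mathrm{Stab}_{\Gamma_0}(\calk) = \Gamma_0 \cap \Gamma_1 = \mathrm{Stab}_{\Gamma_1}(\calk)$. By the definition of \meetcute we already have $\mathrm{Stab}_{\Gamma_0}(\calk) = \mathrm{Stab}_{\Gamma_1}(\calk)$; call this common subgroup $\Lambda$. Certainly $\Lambda \subseteq \Gamma_0 \cap \Gamma_1$, since $\Lambda \subseteq \mathrm{Stab}_{\Gamma_i}(\calk) \subseteq \Gamma_i$ for each $i$. For the reverse inclusion, take $\gamma \in \Gamma_0 \cap \Gamma_1$. Since $\gamma \in \Gamma_0$ it preserves $\mathrm{Hull}(\Gamma_0)$, and since $\gamma \in \Gamma_1$ it preserves $\mathrm{Hull}(\Gamma_1)$; hence $\gamma$ preserves their intersection, which is exactly $\calk$. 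Therefore $\gamma \in \mathrm{Stab}_{\Gamma_0}(\calk) = \Lambda$. This gives $\Gamma_0 \cap \Gamma_1 \subseteq \Lambda$ and hence the desired chain of equalities.

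Next I would establish the statement about the half-spaces. Let $\calb_0$ and $\calb_1$ be the closed half-spaces of $\mathbb{H}^3$ bounded by $\calk$. Fix $i \in \{0,1\}$ and suppose $\g_i \in \Gamma_i$ with $\g_i(\calb_{1-i}) \cap \calb_{1-i} \neq \emptyset$. The key geometric point is that $\mathrm{Hull}(\Gamma_{1-i})$ lies on one side of $\calk$, i.e.\ $\mathrm{Hull}(\Gamma_{1-i}) \subseteq \calb_i$ (up to swapping the labels of $\calb_0,\calb_1$, which I would fix once and for all): indeed $\calk = \mathrm{Hull}(\Gamma_0) \cap \mathrm{Hull}(\Gamma_1)$ is a single geodesic plane, so neither hull can cross to the other side of the other's bounding plane $\calk$ without enlarging the intersection. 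Now $\g_i \in \Gamma_i$ preserves $\mathrm{Hull}(\Gamma_i) \subseteq \calb_{1-i}$, so $\g_i(\calk)$, being the bounding plane on the appropriate side, still bounds $\mathrm{Hull}(\Gamma_i)$ from the $\calb_{1-i}$-side. The hypothesis $\g_i(\calb_{1-i}) \cap \calb_{1-i} \neq \emptyset$ then forces $\g_i(\calk)$ and $\calk$ to be non-separated by $\mathrm{Hull}(\Gamma_i)$; combined with the fact that $\g_i(\calk)$ is a face of $\g_i(\mathrm{Hull}(\Gamma_i)) = \mathrm{Hull}(\Gamma_i)$ and $\calk$ is a face of $\mathrm{Hull}(\Gamma_i)$, and that $\mathrm{Hull}(\Gamma_i)$ meets $\calb_{1-i}$ only along $\calk$, one concludes $\g_i(\calk) = \calk$. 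Thus $\g_i \in \mathrm{Stab}_{\Gamma_i}(\calk) = \Gamma_0 \cap \Gamma_1$ by the first part.

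The main obstacle I anticipate is making the half-space argument fully rigorous: one must be careful that $\mathrm{Hull}(\Gamma_i)$ really does lie in the closed half-space $\calb_{1-i}$ and touches $\calk$ in nothing bigger than a face, and that $\g_i$ cannot slide $\mathrm{Hull}(\Gamma_i)$ so that its boundary plane drifts into the interior of $\calb_i$ while still meeting $\calb_{1-i}$. A clean way to handle this is to observe that since $\g_i$ preserves $\mathrm{Hull}(\Gamma_i)$ and $\mathrm{Hull}(\Gamma_i) \cap \calb_{1-i} = \calk$ (as $\mathrm{Hull}(\Gamma_i)$ is convex, contained in $\calb_{1-i}$'s complementary side except for its face on $\calk$), the condition $\g_i(\calb_{1-i}) \cap \calb_{1-i} \ne \emptyset$ together with $\g_i$ fixing $\mathrm{Hull}(\Gamma_i)$ setwise leaves $\calk$ as the only possibility for $\g_i(\calk)$; everything else is bookkeeping about which side is which, which I would pin down at the start of the proof.
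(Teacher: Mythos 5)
The paper states this Fact without proof (``follows easily from this definition''), so the comparison is with the short argument the authors have in mind.  Your proof of the first assertion, $\mathrm{Stab}_{\Gamma_0}(\calk)=\Gamma_0\cap\Gamma_1=\mathrm{Stab}_{\Gamma_1}(\calk)$, is correct and is the intended one.  The second assertion is where there are genuine problems.  First, your labeling of the half-spaces is backwards, and with your labeling the claim you argue for is false: you fix $\mathrm{Hull}(\Gamma_{1-i})\subseteq\calb_i$, i.e.\ $\mathrm{Hull}(\Gamma_i)\subseteq\calb_{1-i}$, but then \emph{every} $\g_i\in\Gamma_i$ satisfies $\g_i(\calb_{1-i})\cap\calb_{1-i}\neq\emptyset$, since $\calk\subseteq\mathrm{Hull}(\Gamma_i)=\g_i\bigl(\mathrm{Hull}(\Gamma_i)\bigr)\subseteq\calb_{1-i}\cap\g_i(\calb_{1-i})$, so your conclusion would force $\Gamma_i=\Gamma_0\cap\Gamma_1$.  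The statement is only nonvacuous with the opposite convention, $\mathrm{Hull}(\Gamma_i)\subseteq\calb_i$, so that $\calb_{1-i}$ is the half-space with $\calb_{1-i}\cap\mathrm{Hull}(\Gamma_i)=\calk$; your final paragraph silently switches to this convention, contradicting the paragraph before it.  This is not mere bookkeeping: the orientation of the half-spaces is exactly what the hypothesis is about.

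Second, even with the labels corrected, the step from $\g_i(\calb_{1-i})\cap\calb_{1-i}\neq\emptyset$ to $\g_i(\calk)=\calk$ is asserted (``leaves $\calk$ as the only possibility'') rather than proved, and this is the actual content of the Fact; ``non-separated by $\mathrm{Hull}(\Gamma_i)$'' is not an argument.  What is missing is, concretely: (a) since $\g_i(\calk)\subseteq\mathrm{Hull}(\Gamma_i)\subseteq\calb_i$, the translate $\g_i(\calk)$ has no points in $\mathrm{int}\,\calb_{1-i}$, while two \emph{distinct intersecting} planes each have points strictly on both sides of the other; hence either $\g_i(\calk)=\calk$ or $\g_i(\calk)\cap\calk=\emptyset$; and (b) if $\g_i(\calk)\cap\calk=\emptyset$ then $\calb_{1-i}$ and $\g_i(\calb_{1-i})$ are disjoint --- for instance, the nearest-point retraction onto the convex set $\mathrm{Hull}(\Gamma_i)$ (which is $\g_i$-equivariant) sends $\calb_{1-i}$ into $\calk$ and $\g_i(\calb_{1-i})$ into $\g_i(\calk)$, so a common point would force $\calk\cap\g_i(\calk)\neq\emptyset$.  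Contraposing (a) and (b) gives the Fact.  Finally, your one-line justification that each hull lies in a single closed half-space also needs an argument (if both hulls had points strictly on the same side of $\calk$, convexity would put a cone over $\calk$ from each such point inside each hull, and two such cones meet off $\calk$, contradicting $\mathrm{Hull}(\Gamma_0)\cap\mathrm{Hull}(\Gamma_1)=\calk$); note this uses that neither hull reduces to $\calk$, which holds in the paper's applications.
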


In general, if $\Theta$ is a subgroup of $\Gamma$, the limit set of $\Theta$ is contained in that of $\Gamma$, and so the covering map $\mathbb{H}^3/\Theta \rightarrow \mathbb{H}^3/\Gamma$ maps $C(\Theta)$ into $C(\Gamma)$ --- we will call this restriction the \textit{natural map} $C(\Theta) \rightarrow C(\Gamma)$.  When $\Gamma_0$ and $\Gamma_1$ meet cute along $\mathcal{K}$ then the natural map $C(\Gamma_0 \cap \Gamma_1) \rightarrow C(\Gamma_i)$ restricts to an embedding of the 2-orbifold $\mathcal{K}/(\Gamma_0 \cap \Gamma_1)$.

The lemma below is a geometric combination theorem for Kleinian groups which \meetcute along a hyperplane.  It follows from Maskit's combination theorem and observations on the geometry of Kleinian groups that go back at least to J.~Morgan's account of geometrization for Haken manifolds \cite{Mo}.

\begin{lemma}\label{Maskit}  Suppose $\Gamma_0$ and $\Gamma_1$ \meetcute along a plane $\mathcal{K}$.  Let $E = \mathcal{K}/\Theta$, where $\Theta = \Gamma_0 \cap \Gamma_1$, and for $i=0$, $1$ let $\iota_i \co E \to C(\Gamma_i)$ be the natural embedding.  
Then $\langle \Gamma_0,\Gamma_1\rangle$ is a Kleinian group, and the inclusions $\Gamma_i \rightarrow \langle \Gamma_0,\Gamma_1\rangle$ determine an isomorphism $\Gamma_0 \ast_{\Theta} \Gamma_1 \rightarrow \langle \Gamma_0,\Gamma_1\rangle$ as abstract groups.  The natural maps $C(\Gamma_i) \to C(\langle \Gamma_0,\Gamma_1\rangle)$ determine an isometry $C(\Gamma_0) \cup_{\iota_1 \iota_0^{-1}} C(\Gamma_1) \rightarrow C(\langle \Gamma_0,\Gamma_1\rangle)$.  \end{lemma}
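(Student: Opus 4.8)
The plan is to get the discreteness and the amalgam structure from Maskit's first combination theorem \cite{Mask}, and then to deduce the statement about convex cores by analyzing $\text{Hull}(\Gamma)$ directly, in the spirit of \cite{Mo}. Write $\Gamma = \langle\Gamma_0,\Gamma_1\rangle$, $\Theta = \Gamma_0\cap\Gamma_1$, and let $C = \partial\mathcal{K}\subset\mathbb{C}\cup\{\infty\}$ be the ideal circle of $\mathcal{K}$; it bounds closed round disks $\overline{D_0},\overline{D_1}$ with $\overline{D_i} = \partial\overline{\mathcal{B}_i}$ for the two closed half-spaces $\overline{\mathcal{B}_0},\overline{\mathcal{B}_1}$ of $\mathbb{H}^3 - \mathcal{K}$, labeled so that $\text{Hull}(\Gamma_i)\subseteq\overline{\mathcal{B}_i}$ ($\mathcal{K}$ being a boundary plane of each $\text{Hull}(\Gamma_i)$, as is the case whenever this lemma is applied below). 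Since disjoint closed half-spaces of $\mathbb{H}^3$ have disjoint ideal disks, the Fact says precisely that $\overline{D_{1-i}}$ is precisely invariant under $\Theta$ in $\Gamma_i$ for $i = 0,1$; with $\overline{D_0}\cup\overline{D_1} = \mathbb{C}\cup\{\infty\}$ these are the hypotheses of Maskit's theorem, which gives that $\Gamma$ is discrete and that $\Gamma_0\ast_\Theta\Gamma_1\to\Gamma$ is an isomorphism. I will also use the standard ping-pong consequence: for $\gamma\in\Gamma$ with a reduced syllable decomposition of length $n\geq1$, first syllable in $\Gamma_{j_1}$ and last in $\Gamma_{j_n}$, one has $\gamma(\overline{D_{1-j_n}})\subseteq D_{j_1}$, hence $\gamma(\overline{\mathcal{B}_{1-j_n}})\subseteq\mathrm{int}\,\mathcal{B}_{j_1}$; in particular $\mathrm{Stab}_\Gamma(\mathcal{K}) = \Theta$ and distinct $\Gamma$-translates of $\mathcal{K}$ are disjoint.

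Next I would show that the natural maps $\pi_i\co C(\Gamma_i)\to C(\Gamma)$ — the restrictions of the coverings $\mathbb{H}^3/\Gamma_i\to\mathbb{H}^3/\Gamma$, which carry $C(\Gamma_i)$ into $C(\Gamma)$ since $\text{Hull}(\Gamma_i)\subseteq\text{Hull}(\Gamma)$ — are isometric embeddings whose images meet exactly along $\iota(E)$, where $\iota\co E\to C(\Gamma)$ is the natural map. For injectivity of $\pi_i$: if $x,y\in\text{Hull}(\Gamma_i)$ with $y = \gamma x$ and $\gamma\in\Gamma - \Gamma_i$, then after absorbing any $\Gamma_i$-syllables at the two ends of a reduced expression for $\gamma$ — which replaces $x$ and $y$ by $\Gamma_i$-translates and so changes neither $\pi_i(x)$ nor $\pi_i(y)$ — the word has first and last syllables in $\Gamma_{1-i}$, whence $\gamma(\overline{\mathcal{B}_i})\subseteq\mathrm{int}\,\mathcal{B}_{1-i}$, which is disjoint from $\overline{\mathcal{B}_i}\supseteq\text{Hull}(\Gamma_i)$, contradicting $y = \gamma x\in\text{Hull}(\Gamma_i)$. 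The same style of argument gives $\pi_0(C(\Gamma_0))\cap\pi_1(C(\Gamma_1)) = \iota(E)$: writing a point of the intersection as $\pi_0(x_0) = \pi_1(x_1)$ with $x_i\in\text{Hull}(\Gamma_i)$ and $x_1 = \gamma x_0$, and reducing $\gamma$ at its ends, one gets either $\gamma\in\Gamma_0$ (so $x_1\in\text{Hull}(\Gamma_0)\cap\text{Hull}(\Gamma_1) = \mathcal{K}$), or $\gamma\in\Gamma_1$ (so $x_0\in\mathcal{K}$), or $\gamma$ with first syllable in $\Gamma_0$ and last in $\Gamma_1$, forcing $x_1 = \gamma x_0\in\mathrm{int}\,\mathcal{B}_0$, impossible since $x_1\in\overline{\mathcal{B}_1}$. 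Finally $\mathrm{Stab}_\Gamma(\mathcal{K}) = \Theta$ gives $\pi_0\circ\iota_0 = \iota = \pi_1\circ\iota_1$.

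For surjectivity of the combined map I would prove $\text{Hull}(\Gamma) = \bigcup_{\gamma\in\Gamma,\,i\in\{0,1\}}\gamma\,\text{Hull}(\Gamma_i)$, whence $C(\Gamma) = \pi_0(C(\Gamma_0))\cup\pi_1(C(\Gamma_1))$. The inclusion $\supseteq$ is clear. For $\subseteq$, let $Y$ denote the right-hand union. It is $\Gamma$-invariant and closed, the family $\{\gamma\,\text{Hull}(\Gamma_i)\}$ being locally finite in $\mathbb{H}^3$ (accumulating only on $\Lambda(\Gamma)$). It is convex: the planes $\gamma\mathcal{K}$ are pairwise disjoint or equal, and since each $\text{Hull}(\Gamma_i)$ lies in $\overline{\mathcal{B}_i}$ with $\mathcal{K}$ on its boundary, these planes organize $Y$ as a ``tree'' of convex pieces $\gamma\,\text{Hull}(\Gamma_i)$ meeting adjacently along translates of $\mathcal{K}$, so a geodesic segment joining two points of $Y$ crosses the separating translates of $\mathcal{K}$ in order and lies in a single piece between consecutive crossings. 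Finally $\Lambda(\Gamma)$ is the closure of the $\Gamma$-orbit of $\Lambda(\Gamma_0)\cup\Lambda(\Gamma_1)$ — a standard fact for amalgams of geometrically finite groups, and $\Gamma$ is geometrically finite by Maskit's theorem — so $Y$, being closed, convex, $\Gamma$-invariant, and containing $\Lambda(\Gamma)$ in its ideal boundary, contains $\text{Hull}(\Gamma)$.

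Putting these together, define $\Phi\co C(\Gamma_0)\cup_{\iota_1\iota_0^{-1}}C(\Gamma_1)\to C(\Gamma)$ to be $\pi_i$ on the $C(\Gamma_i)$-summand. It is well defined because $\pi_0\iota_0 = \pi_1\iota_1$, bijective by the two preceding paragraphs, and an isometry on each summand. Since $\mathcal{K}$ lies in the interior of $\text{Hull}(\Gamma)$ — it has points of $\text{Hull}(\Gamma_0)$ on one side and of $\text{Hull}(\Gamma_1)$ on the other, arbitrarily near each of its points — the surface $\iota(E)$ lies in the interior of $C(\Gamma)$ with $\pi_0(C(\Gamma_0))$ and $\pi_1(C(\Gamma_1))$ on its two sides, so $\Phi$ is a local isometry across $\iota(E)$ as well; a bijective local isometry is an isometry, as required. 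The step I expect to be the main obstacle is the surjectivity argument — the assertion that the $\Gamma$-orbit of $\text{Hull}(\Gamma_0)\cup\text{Hull}(\Gamma_1)$ is already convex, so that no part of $\text{Hull}(\Gamma)$ is missed. This is the geometric core of the combination theorem, and it rests on $\mathcal{K}$ being a boundary plane of each $\text{Hull}(\Gamma_i)$ (so that the translates of $\mathcal{K}$ glue the pieces into a convex tree rather than cutting through them) — which is exactly where the totally geodesic hypothesis, and the precise content of ``meet cute'', enter.
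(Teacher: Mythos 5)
Your route is genuinely different from the paper's: the paper gets the group-theoretic statement from Morgan's account of Maskit's combination theorem and then outsources the convex-core statement to the remarks below \cite[Theorem 8.2]{Mo} as fleshed out in Anderson--Canary, whereas you attempt a self-contained geometric proof. Most of your new material is sound: the ping-pong consequences of the Fact, the injectivity of the natural maps $C(\Gamma_i)\to C(\langle\Gamma_0,\Gamma_1\rangle)$, the identification of the overlap with $\iota(E)$, and the convexity of $Y=\Gamma\cdot(\mathrm{Hull}(\Gamma_0)\cup\mathrm{Hull}(\Gamma_1))$ via the tree-of-pieces/separating-walls argument all check out (for the last one, note you do not even need the crossing points to occur ``in order'': the chain of segments $[x,p_1],[p_1,p_2],\ldots,[p_n,y]$ is connected and contains both endpoints, hence covers $[x,y]$). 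One small caveat in the first step: translates of $\mathcal{K}$ can share parabolic fixed points (this actually happens for the paper's $\Gamma_S$, where a conjugating element takes the fixed point of $\p_1$ to that of $\p_3$ on $\partial\calh$), so the closed disks $\overline{D_i}$ are not precisely invariant in the strict sense; one needs the version of the combination theorem that tolerates tangency at parabolic points (e.g.\ Morgan's formulation), though your later arguments only use disjointness of the open half-spaces inside $\mathbb{H}^3$ and are unaffected.

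The genuine gap is the closedness of $Y$, which you dispose of in a parenthesis (``the family $\{\gamma\,\mathrm{Hull}(\Gamma_i)\}$ being locally finite''). This is not free, and it is essentially the properness content that the paper is citing Anderson--Canary for. Two points. First, your justification of the limit-set step invokes geometric finiteness of $\Gamma_0,\Gamma_1$ and of the amalgam, which is not a hypothesis of the lemma; that particular step has a trivial fix ($Y$ contains a full $\Gamma$-orbit, so $\overline{Y}$ already contains $\Lambda(\Gamma)$), but it does not repair local finiteness. Second, local finiteness of the orbit of a plane is false for a plane with small stabilizer (such orbits are typically dense in the space of planes), so it must come from the ``meet cute'' structure, and the natural argument is genuinely two-staged: the walls adjacent to a fixed piece have pairwise disjoint outer half-spaces, so a volume count shows only finitely many of them meet a given ball; but one must still rule out an infinite \emph{nested} chain of walls all meeting a fixed compact set (K\"onig's lemma leaves exactly this case), and here there is no uniform lower bound on the distance between consecutive walls, since adjacent boundary planes of a single piece can be asymptotic at a shared parabolic fixed point (again, this occurs in the paper's examples, e.g.\ $\calh$ and $\c^{-2}\calh$ share $\infty$). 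Without closedness you cannot conclude $\mathrm{Hull}(\Gamma)\subseteq Y$ from convexity and $\Lambda(\Gamma)\subseteq\overline{Y}$ (a convex non-closed set can miss hull points of its ideal boundary), so surjectivity of $\Phi$ is unproven as written. You should either supply the missing local-finiteness argument (using, say, the finite area of $E$ and the cusp structure to control the nested case) or do as the paper does and quote Morgan/Anderson--Canary for the statement that the natural maps carry $C(\Gamma_0)\cup_E C(\Gamma_1)$ onto $C(\langle\Gamma_0,\Gamma_1\rangle)$.
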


In using Lemma \ref{Maskit}, we often write $C(\Gamma_0) \cup_E C(\Gamma_1)$ when the maps $\iota_i$ are clear.

\begin{proof}  We will use the version of Maskit's combination theorem recorded in \cite[Theorem 8.2]{Mo}).  The fact above implies that $\Gamma_0$ and $\Gamma_1$ which \meetcute along a hyperplane $\calk$ satisfy the hypotheses of \cite[Theorem 8.2]{Mo}, from which the group-theoretic conclusions above thus follow.  That the desired isometry exists follows from the remarks in \cite{Mo} below Theorem 8.2, which have since been considerably fleshed out in Anderson--Canary's ``The visual core of a hyperbolic manifold'' \cite{AnCa}.  

The function $\tilde{f}\co\mathbb{H}^3\to [0,1]$ described in \cite{Mo} is the harmonic extension of the characteristic function of $\mathcal{B}_0$: $\tilde{f}(y)$ is the visual measure of the set of vectors pointing from $y$ toward $\mathcal{B}_0$.  See \cite[\S 2]{AnCa} for a precise analytic definition.  It is not hard to see that here $\calk=\tilde{f}^{-1}(\frac{1}{2})$ (cf.~\cite[Proposition 2.2]{AnCa}), whence our $E$ is Morgan's $X = f^{-1}(\frac{1}{2})$.

Our $\iota_i$ is Morgan's $p_i$, mapping to $N_i = \mathbb{H}^3/\Gamma_i$ for $i \in\{0,1\}$.  For each $i$, $\iota_i(E)$ is a convex core boundary component of $N_i$, so $p_0(N_{-}) \cap C(N_0) = p_0(E)$, $p_1(N_{+})\cap C(N_1) = p_1(E)$, and the result follows from the equation at the bottom of \cite[p.~76]{Mo}.  See \cite{AnCa}, Proposition 5.2 and the remarks after 5.3 for related results.\end{proof}

We first apply Lemma \ref{Maskit} to join $C(\Gamma_{T_0})$ to a copy of itself across $\partial_+ C(\Gamma_{T_0})$.  Recall from above Lemma \ref{M_Sboundary} that we have defined $\calh$ to be the geodesic hyperplane of $\mathbb{H}^3$ with ideal boundary $\mathbb{R} \cup \{\infty\}$.  Let $\r \in \text{Isom}(\mathbb{H}^3)$ be the reflection through $\mathcal{H}$.   This acts on $\mathbb{C} \cup \{ \infty\}$ by complex conjugation; thus if ${\sf q} \in \Gamma < \mathrm{PSL}_2(\mathbb{C})$, then ${\sf q}^\r = \bar{{\sf q}}$, where $\bar{\sf q} \in \mathrm{PSL}_2(\mathbb{C})$ is the element whose entries are the conjugates of the entries of ${\sf q}$.  Hence, we let $\overline{\Gamma}$ denote $\Gamma^\r$.

\begin{lemma} \label{M_TandH}  Define $\c = \left( \begin{smallmatrix} 1 & i\sqrt{2} \\ 0 & 1 \end{smallmatrix} \right)$.  Then $\Gamma_T \doteq \langle \Gamma_{T_0}, \overline{\Gamma}_{T_0}^{\c^{-2}} \rangle$ is a Kleinian group, there is an inclusion-induced isomorphism $\Gamma_{T_0} *_{\Lambda^{\k}} \overline{\Gamma}_{T_0}^{\c^{-2}} \to \Gamma_T$ and an isometry $C(\Gamma_{T_0}) \cup_{F'} C(\overline{\Gamma}_{T_0}^{\c^{-2}}) \to C(\Gamma_T)$ determined by the natural maps.  Furthermore, $\c^{-2}\r$ normalizes $\Gamma_T$, and $\phi_{\c^{-2}\r}\co C(\Gamma_T) \to C(\Gamma_T)$ is an orientation-reversing involution fixing $F'$ and exchanging its complementary components.\end{lemma}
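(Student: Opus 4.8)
The plan is to recognize Lemma \ref{M_TandH} as an application of Lemma \ref{Maskit}, taking $\Gamma_0 = \Gamma_{T_0}$ and $\Gamma_1 = \overline{\Gamma}_{T_0}^{\c^{-2}}$ and gluing along the hyperplane $\k(\calh)$ that carries $\partial_+ C(\Gamma_{T_0})$. First I would set $\sfm = \c^{-2}\r$ and record its properties. Acting on $\mathbb{C}\cup\{\infty\}$, $\r$ is $z\mapsto\bar z$ and $\c^{-2}$ is $z\mapsto z - 2i\sqrt 2$, so $\sfm$ is $z\mapsto\bar z - 2i\sqrt 2$; this is an orientation-reversing involution fixing the line $\mathbb{R} - i\sqrt 2$ pointwise, hence fixing $\k(\calh)$ (which lies over that line) pointwise and exchanging the two closed half-spaces $\calb_0$, $\calb_1$ bounded by $\k(\calh)$. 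From the conjugation conventions, $\overline{\Gamma}_{T_0}^{\c^{-2}} = (\Gamma_{T_0}^{\r})^{\c^{-2}} = \Gamma_{T_0}^{\sfm} = \sfm\Gamma_{T_0}\sfm^{-1}$, so the second group has convex hull $\sfm(\text{Hull}(\Gamma_{T_0}))$.

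The substantive step is to verify that $\Gamma_{T_0}$ and $\overline{\Gamma}_{T_0}^{\c^{-2}}$ \meetcute along $\k(\calh)$. For the stabilizer condition, Lemma \ref{M_T0boundary} gives $\mathrm{Stab}_{\Gamma_{T_0}}(\k(\calh)) = \Lambda^{\k}$; since $\sfm$ preserves $\k(\calh)$, $\mathrm{Stab}_{\overline{\Gamma}_{T_0}^{\c^{-2}}}(\k(\calh)) = \sfm\Lambda^{\k}\sfm^{-1}$, and because $\sfm$ is the reflection through $\k(\calh)$ it is central in the stabilizer of $\k(\calh)$ in $\mathrm{Isom}(\mathbb{H}^3)$ (conjugating it by any isometry preserving $\k(\calh)$ returns the reflection through the same plane), so this conjugate is again $\Lambda^{\k}$. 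For the hull condition, Lemma \ref{M_T0boundary} also shows $\partial_+ C(\Gamma_{T_0}) = \k(\calh)/\Lambda^{\k}$ is a component of $\partial C(\Gamma_{T_0})$, so $\k(\calh)$ is a supporting hyperplane of the convex set $\text{Hull}(\Gamma_{T_0})$ lying in its frontier; thus $\text{Hull}(\Gamma_{T_0})$ lies in one of the half-spaces, say $\calb_0$, and contains all of $\k(\calh)$ (this plane being the universal cover of the $4$-punctured sphere $\partial_+ C(\Gamma_{T_0})$). Applying the isometry $\sfm$, $\text{Hull}(\overline{\Gamma}_{T_0}^{\c^{-2}})$ lies in $\calb_1$ and contains $\k(\calh)$, so $\text{Hull}(\Gamma_{T_0})\cap\text{Hull}(\overline{\Gamma}_{T_0}^{\c^{-2}}) = \calb_0\cap\calb_1 = \k(\calh)$. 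I expect this hull condition to be the one point needing care; the rest is bookkeeping or direct quotation.

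With this established, Lemma \ref{Maskit} applies. Here $\Theta = \Gamma_{T_0}\cap\overline{\Gamma}_{T_0}^{\c^{-2}} = \Lambda^{\k}$ and $E = \k(\calh)/\Lambda^{\k} = F'$, with $\iota_0,\iota_1$ the embeddings of Lemma \ref{M_T0boundary}. The lemma then yields at once that $\Gamma_T = \langle\Gamma_{T_0},\overline{\Gamma}_{T_0}^{\c^{-2}}\rangle$ is Kleinian, that the inclusions induce an isomorphism $\Gamma_{T_0}*_{\Lambda^{\k}}\overline{\Gamma}_{T_0}^{\c^{-2}}\to\Gamma_T$, and that the natural maps induce an isometry $C(\Gamma_{T_0})\cup_{F'}C(\overline{\Gamma}_{T_0}^{\c^{-2}})\to C(\Gamma_T)$.

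For the final assertion: $\sfm$ conjugates $\Gamma_{T_0}$ onto $\overline{\Gamma}_{T_0}^{\c^{-2}}$, and as an involution it conjugates $\overline{\Gamma}_{T_0}^{\c^{-2}}$ back onto $\Gamma_{T_0}$, so it normalizes $\Gamma_T = \langle\Gamma_{T_0},\overline{\Gamma}_{T_0}^{\c^{-2}}\rangle$; hence $\c^{-2}\r$ normalizes $\Gamma_T$. The induced map $\phi_{\c^{-2}\r}\co C(\Gamma_T)\to C(\Gamma_T)$ is then orientation-reversing (since $\c^{-2}\r$ is) and satisfies $\phi_{\c^{-2}\r}\circ\phi_{\c^{-2}\r} = \phi_{(\c^{-2}\r)^2} = \mathrm{id}$. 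Since $\c^{-2}\r$ fixes $\k(\calh)$ pointwise it fixes the image $F'$ of $\k(\calh)$; and since it exchanges $\calb_0$ with $\calb_1$, equivalently $\text{Hull}(\Gamma_{T_0})$ with $\text{Hull}(\overline{\Gamma}_{T_0}^{\c^{-2}})$, the induced involution exchanges the natural-map images of $C(\Gamma_{T_0})$ and $C(\overline{\Gamma}_{T_0}^{\c^{-2}})$, which by the isometry just displayed are exactly the two complementary components of $F'$ in $C(\Gamma_T)$.
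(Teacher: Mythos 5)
Your proposal is correct and follows essentially the same route as the paper: both verify that the two tangle groups meet cute along the plane covering $\partial_+ C(\Gamma_{T_0})$ (hull condition from the totally geodesic boundary component, stabilizer condition from invariance under the relevant reflection) and then quote Lemma \ref{Maskit}, with the normalizing involution $\c^{-2}\r$ handling the last assertion. The only difference is cosmetic: the paper first conjugates by $\c$ so the gluing plane becomes $\calh$ and works with $\r$, then conjugates back by $\c^{-1}$, whereas you work in place at $\k(\calh)$ with the reflection $\c^{-2}\r$, which slightly streamlines the bookkeeping.
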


\begin{proof}  Recall from Lemmas \ref{M_Sboundary} and \ref{M_T0boundary} that $\Lambda$ and $\Lambda^{\k}$ are the stabilizers in $\Gamma_{T_0}$ of the geodesic planes $\calh$ and $\k(\calh)$, respectively, and that these planes project to the components of $\partial C(\Gamma_{T_0})$.  It follows that $\calh$ and $\k(\calh)$ are components of the boundary of $\mathrm{Hull}(\Gamma_{T_0})$, so $\mathrm{Hull}(\Gamma_{T_0})$ is contained in the region between them.

With $\c$ as defined in the statement of the lemma, note that $\c(\calh)= (\mathbb{R}+i\sqrt{2})\times \mathbb{R}^+$ and that $\c\k(\calh) = \calh$.   Since $\mathrm{Hull}(\Gamma^{\c}_{T_0})$ has boundary components $\c(\calh)$ and $\c\k(\calh) = \calh$, and $\Lambda^{\c\k} = \mathrm{Stab}_{\Gamma^{\c}_{T_0}}(\calh)$ is invariant under conjugation by $\r$, $\Gamma^{\c}_{T_0}$ and $\overline{\Gamma^{\c}_{T_0}}$ \meetcute along $\calh$.  Applying Lemma \ref{Maskit}, we obtain an isomorphism $\Gamma^{\c}_{T_0} *_{\Lambda^{\c\k}} \overline{\Gamma^{\c}_{T_0}} \to \langle \Gamma^{\c}_{T_0},\overline{\Gamma^{\c}_{T_0}} \rangle$ and  an isometry
\[ C(\Gamma_{T_0}^\c) \cup_{\phi_{\c}(F')} C(\overline{\Gamma_{T_0}^\c}) \rightarrow C(\langle \Gamma_{T_0}^\c, \overline{\Gamma_{T_0}^\c} \rangle)\]
induced by the natural maps.  It is clear that $\r$ normalizes $\langle \Gamma^{\c}_{T_0},\overline{\Gamma^{\c}_{T_0}} \rangle$, exchanging amalgamands, hence $\phi_{\r}$ acts as an orientation-reversing involution of $C(\langle \Gamma^{\c}_{T_0},\overline{\Gamma^{\c}_{T_0}} \rangle)$, fixing $F'$ and exchanging $C(\Gamma^{\c}_{T_0})$ with $C(\overline{\Gamma^{\c}_{T_0}})$.

Observe that $\bar{\c}=\c^{-1}$.  It follows that $\overline{\Gamma^{\c}_{T_0}} = \overline{\Gamma}_{T_0}^{c^{-1}}$, and hence that $\Gamma_T = \langle \Gamma_{T_0}^\c, \overline{\Gamma_{T_0}^\c} \rangle^{\c^{-1}}$.  Conjugating the groups of the paragraph above by $c^{-1}$, we obtain an inclusion-induced isomorphism $\Gamma_{T_0} *_{\Lambda^{k}} \overline{\Gamma}_{T_0}^{\c^{-2}} \to \Gamma_T$, and an isometry 
$$ C(\Gamma_{T_0}) \cup_{F'} C(\overline{\Gamma}_{T_0}^{\c^{-2}}) \to C(\Gamma_T)  $$
induced by the natural maps.  Furthermore, $\c^{-1} \r \c = \c^{-2} \r$ normalizes $\Gamma_T$ and induces an orientation-reversing involution $\phi_{\c^{-2}\r}$ fixing $F'$ and exchanging its sides.  \end{proof}

If $M$ is an oriented manifold with a boundary component $F$, the \textit{double} of $M$ across $F$ is $M \cup_F \overline{M}$, where $\overline{M}$ is a copy of $M$ with orientation reversed, and the gluing map $F \to \overline{F} \subset \overline{M}$ is the identity map.  

\begin{cor}\label{the real M_T}  There is an isometry $p_T\co M_T\to C(\Gamma_T)$, where $M_T$ is the double of $M_{T_0}$ across $F'$, that is the natural map following $p_{T_0}$ from Corollary \ref{M_T0andH0} on $M_{T_0}$.\end{cor}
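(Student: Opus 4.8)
The plan is to build $p_T$ by gluing the isometry $p_{T_0}$ of Corollary~\ref{M_T0andH0} to a mirror copy of itself and then invoke the isometry $C(\Gamma_{T_0})\cup_{F'}C(\overline{\Gamma}_{T_0}^{\c^{-2}})\to C(\Gamma_T)$ furnished by Lemma~\ref{M_TandH}. By Corollary~\ref{M_T0andH0} there is an isometry $p_{T_0}\co M_{T_0}\to C(\Gamma_{T_0})$, which we take orientation--preserving (it is induced by the inclusion of a polyhedron into $\mathbb{H}^3$). Since $\overline{\Gamma}_{T_0}^{\c^{-2}}=\Gamma_{T_0}^{\c^{-2}\r}$, the element $\c^{-2}\r$ conjugates $\Gamma_{T_0}$ onto $\overline{\Gamma}_{T_0}^{\c^{-2}}$, so $\phi_{\c^{-2}\r}$ restricts to an isometry $C(\Gamma_{T_0})\to C(\overline{\Gamma}_{T_0}^{\c^{-2}})$; it reverses orientation because $\c^{-2}\in\mathrm{PSL}_2(\mathbb{C})$ while $\r$ does not. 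Hence $\phi_{\c^{-2}\r}\circ p_{T_0}$ is an orientation--reversing isometry $M_{T_0}\to C(\overline{\Gamma}_{T_0}^{\c^{-2}})$, and precomposing it with the tautological orientation--reversing identification $\overline{M_{T_0}}\to M_{T_0}$ yields an orientation--preserving isometry $\overline{p}_{T_0}\co\overline{M_{T_0}}\to C(\overline{\Gamma}_{T_0}^{\c^{-2}})$.

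Next I would check that $p_{T_0}$ and $\overline{p}_{T_0}$ agree along the gluing four--punctured sphere $F'$, so that together they define an isometry from the double $M_T=M_{T_0}\cup_{F'}\overline{M_{T_0}}$ onto $C(\Gamma_{T_0})\cup_{F'}C(\overline{\Gamma}_{T_0}^{\c^{-2}})$. In the double, the copy in $\overline{M_{T_0}}$ of a point $x\in\partial_+M_{T_0}$ is glued to $x$ by the identity, while the amalgamation of Lemma~\ref{Maskit} glues $C(\Gamma_{T_0})$ to $C(\overline{\Gamma}_{T_0}^{\c^{-2}})$ by $\iota_1\iota_0^{-1}$, where $\iota_0,\iota_1$ are the natural identifications of the two convex--core boundary components with $\k(\calh)/\Lambda^{\k}$ (Lemmas~\ref{M_T0boundary} and~\ref{M_TandH}; here one uses $\mathrm{Stab}_{\Gamma_{T_0}}(\k(\calh))=\mathrm{Stab}_{\overline{\Gamma}_{T_0}^{\c^{-2}}}(\k(\calh))=\Lambda^{\k}$, part of the meet-cute setup behind Lemma~\ref{M_TandH}). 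Because $\phi_{\c^{-2}\r}$ fixes $F'$ pointwise (Lemma~\ref{M_TandH}; equivalently $\c^{-2}\r$ acts as the identity on the plane $\k(\calh)$), the restriction of $\overline{p}_{T_0}$ to $\partial_+M_{T_0}$ is exactly $\iota_1\iota_0^{-1}\circ(p_{T_0}|_{\partial_+M_{T_0}})$, which is the agreement required. The resulting bijection is an isometry on each of the two pieces and continuous, and since $F'$ is totally geodesic on both sides it is a global isometry; composing with the natural--map isometry of Lemma~\ref{M_TandH} produces $p_T\co M_T\to C(\Gamma_T)$, whose restriction to $M_{T_0}$ is, by construction, $p_{T_0}$ followed by the natural map $C(\Gamma_{T_0})\to C(\Gamma_T)$.

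The only point needing care, and the main (minor) obstacle, is matching the topological doubling — whose gluing on $F'$ is the identity and which reverses orientation on the second factor — with the algebraic amalgamation of Lemma~\ref{M_TandH}. This reduces to the two facts just used: that $\c^{-2}\r$ acts on $\partial\mathbb{H}^3$ by $z\mapsto\bar z-2i\sqrt{2}$, hence fixes the line $\mathbb{R}-i\sqrt{2}$ and therefore the plane $\k(\calh)$ pointwise, and that the stabilizers of $\k(\calh)$ in $\Gamma_{T_0}$ and in $\overline{\Gamma}_{T_0}^{\c^{-2}}$ coincide. Both are already in hand, so no new geometric input is needed beyond Corollary~\ref{M_T0andH0} and Lemma~\ref{M_TandH}; everything else is orientation-- and gluing--bookkeeping.
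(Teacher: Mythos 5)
Your proposal is correct and is essentially the paper's (implicit) argument: the corollary is stated as an immediate consequence of Lemma \ref{M_TandH}, whose isometry $C(\Gamma_{T_0})\cup_{F'}C(\overline{\Gamma}_{T_0}^{\c^{-2}})\to C(\Gamma_T)$ and orientation-reversing involution $\phi_{\c^{-2}\r}$ fixing $F'$ are exactly the ingredients you use, and the same doubling bookkeeping reappears in the paper's construction of $f_T$ in Proposition \ref{Tmap}. Your verification that $\c^{-2}\r$ fixes $\k(\calh)$ pointwise and that the stabilizers of $\k(\calh)$ in the two amalgamands coincide is precisely the check needed to match the topological double with the amalgamation, so nothing is missing.
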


The advantage that $\Gamma_T$ has over $\Gamma_{T_0}$ for our purposes is that the components of $\partial C(\Gamma_T)$ are naturally orientation-reversing isometric, since they are exchanged by $\phi_{\c^{-2}\r}$.  Recall from Lemma \ref{M_Sboundary} that $\Lambda = \mathrm{Stab}_{\Gamma_{T_0}}(\calh)$; thus by Lemma \ref{M_TandH}, $\Lambda = \mathrm{Stab}_{\Gamma_T}(\calh)$.  We will again refer by $i^{(0)}_+$ to the natural map $F^{(0)} \to C(\Gamma_T)$.  Then the lemma below follows from Lemma \ref{M_TandH}.

\begin{lemma}\label{M_Tboundary}  Let $F^{(1)} = \c^{-2}\calh/\Lambda^{\c^{-2}}$, and let $\phi_{\c^{-2}} \co F^{(0)} \to F^{(1)}$ and $\iota^{(1)}_- \co F^{(1)} \to C(\Gamma_T)$ be the natural maps.  Then $\partial C(\Gamma_T) = \partial_- C(\Gamma_T) \sqcup \partial_+ C(\Gamma_T)$, where $\partial_- C(\Gamma_T) \doteq \iota^{(0)}_+(F^{(0)})$ and $\partial_+ C(\Gamma_T) \doteq \iota^{(1)}_-(F^{(1)})$, and $\iota^{(1)}_- \phi_{\c^{-2}} = \phi_{\c^{-2}\r} \iota^{(0)}_+$.  \end{lemma}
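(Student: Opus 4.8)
The plan is to deduce everything from Lemma~\ref{M_TandH}, using the single additional observation that $\Lambda < \mathrm{PSL}_2(\mathbb{R})$ and hence $\Lambda$ is fixed under conjugation by $\r$ (which acts as complex conjugation on $\mathrm{PSL}_2(\mathbb{C})$), so that $\Lambda^{\c^{-2}\r} = \Lambda^{\c^{-2}}$. First I would unwind the decomposition of Lemma~\ref{M_TandH}: the natural maps give an isometry $C(\Gamma_{T_0}) \cup_{F'} C(\overline{\Gamma}_{T_0}^{\c^{-2}}) \to C(\Gamma_T)$, and by Corollary~\ref{M_T0andH0} each of $C(\Gamma_{T_0})$ and $C(\overline{\Gamma}_{T_0}^{\c^{-2}})$ has exactly two boundary components. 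The gluing face is $F' = \partial_+ C(\Gamma_{T_0})$ (Lemma~\ref{M_T0boundary}), so $\partial C(\Gamma_T)$ has exactly two components: $\partial_- C(\Gamma_{T_0})$, which equals $\iota^{(0)}_+(F^{(0)})$ by Lemma~\ref{M_Sboundary}(2) --- with the natural map $F^{(0)} \to C(\Gamma_{T_0})$ there composing with the natural map $C(\Gamma_{T_0}) \to C(\Gamma_T)$ to give what the text again calls $\iota^{(0)}_+$ --- and the surviving non-gluing boundary component of $C(\overline{\Gamma}_{T_0}^{\c^{-2}})$. I declare the former to be $\partial_- C(\Gamma_T)$.

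Next I would identify the other component via the involution $\phi_{\c^{-2}\r}$ of Lemma~\ref{M_TandH}. Because $\phi_{\c^{-2}\r}$ fixes $F'$ and exchanges the halves $C(\Gamma_{T_0})$ and $C(\overline{\Gamma}_{T_0}^{\c^{-2}})$, it carries the non-gluing boundary component $\partial_- C(\Gamma_{T_0})$ of the first half isometrically onto the non-gluing boundary component of the second; hence the remaining boundary component of $C(\Gamma_T)$ is $\phi_{\c^{-2}\r}(\iota^{(0)}_+(F^{(0)}))$. Now $\iota^{(0)}_+$ is induced by the inclusion $\calh \hookrightarrow \mathbb{H}^3$ (Lemma~\ref{M_Sboundary}(2) with Lemma~\ref{combinatorics n cores}), and $\phi_{\c^{-2}\r}$ is induced by the isometry $\c^{-2}\r$ of $\mathbb{H}^3$; since $\r$ fixes $\calh$ pointwise, $\c^{-2}\r(\calh) = \c^{-2}(\calh)$ and the composite $\phi_{\c^{-2}\r}\iota^{(0)}_+$ is induced by the inclusion $\c^{-2}(\calh) \hookrightarrow \mathbb{H}^3$ precomposed with $\c^{-2}\co\calh \to \c^{-2}(\calh)$. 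Since $\c^{-2}\r$ normalizes $\Gamma_T$ and $\mathrm{Stab}_{\Gamma_T}(\calh) = \Lambda$ (Lemmas~\ref{M_Sboundary} and~\ref{M_TandH}), we get $\mathrm{Stab}_{\Gamma_T}(\c^{-2}(\calh)) = \Lambda^{\c^{-2}\r} = \Lambda^{\c^{-2}}$; hence $\c^{-2}(\calh)$ projects to $\partial_+ C(\Gamma_T)$, and the natural map $\iota^{(1)}_-\co F^{(1)} \to C(\Gamma_T)$, where $F^{(1)} = \c^{-2}(\calh)/\Lambda^{\c^{-2}}$, has image exactly this remaining component.

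Finally, the equation $\iota^{(1)}_-\phi_{\c^{-2}} = \phi_{\c^{-2}\r}\iota^{(0)}_+$ is the bookkeeping just carried out: both sides are the map $\calh/\Lambda \to C(\Gamma_T)$ induced by the isometry $\c^{-2}\co\calh \to \c^{-2}(\calh)$ followed by $\c^{-2}(\calh)\hookrightarrow \mathbb{H}^3 \to \mathbb{H}^3/\Gamma_T$, where on the right one uses $\c^{-2}\r|_\calh = \c^{-2}|_\calh$. The only step that takes any real care is the second paragraph --- verifying that the boundary component newly exposed by the amalgamation is precisely the $\phi_{\c^{-2}\r}$-image of $\partial_- C(\Gamma_T)$ and computing its $\Gamma_T$-stabilizer as $\Lambda^{\c^{-2}}$ --- since once Lemma~\ref{M_TandH} is granted the rest is formal.
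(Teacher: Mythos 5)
Your proposal is correct and follows exactly the route the paper intends: the paper gives no written proof, asserting only that the lemma "follows from Lemma \ref{M_TandH}" together with the stabilizer facts recorded just above it, and your argument is precisely that deduction spelled out — the two-piece decomposition of $C(\Gamma_T)$, the involution $\phi_{\c^{-2}\r}$ exchanging the halves, and the computation $\mathrm{Stab}_{\Gamma_T}(\c^{-2}(\calh)) = \Lambda^{\c^{-2}\r} = \Lambda^{\c^{-2}}$ using $\bar{\Lambda}=\Lambda$ and $\r|_{\calh}=\mathrm{id}$. No gaps; you also correctly read $F^{(1)}$ as $\c^{-2}(\calh)/\Lambda^{\c^{-2}}$, consistent with Definitions \ref{geometric pieces}.
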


$C(\Gamma_T)$ is a geometric model for the double of $(S^2 \times I,T_0)$ across $(S^2 \times \{1\},\partial_+ T_0)$.  Note that the double of $S^2 \times I$ across $S^2 \times \{1\}$ is again homeomorphic to $S^2 \times I$, by a map taking $(p,t) \in S^2 \times I$ to $(p,t/2)$ and $(p,t) \in \overline{S^2 \times I}$ to $(p,1-t/2)$.

\begin{dfn}  Let $(S^2 \times I,T)$ be the double of $(S^2 \times I,T_0)$ across $(S^2 \times \{1\},\partial_+ T_0)$.  We will identify $(S^2 \times I,T_0) \subset (S^2 \times I,T)$ with its image under the map discussed above, so that $T_0 = T \cap S^2 \times [0,1/2]$.  In particular, we have $\bound_- T = \bound_- T_0 = T \cap S^2 \times \{0\}$ and $\bound _+ T_0 = T \cap S^2 \times \{ 1/2\}$, and we will take  $\bound _+ T = T \cap S^2 \times \{ 1\}$.  \end{dfn}

\begin{figure}[ht]

\setlength{\unitlength}{.1in}

\begin{picture}(40,15)

\put(11.5,0) {\includegraphics[height= 1.45in]{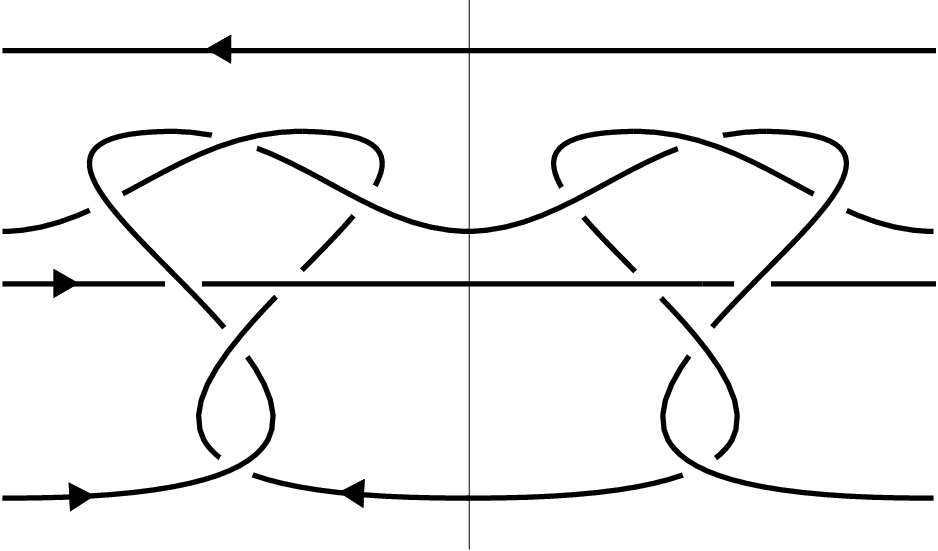}}

\put(17,14){$e$}

\put(13,11){$y$}

\put(21.5, 2.25){$t$}

\put(22.75, 9.25){$q$}

\put(21.5, 5.5){$z$}

\put(12.5, 2.25){$a$}

\put(11.5, 9){$v$}

\put(12.5, 5.5){$u$}

\end{picture}

\caption{The tangle $T \subset S^2\times I$ with labeled Wirtinger generators for $T_0$}  \label{T}
\label{fig:T}

\end{figure}

The tangle $(S^2 \times I,T)$ is pictured in Figure \ref{fig:T}, with $T_0 \subset T$ visible to the left of the grey vertical line representing $S^2 \times \{1/2\}$.  There is a mirror symmetry of $(S^2\times I,T)$, visible in the figure as reflection through the grey vertical line:
$$ r_T \co (S^2 \times I, T) \rightarrow (S^2 \times I, T)  $$
given by $r_T(p,x)=(p,1-x)$, hence fixing $(S^2 \times \{1/2\},\partial_+ T_0)$.

\begin{prop}\label{Tmap} There is a homeomorphism $f_T \co S^2 \times I -T \rightarrow C(\Gamma_T)$, 
which restricts on $S^2 \times [0,1/2]-T_0$ to $f_{T_0}$ followed by the natural map, such that the diagram below commutes.
\[ \xymatrix{  S^2 \times I -T \ar[d]_{r_T} \ar[r]^{f_T} & C(\Gamma_T) \ar[d]^{\phi_{\c^{-2}\r}} \\ S^2 \times I-T \ar[r]^{f_T} & C(\Gamma_T) } \]  
Furthermore, $f_T$ takes $S^2 \times \{0\} - \partial_- T$ to $\partial_- C(\Gamma_T)$ and $S^2 \times \{1\} - \partial_+ T$ to $\partial_+ C(\Gamma_T)$.  \end{prop}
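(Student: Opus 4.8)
The plan is to build $f_T$ by doubling the homeomorphism $f_{T_0}$ of Proposition \ref{T0map} across the appropriate four-punctured sphere, exactly mirroring on the topological side the doubling construction that produced $\Gamma_T$ from $\Gamma_{T_0}$ in Lemma \ref{M_TandH} and Corollary \ref{the real M_T}. First I would recall that, by the definition of $(S^2\times I,T)$ as the double of $(S^2\times I,T_0)$ across $(S^2\times\{1\},\partial_+T_0)$, the mirror symmetry $r_T$ identifies $S^2\times[1/2,1]-T$ with a copy of $S^2\times[0,1/2]-T_0$, glued along $S^2\times\{1/2\}-\partial_+T_0$. On the geometric side, Corollary \ref{the real M_T} gives an isometry $p_T\co M_T\to C(\Gamma_T)$ expressing $C(\Gamma_T)$ as $C(\Gamma_{T_0})\cup_{F'}C(\overline\Gamma_{T_0}^{\c^{-2}})$, with $\phi_{\c^{-2}\r}$ the orientation-reversing involution exchanging the two halves and fixing $F'$. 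So both source and target are doubles across a distinguished four-punctured sphere, and the obvious move is to set $f_T$ equal to $f_{T_0}$ (followed by the natural map $C(\Gamma_{T_0})\to C(\Gamma_T)$) on $S^2\times[0,1/2]-T_0$, and equal to $\phi_{\c^{-2}\r}\circ(\text{natural map})\circ f_{T_0}\circ r_T$ on $S^2\times[1/2,1]-T$.

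The key steps, in order, are: (1) check that these two partial definitions agree on the overlap $S^2\times\{1/2\}-\partial_+T_0$, so they glue to a well-defined continuous map; (2) check that the glued map is a homeomorphism onto $C(\Gamma_T)$; (3) verify the commuting square with $r_T$ and $\phi_{\c^{-2}\r}$; and (4) identify the images of the two boundary spheres. Step (1) is where the real content lies. On the overlap, the first definition gives $f_{T_0}$ restricted to $S^2\times\{1\}-\partial_+T_0$, which by Corollary \ref{T0boundary} maps onto $\partial_+C(\Gamma_{T_0})$, i.e.\ onto $F'=\k(\calh)/\Lambda^\k$ sitting inside $C(\Gamma_T)$; the second definition gives $\phi_{\c^{-2}\r}$ applied to the same thing, and since $\phi_{\c^{-2}\r}$ fixes $F'$ pointwise (Lemma \ref{M_TandH}), the two agree. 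One must be slightly careful that the identification used in doubling matches the one used for $\Gamma_T$, i.e.\ that the natural involution of $F'$ coming from $r_T$ corresponds under $f_{T_0}$ to the restriction of $\phi_{\c^{-2}\r}$; this is precisely the kind of equivariance established just before Corollary \ref{T0boundary} (the visible involution of $S^2\times I-T_0$ is isotopic to the pullback of $\phi_\k$), combined with the fact that $r_T$ restricts to $\mathrm{id}$ on $S^2\times\{1/2\}$ by construction.

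For step (2), a map between doubles that restricts to a homeomorphism on each half and is compatible on the separating surface is itself a homeomorphism — this follows from the gluing lemma for topological spaces together with invariance of domain, or simply because $f_T$ has an inverse built the same way from $f_{T_0}^{-1}$. Step (3) is then essentially a tautology: on $S^2\times[0,1/2]-T_0$, precomposing with $r_T$ swaps to the other half, where $f_T$ is by definition $\phi_{\c^{-2}\r}$ composed with the first-half formula; since $\phi_{\c^{-2}\r}$ is an involution, the square commutes, and symmetry handles the other half. Step (4): $f_T$ sends $S^2\times\{0\}-\partial_-T=S^2\times\{0\}-\partial_-T_0$ via $f_{T_0}$ to $\partial_-C(\Gamma_{T_0})=\partial_-C(\Gamma_T)$ by Corollary \ref{T0boundary} and Lemma \ref{M_Tboundary}; and $S^2\times\{1\}-\partial_+T=r_T(S^2\times\{0\}-\partial_-T)$ maps to $\phi_{\c^{-2}\r}(\partial_-C(\Gamma_T))=\partial_+C(\Gamma_T)$. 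The main obstacle is bookkeeping in step (1): matching the concrete gluing in the definition of the double of $T_0$ with the conjugation-by-$\c^{-2}\r$ picture of $\Gamma_T$, and confirming the two four-punctured-sphere identifications coincide on the nose rather than merely up to isotopy — but the equivariance statements already proved for $f_{T_0}$ are exactly what is needed, so this should go through without new ideas.
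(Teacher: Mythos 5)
Your proposal is correct and follows essentially the same route as the paper's proof: define $f_T$ on $S^2\times[0,1/2]-T_0$ as $f_{T_0}$ followed by the natural map and on the other half as $\phi_{\c^{-2}\r}\circ f_T\circ r_T$, with well-definedness coming from the fact that $r_T$ fixes $S^2\times\{1/2\}-\partial_+T_0$ and $\phi_{\c^{-2}\r}$ fixes $F'$, equivariance built in by construction, and the boundary identifications from Corollary \ref{T0boundary} and Lemma \ref{M_Tboundary}. No substantive differences.
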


\begin{proof}  We define $f_T$ using the properties described in the statement of the proposition.  Namely, we first require $f_T$ to restrict on $S^2 \times [0,1/2] - T_0$ to the homeomorphism $f_{T_0}$ defined in Proposition \ref{T0map}, followed by the natural map $C(\Gamma_{T_0}) \to C(\Gamma_T)$.  For $x \in S^2 \times [1/2,1] - T$, we define $f_T(x) = \phi_{\c^{-2}\r} f_T\, r_T(x)$.  The resulting map is well-defined, since $r_T$ fixes $S^2 \times \{1/2\} - \partial_+ T_0$ and $\phi_{\c^{-2}\r}$ fixes $F'$.  It is a homeomorphism, since $r_T$, $f_{T_0}$, and $\phi_{\c^{-2}\r}$ are.  By Corollary \ref{T0boundary}, $f_{T_0}$ takes $S^2 \times \{0\} - \partial_- T_0$ to $\partial_- C(\Gamma_{T0})$; it therefore follows from the definitions and Lemma \ref{M_Tboundary} that $f_T(S^2 \times \{0\} - \partial_- T) = \partial_- C(\Gamma_T)$.  The conclusion thus follows from the reflection equivariance of $f_T$.  \end{proof}

\begin{dfns}\label{L_n dfns}  
\[\] \vspace{-.4in}
\begin{enumerate}
\item Let $j \co (\partial B^3, \partial S) \rightarrow (S^2 \times \{0\}, \partial_- T)$ be the homeomorphism such that $(B^3,S) \cup_{j} (S^2 \times I, T)$ is the tangle pictured in Figure \ref{tanglepic}. 
\bigskip
\item Define $h \co S^2 \times \mathbb{R} \rightarrow S^2 \times \mathbb{R}$ by $h(p,x)=(p, x+1)$, and with $T \subset S^2 \times I \subset S^2 \times \mathbb{R}$, let $T^{(i)} = h^{i-1}(T)$ (so $T^{(1)} = T$ in particular).  For $n \in \mathbb{N}$, define
\[  (S^3,L_n)\ = \ (B^3, S) \, \cup_j \, \left(S^2 \times [0,n],  \cup_{i=1}^{n} T^{(i)} \right) \cup_{j_n} (\overline{B}^3,\overline{S}).\]
For $i \in \{0,1,\hdots,n\}$, let $S^{(i)}$ be the image in $(S^3,L_n)$ of $S^2 \times \{i\}\subset S^2\times[0,n]$.    Above, $(\overline{B}^3,\overline{S}) = r_S(B^3,S)$, where $r_S$ is the reflective involution of $S^3$ fixing the boundary of an embedding of $B^3$ and exchanging its sides, and $j_n  \ = \ r_S j^{-1} r_T h^{-n+1} \co \left(S^{(n)}, \bound_+ T^{(n)}\right) \longrightarrow  \left( \bound \overline{B}^3, \bound \overline{S}\right)$. 
\item   Using Figure \ref{fig:T} and taking $T \subset S^2 \times I \subset S^2 \times \mathbb{R}$, label the points of $S^{(0)} \cap L_n = S^2 \times \{0\} \cap T$ by $2$, $3$, $4$, and $1$ top-to-bottom, so that for example $2$ is the terminal point of the tangle string labeled $e$ and $1$ is the initial point of the string labeled $a$.  Label each point of $S^{(i)} \cap L_n$ by its image under $h^{-i}$.
\end{enumerate}  \end{dfns}

\begin{figure}
\begin{center}
\includegraphics[height=1.15in]{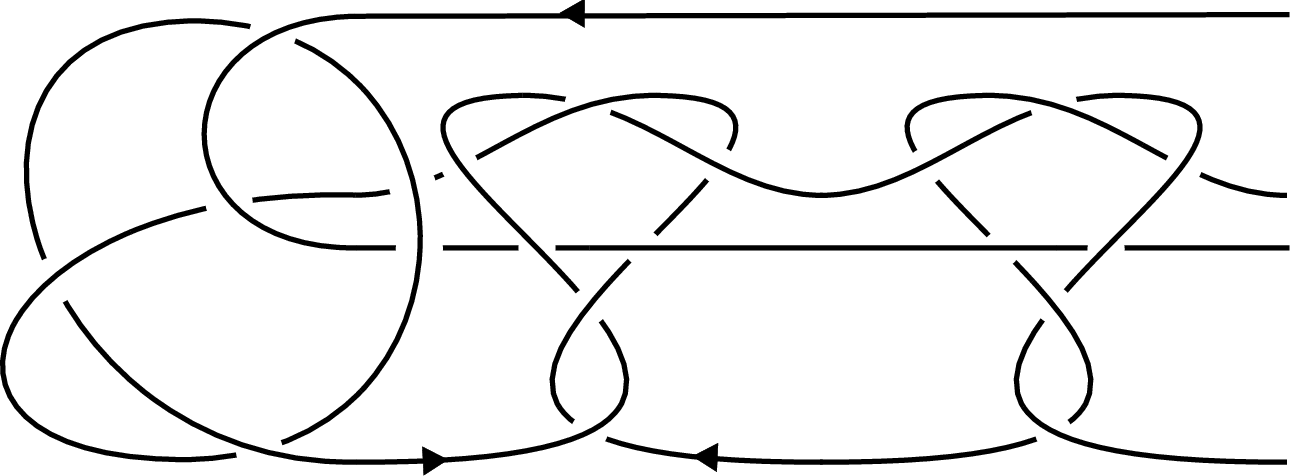}
\end{center}
\caption{$S \cup T$} \label{tanglepic}
\end{figure}

\begin{remark}  With Wirtinger generators for $\pi_1(B^3 - S)$ and $\pi_1(S^2 \times I - T)$ as labeled in Figures \ref{SandT_0} and \ref{fig:T}, we have $j_*(a) = a$, $j_*(u) = u$, and $j_*(v) = v$.\end{remark}

We now construct a geometric model of $S^3-L_n$.

\begin{dfns}\label{geometric pieces} 
\[\] \vspace{-.4in}
 \begin{enumerate}
\item For $i \geq 0$, let $\Lambda^{(i)}= \Lambda^{\c^{-2i}}$ and $F^{(i)}= \c^{-2i}(\mathcal{H})/\Lambda^{(i)}$
\item For $i \geq 1$, let $\Gamma_T^{(i)}=\Gamma_T^{\c^{-2(i-1)}}$, and define $\phi_i = \phi_{\c^{-2(i-1)}} \co C(\Gamma_T) \rightarrow C(\Gamma_T^{(i)})$.
\end{enumerate} \end{dfns}

The definitions above of $F^{(0)}$ and $F^{(1)}$ above agree with our previous definitions.  Also, $\Gamma^{(1)}_T = \Gamma_T$, and since $\Gamma^{(i)}_T = \c^{-2(i-1)}\Gamma_T \c^{2(i-1)}$, Lemma \ref{M_Tboundary} implies that
$$\Lambda^{(i-1)} = \mathrm{Stab}_{\Gamma^{(i)}_T}(\c^{-2(i-1)}(\calh))\ \ \mbox{and}\ \ \Lambda^{(i)} = \mathrm{Stab}_{\Gamma^{(i)}_T}(\c^{-2i}(\calh)),$$ 
and the resulting natural maps $\iota^{(i-1)}_+ \co F^{(i-1)} \to C(\Gamma^{(i)}_T)$ and $\iota^{(i)}_- \co F^{(i)} \to C(\Gamma^{(i)}_T)$ map to the components of its totally geodesic boundary.

\begin{prop}\label{geometric model}  For $n \in \mathbb{N}$, define 
$M_n = C(\Gamma_S) \cup C(\Gamma_T^{(1)}) \cup \hdots \cup C(\Gamma_T^{(n)}) \cup C(\overline{\Gamma}_S)$, using gluing maps defined as follows: \begin{align*} 
  &\iota^{(0)}_+(\iota^{(0)}_-)^{-1}\co \partial C(\Gamma_S) \to \partial_- C(\Gamma_T^{(1)})  \\
  &\iota^{(i)}_+(\iota^{(i)}_-)^{-1} \co \partial_+ C(\Gamma_T^{(i)}) \to \partial_- C(\Gamma_T^{(i+1)})\  \mathrm{for}\ 1\leq i<n \\
  &\phi_{\r}\iota^{(0)}_-\phi_{n+1}^{-1}(\iota^{(n)}_-)^{-1} \co \partial_+ C(\Gamma_T^{(n)}) \to \partial C(\overline{\Gamma}_S) \end{align*}
There is a homeomorphism $f_n\co S^3 - L_n \to M_n$ which restricts on $B^3 - S$ to $f_S$, on $S^2 \times [i-1,i] - T^{(i)}$ to $\phi_i f_T h^{-i+1}$ for $1 \leq i \leq n$, and on $\overline{B}^3-\overline{S}$ to $\phi_{\r} f_S r_S$. \end{prop}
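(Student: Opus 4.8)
The plan is to realize $M_n$ as the convex core of a Kleinian group assembled by the combination theorem, and then to build $f_n$ piece by piece out of $f_S$ and $f_T$. First, $M_n$ is a genuine complete hyperbolic $3$-manifold: each gluing map named in the statement is a composition of the boundary isometries $\iota^{(\cdot)}_\pm$ (Lemma \ref{M_Sboundary} and the discussion following Definitions \ref{geometric pieces}) with $\phi_{n+1}=\phi_{\c^{-2n}}$ and $\phi_\r$, hence carries one totally geodesic four-punctured-sphere boundary component of a piece isometrically onto another, and each piece is complete with totally geodesic boundary. To see $M_n$ is a convex core, replace the last piece by the conjugate group $\overline{\Gamma}_S^{\c^{-2n}}$ (whose core is isometric to $C(\overline{\Gamma}_S)$ via $\phi_{\c^{2n}}$) and apply Lemma \ref{Maskit} repeatedly: to $\Gamma_S$ and $\Gamma_T^{(1)}$ along $\calh$, then to $\langle\Gamma_S,\Gamma_T^{(1)}\rangle$ and $\Gamma_T^{(2)}$ along $\c^{-2}(\calh)$, and so on, ending with $\overline{\Gamma}_S^{\c^{-2n}}$ along $\c^{-2n}(\calh)$. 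This identifies $M_n$ isometrically with $C(\Gamma_n)$, where $\Gamma_n\doteq\langle\Gamma_S,\Gamma_T^{(1)},\ldots,\Gamma_T^{(n)},\overline{\Gamma}_S^{\c^{-2n}}\rangle$.

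The point requiring care at each stage is the \meetcute hypothesis of Lemma \ref{Maskit}. I would verify it inductively, carrying along the claims that $\Delta_k\doteq\langle\Gamma_S,\Gamma_T^{(1)},\ldots,\Gamma_T^{(k)}\rangle$ is Kleinian, that $\mathrm{Hull}(\Delta_k)$ lies in the closed half-space bounded by $\c^{-2k}(\calh)$ containing all the pieces and has $\c^{-2k}(\calh)$ among its boundary planes, that $C(\Delta_k)$ has the single boundary component $\iota^{(k)}_-(F^{(k)})$, and that $\mathrm{Stab}_{\Delta_k}(\c^{-2k}(\calh))=\Lambda^{(k)}$. The base case $\Delta_0=\Gamma_S$ follows from Corollary \ref{M_SandG} (the octahedron $\calp_1$ is embedded above $\calh$ with external face $A\subset\calh$) and Lemma \ref{M_Sboundary}. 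For the step, $\Gamma_T^{(k+1)}=\Gamma_T^{\c^{-2k}}$ has hull the $\c^{-2k}$-translate of the slab between $\calh$ and $\c^{-2}(\calh)$ supplied by Lemmas \ref{M_TandH} and \ref{M_Tboundary}, hence lies in the closed half-space on the \emph{other} side of $\c^{-2k}(\calh)$ with $\c^{-2k}(\calh)$ a boundary plane, and $\mathrm{Stab}_{\Gamma_T^{(k+1)}}(\c^{-2k}(\calh))=\Lambda^{(k)}$ by the remarks after Definitions \ref{geometric pieces}; since the two hulls lie on opposite closed sides of $\c^{-2k}(\calh)$, each containing that whole plane, their intersection is exactly $\c^{-2k}(\calh)$, so $\Delta_k$ and $\Gamma_T^{(k+1)}$ \meetcute there. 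Lemma \ref{Maskit} then gives $\Delta_{k+1}$ Kleinian with $C(\Delta_{k+1})=C(\Delta_k)\cup_{F^{(k)}}C(\Gamma_T^{(k+1)})$, and the remaining claims propagate using Lemma \ref{M_Tboundary}. The last piece $\overline{\Gamma}_S^{\c^{-2n}}$ has hull the $\c^{-2n}$-translate of the reflection of $\mathrm{Hull}(\Gamma_S)$ in $\calh$, hence lies below $\c^{-2n}(\calh)$ with boundary plane $\c^{-2n}(\calh)$, so the argument runs once more and $C(\Gamma_n)$ has empty boundary.

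Now for $f_n$. Cutting along the $S^{(i)}$ exhibits $S^3-L_n$ as the union of $B^3-S$, the pieces $S^2\times[i-1,i]-T^{(i)}$ (each carried to $S^2\times I-T$ by $h^{-i+1}$), and $\overline{B}^3-\overline{S}=r_S(B^3-S)$; I would define $f_n$ on these by $f_S$, $\phi_i f_T h^{-i+1}$, and $\phi_\r f_S r_S$ respectively, each a homeomorphism onto the matching piece of $M_n$. The restriction to $S^2\times[0,n]-\cup_{i=1}^n T^{(i)}$ is automatically well-defined: for $0<i<n$ and $x\in S^{(i)}-L_n$ one has $h^{-i+1}(x)=r_T(h^{-i}(x))$, so $f_T(h^{-i+1}(x))=\phi_{\c^{-2}\r}f_T(h^{-i}(x))$ by Proposition \ref{Tmap}, and feeding this through $\phi_i$ on one side and $\phi_{i+1}=\phi_{\c^{-2}}\circ\phi_i$ on the other, using $\iota^{(1)}_-\phi_{\c^{-2}}=\phi_{\c^{-2}\r}\iota^{(0)}_+$ (Lemma \ref{M_Tboundary}) together with the evident compatibilities of $\phi_i,\phi_{i+1}$ with the $\iota^{(\cdot)}_\pm$, makes the two formulas agree after the gluing map $\iota^{(i)}_+(\iota^{(i)}_-)^{-1}$ of $M_n$ is applied. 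Across $S^{(0)}$ I would match $f_S$ with $f_T$: on $\partial B^3-S$ the maps $f_S$ and $f_T\circ j$ induce the same isomorphism onto $\Lambda$ --- both send $a\mapsto\p_1^{-1}$ and $v\mapsto\p_3^{-1}$ by Propositions \ref{Smap} and \ref{Tmap}, and $j_*$ fixes $a,u,v$ by the remark after Definitions \ref{L_n dfns} --- so, since this data pins down a self-homeomorphism of a four-punctured sphere up to isotopy, after an isotopy of $f_S$ in a collar of $\partial B^3-S$ (changing neither its homotopy class nor $f_{S*}$) the restrictions agree on the nose. Across $S^{(n)}$ compatibility is then forced: unwinding $j_n=r_S j^{-1}r_T h^{-n+1}$ and using again Proposition \ref{Tmap}, Lemma \ref{M_Tboundary}, and the matching just arranged at $S^{(0)}$, one checks that $\phi_\r f_S r_S\circ j_n$ agrees on $S^{(n)}-L_n$ with $\phi_n f_T h^{-n+1}$ followed by $\phi_\r\iota^{(0)}_-\phi_{n+1}^{-1}(\iota^{(n)}_-)^{-1}$. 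Gluing the now-matching piecewise homeomorphisms produces $f_n$.

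The real work here is bookkeeping rather than depth: keeping straight the cascade of conjugations by $\c^{-2i}$, $\r$, and $\k$ and the many natural identifications $\iota^{(\cdot)}_\pm$ and $\phi_{(\cdot)}$, and in particular confirming that the gluing maps written in the statement are precisely those delivered by the iterated application of Lemma \ref{Maskit} (modulo the isometry $C(\overline{\Gamma}_S^{\c^{-2n}})\to C(\overline{\Gamma}_S)$ induced by $\c^{2n}$). I expect matching $f_S$ with $f_T$ along $S^{(0)}$ --- hence the isotopy and the $\pi_1$-comparison above --- to be the only place where something genuinely has to be \emph{arranged}; everything else is dictated by the definitions and by the equivariance built into Propositions \ref{Tmap} and Lemma \ref{M_Tboundary}.
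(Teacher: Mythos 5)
Your proof is correct and, for the content of this proposition, takes essentially the same route as the paper's: define $f_n$ piecewise by $f_S$, $\phi_i f_T h^{-i+1}$, and $\phi_{\r} f_S r_S$, then verify well-definedness on the spheres $S^{(i)}$ --- at $S^{(0)}$ by the same $\pi_1$-comparison onto $\Lambda$ followed by an isotopy (the paper isotopes $j$ rather than $f_S$, an immaterial difference), at $0<i<n$ by the identity $r_Th^{-i+1}=h^{-i}$ together with the reflection-equivariance of $f_T$ and Lemma \ref{M_Tboundary}, and at $S^{(n)}$ by the same definition-chase with $j_n$. Your opening Maskit-combination argument identifying $M_n$ with $C(\Gamma_n)$ is not needed here --- $M_n$ is a hyperbolic manifold simply because the stated gluing maps are isometries between totally geodesic boundary components --- and it duplicates the paper's Proposition \ref{algebraic model}, whose proof runs the same induction you sketch.
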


\begin{proof}  We will use the description of $f_n$ above as its definition.  Then by Proposition \ref{Smap} and the definitions, $f_n$ restricts on $B^3 - S$ and $\overline{B}^3-\overline{S}$ to homeomorphisms to $C(\Gamma_S)$ and $C(\overline{\Gamma}_S)$, respectively.  By Proposition \ref{Tmap} and the definitions, for each $i$ between $1$ and $n$ it restricts on $S^2 \times [i-1,i] - T^{(i)}$ to a homeomorphism to $C(\Gamma_T^{(i)})$.  Thus in order to show that $f_n$ is a homeomorphism, we must only show that it is well defined on the spheres $S^{(i)}-\{1,2,3,4\}$ that separate these tangle complements.

We first check the case $i=1$, showing that $f_n$ is well defined on $S^{(0)}-\{1,2,3,4\}$.  Since $T^{(1)}=T$ and $\Gamma_T^{(1)}=\Gamma_T$, and $h^0$ and $\phi_1$ are each the identity map, in this case  we must only show that on $\partial B^3 - \partial S$, $f_T \circ j = \iota^{(0)}_+ (\iota^{(0)}_-)^{-1} \circ f_S$.

By their definitions above, $f_S$ and $f_T \circ j$ induce the same isomorphism from $\pi_1 (\partial B^3 - \partial S)$ to $\Lambda = \Gamma_S \cap \Gamma_T$.  Recall from Lemma \ref{M_Sboundary} and the remarks above Lemma \ref{M_Tboundary} that  the $\iota^{(0)}_{\pm}$ are induced by the inclusions of $\Lambda$ into $\Gamma_S$ and $\Gamma_T$.  Therefore at the level of fundamental group, $\left(\iota^{(0)}_+(\iota^{(0)}_-)^{-1} \circ f_S\right)_* = \left(f_T \circ j\right)_*$.  Since any two homeomorphisms between 4-punctured spheres that induce the same map on fundamental groups are properly isotopic, we may isotope $j$ so that $f_S$ and $f_T j$ agree on $S^{(0)}$.  The conclusion thus follows in this case.

For $1\leq i<n$, we may use the fact that $\Gamma^{(i)}_T$ and $\Gamma^{(i+1)}_T$ are conjugates of $\Gamma_T$ to obtain the following model descriptions for $\iota^{(i)}_+$ and $\iota^{(i)}_-$:  \begin{align}\label{model i's}
 & \iota^{(i)}_+ = \phi_{i+1} \iota^{(0)}_+ \phi_{i+1}^{-1} &
 & \iota^{(i)}_- = \phi_{i} \iota^{(1)}_- \phi_{i}^{-1}  \end{align}
Here $\iota^{(0)}_+ \co F^{(0)} \to \partial_- C(\Gamma_T)$ and $\iota^{(1)}_- \co F^{(1)} \to \partial_+ C(\Gamma_T)$ are the natural maps of Lemma \ref{M_Tboundary}.  Using the reflection-invariance property described there, we thus obtain \begin{align}\label{reflection i's}
  \iota^{(i)}_+(\iota^{(i)}_-)^{-1} = \phi_{i+1} \iota^{(0)}_+\left( \iota^{(1)}_- \phi_{2} \right)^{-1} \phi_{i}^{-1} = \phi_{i+1} \phi_{\c^{-2}\r}^{-1} \phi_{i}^{-1}.  \end{align}
Then by the reflection-equivariance property of $f_T$ from Proposition \ref{Tmap}, we have
$$ \iota^{(i)}_+(\iota^{(i)}_-)^{-1} \circ \phi_{i} f_T h^{-i+1} = \phi_{i+1} \phi_{\c^{-2}\r}^{-1} f_T h^{-i+1} = \phi_{i+1} f_T r_T h^{-i+1}.  $$
It follows directly from the definitions that $r_T h^{-i+1} = h^{-i}$ on $S^{(i)}$, whence $f_n$ is well defined on $S^{(i)}-\{1,2,3,4\}$ for $1 \leq i <n$.

To show $f_n$ is well defined on $S^{(n)}$ requires another definition-chase, this time to check
$$  \phi_{\r}f_Sr_S \circ j_n = \phi_{\r}\iota_-^{(0)} \phi_{n+1}^{-1} (\iota^{(n)}_-)^{-1} \circ \phi_n f_T h^{-n+1}. $$
By Definition \ref{L_n dfns}(2), $j_n =  r_Sj^{-1}r_Th^{-n+1}$; therefore simplifying the left-hand side above  yields $\phi_{\r} f_S j^{-1} r_T h^{-n+1}$.  On the other hand, using the model description of $\iota^{(n)}_-$ from (\ref{model i's}), the right-hand side above simplifies to $\phi_{\r}\iota_-^{(0)}\phi_{2}^{-1}(\iota^{(1)}_-)^{-1}f_Th^{-n+1}$.  The reflection-invariance property of Lemma \ref{M_Tboundary} and an appeal to the case $i=0$ establish the desired equation.  \end{proof}

\begin{cor}\label{geodesic spheres}  For $0 \leq i<n$, refer again by $F^{(i)}$ to the image of $\iota^{(i)}_+(F^{(i)}) \subset C(\Gamma_T^{(i)})$ under its inclusion into $M_n$, and refer by $F^{(n)}$ to the image of $\iota^{(n)}_-(F^{(n)})$.  For each $i$, $F^{(i)}$ is totally geodesic in $M_n$ and $f_n(S^{(i)}-\{1,2,3,4\}) = F^{(i)}$.  \end{cor}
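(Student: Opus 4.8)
\textbf{Proof proposal for Corollary \ref{geodesic spheres}.}

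The plan is to leverage the structure already built in Proposition \ref{geometric model} together with Lemma \ref{Maskit}. The key point is that each $M_n$ is assembled from convex cores of the groups $\Gamma_S$, $\Gamma_T^{(1)},\ldots,\Gamma_T^{(n)}$, $\overline{\Gamma}_S$ by gluing along totally geodesic boundary components via isometries, and the surfaces $F^{(i)}$ are precisely these gluing loci. First I would set up, for each $i$ with $0\leq i\leq n$, the pair of Kleinian groups sitting on either side of $F^{(i)}$: for $0<i<n$ these are $\Gamma_T^{(i)}$ and $\Gamma_T^{(i+1)}$, for $i=0$ they are $\Gamma_S$ and $\Gamma_T^{(1)}$, and for $i=n$ they are $\Gamma_T^{(n)}$ and $\overline{\Gamma}_S$ (appropriately conjugated to sit concretely in $\mathbb{H}^3$). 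I would verify that each such pair \meetcute along the geodesic hyperplane $\c^{-2i}(\calh)$, using Lemmas \ref{M_Sboundary}, \ref{M_T0boundary}, \ref{M_Tboundary} which identify the relevant stabilizers with $\Lambda^{(i)}$, and then apply Lemma \ref{Maskit} to conclude that in the ambient group the image of $\c^{-2i}(\calh)/\Lambda^{(i)}$ under the natural map is exactly the common convex core boundary component, hence totally geodesic in $M_n$.

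More carefully, I would argue inductively that $M_n$ is isometric to $C(\Gamma_n)$ for a single Kleinian group $\Gamma_n = \langle \Gamma_S, \Gamma_T^{(1)},\ldots,\Gamma_T^{(n)},\overline{\Gamma}_S^{(\cdot)}\rangle$ obtained by iterated amalgamation along the $\Lambda^{(i)}$, with each $F^{(i)}$ mapping to $\c^{-2i}(\calh)/\mathrm{Stab}_{\Gamma_n}(\c^{-2i}(\calh))$ — a quotient of a geodesic hyperplane, hence totally geodesic. The inductive step is exactly an application of Lemma \ref{Maskit}: having built $C$ of the union of the first $i$ pieces as a Kleinian group $\Theta_i$ whose convex core has $\c^{-2i}(\calh)/\Lambda^{(i)}$ as a boundary component (this uses that $\Lambda^{(i)}$ is the stabilizer of that hyperplane in both $\Theta_i$ and the next piece), I check the \meetcute hypothesis and glue on $C(\Gamma_T^{(i+1)})$ (or $C(\overline{\Gamma}_S)$ at the last stage), noting the gluing maps in Proposition \ref{geometric model} are precisely the $\iota^{(i)}_\pm$ natural maps that Lemma \ref{Maskit} uses. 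The second claim, $f_n(S^{(i)}-\{1,2,3,4\}) = F^{(i)}$, then follows immediately from the explicit description of $f_n$ in Proposition \ref{geometric model}: on the piece $S^2\times[i-1,i]-T^{(i)}$ the map is $\phi_i f_T h^{-i+1}$, and by Proposition \ref{Tmap} $f_T$ carries $S^2\times\{1\}-\partial_+ T$ to $\partial_+ C(\Gamma_T)$, which $\phi_i$ identifies with $\iota^{(i)}_+(F^{(i)})$; similarly $S^{(0)}$ maps correctly via $f_S$ and Proposition \ref{Smap}. The image of the separating sphere is therefore the image of the relevant convex-core boundary component, which is $F^{(i)}$ by definition.

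The main obstacle I anticipate is bookkeeping rather than substance: one must be careful that the iterated amalgamation genuinely satisfies the \meetcute hypothesis at each stage, i.e.\ that $\mathrm{Hull}$ of the already-assembled group meets $\mathrm{Hull}$ of the incoming piece in exactly the hyperplane $\c^{-2i}(\calh)$ and not in a larger set, and that the stabilizer there is $\Lambda^{(i)}$ in both. This requires knowing that $\mathrm{Hull}(\Theta_i)$ lies entirely on one side of $\c^{-2i}(\calh)$ — which follows because $\c^{-2i}(\calh)$ and $\c^{-2(i-1)}(\calh)$ are the two boundary hyperplanes of $\mathrm{Hull}(\Gamma_T^{(i)})$ (as in the proof of Lemma \ref{M_TandH}) and an inductive "nesting" argument for the earlier pieces — and is essentially already contained in the proof of Proposition \ref{geometric model}, so I would cite that proposition directly rather than redo the combination theory. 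Indeed, the cleanest route is simply: Proposition \ref{geometric model} exhibits $M_n$ as built by gluing the $C(\Gamma_\bullet)$ along totally geodesic boundary components by isometries, so the gluing loci $F^{(i)}$ are totally geodesic in $M_n$; and the formula for $f_n$ in that proposition shows $f_n$ carries each $S^{(i)}$ to the corresponding gluing locus, which is $F^{(i)}$.
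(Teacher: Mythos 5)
Your closing paragraph is exactly the paper's argument: the corollary is presented as an immediate consequence of Proposition \ref{geometric model}, since the gluing maps are built from the isometric embeddings $\iota^{(i)}_{\pm}$ onto totally geodesic convex-core boundary components, and the explicit restriction formulas for $f_n$ identify each $S^{(i)}-\{1,2,3,4\}$ with the corresponding gluing locus $F^{(i)}$. The longer Maskit-combination detour you sketch is not needed for this statement (it essentially re-derives Proposition \ref{algebraic model}, which the paper proves separately afterwards), so the short route you settle on is the right one.
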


This follows immediately from Proposition \ref{geometric model}, since the maps $\iota^{(i)}_{\pm}$ are isometric embeddings.  The proposition below describes an algebraic model for $M_n$.

\begin{prop}\label{algebraic model}  For $n \in \mathbb{N}$, define $\Gamma_n = \langle \Gamma_S,\Gamma_T^{(1)},\hdots,\Gamma_T^{(n)},\overline{\Gamma}_S^{\c^{-2n}} \rangle$.  There is an isometry $M_n \to \mathbb{H}^3/\Gamma_n$ which restricts on $C(\Gamma_S)$ and each $C(\Gamma_T^{(i)})$ to the natural map, and on $C(\overline{\Gamma}_S)$ to $\phi_{n+1}$ followed by the natural map.  \end{prop}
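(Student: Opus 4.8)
The plan is to obtain the isometry by iterating Maskit's combination theorem in the form of Lemma \ref{Maskit}, attaching the pieces $C(\Gamma_S)$, $C(\Gamma_T^{(1)}),\dots,C(\Gamma_T^{(n)})$, $C(\overline{\Gamma}_S^{\c^{-2n}})$ one at a time along totally geodesic four-punctured spheres. For $0\le k\le n$ write $\Gamma_{\le k}=\langle\Gamma_S,\Gamma_T^{(1)},\dots,\Gamma_T^{(k)}\rangle$, so $\Gamma_{\le 0}=\Gamma_S$ and $\Gamma_{\le n}=\langle\Gamma_S,\Gamma_T^{(1)},\dots,\Gamma_T^{(n)}\rangle$. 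Recall from Definitions \ref{geometric pieces} and the discussion after them that $\Gamma_T^{(i)}=\Gamma_T^{\c^{-2(i-1)}}$, that $\mathrm{Stab}_{\Gamma_T^{(i)}}(\c^{-2(i-1)}(\calh))=\Lambda^{(i-1)}$ and $\mathrm{Stab}_{\Gamma_T^{(i)}}(\c^{-2i}(\calh))=\Lambda^{(i)}$, and that these two geodesic planes carry the boundary components of $C(\Gamma_T^{(i)})$. The key geometric input is that the relevant convex hulls sit inside a stack of slabs with pairwise disjoint interiors: arguing exactly as in the proof of Lemma \ref{M_TandH} (where $\mathrm{Hull}(\Gamma_{T_0})$ lies between $\calh$ and $\k(\calh)$ and $\mathrm{Hull}(\overline{\Gamma}_{T_0}^{\c^{-2}})$ between $\k(\calh)$ and $\c^{-2}(\calh)$), one sees that $\mathrm{Hull}(\Gamma_T)$ lies in the closed region bounded by $\calh$ and $\c^{-2}(\calh)$, hence $\mathrm{Hull}(\Gamma_T^{(i)})$ lies in the closed ``$i$th slab'' bounded by $\c^{-2(i-1)}(\calh)$ and $\c^{-2i}(\calh)$; and from the explicit embedding of $\calp_1$ in Figure \ref{fig:idealoct} (the external face $A$ lies in $\calh$, all other ideal vertices have positive imaginary part) together with Corollary \ref{M_SandG} one sees that $\mathrm{Hull}(\Gamma_S)$ lies in the half-space bounded by $\calh$ on the positive-imaginary side, while $\mathrm{Hull}(\overline{\Gamma}_S^{\c^{-2n}})$ lies in the half-space bounded by $\c^{-2n}(\calh)$ on the far side from all the slabs.

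First I would prove, by induction on $k$, that $\Gamma_{\le k}$ is a (geometrically finite, torsion-free) Kleinian group whose convex core has a single totally geodesic boundary component, namely the image under the natural map $F^{(k)}\to C(\Gamma_{\le k})$ of the inclusion $\Lambda^{(k)}\hookrightarrow\Gamma_{\le k}$, and that $\mathrm{Hull}(\Gamma_{\le k})$ lies in the union of the half-space containing $\mathrm{Hull}(\Gamma_S)$ with the first $k$ slabs, so that $\c^{-2k}(\calh)$ is an honest boundary face of it. The base case $k=0$ is Corollary \ref{M_SandG} and Lemma \ref{M_Sboundary}. For the step, $\mathrm{Hull}(\Gamma_{\le k})$ (in the first $k$ slabs) and $\mathrm{Hull}(\Gamma_T^{(k+1)})$ (in slab $k{+}1$) meet in exactly their common boundary plane $\c^{-2k}(\calh)$, and $\mathrm{Stab}_{\Gamma_{\le k}}(\c^{-2k}(\calh))=\Lambda^{(k)}=\mathrm{Stab}_{\Gamma_T^{(k+1)}}(\c^{-2k}(\calh))$, so $\Gamma_{\le k}$ and $\Gamma_T^{(k+1)}$ \meetcute along $\c^{-2k}(\calh)$ and Lemma \ref{Maskit} applies. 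It gives that $\Gamma_{\le k+1}=\langle\Gamma_{\le k},\Gamma_T^{(k+1)}\rangle$ is Kleinian, $\cong\Gamma_{\le k}\ast_{\Lambda^{(k)}}\Gamma_T^{(k+1)}$, with $C(\Gamma_{\le k+1})\cong C(\Gamma_{\le k})\cup_{F^{(k)}}C(\Gamma_T^{(k+1)})$ via the natural maps. Since $F^{(k)}$ was the only boundary component of $C(\Gamma_{\le k})$ and one of the two of $C(\Gamma_T^{(k+1)})$, the glued core has exactly one boundary component, the image of $F^{(k+1)}$, and $\mathrm{Hull}(\Gamma_{\le k+1})=\mathrm{Hull}(\Gamma_{\le k})\cup\mathrm{Hull}(\Gamma_T^{(k+1)})$ lies in the first $k{+}1$ slabs; this closes the induction.

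Next I would attach the last piece. Using $\overline{\Lambda}=\Lambda$ (valid since $\Lambda<\mathrm{PSL}_2(\mathbb{R})$), one has $\mathrm{Stab}_{\overline{\Gamma}_S^{\c^{-2n}}}(\c^{-2n}(\calh))=\Lambda^{\c^{-2n}}=\Lambda^{(n)}=\mathrm{Stab}_{\Gamma_{\le n}}(\c^{-2n}(\calh))$, and the two hulls lie on opposite sides of $\c^{-2n}(\calh)$, so they \meetcute there; Lemma \ref{Maskit} identifies $C(\Gamma_n)=C(\langle\Gamma_{\le n},\overline{\Gamma}_S^{\c^{-2n}}\rangle)$ with $C(\Gamma_{\le n})\cup_{F^{(n)}}C(\overline{\Gamma}_S^{\c^{-2n}})$. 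Both pieces have their remaining boundary component equal to $F^{(n)}$, so the glued core has empty boundary; hence $\mathrm{Hull}(\Gamma_n)$ is a nonempty convex subset of $\mathbb{H}^3$ with empty frontier, so $\mathrm{Hull}(\Gamma_n)=\mathbb{H}^3$ and $C(\Gamma_n)=\mathbb{H}^3/\Gamma_n$ (alternatively, by Proposition \ref{geometric model} the glued space is homeomorphic to the link complement $S^3-L_n$, which has no geodesic boundary). Composing the $n{+}1$ isometries produced by the successive applications of Lemma \ref{Maskit} gives an isometry $\mathbb{H}^3/\Gamma_n\to C(\Gamma_S)\cup C(\Gamma_T^{(1)})\cup\dots\cup C(\Gamma_T^{(n)})\cup C(\overline{\Gamma}_S)$. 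The last thing to check is that the gluing maps this isometry induces along the $F^{(i)}$ coincide with those prescribed in Proposition \ref{geometric model}: this is a bookkeeping matter of identifying, at each stage, the natural map $F^{(k)}\to C(\Gamma_{\le k})$ with $\iota^{(k)}_+$ followed by the inclusion $C(\Gamma_T^{(k)})\hookrightarrow C(\Gamma_{\le k})$ (with $\iota^{(0)}_-$ at the bottom and the $\phi_{\r}$, $\phi_{n+1}$ conjugations of Corollary \ref{the real M_T} and Lemma \ref{M_Tboundary} at the top), using that natural maps are inclusion-induced and compose. Inverting yields the asserted isometry $M_n\to\mathbb{H}^3/\Gamma_n$ with the stated restrictions.

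The main obstacle is the geometric bookkeeping behind every application of Lemma \ref{Maskit}: confirming that the hull of the group assembled so far really does lie in the predicted union of slabs and half-space and has $\c^{-2k}(\calh)$ as a genuine boundary face, so that its intersection with the hull of the incoming piece is exactly $\c^{-2k}(\calh)$ rather than something smaller or empty. This is where the explicit geometry of $\calp_1$ and $\calp_2$ and the reflective structure built in Lemma \ref{M_TandH} do the work, and where the inductive statement must be phrased carefully; by comparison the stabilizer computations (Lemmas \ref{M_Sboundary}, \ref{M_Tboundary}) and the matching of gluing maps are routine once the conjugation bookkeeping is pinned down.
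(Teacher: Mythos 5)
Your proposal is correct and follows essentially the same route as the paper: the same induction on the partial amalgams $\Gamma^{(k)}_- = \langle\Gamma_S,\Gamma_T^{(1)},\dots,\Gamma_T^{(k)}\rangle$, the same slab/half-space containments of the hulls to verify the \meetcute hypothesis at each stage, repeated application of Lemma \ref{Maskit}, and the same final observation that the last glued core has no boundary and hence equals $\mathbb{H}^3/\Gamma_n$. The bookkeeping identifying the induced gluings with those of Proposition \ref{geometric model} is likewise handled in the paper exactly as you indicate, via the factorization of the natural map $F^{(k)}\to\partial C(\Gamma^{(k)}_-)$ through $\iota^{(k)}_-$.
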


\begin{proof} We first recall from Lemma \ref{M_Sboundary} that the plane $\calh$ with ideal boundary $\mathbb{R}\cup\{\infty\}$ projects to $\partial C(\Gamma_S)$ under the quotient map $\mathbb{H}^3 \to \mathbb{H}^3/\Gamma_S$, so it is a component of $\partial \mathrm{Hull}(\Gamma_S)$.   Because the octahedron $\calp_1$ is contained in $\mathrm{Hull}(\Gamma_S)$ and all its ideal vertices have non-negative imaginary part, it follows that 
$$\mathrm{Hull}(\Gamma_S) \subset \{ z \in \mathbb{C}\,|\,\Im z \geq 0 \}\cup \{\infty\}.$$  
Similarly, from Lemma \ref{M_Tboundary} and the positioning of $\calp_2$ we find that 
$$\mathrm{Hull}(\Gamma_T) \subset \{ z \in \mathbb{C}\,|\,0 \geq \Im z \geq -2\sqrt{2} \}\cup \{\infty\}.$$
Then inspecting the action of $\c$ on $\mathbb{C}\cup\{\infty\}$, we find that for each $i\in\mathbb{N}$, any point of $\mathrm{Hull}(\Gamma_T^{(i)})$ has imaginary part between $-2(i-1)\sqrt{2}$ and $-2i\sqrt{2}$ for $i \in \mathbb{N}$.%(unless the point is at $\infty$).

The claim below builds an inductive picture of a family of isometrically embedded, codimension-0 submanifolds of $M_n$ with totally geodesic boundary. 

\begin{claim}  For $1 \leq i \leq n$, define $\Gamma^{(i)}_- = \langle \Gamma_S,\Gamma_T^{(1)},\hdots,\Gamma_T^{(i)} \rangle$.  There is an isometry $C(\Gamma_S) \cup C(\Gamma_T^{(1)}) \cup \hdots \cup C(\Gamma_T^{(i)}) \to C(\Gamma^{(i)}_-)$, where the gluing maps for the domain are as in Proposition \ref{geometric model}, which restricts on $C(\Gamma_S)$ and each $C(\Gamma^{(j)})$, $j<i$, to the natural map.  Furthermore: \begin{enumerate}
  \item $\Lambda^{(i)} = \mathrm{Stab}_{\Gamma^{(i)}_-}(\c^{-2n}(\calh))$, and the resulting natural map  $F^{(i)} \to \partial C(\Gamma^{(i)}_-)$ factors as $\iota^{(i)}_-$ followed by the natural map $C(\Gamma_T^{(i)}) \to C(\Gamma^{(i)}_-)$.
  \item $\mathrm{Hull}(\Gamma^{(i)}_-) \subset \{ z \in \mathbb{C}\,|\,\Im\ z \geq -i\sqrt{2} \} \cup\{\infty\}.$
\end{enumerate} \end{claim}

\begin{proof}[Proof of claim]  We will prove the claim by induction.  If it holds for some $i<n$, then (1) and (2) above, together with the observations above the claim imply that $\Gamma^{(i)}_-$ and $\Gamma_T^{(i+1)}$ \meetcute along $\c^{-2i}(\calh)$.  Then by Lemma \ref{Maskit}, the natural maps determine an isometry $C(\Gamma^{(i)}_-) \cup C(\Gamma_T^{(i+1)}) \to C(\Gamma^{(i+1)}_-)$, where by the inductive hypothesis and the observation above Proposition \ref{geometric model}, the gluing map for the domain is $\iota^{(i)}_+(\iota^{(i)}_-)^{-1}$ following the inverse of the natural map.

Furthermore, since $C(\Gamma^{(i)}_-)$ has a unique totally geodesic boundary component, which is isometrically identified with $\partial_- C(\Gamma_T^{(i+1)})$ in the isometry to $C(\Gamma^{(i+1)}_-)$ described above, the unique totally geodesic boundary component of $C(\Gamma^{(i+1)}_-)$ is the isometric image of $\partial_+ C(\Gamma_T^{(i+1)})$.  Therefore the observations above Proposition \ref{geometric model} imply that this boundary component is the image of $\iota^{(i+1)}_-(F^{(i+1)})$ under the natural map.  Assertion (1) of the claim thus follows for $\Gamma^{(i+1)}_-$.  It follows that $\mathrm{Hull}(\Gamma^{(i+1)}_-)$ is entirely on one side or the other of the boundary at infinity of $\c^{-2(i+1)}(\calh)$.  Since $\Gamma_T^{(i+1)} < \Gamma^{(i+1)}_-$, assertion (2) now follows.

By our definition of ``natural map'' above Lemma \ref{Maskit}, the composition of the natural map $C(\Gamma_T^{(j)}) \to C(\Gamma^{(i)}_-)$, with the natural map $C(\Gamma^{(i)}_-) \to C(\Gamma^{(i+1)}_-)$ is itself natural, for $j \leq i$.  Hence if the claim holds for $\Gamma^{(i)}_-$, $i<n$, it holds for $\Gamma^{(i+1)}_-$.  The claim will therefore hold by induction if it is true in the base case $i=1$.  But this follows from the fact that $\Gamma_S$ and $\Gamma_T^{(1)}$ \meetcute along $\calh$.  This follows in turn from Lemmas \ref{M_Sboundary} and \ref{M_Tboundary}, which establish that $\Lambda^{(0)} = \mathrm{Stab}_{\Gamma_S}(\calh)=\mathrm{Stab}_{\Gamma_T}(\calh)$, and the first paragraph of the proof.  \end{proof}

Using the claim, it now follows that $\Gamma^{(n)}_-$ and $\overline{\Gamma}_S^{\c^{-2n}}$ \meetcute along $\c^{-2n}(\calh)$; hence a final application of Lemma \ref{Maskit} implies that the natural maps determine an isometry $C(\Gamma^{(n)}_-) \cup C(\overline{\Gamma}_S^{\c^{-2n}}) \to C(\Gamma_n)$.  Since each of $C(\Gamma^{(n)}_-)$ and $C(\overline{\Gamma}_S^{\c^{-2n}})$ has a unique boundary component, $C(\Gamma_n)$ is boundaryless and hence equal to $\mathbb{H}^3/\Gamma_n$.  The conclusion of the proposition follows.
\end{proof}

The result below follows from Proposition \ref{algebraic model}, or really, its proof.

\begin{cor}\label{splitting}  For fixed $n$ and $0 \leq i \leq n$, define \begin{align*}
 & \Gamma^{(i)}_- = \Big\langle \Gamma_S,\Gamma_T^{(1)}, \hdots, \Gamma_T^{(i)} \Big\rangle &
 & \Gamma^{(i)}_+ = \Big\langle \Gamma_T^{(i+1)},\hdots,\Gamma_T^{(n)},\overline{\Gamma}_S^{\c^{-2n}} \Big\rangle,  \end{align*}
Then $\Gamma^{(i)}_+$ and $\Gamma^{(i)}_-$ \meetcute along $\c^{-2i}(\calh)$ and the natural maps determine an isometry $C(\Gamma^{(i)}_-) \cup C(\Gamma^{(i)}_+) \to \mathbb{H}^3/\Gamma_n$.  The isometry of Proposition \ref{algebraic model} factors through this map, so that the component of $M_n - F^{(i)}$ containing $C(\Gamma_S)$ is taken isometrically to its image in $C(\Gamma^{(i)}_-)$.\end{cor}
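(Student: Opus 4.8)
The plan is to run the inductive construction of the Claim in the proof of Proposition~\ref{algebraic model} a second time, starting from the opposite end of the tower, and then to apply Lemma~\ref{Maskit} once more. The Claim already supplies, for $0\le i\le n$, that $\Lambda^{(i)}=\mathrm{Stab}_{\Gamma^{(i)}_-}(\c^{-2i}(\calh))$, that $C(\Gamma^{(i)}_-)$ has a unique totally geodesic boundary component — the image of $\iota^{(i)}_-(F^{(i)})$ — and that $\mathrm{Hull}(\Gamma^{(i)}_-)$ lies in the half-space bounded by $\c^{-2i}(\calh)$ that contains $\mathrm{Hull}(\Gamma_S)$ (the case $i=0$ being immediate, since $\Gamma^{(0)}_-=\Gamma_S$). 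All that is missing is the mirror-image statement for the $\Gamma^{(i)}_+$.

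First I would prove, by downward induction on $i$, that $\Lambda^{(i)}=\mathrm{Stab}_{\Gamma^{(i)}_+}(\c^{-2i}(\calh))$, that $C(\Gamma^{(i)}_+)$ has a unique totally geodesic boundary component, namely the image of $\iota^{(i)}_+(F^{(i)})$, and that $\mathrm{Hull}(\Gamma^{(i)}_+)$ lies in the half-space bounded by $\c^{-2i}(\calh)$ \emph{opposite} the one containing $\mathrm{Hull}(\Gamma_S)$. The base case $i=n$ has $\Gamma^{(n)}_+=\overline{\Gamma}_S^{\c^{-2n}}$; here the assertions follow from the corresponding facts for $\Gamma_S$ (recorded in the first paragraph of the proof of Proposition~\ref{algebraic model}) by applying the reflection $\r$ and conjugating by $\c^{-2n}$, using that $\Lambda<\mathrm{PSL}_2(\mathbb{R})$, so $\overline{\Lambda}=\Lambda$ and $\mathrm{Stab}_{\overline{\Gamma}_S^{\c^{-2n}}}(\c^{-2n}(\calh))=\Lambda^{\c^{-2n}}=\Lambda^{(n)}$. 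The step passing from $i$ to $i-1$ is identical in form to that of the Claim: the positioning of $\mathrm{Hull}(\Gamma_T^{(i)})$ recorded before the Claim, together with $\Lambda^{(i)}=\mathrm{Stab}_{\Gamma_T^{(i)}}(\c^{-2i}(\calh))$ and the inductive hypothesis, shows $\Gamma_T^{(i)}$ and $\Gamma^{(i)}_+$ \meetcute along $\c^{-2i}(\calh)$; Lemma~\ref{Maskit} then produces an isometry $C(\Gamma_T^{(i)})\cup C(\Gamma^{(i)}_+)\to C(\Gamma^{(i-1)}_+)$, from which the assertions for $\Gamma^{(i-1)}_+$ follow just as in the Claim.

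With both halves in place, for each $i$ the hulls $\mathrm{Hull}(\Gamma^{(i)}_-)$ and $\mathrm{Hull}(\Gamma^{(i)}_+)$ each have $\c^{-2i}(\calh)$ as a boundary component and lie on opposite sides of it, so their intersection is exactly $\c^{-2i}(\calh)$, while their stabilizers of that plane both equal $\Lambda^{(i)}$; thus $\Gamma^{(i)}_-$ and $\Gamma^{(i)}_+$ \meetcute along $\c^{-2i}(\calh)$. Lemma~\ref{Maskit} then shows $\langle\Gamma^{(i)}_-,\Gamma^{(i)}_+\rangle$ is Kleinian and that the natural maps induce an isometry $C(\Gamma^{(i)}_-)\cup_{F^{(i)}}C(\Gamma^{(i)}_+)\to C(\langle\Gamma^{(i)}_-,\Gamma^{(i)}_+\rangle)$. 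Since each factor has $F^{(i)}$ as its unique boundary component, the domain is boundaryless; and $\langle\Gamma^{(i)}_-,\Gamma^{(i)}_+\rangle=\Gamma_n$ by definition, so $C(\Gamma_n)=\mathbb{H}^3/\Gamma_n$ and this is the asserted isometry.

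Finally I would verify the factorization. By Proposition~\ref{geometric model} and Corollary~\ref{geodesic spheres}, the component of $M_n-F^{(i)}$ containing $C(\Gamma_S)$ is $C(\Gamma_S)\cup C(\Gamma_T^{(1)})\cup\cdots\cup C(\Gamma_T^{(i)})$ with the gluings of that proposition — precisely the domain of the isometry onto $C(\Gamma^{(i)}_-)$ furnished by the Claim. The isometry of Proposition~\ref{algebraic model} restricts to the natural map on each of these pieces, and a composition of natural maps is natural, so each restriction factors through the natural map $C(\Gamma^{(i)}_-)\to C(\Gamma_n)$; hence the isometry of Proposition~\ref{algebraic model} factors through $C(\Gamma^{(i)}_-)\cup C(\Gamma^{(i)}_+)\to\mathbb{H}^3/\Gamma_n$ and carries that component isometrically to its image in $C(\Gamma^{(i)}_-)$. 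The only real difficulty I anticipate is bookkeeping — keeping the two induction directions indexed so that $\Gamma_T^{(i)}$ glues to $\Gamma^{(i)}_+$ along $\c^{-2i}(\calh)$, and matching the boundary identifications with those of Proposition~\ref{geometric model}; there is no geometric input beyond what the proof of Proposition~\ref{algebraic model} already provides.
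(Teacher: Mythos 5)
Your proposal is correct and is essentially the argument the paper intends: the corollary is stated as following from the proof of Proposition \ref{algebraic model}, and you simply make explicit the mirror-image (downward) induction for the groups $\Gamma^{(i)}_+$, the final application of Lemma \ref{Maskit} along $\c^{-2i}(\calh)$, and the naturality bookkeeping for the factorization, all of which the paper leaves implicit. (As a small shortcut you could also have obtained the $\Gamma^{(i)}_+$ facts directly from the Claim by conjugating $\Gamma^{(n-i)}_-$ by the reflection through $\c^{-n}(\calh)$, as in equation (\ref{back conjugation}), but your induction is the same machinery.)
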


In the remainder of the paper, we will frequently take the isometry above for granted and refer to the components obtained by splitting $M_n$ along $F^{(i)}$ by $C(\Gamma^{(i)}_{\pm})$.

%%%%%%%%%%%%%%%%%%%%%%%
\section{Invariants}  \label{sec:invariants}
%%%%%%%%%%%%%%%%%%%%%%%

%%%%%%%%%%%%%%%%%%%%%%
\subsection{Traces} \label{subsec:traces}

If $\Gamma \subset \mathrm{PSL}_2(\mathbb{C})$ is a discrete group, its trace field $\mathbb{Q}(\mathrm{tr}\,\Gamma)$ is obtained by adjoining to $\mathbb{Q}$ the traces of elements of $\Gamma$.  If the hyperbolic 3-manifold $M = \mathbb{H}^3/\Gamma$ has finite volume, Mostow rigidity implies that this is a topological invariant of $M$.  It follows from the local rigidity theorems of Garland and Prasad that in this case the trace field is a number field; ie, a finite extension of $\mathbb{Q}$.  The trace field is not generally an invariant of the commensurability class of $M$, however, and to obtain one we pass to the \textit{invariant trace field} $k\Gamma$.  This is obtained by adjoining to $\mathbb{Q}$ the traces of squares of elements of $\Gamma$.  When $M$ is the complement of a link in a $\mathbb{Z}_2$--homology sphere, its trace field and invariant trace field coincide (cf. \cite{MaR}).

\begin{prop}\label{trace fields}  $k(\Gamma_S) = \mathbb{Q}(i)$, $k(\Gamma_T) = \mathrm{Q}(i\sqrt{2})$, and $k(M_n) = \mathbb{Q}(i,i\sqrt{2})$ for all $n \in \mathbb{N}$.  In particular, $M_n$ is not arithmetic for any $n\in\mathbb{N}$.\end{prop}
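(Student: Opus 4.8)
The plan is to compute each invariant trace field directly from the explicit matrix generators, using the fact (cited from \cite{MaR}) that for link complements in $\mathbb{Z}_2$-homology spheres the trace field equals the invariant trace field, so it suffices to identify the field generated by traces of the given generators and their products. First I would handle $\Gamma_S = \langle \s,\t\rangle$: reading off $\tr\s = 2$, $\tr\t = 1+i$, and $\tr(\s\t)$ from Corollary \ref{M_SandG}, all entries and hence all traces lie in $\Q(i)$, so $k(\Gamma_S)\subseteq\Q(i)$; conversely $\tr\t = 1+i$ already generates $\Q(i)$ over $\Q$, giving equality. The same bookkeeping with the generators $\f,\g,\h$ of $\Gamma_{T_0}$ from Corollary \ref{M_T0andH0} shows every entry lies in $\Q(i\sqrt2)$, and $\tr\g = 2-2i\sqrt2$ (or $\tr\h = -i\sqrt2$) generates $\Q(i\sqrt2)$, so $k(\Gamma_T) = k(\Gamma_{T_0}) = \Q(i\sqrt2)$, noting that adjoining the matrix $\c$ of Lemma \ref{M_TandH} and conjugating by $\r$ (complex conjugation) keeps us inside $\Q(i\sqrt2)$ since that field is closed under conjugation.

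Next I would assemble $M_n = \mathbb{H}^3/\Gamma_n$ with $\Gamma_n$ as in Proposition \ref{algebraic model}. Its generators are those of $\Gamma_S$, of the conjugates $\Gamma_T^{(i)} = \Gamma_T^{\c^{-2(i-1)}}$, and of $\overline{\Gamma}_S^{\c^{-2n}}$. Since $\c$ has entries in $\Q(i\sqrt2)\subset\Q(i,i\sqrt2)$, conjugation by powers of $\c$ preserves $\mathrm{PSL}_2(\Q(i,i\sqrt2))$, and complex conjugation $\r$ fixes $\Q(i,i\sqrt2)$ setwise; hence every generator of $\Gamma_n$ lies in $\mathrm{PSL}_2(\Q(i,i\sqrt2))$, so $k(M_n)\subseteq\Q(i,i\sqrt2)$. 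For the reverse inclusion, $\Gamma_n$ contains $\Gamma_S$ (so $i\in k(M_n)$) and contains $\Gamma_T$ (so $i\sqrt2\in k(M_n)$), giving $\Q(i,i\sqrt2)\subseteq k(M_n)$ and therefore equality. One should double-check that $\Gamma_S\le\Gamma_n$ and $\Gamma_T^{(1)} = \Gamma_T\le\Gamma_n$ literally as written, which is immediate from the definition of $\Gamma_n$ as the group generated by these subgroups.

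Finally, for non-arithmeticity: an arithmetic Kleinian group has invariant trace field with exactly one complex place, but $\Q(i,i\sqrt2)$ is a totally imaginary biquadratic extension of $\Q$ with two complex places (it contains the real subfield $\Q(\sqrt2)$, and is a degree-$4$ extension of $\Q$ with no real embeddings but two conjugate pairs of complex embeddings). Hence no $M_n$ is arithmetic. Alternatively one can invoke that an arithmetic group commensurable with a non-cocompact one must have invariant trace field an \emph{imaginary quadratic} field, which $\Q(i,i\sqrt2)$ is not. I would use whichever of these two formulations the paper's conventions make cleanest; the essential point is just that $[\Q(i,i\sqrt2):\Q] = 4$ and it is not imaginary quadratic.

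The main obstacle is entirely bookkeeping rather than conceptual: one must be careful that conjugating the various pieces by powers of $\c$ and by the reflection $\r$ does not enlarge the field, which rests on the elementary but essential observations that $\c\in\mathrm{PSL}_2(\Q(i\sqrt2))$ and that $\Q(i,i\sqrt2)$ is stable under complex conjugation. Everything else is reading traces off the displayed matrices and checking that the two obvious generators $i$ and $i\sqrt2$ of the target field actually appear.
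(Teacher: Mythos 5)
Your proposal is correct and follows essentially the same route as the paper: bound the fields from above by observing that all generators (including the conjugating element $\c$ and the reflection, i.e.\ complex conjugation) have entries in $\mathbb{Q}(i)$, $\mathbb{Q}(i\sqrt{2})$, or $\mathbb{Q}(i,i\sqrt{2})$ as appropriate, get the reverse inclusions from $\tr(\t)=\pm(1+i)$ and $\tr(\h)=\pm i\sqrt{2}$, and conclude non-arithmeticity because a non-compact arithmetic manifold has imaginary quadratic invariant trace field. The only blemish is the slip $\tr(\g)=2-2i\sqrt{2}$ (in fact $\tr(\g)=2$), which is harmless since your fallback $\tr(\h)=-i\sqrt{2}$ is exactly what is needed.
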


\begin{proof}  Its definition in Corollary \ref{M_SandG} immediately implies $\Gamma_S < \mathrm{PSL}_2(\mathbb{Q}(i))$.  The description in Corollary \ref{M_T0andH0}, of $\Gamma_{T_0}$, and Lemma \ref{M_TandH} imply that $\Gamma_T< \mathrm{PSL}_2(\mathbb{Q}(i\sqrt{2}))$.  Thus $k\Gamma_S \subseteq \mathbb{Q}(i)$, and $k\Gamma_T \subseteq \mathbb{Q}(i\sqrt{2})$.  That equality holds is clear upon noting that $\tr({\sf h})=\pm i\sqrt{2}$ and $\tr({\sf t})=\pm (1+i)$.   Since $\Gamma_S$ and $\Gamma_T$ are in $\Gamma_n$ we have $\mathbb{Q}(i,i\sqrt{2})\subseteq k(\Gamma_n)$.  For the other containment we note that $\c$ from Lemma \ref{M_TandH} lies in $\mathrm{PSL}_2(\mathbb{Q}(i\sqrt{2}))$, and $\Gamma_n$ is contained in the group generated by $\Gamma_S$, $\Gamma_T$, and $\c$.  

It is well known that any non-compact arithmetic manifold $M$ has $k(M)\subset \mathbb{Q}(i\sqrt{d})$ for some $d\in\mathbb{N}$ (see eg. \cite[Theorem 8.2.3]{MaR}), so $M_n$ is not arithmetic.\end{proof}

We say $M = \mathbb{H}^3/\Gamma$ has \textit{integral traces} if for each $\gamma \in \Gamma$, $\mathrm{tr}\ \gamma$ is an algebraic integer.  Otherwise we say $M_n$ has a nonintegral trace.  $M$ has integral traces if and only if all manifolds commensurable to $M$ do as well (cf. \cite{MaR}).

\begin{prop} \label{prop:integraltraces}  For each $n$, $M_n$ has integral traces.  \end{prop}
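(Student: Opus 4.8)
The plan is to produce a ring $R$ of algebraic integers such that $\Gamma_n$ is the image in $\mathrm{PSL}_2(\C)$ of a subgroup of $\mathrm{SL}_2(R)$; then every $\gamma\in\Gamma_n$ has a lift to $\mathrm{SL}_2(R)$, so $\tr\,\gamma\in R$ is an algebraic integer, which is exactly what is to be shown. The natural candidate is $R = \mathbb{Z}[i,i\sqrt{2}]$. Since $i$ and $i\sqrt{2}$ are roots of the monic integer polynomials $x^2+1$ and $x^2+2$, they are algebraic integers, and since the algebraic integers form a ring, every element of $R$ is an algebraic integer; equivalently, $R$ is finitely generated as a $\mathbb{Z}$-module (with basis $1,i,\sqrt{2},i\sqrt{2}$) and hence integral over $\mathbb{Z}$. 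So it suffices to check that, with the $\mathrm{SL}_2$-lifts of the generators fixed by the matrix formulas of Corollaries \ref{M_SandG} and \ref{M_T0andH0} and Lemma \ref{M_TandH}, one has $\Gamma_n\subset\mathrm{SL}_2(R)$.

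First I would record, directly from the displayed entries, that $\Gamma_S = \langle\s,\t\rangle\subset\mathrm{SL}_2(\mathbb{Z}[i])$ (so also $\overline{\Gamma}_S\subset\mathrm{SL}_2(\mathbb{Z}[i])$, as complex conjugation preserves $\mathbb{Z}[i]$), and that $\Gamma_{T_0} = \langle\f,\g,\h\rangle\subset\mathrm{SL}_2(\mathbb{Z}[i\sqrt{2}])$ (so also $\overline{\Gamma}_{T_0}\subset\mathrm{SL}_2(\mathbb{Z}[i\sqrt{2}])$). The crucial observation is that $\c = \left( \begin{smallmatrix} 1 & i\sqrt{2} \\ 0 & 1 \end{smallmatrix} \right)$ and every power $\c^k$ lie in $\mathrm{SL}_2(\mathbb{Z}[i\sqrt{2}])$, so conjugation by any $\c^k$ carries $\mathrm{SL}_2(\mathbb{Z}[i\sqrt{2}])$ into itself. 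Consequently $\Gamma_T = \langle\Gamma_{T_0},\overline{\Gamma}_{T_0}^{\c^{-2}}\rangle\subset\mathrm{SL}_2(\mathbb{Z}[i\sqrt{2}])$ and, for each $i$, $\Gamma_T^{(i)} = \Gamma_T^{\c^{-2(i-1)}}\subset\mathrm{SL}_2(\mathbb{Z}[i\sqrt{2}])$; while $\overline{\Gamma}_S^{\c^{-2n}} = \c^{-2n}\,\overline{\Gamma}_S\,\c^{2n}$ is a product of matrices drawn from $\mathrm{SL}_2(\mathbb{Z}[i])$ and $\mathrm{SL}_2(\mathbb{Z}[i\sqrt{2}])$, hence lies in $\mathrm{SL}_2(R)$. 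Since $\Gamma_n = \langle\Gamma_S,\Gamma_T^{(1)},\hdots,\Gamma_T^{(n)},\overline{\Gamma}_S^{\c^{-2n}}\rangle$ by Proposition \ref{algebraic model} and each of these subgroups lies in $\mathrm{SL}_2(R)$, it follows that $\Gamma_n\subset\mathrm{SL}_2(R)$, which finishes the proof.

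There is no real obstacle here: the argument is essentially bookkeeping with matrices already in hand. The one point that deserves care is that the elements used to assemble $\Gamma_n$ are powers of the single parabolic $\c$, whose entries already lie in $\mathbb{Z}[i\sqrt{2}]$, so conjugating by them never introduces denominators; this is exactly what fails for the $(12)(34)$-mutants treated in Section \ref{sec:in-commensurablo!}, where the gluing isometries are not integral and nonintegral traces appear. One should also note, to be safe, that integrality of a trace is insensitive to the sign ambiguity in lifting from $\mathrm{PSL}_2(\C)$ to $\mathrm{SL}_2(\C)$, so exhibiting one $\mathrm{SL}_2(R)$-lift per element of $\Gamma_n$ is enough.
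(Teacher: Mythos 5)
Your proof is correct and follows essentially the same route as the paper: observe that the fixed $\mathrm{SL}_2$-lifts of the generators of $\Gamma_S$, $\Gamma_{T_0}$ (hence $\Gamma_T$) and the element $\c$ all have algebraic-integer entries, so every element of $\Gamma_n$ has a lift with entries in $\mathbb{Z}[i,\sqrt{2}]$ and therefore an integral trace. Your extra remarks (closure of $\mathrm{SL}_2(R)$ under conjugation by powers of $\c$, and insensitivity of integrality to the $\pm$ lift ambiguity) just make explicit what the paper leaves implicit.
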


\begin{proof}  As in the proposition above, this follows from the fact that each $\Gamma_n$ is contained in the group generated by $\Gamma_S$, $\Gamma_T$, and $\c$.  It is easy to see that the \textit{entries} of the generators for $\Gamma_S$ and $\Gamma_{T_0}$ are algebraic integers.  Since $\c$ has integral entries as well, all elements of $\Gamma_n$ have integral entries, hence integral traces.  \end{proof}

\begin{remark} Bass showed that if $M = \mathbb{H}^3/\Gamma$ where $\Gamma$ has an element with a nonintegral trace, there are closed essential surfaces in $M$ associated to this trace \cite{Ba}.  We say that such surfaces are \textit{detected by the trace ring}.  For fixed $n$ and $1\leq i \leq n$, closed essential surfaces in $M_n$ can be obtained by ``tubing" $S^{(i)}$ through $B^3 - L^{(i)}_-$.  More precisely, let $\mathcal{N}_i$ be a regular neighborhood of $L^{(i)}_-$ in $(B^3,L^{(i)}_-) \subset (S^3,L_n)$, let $A_i = \mathcal{N}_i \cap \overline{B^3 - \mathcal{N}_i}$, and let
$$ \hat{S}_i = \overline{S^{(i)} - (S^{(i)} \cap \mathcal{N}_i)} \cup A_i.  $$
Then $\hat{S}_i$ is a closed surface of genus two which is incompressible in $M_n$.  We will show below that certain mutants have nonintegral traces, and one easily finds surfaces analogous to $\hat{S}_i$ in the mutants.  It is interesting to note that although these surfaces are present in all of these  link complements, the trace ring does not detect any closed surfaces in the $M_n$.  \end{remark}

%%%%%%%%%%%%%%%%%%%%%%%%%%%%%%%%%%%%%%
\subsection{Scissors congruence and the Bloch invariant} \label{subsec:bloch}

We will prove in Proposition \ref{M_n incomm} below that the \textit{Bloch invariant} distinguishes the commensurability class of $M_m$ from that of $M_n$ for $m \neq n$.  This is an invariant of a polyhedral decomposition which by construction is invariant under \textit{scissors congruence}: cutting the constituent polyhedra apart and reassembling them in new ways.  Its deep connection to  algebraic $k$-theory is what makes the Bloch invariant useful, though.  For background and an account of the connection to scissors congruence we refer the reader to \cite{Dup} and \cite{Ne}, our main source for the expository material here.

\begin{dfn}  For a field $k \subset \mathbb{C}$, define the \textit{pre-Bloch group} $\mathcal{P}(k)$ to be the quotient of the free $\mathbb{Z}$--module on $k - \{0,1\}$ by all instances of the following relations.  \begin{align}  \label{fiveterm}
&  [x] - [y] + \left[ \frac{y}{x} \right] - \left[ \frac{1-x^{-1}}{1-y^{-1}} \right] + \left[ \frac{1-x}{1-y} \right] = 0, \ \ x \neq y \in k -\{0,1\} \\  \label{permute}
&  [z] = \left[ 1- \frac{1}{z} \right] = \left[\frac{1}{1-z}\right] = -\left[ \frac{1}{z} \right] = - \left[ \frac{z}{z-1} \right] = -[1-z], \ \ z \in k - \{0,1\}  \end{align}
There is a map $\delta \co \mathcal{P}(k) \rightarrow k^* \wedge k^*$ given by $[z] \mapsto 2(z \wedge (1-z))$.  (Here $k^*$ is considered a $\mathbb{Z}$--module with multiplication as the group operation and $\mathbb{Z}$--action given by $a.x = x^a$, $a \in \mathbb{Z}$.)  The \textit{Bloch group} is $\mathcal{B}(k) = \ker \delta$.  \end{dfn}

\begin{remark}  If $k$ is algebraically closed, the relation (\ref{fiveterm}) above, called the \textit{five term relation}, implies (\ref{permute}).  For instance, taking $\sqrt{z}$ and $\sqrt{z^{-1}}$ as $x$ and $y$, respectively, in (\ref{fiveterm}), then interchanging their roles and summing the results yields $[z] + [1/z] = 0$.  \end{remark}

For any ideal tetrahedron $T$ in $\mathbb{H}^3$, there is an orientation-preserving isometry of $\mathbb{H}^3$ taking its ideal vertices to $0$, $1$, $\infty$ and a complex number $z$ with non-negative imaginary part.  Let the \textit{cross ratio parameter} of $T$ be $[z]\in\mathcal{P}(\mathbb{C})$.  This is well-defined because any other isometry answering the description above fixes $z$ or replaces it by one of $1-\frac{1}{z}$ or $\frac{1}{1-z}$.

For $k'\subset k$, inclusion induces a map $\mathcal{P}(k')\to\mathcal{P}(k)$.  Although this is not injective in general, a theorem of Borel that we record below implies that if $k'$ is a number field then $\mathcal{B}(k')$ does inject, modulo torsion.  We offer this observation to excuse occasional imprecision about the precise location of our invariants.

\begin{dfn}  Let $M = T_1 \cup \hdots \cup T_n$ be a triangulated complete, orientable hyperbolic $3$-manifold of finite volume (with or without boundary); ie, with each $T_i$ isometric to an ideal hyperbolic tetrahedron and $T_i \cap T_j$ either empty, an edge of each, or a face of each for $i \neq j$.  Define the \textit{Bloch invariant} of $M$ as
$$ \beta(M) = [z_1] + [z_2] + \hdots + [z_n] \in \mathcal{P}(\mathbb{C}),  $$
where $[z_i]$ is the cross ratio parameter of $T_i$ for each $i$ in $\{1,\hdots,n\}$.\end{dfn}

\begin{remark}  If $\partial M = \emptyset$ then $\beta(M)\in\mathcal{B}(\mathbb{C})$ by a geometric interpretation of the Bloch invariant, and by work of Neumann--Yang \cite{NeYang} it does not vary with triangulation.  \end{remark}

We will obtain a triangulation of $M_n$ by subdividing the decomposition below.

\begin{lemma}\label{poly decomp} The members of $\cals = \{\calp_1,\calp_2,\c^{-1}\calp_2,\hdots,\c^{-2n+1}\calp_2,\c^{-2n}\r\calp_1\}$ project under $\mathbb{H}^3\to\mathbb{H}^3/\Gamma_n$ to the cells of an ideal polyhedral decomposition of $M_n$.\end{lemma}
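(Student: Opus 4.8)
The strategy is to build the polyhedral decomposition of $M_n$ by assembling the decompositions of the pieces $C(\Gamma_S)$, $C(\Gamma_T^{(i)})$, and $C(\overline\Gamma_S)$ that appear in Proposition \ref{geometric model}, using the fact (from Corollary \ref{M_SandG} and Lemma \ref{combinatorics n cores}) that $\calp_1$ is an exact fundamental domain for the face-pairing action and projects to give $M_S = C(\Gamma_S)$ as an identification space, and the analogous fact (from Corollary \ref{M_T0andH0}) that $\calp_2$ realizes $M_{T_0} = C(\Gamma_{T_0})$. First I would record that each $\c^{-k}\calp_2$ is a fundamental domain for the conjugate group $\Gamma_{T_0}^{\c^{-k}}$, and that the pair $\calp_2, \c^{-1}\calp_2$ glues along the plane $\c^{-1}(\calh)$ — which by Lemma \ref{M_TandH} is exactly the plane along which $\Gamma_{T_0}$ and $\overline\Gamma_{T_0}^{\c^{-2}}$ meet cute — to give a fundamental domain for $\Gamma_T$ realizing $M_T = C(\Gamma_T)$. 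Inductively, $\c^{-2(i-1)}\calp_2 \cup \c^{-2i-1}\calp_2$ is then a fundamental domain for $\Gamma_T^{(i)}$ realizing $C(\Gamma_T^{(i)})$, and likewise $\c^{-2n}\r\calp_1$ is a fundamental domain for $\overline\Gamma_S^{\c^{-2n}}$ realizing $C(\overline\Gamma_S)$.

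Next I would invoke the meet-cute structure established in the proof of Proposition \ref{algebraic model}: the planes $\calh = \c^{0}(\calh)$, $\c^{-1}(\calh)$, $\dots$, $\c^{-2n}(\calh)$ cut $\mathbb{H}^3$ into the slabs containing the successive hulls, and the polyhedra of $\cals$ sit one per slab (with $\calp_2$ and $\c^{-1}\calp_2$ sharing a slab boundary at $\c^{-1}(\calh)$, etc., according to the pattern $\calp_1 \mid \calp_2,\c^{-1}\calp_2 \mid \c^{-2}\calp_2, \c^{-3}\calp_2 \mid \cdots \mid \c^{-2n}\r\calp_1$). The key geometric point is that $A$ (the external face of $\calp_1$) coincides with $C$ (an external face of $\calp_2$) in $\calh$, as already used in Lemma \ref{M_Sboundary}; and at each successive interface the relevant external faces are carried to one another by the $\Gamma_n$-translates realizing the gluing maps $\iota^{(i)}_+(\iota^{(i)}_-)^{-1}$ of Proposition \ref{geometric model}. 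Thus the union of the $\Gamma_n$-orbits of the members of $\cals$ tiles $\mathbb{H}^3$: interiors are disjoint because distinct slabs are disjoint and within a slab the polyhedra are translates of a fundamental domain for the stabilizing conjugate group, and they cover because each $C(\Gamma_\bullet)$ is covered by its polyhedron's orbit. Therefore the images of the members of $\cals$ in $M_n = \mathbb{H}^3/\Gamma_n$ have disjoint interiors and cover $M_n$.

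Finally I would check the face-identification condition required for an ideal polyhedral decomposition: that the faces of the projected polyhedra are identified in pairs. The internal faces of each $\calp_1$ or $\c^{-k}\calp_2$ are paired by the internal face pairings of Corollaries \ref{M_SandG} and \ref{M_T0andH0} (conjugated appropriately), which lie in $\Gamma_n$; the external faces are the four-punctured-sphere pieces that get glued across the $F^{(i)}$, and Corollary \ref{geodesic spheres} together with the triangulation-matching statements of Lemma \ref{M_Sboundary} shows that the external face of $\c^{-2(i-1)}\calp_2$ (resp. $\calp_1$) on the plane $\c^{-2i}(\calh)$ (resp. $\calh$) is carried onto the corresponding external face of $\c^{-2i}\calp_2$ (resp. of the adjacent piece) by an element of $\Gamma_n$. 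Vertices are all ideal (they are parabolic fixed points of $\Gamma_n$, as noted in the proof of Lemma \ref{combinatorics n cores}), so the cells are genuinely ideal polyhedra. The main obstacle, and the step deserving the most care, is verifying that the external-face interfaces of consecutive polyhedra in $\cals$ match up exactly under $\Gamma_n$ rather than merely up to isotopy — i.e. that the isometric gluing maps $\iota^{(i)}_\pm$ used in Proposition \ref{geometric model} are realized by the same group elements that identify the boundary faces of the $\c^{-k}\calp_2$; this is precisely what the meet-cute analysis and the explicit face computations in Corollaries \ref{M_SandG}, \ref{M_T0andH0} and Lemma \ref{M_Sboundary} were set up to guarantee, so the proof amounts to citing those and chasing the identifications one interface at a time.
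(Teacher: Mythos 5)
Your proposal follows essentially the same route as the paper's proof: each of $C(\Gamma_S)$, $C(\Gamma_T^{(i)})$, and $C(\overline{\Gamma}_S^{\c^{-2n}})$ is decomposed by the corresponding polyhedra of $\cals$, and the remaining point is that the gluing maps of Proposition \ref{geometric model} carry the induced boundary triangulations to one another, which is exactly what Lemma \ref{M_Sboundary}(3) and the identity (\ref{reflection i's}) supply. The one step you assert without justification is that $\c^{-1}\calp_2$ is the correct mirrored cuboctahedron for doubling $C(\Gamma_{T_0})$ into $C(\Gamma_T)$: from Lemma \ref{M_TandH} the natural second polyhedron is $\c^{-2}\r\calp_2$, so you need the (easy, but necessary) observation that $\r\calp_2=\c\calp_2$, hence $\c^{-2}\r\calp_2=\c^{-1}\calp_2$; relatedly, the second cuboctahedron for $C(\Gamma_T^{(i)})$ is $\c^{-2i+1}\calp_2$, not $\c^{-2i-1}\calp_2$ as written.
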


\begin{proof}  By Corollary \ref{M_SandG}, $\calp_1$ projects under $\mathbb{H}^3\to\mathbb{H}^3/\Gamma_S$ to an ideal polyhedral decomposition of $C(\Gamma_S)$: it maps onto $C(\Gamma_S)$ with internal faces identified in pairs.  Corollary \ref{M_T0andH0} implies the same for $\calp_2\to C(\Gamma_{T_0})$ under $\mathbb{H}^3\to\mathbb{H}^3/\Gamma_{T_0}$, and hence also for $\c^{-2}\r\calp_2\to C(\overline{\Gamma}_{T_0}^{c^{-2}})$ (cf.~the paragraph above Lemma \ref{M_TandH}).

It is easy to see that $\r\calp_2 = \c\calp_2$, for instance by comparing sets of ideal vertices, so $\c^{-2}\r\calp_2 = \c^{-1}\calp_2$.  Therefore Lemma \ref{M_TandH} implies that $\calp_2\cup\c^{-1}\calp_2$ projects to an ideal polyhedral decomposition of $C(\Gamma_T)$ under $\mathbb{H}^3\to\mathbb{H}^3/\Gamma_T$.  In particular, this projection identifies the external faces of $\calp_2$ that map to $F'$ with external faces of $\c^{-1}\calp_2$ pairwise, since their images are fixed by the doubling involution $\phi_{\c^{-2}\r}$.

It follows from the above that $\c^{-2(i-1)}\calp_2\cup\c^{-2i+1}\calp_2$ projects to a decomposition of  $C(\Gamma_T^{(i)})$ for any $i\in\mathbb{N}$ (recall Definitions \ref{geometric pieces}), and from the first paragraph that the same holds for $\c^{-2n}\r\calp_1\to C(\overline{\Gamma}_S^{\c^{-2n}})$.  By Proposition \ref{algebraic model}, it remains only to show that the gluings producing $M_n$ preserve induced triangulations of boundaries.  These are defined in Proposition \ref{geometric model}.  Lemma \ref{M_Sboundary}(3) implies that $\iota_+^{(0)}(\iota_-^{(0)})^{-1}$ preserves triangulations, and (\ref{reflection i's}) from the proof of Proposition \ref{geometric model} does the same for $\iota_+^{(i)}(\iota_-^{(i)})^{-1}$ for $1\leq i\leq n$.  They combine to imply that the final map does as well.\end{proof}

\begin{lemma}\label{triangulations}  $M_n$ has Bloch invariant $\beta_1-\bar{\beta}_1+n\beta_2\in\mathcal{B}(\mathbb{Q}(i,i\sqrt{2}))$ for any $n$, where $\beta_1 = 4\left[\frac{1+i}{2}\right]\in\mathcal{P}(\mathbb{Q}(i))$, $\bar{\beta}_1 = 4\left[\frac{1-i}{2}\right]$, and $\beta_2\in\mathcal{P}(\mathbb{Q}(i\sqrt{2}))$.\end{lemma}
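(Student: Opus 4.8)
The plan is to refine the ideal polyhedral decomposition of $M_n$ from Lemma \ref{poly decomp} to an ideal triangulation $\mathcal{T}$ and add up the cross-ratio parameters of the resulting tetrahedra. That decomposition has tiles $\calp_1$ (whose image covers $C(\Gamma_S)$), the $2n$ polyhedra $\c^{-j}\calp_2$ for $0\le j\le 2n-1$ — grouped in consecutive pairs $\c^{-2(i-1)}\calp_2,\ \c^{-(2i-1)}\calp_2$ that cover $C(\Gamma_T^{(i)})$ for $1\le i\le n$ — and $\c^{-2n}\r\calp_1$ (covering $C(\overline{\Gamma}_S)$). First I would fix subdivisions of $\calp_1$ and of $\calp_2$ into ideal tetrahedra whose induced triangulations of the paired (internal and external) faces are carried to one another by the face pairings of Corollaries \ref{M_SandG} and \ref{M_T0andH0} and by the gluings of Proposition \ref{geometric model}; since the $\c^{-j}$ and $\c^{-2n}\r$ are isometries, transporting these subdivisions yields subdivisions of all the tiles that fit together into an ideal triangulation $\mathcal{T}$ of $M_n$. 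Then $\beta(M_n)=\sum_{\Delta\in\mathcal{T}}[z_\Delta]$, and since $\mathcal{T}$ refines the polyhedral decomposition its tetrahedra lie in the individual polyhedral cells, so the sum breaks up cell by cell. Because $\c^{2(i-1)}$ is an orientation-preserving isometry taking the cells of $C(\Gamma_T^{(i)})$ to those of $C(\Gamma_T^{(1)})=C(\Gamma_T)$, each $C(\Gamma_T^{(i)})$ contributes the same element of $\mathcal{P}(\mathbb{C})$, which I will call $\beta_2$.

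For the contribution of $\calp_1$: with $\calp_1$ the regular ideal octahedron placed as in Figure \ref{fig:idealoct}, cone from one ideal vertex to its antipode, obtaining four ideal tetrahedra, each spanned by that vertex, its antipode, and one edge of the equatorial square. By the symmetry of the regular octahedron the four are isometric, and a dihedral-angle count ($\pi/2$ along the axis and along the equatorial edges, $\pi/4$ along the remaining four edges) — or a direct cross-ratio computation from the vertex coordinates, all of which lie in $\mathbb{Q}(i)$ — identifies each with the ideal tetrahedron of cross-ratio parameter $\frac{1+i}{2}$. So the image of $\calp_1$ contributes $\beta_1=4\left[\frac{1+i}{2}\right]\in\mathcal{P}(\mathbb{Q}(i))$. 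Since $\c^{-2n}$ is orientation-preserving and $\r\calp_1$ is the mirror image of $\calp_1$, whose four tetrahedra are the $\r$-images of the four above, the orientation-reversal conventions for the Bloch invariant (\cite{Ne}, \cite{NeYang}) make each of these contribute $-\left[\frac{1-i}{2}\right]$; hence $C(\overline{\Gamma}_S)$ contributes $-4\left[\frac{1-i}{2}\right]=-\bar{\beta}_1$.

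It remains only to observe that $\beta_2\in\mathcal{P}(\mathbb{Q}(i\sqrt2))$. By Corollary \ref{M_T0andH0} the cuboctahedron $\calp_2$ is a tile of the $\mathrm{PSL}_2(\mathcal{O}_2)$-invariant tessellation of \cite{Hatcher}, $\mathcal{O}_2=\mathbb{Z}[i\sqrt2]$, so all its ideal vertices lie in $\mathbb{Q}(i\sqrt2)\cup\{\infty\}$; and $\c\in\mathrm{PSL}_2(\mathbb{Q}(i\sqrt2))$, so the same holds for $\c^{-1}\calp_2$. The cross ratio of any four such points lies in $\mathbb{Q}(i\sqrt2)$, so every tetrahedron contributed by $C(\Gamma_T)$ has shape in $\mathbb{Q}(i\sqrt2)$ and $\beta_2\in\mathcal{P}(\mathbb{Q}(i\sqrt2))$. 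Summing the contribution of $\calp_1$, of the $n$ tiles $C(\Gamma_T^{(i)})$, and of $\c^{-2n}\r\calp_1$ gives
\[\beta(M_n)=\beta_1-\bar{\beta}_1+n\beta_2.\]
Every shape occurring lies in $\mathbb{Q}(i,i\sqrt2)$, so this is an element of $\mathcal{P}(\mathbb{Q}(i,i\sqrt2))$; and since $M_n$ is a complete hyperbolic $3$-manifold of finite volume, the edge-gluing conditions of the ideal triangulation $\mathcal{T}$ force $\delta(\beta(M_n))=0$, whence $\beta(M_n)\in\mathcal{B}(\mathbb{Q}(i,i\sqrt2))$ (\cite{NeYang}).

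The step I expect to require the most care is the first one: arranging the subdivisions of $\calp_1$ and — above all — of the cuboctahedron $\calp_2$, whose six square internal faces are paired by $\f$, $\g$, $\h$, so that all the induced face triangulations are compatible with the face pairings and with the $4$-punctured-sphere gluings of Proposition \ref{geometric model}, so that $\mathcal{T}$ really is a triangulation. If arranging this global compatibility by hand proves awkward, the five-term relation (\ref{fiveterm}) furnishes a way around it: the sum $\sum_{\Delta\subset P}[z_\Delta]$ over the tetrahedra of any triangulation of a fixed ideal polyhedron $P$ depends only on $P$, so one may compute each tile's contribution in isolation and then appeal to \cite{NeYang} for the independence of $\beta(M_n)$ from the triangulation used.
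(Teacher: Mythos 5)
Your proposal is correct and follows essentially the same route as the paper: subdivide the polyhedral decomposition of Lemma \ref{poly decomp}, cone the octahedron from $(1+i)/2$ to $\infty$ to get $4\left[\frac{1+i}{2}\right]$, use the mirror-image rule $[z]\mapsto -[\bar z]$ for $\r\calp_1$, and observe that the cuboctahedral pieces have all shapes in $\mathbb{Q}(i\sqrt2)$ because their ideal vertices do. The only care point the paper makes explicit that you gloss over is that the second cuboctahedron in each $C(\Gamma_T^{(i)})$ carries the subdivision transported by the reflection $\c^{-2}\r$ rather than by $\c^{-1}$ (it covers the mirror copy $\overline{\Gamma}_{T_0}^{\c^{-2}}$), but since $\mathbb{Q}(i\sqrt2)$ is conjugation-invariant and your fallback via the five-term relation and Neumann--Yang triangulation-independence covers the matching issue, this does not affect your argument.
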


\begin{proof}  We will produce a triangulation of $M_n$ by subdividing the polyhedral decomposition from Lemma \ref{poly decomp}.  $\mathcal{P}_1$ divides into a collection of $4$ tetrahedra by the addition of a single edge $\gamma$ joining the ideal vertices $(1+i)/2$ and $\infty$, and four ideal triangular faces that share $\gamma$.  One has ideal vertices $0$, $1$, $\infty$, and $(1+i)/2$ and thus a parameter of $\left[\frac{1+i}{2}\right]$.  Since the others are its image under rotation about $\gamma$ they have identical cross ratio parameters.  Their union projects to a triangulation of $C(\Gamma_S)$ with Bloch invariant $\beta_1 = 4\left[\frac{1+i}{2}\right]$.

Any ideal tetrahedron with its vertex set contained in that of $\calp_2$ has cross ratio parameter in $\mathcal{P}(\mathbb{Q}(i\sqrt{2}))$, since $\calp_2$ has ideal vertices in $\mathbb{Q}(i\sqrt{2})\cup\{\infty\}$.  We leave it to the reader to divide $\calp_2$ into ideal tetrahedra in such a way that the resulting division of square faces, each into two ideal triangles, is preserved by the face-pairings that produce $M_{T_0}$.  Such a triangulation projects to one of $C(\Gamma_{T_0})$, and its image under $\c^{-2}\r$ projects to one of $C(\Gamma_{T_0})$.

Above it is important to use $\c^{-2}\r$ and not $\c^{-1}$, since the face pairings of $\c^{-1}\calp_2$ project it to $C(\overline{\Gamma}_{T_0}^{\c^{-2}})$.  Recall that $\r$ is a reflection, extending to $\mathbb{C}$ as complex conjugation.  One checks using (\ref{permute}) that if a tetrahedron has cross ratio parameter $[z]$ then its mirror image has parameter $-[\bar{z}]$.  Since $\mathbb{Q}(i\sqrt{2})$ is preserved by complex conjugation, using the triangulations from the paragraph above gives $C(\Gamma_T)$ a Bloch invariant $\beta_2\in\mathcal{P}(\mathbb{Q}(i\sqrt{2}))$.

For each $i$ with $1\leq i\leq n$, $C(\Gamma_T^{(i)})$ inherits a triangulation with Bloch invariant $\beta_2$ from $\c^{-2(i-1)}\calp_2\cup\c^{-2i+1}\calp_2 = \c^{-2(i-1)}(\calp_2\cup\c^{-1}\calp_2)$, and $C(\overline{\Gamma}_S)$ inherits one with invariant $\bar{\beta_1}$ from $\r(\calp_1)$.  Lemma \ref{poly decomp} implies that these combine to triangulate $M_n$, so its Bloch invariant is as described above.\end{proof}

Below we record a standard formula for the \textit{Bloch-Wigner dilogarithm function} $D_2 \co \mathbb{C} - \{0,1\} \rightarrow \mathbb{R}$ in terms of the \textit{dilogarithm}, $\psi(z) = \sum_{i=1}^{\infty} \frac{z^n}{n^2}\ (\mbox{for}\ |z|<1)$:
$$  D_2(z) = \Im \psi(z) + \log |z| \arg (1-z)  $$
For $z$ in the upper half plane, the ideal tetrahedron with ideal vertices $0$, $1$, $\infty$, and $z$ has volume $D_2(z)$; note also that $D_2(\bar{z}) = -D_2(z)$.  $D_2$ determines a well-defined functional on $\mathcal{P}(\mathbb{C})$, and this in turn produces the \textit{Borel regulator}, the map below:

\begin{thm1}[Borel, \cite{Borel}]  For a number field $k$ fix embeddings $\sigma_1, \hdots, \sigma_{r_2}$ to $\mathbb{C}$, one representing each complex-conjugate pair.  The map $\mathrm{B}_k\co\mathcal{P}(k) \rightarrow \mathbb{R}^{r_2}$ extending $[z] \mapsto (D_2(\sigma_1(z)), \hdots, D_2(\sigma_{r_2}(z)) )$ takes $\mathcal{B}(k)$ onto a lattice in $\mathbb{R}^{r_2}$, with kernel consisting entirely of torsion elements.  \end{thm1}

We use the Borel regulator $\mathrm{B}_k$ to show that Bloch invariants distinguish the commensurability class of $M_m$ from that of $M_n$ for $m\neq n$.

\begin{prop}\label{M_n incomm} For $m \neq n$, $M_m$ is not commensurable with $M_n$.  \end{prop}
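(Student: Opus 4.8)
The plan is to use the Borel regulator together with the explicit computation of the Bloch invariant in Lemma~\ref{triangulations}. Recall from that lemma that $\beta(M_n) = \beta_1 - \bar\beta_1 + n\beta_2$, where $\beta_1 = 4[\frac{1+i}{2}] \in \mathcal{P}(\mathbb{Q}(i))$, $\bar\beta_1 = 4[\frac{1-i}{2}]$, and $\beta_2 \in \mathcal{P}(\mathbb{Q}(i\sqrt 2))$. Since all of these lie in $\mathcal{P}(\mathbb{Q}(i,i\sqrt 2))$ and $\beta(M_n) \in \mathcal{B}(\mathbb{Q}(i,i\sqrt 2))$ by the remark after the definition of the Bloch invariant, I would work throughout in the number field $k = \mathbb{Q}(i,i\sqrt 2)$, which has $r_2 = 2$ pairs of complex embeddings. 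The key point is that commensurable closed hyperbolic $3$-manifolds have Bloch invariants agreeing modulo torsion (this is the standard consequence of the behavior of the Bloch invariant under finite covers, via \cite{NeYang} and the relation to scissors congruence in \cite{Ne}); so if $M_m$ and $M_n$ were commensurable we would have $\beta(M_m) \equiv \beta(M_n)$ modulo torsion in $\mathcal{B}(k)$, whence $(m-n)\beta_2$ is torsion.

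The main step is then to show $\beta_2$ is non-torsion in $\mathcal{B}(k)$, equivalently (by Borel's theorem) that $\mathrm{B}_k(\beta_2) \neq 0$ in $\mathbb{R}^2$. Here I would simply compute: $\mathrm{B}_k(\beta_2) = (D_2(\sigma_1(\beta_2)), D_2(\sigma_2(\beta_2)))$, and since $\beta_2$ is the Bloch invariant of $C(\Gamma_T)$, the first coordinate (under the identity embedding) is, up to sign, the hyperbolic volume of $C(\Gamma_T) = M_T$, which is a nonzero real number — the cuboctahedron $\calp_2$ has positive volume, and $M_T$ is its double minus identifications, a genuine finite-volume hyperbolic manifold with geodesic boundary. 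Hence $\mathrm{B}_k(\beta_2) \neq 0$, so $\beta_2$ is non-torsion, so $(m-n)\beta_2$ is non-torsion whenever $m \neq n$. This contradicts commensurability, completing the argument.

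A small amount of care is needed in two places. First, the "commensurable implies equal Bloch invariant mod torsion" principle is usually stated for closed manifolds, while the $M_n$ are cusped; but they are complete of finite volume without boundary, $\beta(M_n) \in \mathcal{B}(\mathbb{C})$, and the covering behavior goes through verbatim in the cusped finite-volume setting (cf.~\cite{NeYang}), so this is not a real obstacle. Second, one must ensure that passing from $\mathcal{B}(k)$ to $\mathcal{B}(\mathbb{C})$ does not lose the needed information: but Borel's theorem is stated for the number field $k$ directly, and $\mathrm{B}_k$ has kernel consisting only of torsion, so detecting non-torsion-ness of $\beta_2$ can and should be done at the level of $\mathcal{B}(k)$ — this is exactly the point the authors flagged when they remarked on "occasional imprecision about the precise location of our invariants." The one genuinely substantive input is that the volume of $M_T$ is nonzero, which is immediate from $\calp_2$ being a nondegenerate ideal polyhedron of positive volume. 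I expect the writing to amount to invoking Lemma~\ref{triangulations}, the covering invariance of $\beta$, and Borel's theorem, with the volume positivity as the only computational fact, so there is no serious obstacle here.
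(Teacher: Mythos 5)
There is a genuine gap, and it sits in the very first deduction. You assert that commensurability forces $\beta(M_m)\equiv\beta(M_n)$ modulo torsion, citing the behavior of the Bloch invariant under finite covers. But that behavior is multiplicativity: a degree-$k$ cover $\widetilde{M}\to M$ satisfies $\beta(\widetilde{M})=k\,\beta(M)$. So if $\widetilde{M}$ covers $M_m$ with degree $p$ and $M_n$ with degree $q$, the conclusion is only
\[
p\bigl[\beta_1-\bar{\beta}_1+m\beta_2\bigr]\;=\;q\bigl[\beta_1-\bar{\beta}_1+n\beta_2\bigr]
\]
in $\mathcal{B}(k)$ (mod torsion), with $p$ and $q$ unknown and a priori unequal --- the Bloch invariant, like volume, is only a commensurability invariant up to rational scaling, not on the nose. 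Consequently ``$(m-n)\beta_2$ is torsion'' does not follow, and showing that $\beta_2$ is non-torsion is not enough. What the displayed relation requires you to rule out is a nontrivial rational dependence between $\beta_1-\bar{\beta}_1$ and $\beta_2$, i.e.\ you must show these two classes are linearly independent over $\mathbb{Q}$ in $\mathcal{B}(k)$ modulo torsion.

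Your verification that $\beta_2$ is non-torsion uses only the identity embedding, whose regulator coordinate is (twice) the volume $v_2$ of $\calp_2$; in effect your argument reduces to comparing volumes. But distinct volumes do not preclude commensurability (commensurable manifolds have volumes with rational ratio, and nothing rules out $p(2v_1+2mv_2)=q(2v_1+2nv_2)$ with $p\neq q$ without further input). This is exactly why the paper uses both complex places of $k=\mathbb{Q}(i,i\sqrt{2})$: with $\sigma_1=\mathit{id}$ and $\sigma_2$ fixing $i$ but sending $i\sqrt{2}\mapsto -i\sqrt{2}$, one computes $\mathrm{B}_k(\beta_1-\bar{\beta}_1)=2(v_1,v_1)$ while $\mathrm{B}_k(\beta_2)=2(v_2,-v_2)$, because $\sigma_2$ fixes $\mathbb{Q}(i)$ pointwise but acts as complex conjugation on $\mathbb{Q}(i\sqrt{2})$ and $D_2(\bar z)=-D_2(z)$. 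The vectors $(v_1,v_1)$ and $(v_2,-v_2)$ are linearly independent in $\mathbb{R}^2$, so applying $\mathrm{B}_k$ to the displayed relation forces $p=q$ and then $m=n$. So your overall toolkit (Lemma~\ref{triangulations}, covering multiplicativity, Borel's theorem) is the right one, but the proof only closes once you replace the false ``equal mod torsion'' principle with the proportionality relation and use the second Galois embedding to get the needed linear independence; the single fact ``$\mathrm{vol}(M_T)>0$'' cannot carry the argument.
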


\begin{remark}  We thank the referee on an earlier version of this paper for describing the argument below.  (Our original proof used cusp parameters; cf.~Lemma \ref{PGL action}.)\end{remark}

\begin{proof}  It is clear that $k=\mathbb{Q}(i,i\sqrt{2})$ has two pairs of complex conjugate embeddings, each determined by its action on $i$ and $i\sqrt{2}$.  We will take $\sigma_1 = \mathit{id}_k$, and $\sigma_2(i) = i$, $\sigma_2 (i\sqrt{2}) = -i\sqrt{2}$, in defining the Borel regulator $\mathrm{B}_k$ on $k$.  Since each $\sigma_i$ restricts on $\mathbb{Q}(i)$ to the identity, $\mathrm{B}_k$ takes each of $\beta_1$ and $-\bar{\beta}_1$ to $(v_1,v_1)\in\mathbb{R}^2$, where $v_1$ is the volume of $\calp_1$.  On the other hand, $\mathrm{B}_k(\beta_2)=2(v_2,-v_2)$, where $v_2 = \mathrm{vol}(\calp_2)$.

For any $n$, a covering space $\widetilde{M} \rightarrow M_n$ of degree $k$ has $\beta(\widetilde{M}) = k \beta(M_n)$.  This is because the preimage in $\widetilde{M}$ of each tetrahedron $T$ from the triangulation of $M_n$ described in Lemma \ref{triangulations} is a non-overlapping union of $k$ isometric copies of $T$.  Thus if $\widetilde{M}\to M_m$ with degree $p$ and $\widetilde{M}\to M_n$ with degree $q$ it would follow that 
$$ p \left[ \beta_1-\bar{\beta}_1 + m \beta_2 \right] = q\left[ \beta_1-\bar{\beta}_1 + n \beta_2\right].  $$
Applying $\mathrm{B}_k$ to each side of the equation above, we find that since $(v_1,v_1)$ and $(v_2,-v_2)$ are linearly independent in $\mathbb{R}^2$ we must have $p =q$ and $m = n$.
\end{proof}

%%%  Actual Bloch invariant computations are at the end of the world.

%%%%%%%%%%%%%%%%%%%%%%%%%%%
\subsection{Cusp parameters}  \label{subsec:param}

Following Neumann--Reid \cite[\S 2.3]{NeR2}, for a cusp of a complete hyperbolic $3$-manifold $M$ we will call the \textit{cusp parameter} the complex modulus (aka conformal parameter) of a horospherical cusp cross-section, a Euclidean torus.  Thurston also used this invariant to distinguish hyperbolic manifolds \cite[Ch.~6]{Th}.

\begin{dfn}  Let $T = \mathbb{C}/\Lambda$ be a Euclidean torus, where $\Lambda \subset \mathbb{C}$ is a lattice.  Define the \textit{complex modulus} of $T$ as $m(T) = \alpha/\beta$, where $\Lambda = \langle \alpha, \beta \rangle$.  \end{dfn}

\begin{remark}  The complex modulus is not really an invariant of a Euclidean torus; rather, it is an invariant of a particular basis for $\pi_1$.  However, we have:\end{remark}

\begin{lemma}\label{commensurable moduli}  The $\mathrm{PGL}_2(\mathbb{Z})$-orbit of the complex modulus is a similarity invariant of Euclidean tori.  The $\mathrm{PGL}_2(\mathbb{Q})$-orbit is a commensurability invariant.  \end{lemma}

Here we say $T$ and $T'$ are \textit{commensurable} if $T$ has a finite cover which is similar to a cover of $T'$.

\begin{proof}  The complex modulus is clearly scale-invariant.

Let $T=\mathbb{C}/\Lambda$ be a Euclidean torus, where $\Lambda=\langle\alpha,\beta\rangle$.  For a different generating pair $\gamma = p\alpha + q\beta,\ \delta = r\alpha + s\beta$ the change of basis matrix $\sfm=\left( \begin{smallmatrix} p & r \\ q & s \end{smallmatrix} \right)\in\mathrm{PSL}_2(\mathbb{Z})$ has an inverse there as well, since $\alpha$ and $\beta$ are linear combinations of $\gamma$ and $\delta$.  Computing the modulus with $\gamma$ and $\delta$ yields 
$$ \frac{p\alpha + q\beta}{r\alpha + s\beta} = \frac{p(\alpha/\beta) + q}{r(\alpha/\beta) + s} = \sfm^{\mathrm{T}}(m(T)).  $$
If $\gamma$ and $\lambda$ generate a finite-index sub-lattice then since they are linearly independent, $\sfm$ has non-zero determinant.  This implies the commensurability-invariance assertion.\end{proof}

It will prove useful here to understand the complex modulus of a torus by decomposing it into annuli using a family of parallel geodesics.

\begin{dfn}  For a Euclidean annulus $A$ with core of length $\ell$ and distance $d$ between geodesic boundary components, let the \textit{real modulus} of $A$ be $m(A) = d/\ell$.  \end{dfn}

If $T = \mathbb{C}/\Lambda$, and $\Lambda = \langle \alpha, \beta \rangle$, then $\alpha$ and $\beta$ determine isotopy classes of  simple closed geodesics on $T$ with representatives which intersect once.    These are the projections to $T$ of the line segments in $\mathbb{C}$ joining $0$ to $\alpha$ and $\beta$, respectively.  Below let $A_{\beta}$ denote the Euclidean annulus with geodesic boundary obtained as the path completion of the metric on $T-\beta$ inherited from $T$.

\begin{lemma}  \label{mod_decomp} Let $T = C/\Lambda$ be a Euclidean torus, and suppose $\alpha,\ \beta$ is a generating pair for $\Lambda$.  Decompose $m(T)$ into real and imaginary parts:
$$m(T) =  \tau_{\beta} + i \cdot \mu_{\beta},$$ 
where $\tau_{\beta} = \Re (\alpha/\beta)$ and $\mu_{\beta} = \Im (\alpha/\beta) \in \mathbb{R}$.  Then $\tau_{\beta} = \frac{\|\alpha\|}{\|\beta\|} \cos \theta$, where $\theta$ is the angle between the geodesics $\alpha$ and $\beta$ on $T$, and $|\mu_{\beta}| = m(A_{\beta})$.  \end{lemma}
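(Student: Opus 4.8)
The plan is to identify $A_\beta$ with a concrete Euclidean rectangle coming from a fundamental domain for $\Lambda$, and then read off the core length and boundary separation directly from the geometry. First I would choose the fundamental parallelogram $P$ for $T = \mathbb{C}/\Lambda$ spanned by $0$, $\alpha$, $\beta$, $\alpha+\beta$. Cutting $T$ along the geodesic $\beta$ (the image of the segment from $0$ to $\beta$) produces the annulus $A_\beta$, whose universal-type picture is exactly this parallelogram with the two sides parallel to $\beta$ identified; thus $A_\beta$ is isometric to the quotient of the infinite strip between the lines $\mathbb{R}\beta$ and $\alpha + \mathbb{R}\beta$ by the translation $z \mapsto z + \beta$. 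The core geodesic of this annulus is the image of $\mathbb{R}\beta$, which has length $\|\beta\|$, so $\ell = \|\beta\|$. The distance between the two geodesic boundary components is the Euclidean distance between the parallel lines $\mathbb{R}\beta$ and $\alpha + \mathbb{R}\beta$, which is $\|\alpha\|\,|\sin\theta|$ where $\theta$ is the angle between $\alpha$ and $\beta$. Hence $m(A_\beta) = d/\ell = \frac{\|\alpha\|}{\|\beta\|}|\sin\theta|$.

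Next I would relate this to $\mu_\beta = \Im(\alpha/\beta)$. Writing $\alpha/\beta = \frac{\alpha\bar\beta}{|\beta|^2}$, the imaginary part is $\frac{\Im(\alpha\bar\beta)}{\|\beta\|^2}$. Now $\Im(\alpha\bar\beta)$ is (up to sign) the signed area of the parallelogram spanned by $\alpha$ and $\beta$, namely $\pm\|\alpha\|\,\|\beta\|\sin\theta$ where $\theta$ is the angle from $\beta$ to $\alpha$. Therefore $|\mu_\beta| = \frac{\|\alpha\|\,\|\beta\|\,|\sin\theta|}{\|\beta\|^2} = \frac{\|\alpha\|}{\|\beta\|}|\sin\theta| = m(A_\beta)$, as claimed. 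The statement about $\tau_\beta$ is the parallel computation: $\Re(\alpha/\beta) = \frac{\Re(\alpha\bar\beta)}{\|\beta\|^2} = \frac{\langle\alpha,\beta\rangle}{\|\beta\|^2} = \frac{\|\alpha\|\|\beta\|\cos\theta}{\|\beta\|^2} = \frac{\|\alpha\|}{\|\beta\|}\cos\theta$, using that the real part of $\alpha\bar\beta$ is the Euclidean inner product of the vectors $\alpha$ and $\beta$.

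The only genuine subtlety — and the step I expect to require the most care — is making precise the claim that cutting $T$ along $\beta$ yields the annulus described, i.e. that $A_\beta$ really is the strip quotient, with geodesic boundary components that are the two copies of $\beta$ lying at Euclidean distance $\|\alpha\|\,|\sin\theta|$ apart. This is a matter of checking that the path completion of $T - \beta$ is isometric to $\{t\beta + s\alpha : t\in\mathbb{R},\ 0\le s\le 1\}/(z\sim z+\beta)$; one sees this by noting that the segment from $0$ to $\alpha$ is a transversal to $\beta$ meeting it once, so $T - \beta$ deformation retracts to the core and the developing map embeds a lift of $T-\beta$ onto the open strip. Everything else is the elementary linear algebra of $\Re$ and $\Im$ of $\alpha\bar\beta$ recorded above, together with the sign bookkeeping (which angle, and the absolute value on $\mu_\beta$) that the statement already anticipates by writing $|\mu_\beta|$.
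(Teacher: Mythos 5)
Your proposal is correct and follows essentially the same route as the paper: identify $A_{\beta}$ with the strip between $\mathbb{R}\beta$ and $\alpha+\mathbb{R}\beta$ modulo translation by $\beta$, read off core length $\|\beta\|$ and boundary separation $\|\alpha\|\,|\sin\theta|$, and compare with the real and imaginary parts of $\alpha/\beta$ (the paper uses the polar form $\alpha/\beta=\tfrac{\|\alpha\|}{\|\beta\|}e^{i\theta}$ where you use $\alpha\bar{\beta}/\|\beta\|^2$, an immaterial difference). Your extra care in justifying that the path completion of $T-\beta$ is the strip quotient is a detail the paper simply asserts.
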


\begin{proof}  Write $\alpha = \|\alpha\| e^{i \theta_1}$ and $\beta = \|\beta\| e^{i\theta_2}$.  Then $\theta = \theta_1 - \theta_2$ is the angle between the geodesics corresponding to $\alpha$ and $\beta$, and $\frac{\alpha}{\beta} = \frac{\|\alpha\|}{\|\beta\|} e^{i\theta}$.  Writing $e^{i\theta} = \cos \theta + i\sin \theta$ yields the first assertion immediately.

To establish the second, consider the strip $\widetilde{A}_{\beta}$ in $\mathbb{C}$ bounded by the line containing $0$ and $\beta$ and its translate by $\alpha$, containing $\alpha$ and $\alpha + \beta$.  The quotient of $\widetilde{A}_{\beta}$ induced by the action of $\beta$ is the universal covering $\widetilde{A}_{\beta} \rightarrow A_{\beta}$.  The distance between boundary components of $\widetilde{A}_{\beta}$ is $\|\alpha\| |\sin \theta|$, and the length of the core of $A_{\beta}$ is the translation length of $\beta$, which is $\|\beta\|$.  \end{proof}

Lemma \ref{mod_decomp} provides a convenient means for understanding the modulus of a Euclidean torus in terms of ``Fenchel-Nielsen" coordinates $(\mu_{\beta}, \tau_{\beta})$ associated to a simple closed geodesic $\beta$.  We regard $\mu_{\beta}$ as a length parameter for the annulus $A_{\beta}$, and $\tau_{\beta}$ as a twist parameter.

\begin{lemma}  \label{lem:annulus_sum}  Suppose $T$ is a Euclidean torus decomposed into annuli $A_1, \hdots, A_n$ by simple closed geodesics parallel to $\beta$.  Then 
$$|\mu_{\beta}| = m(A_1) + m(A_2) + \hdots +m(A_n).$$  \end{lemma}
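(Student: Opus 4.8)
The plan is to reduce the statement to Lemma \ref{mod_decomp} together with the elementary fact that the real modulus of a Euclidean annulus with geodesic boundary is additive when the annulus is subdivided into subannuli along core geodesics.

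First I would invoke Lemma \ref{mod_decomp} to rewrite the left-hand side: $|\mu_{\beta}| = m(A_{\beta})$, where $A_{\beta}$ is obtained by cutting $T$ along a simple closed geodesic isotopic to $\beta$. Since any two simple closed geodesics of a flat torus in a fixed isotopy class are parallel translates of one another, cutting along any one of them yields an isometric annulus, so $m(A_\beta)$ does not depend on the choice. Among the $n$ geodesics appearing in the given decomposition of $T$, all of which are parallel to $\beta$, I would single out one, call it $\gamma$; cutting $T$ along $\gamma$ alone produces an annulus $A$ with $m(A) = m(A_\beta) = |\mu_\beta|$, and this $A$ is precisely $A_1,\dots,A_n$ glued end to end along the remaining $n-1$ geodesics. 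It then suffices to prove that $m(A) = m(A_1) + \dots + m(A_n)$.

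For the additivity I would pass to the universal cover $\widetilde{A}\to A$, identifying $\widetilde{A}$ with a horizontal strip $\mathbb{R}\times[0,d]$ on which the deck group is generated by a horizontal translation of length $\ell$, where $d$ is the distance between the boundary components of $A$ and $\ell$ is its core length; thus $m(A) = d/\ell$. The other $n-1$ cutting geodesics, being parallel to the core, lift to horizontal lines $y = c_1 < \dots < c_{n-1}$, and with $c_0 = 0$ and $c_n = d$ the subannulus $A_i$ is covered by $\mathbb{R}\times[c_{i-1},c_i]$, so it has core length $\ell$ and boundary components a distance $c_i - c_{i-1}$ apart. Hence $m(A_i) = (c_i - c_{i-1})/\ell$, and the telescoping sum $\sum_{i=1}^n (c_i - c_{i-1})/\ell = d/\ell = m(A)$ finishes the proof.

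The only point requiring a little care — the ``main obstacle,'' such as it is — lies in the reduction step: one must check that all the subannuli $A_i$ share the common core length $\ell = \|\beta\|$ (which holds because parallel essential closed geodesics on a flat torus, or on a Euclidean annulus, are isotopic and hence of equal length), and that cutting $T$ along any single one of the parallel geodesics really does reproduce the annulus $A_\beta$ of Lemma \ref{mod_decomp}. Everything after that is the routine telescoping computation above.
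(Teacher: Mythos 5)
Your proof is correct and takes essentially the same route as the paper: both arguments reduce $|\mu_\beta|$ to $m(A_\beta)$ via Lemma \ref{mod_decomp} and then add up the widths of the parallel subannuli, which all share core length $\|\beta\|$ (the paper measures the widths along a single arc perpendicular to $\partial A_\beta$, while you read them off as heights $c_i - c_{i-1}$ in the universal-cover strip — a cosmetic difference). The only other deviation is that the paper isotopes $\beta$ to be one of the cutting geodesics, whereas you cut along one of the given geodesics and note the resulting annulus is isometric to $A_\beta$; both handle that point adequately.
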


\begin{proof}  By isotoping $\beta$ if necessary, we may assume that it is one of the geodesics determining the $A_i$; hence $A_{\beta} = A_1 \cup A_2 \hdots \cup A_n$.  Then if $\alpha_0$ is an arc perpendicular to $\partial A_{\beta}$, joining one component to the other, for each $i$, $\alpha_0 \cap A_i$ is an arc perpendicular to $\partial A_i$ joining one component to the other.  This is because $\partial A_i$ is parallel to $\beta$.  Since $\ell(\alpha_0) = \sum_i \ell(\alpha_0 \cap A_i)$ and the core of each $A_i$ has length $\ell(\beta)$, the result follows.  \end{proof}

The annuli we are concerned with arise as horospherical cross sections of the cusps of $M_S$ and $M_T$.  Recall from Lemma \ref{M_Sboundary} that $\mathrm{Stab}_{\Gamma_S}(\calh)$ is a group $\Lambda$ generated by parabolic isometries $\p_1$, $\p_2$ and $\p_3$.  Furthermore, as pointed out in Remark 1 below the lemma, $\p_1$ and $\p_3$ are conjugate in $\Gamma_S$, as are $\p_2$ and $\p_4 = \p_1 \p_2 \p_3^{-1}$.  We asserted there that $C(\Gamma_S)$ has two cusps, one corresponding to $\p_1$ and one to $\p_2$.  This follows from the lemma below.  

In what follows, we let $\calv_1= \{ \infty, 0, 1,i, 1+i, (1+i)/2 \}$, the set of ideal vertices of the ideal octahedron $\calp_1$.  Let $\{h_v\,|\, v \in \mathcal{V}_1\}$ be a collection of horospheres invariant under the action of the symmetry group of $\mathcal{P}_1$, such that $h_v$ is centered at $v$ for each $v \in \mathcal{V}_1$ and $h_{\infty}$ is at height $2$.  

\begin{lemma}  \label{lem:M_S_moduli} The projection to $M_{S}$ of $\bigcup \left(h_v \cap \calp_1\right)$ is a disjoint union of Euclidean annuli $A_1$ and $A_2$ with geodesic boundary, such that $p_S(A_1)$ is a horospherical cross section of the cusp of $C(\Gamma_S)$ corresponding to $\p_1$, $p_S(A_2)$ is a cross section of the cusp corresponding to $\p_2$, and $m(A_1) = 1$, $m(A_2) = 1/5$.  \end{lemma}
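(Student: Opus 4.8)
The plan is to transfer the question to the identification space $M_S=\calp_1/\{\s^{\pm1},\t^{\pm1}\}$, which Corollary \ref{M_SandG} identifies isometrically with $C(\Gamma_S)$ via $p_S$, and to study the images of the squares $Q_v\doteq h_v\cap\calp_1$ under the quotient map $q\co\calp_1\to M_S$. Since $\calp_1$ is right-angled and four faces of the regular ideal octahedron meet at each ideal vertex, $Q_v$ is a Euclidean rectangle once $h_v$ is small; because the symmetry group of $\calp_1$ acts transitively on the pairs consisting of an ideal vertex and a face containing it, and $\{h_v\}$ is symmetry-invariant, each $Q_v$ is in fact a Euclidean square of one common side length $s$, and any two are congruent. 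In a checkering adjacent faces are of different types, so of the four faces at a given ideal vertex two are internal and two external, and the four sides of $Q_v$ alternate between sides lying in internal faces and sides lying in external faces. A side of $Q_v$ lying in an external face $g$ projects into the totally geodesic surface $\bound M_S$, and since $v$ is then an ideal vertex of $g$ the plane of $g$ contains $v$ in its boundary and cuts $h_v$ in a Euclidean geodesic; the sides lying in internal faces are identified in pairs by $\s^{\pm1},\t^{\pm1}$. Hence $q\big(\bigcup_v Q_v\big)$ is built from congruent Euclidean squares glued along opposite sides, with geodesic boundary formed by the remaining sides.

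The core of the argument is to determine the equivalence classes of $\calv_1$ under the relation generated by $x\sim\phi_f(x)$, which index the cusps (see the proof of Lemma \ref{combinatorics n cores}). Using the action of $\s$ and $\t$ on ideal points, as recorded in and around Corollary \ref{M_SandG}, one checks that $0$ is fixed by $\s^{\pm1}$ and lies on no internal face paired by $\t^{\pm1}$, so $\{0\}$ is one class, while the remaining five vertices $\infty,1,i,1+i,(1+i)/2$ form a single class. Since $\p_1=\s^{-1}$ fixes $0$ and $\p_2$ fixes $\infty$, the class $\{0\}$ corresponds to the cusp of $C(\Gamma_S)$ associated to $\p_1$ and the five-element class to that of $\p_2$; cf.\ the remarks after Lemma \ref{M_Sboundary}. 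Thus for $h_v$ small the projection is the disjoint union $A_1\sqcup A_2$, where $A_1=q(Q_0)$ meets only the $\p_1$-cusp and $A_2=q\big(\bigcup_{v\ne0}Q_v\big)$ meets only the $\p_2$-cusp. Each square-vertex lies on an edge of $\calp_1$, and since every edge of $\calp_1$ lies on exactly one external face, exactly two square corners meet there, with total angle $\pi$; so each $A_j$ is a flat annulus with straight (geodesic) boundary, and is an embedded horospherical cross section of its cusp.

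It remains to compute the moduli. First one observes that the gluings are Euclidean isometries of the squares: for $A_1$ the only pairing used is $\s$, parabolic fixing $0$, which preserves $h_0$; for $A_2$ the holonomy around the cusp is parabolic fixing the relevant vertex and so preserves the chosen horosphere there, which makes the successive identifications of the five squares consistent --- this is precisely the completeness computation in the proof of Lemma \ref{combinatorics n cores}. Then $A_1$ is one square with a pair of opposite sides identified by an orientation-preserving isometry, hence a Euclidean annulus isometric to $(\mathbb{R}/s\mathbb{Z})\times[0,s]$, so $m(A_1)=1$. For $A_2$ I would trace the identifications and verify that the five squares $Q_\infty,Q_1,Q_i,Q_{1+i},Q_{(1+i)/2}$ form a single cyclic chain: each has exactly two internal sides, none is glued to itself (no pairing fixes a vertex outside $\{0\}$), so the adjacency graph has every vertex of degree two, and connectedness of $A_2$ forces a single $5$-cycle rather than two shorter cycles. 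Hence $A_2$ is isometric to $(\mathbb{R}/5s\mathbb{Z})\times[0,s]$ and $m(A_2)=1/5$.

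The step needing the most care is the combinatorial bookkeeping: pinning down the two vertex classes, matching them to $\p_1$ and $\p_2$, and tracing how the five squares assemble into one cyclic chain --- keeping straight which internal face is paired with which and where each ideal vertex is sent. The compatibility of the symmetry-invariant horosphere collection with the face pairings is a secondary point, resolved by the parabolicity of the cusp holonomies; everything else follows routinely once the checkering of $\calp_1$ and the matrices $\s,\t$ are at hand.
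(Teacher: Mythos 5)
Your proposal follows the paper's proof in all essentials: decompose the cusp cross sections into the vertex squares $h_v\cap\calp_1$, observe they are congruent squares with sides alternating internal/external, identify the two vertex classes $\{0\}$ and $\{\infty,1,i,1+i,(1+i)/2\}$ under the face pairings (your $5$-cycle is exactly the chain in the paper's Figure 6), and read off the moduli $1$ and $1/5$ as ratios; your angle-$\pi$ argument for geodesic boundary and your scale-free modulus computation are fine, and arguably cleaner than quoting the side length $1/2$.

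The one place your justification does not do the job you assign it is the consistency of the gluings with the \emph{prescribed} horospheres. You argue that the five squares glue edge-to-edge because the holonomy around the cusp is parabolic and so preserves the chosen horosphere at its fixed vertex, citing the completeness computation of Lemma \ref{combinatorics n cores}. That computation only produces \emph{some} equivariant family: starting from $h_{v_0}$ and pushing it around the cycle, parabolicity guarantees the family closes up, but it does not show the intermediate horospheres so obtained are the symmetric ones $h_{v_j}$. Since the lemma is about the projection of the specific union $\bigcup\left(h_v\cap\calp_1\right)$, and your $5s\times s$ description of $A_2$ needs each gluing to carry a side of $Q_{v_j}$ \emph{onto} a side of $Q_{v_{j+1}}$, the equivariance $\s(h_v)=h_{\s(v)}$, $\t(h_v)=h_{\t(v)}$ is genuinely load-bearing; the paper states it as a property of $\s$ and $\t$ (verifiable by direct computation with the matrices). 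It can also be deduced from facts you already have: each pairing maps the length-$s$ horocyclic side $h_{v_j}\cap f_j$ isometrically onto a horocyclic arc in the target face cutting off the vertex $v_{j+1}$, and since the length of such arcs is strictly monotone in the horosphere while the corresponding side of $Q_{v_{j+1}}$ also has length $s$, the image horosphere must be $h_{v_{j+1}}$. With that patch, the rest of your argument goes through exactly as in the paper.
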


\begin{proof}  Since $h_{\infty}$ is at height $2$ and our embedding of $\calp_1$ is as in Figure \ref{fig:idealoct}, $h_{\infty} \cap \mathcal{P}_1$ is a square with sides of length $1/2$.  Since the symmetry group of $\mathcal{P}_1$ acts transitively on vertices, this holds for all $h_v \cap \mathcal{P}_1$, $v \in \mathcal{V}_1$.  We will call a side of $h_v \cap \mathcal{P}_1$ \textit{internal} if it is contained in an internal face of $\mathcal{P}_1$ and \textit{external} otherwise.  The face--pairing $\s$ has the property that if $v$ and $v'$ are ideal vertices of $\mathcal{P}_1$ and $\s(v) = v'$, then $\s(h_v)  = h_{v'}$, and $\s(h_v \cap \mathcal{P}_1)$ abuts $h_{v'} \cap \mathcal{P}_1$ along an internal side.  The analogous property holds for $\t$.  

\begin{figure}
\begin{center}
\input{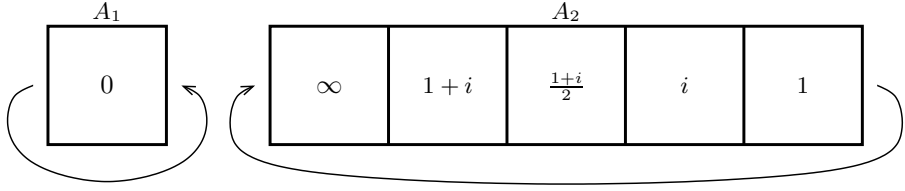}
\end{center}
\caption{Cross sections of the cusps of $M_S$}
\label{fig:M_Scusp}
\end{figure}

Each of $\s$ and $\t$ identifies a pair of internal faces of $\mathcal{P}_1$, yielding $M_S$.  The isometry $p_S$ of Corollary \ref{M_SandG} is induced by the inclusion $\mathcal{P}_1 \rightarrow \mathbb{H}^3$.  Since $\p_1 = \s^{-1}$ fixes the ideal vertex of $\mathcal{P}_1$ at $0$, it identifies the opposite internal sides of $h_0 \cap \mathcal{P}_1$.  This square thus projects to a cusp cross section $A_1$ of $M_S$, mapped by $p_S$ to one of the cusp of $C(\Gamma_S)$ corresponding to $\p_1$.  This is depicted on the left side of Figure \ref{fig:M_Scusp}. 

The other cusp cross section of $M_S$, the annulus $A_2$, is the identification space of the collection 
$$\{h_v \cap \mathcal{P}_1\,|\, v \in \mathcal{V}_1 - \{0\}\}$$ 
shown on the right side of Figure \ref{fig:M_Scusp}.  In the figure, each square is the projection to $M_S$ of $h_v \cap \mathcal{P}_1$ for the ideal vertex $v$ by which it is labeled.  The combinatorics can be verified by considering the action of $\s$ and $\t$ on $\mathcal{V}_1$.

By assumption each square in Figure \ref{fig:M_Scusp} has side length $1/2$, and so the cores of $A_1$ and $A_2$ have lengths $1/2$ and $5/2$, respectively.  For any square in Figure \ref{fig:M_Scusp}, a vertical side projects to an arc joining the distinct boundary components of the corresponding $A_i$, hence the distance between them is $1/2$.  Thus it follows directly from the definition that $m(A_1) = 1$ and $m(A_2) = 1/5$. \end{proof}

The lemma below describes the moduli of the cusps of $C(\Gamma_{T_0})$.  We asserted below Lemma \ref{M_T0boundary} that $C(\Gamma_{T_0})$ has four cusps, one corresponding to each $\p_i$, $i \in \{1,2,3,4\}$, and each joining $\partial_- C(\Gamma_{T_0})$ to $\partial_+ C(\Gamma_{T_0})$.  This follows from Lemma \ref{lem:M_T_moduli} below.  Let $\mathcal{V}_2$ be the set of ideal vertices of $\mathcal{P}_2$, and consider a collection of horospheres $\{ h_v \,|\,v \in \mathcal{V}_2 \}$, invariant under the symmetry group of $\mathcal{P}_2$, such that $h_v$ is centered at $v$ for each $v \in \mathcal{V}$ and $h_{\infty}$ is at height $2$.  

\begin{lemma}  \label{lem:M_T_moduli}  The projection of $\bigcup \left( h_v \cap \calp_2 \right)$ to $M_{T_0}$ is a collection of disjoint Euclidean annuli $B_j$ with geodesic boundary, $j \in \{1,2,3,4\}$, such that $p_{T_0}(B_j)$ is a cross section of the cusp of $C(\Gamma_{T_0})$ corresponding to $\p_j \in \Lambda$, and $m(B_1) = m(B_3) = \sqrt{2}$, $m(B_2) = m(B_4) = \sqrt{2}/5$.  \end{lemma}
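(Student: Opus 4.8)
The plan is to run the argument of Lemma \ref{lem:M_S_moduli} with the cuboctahedron $\calp_2$ in place of the octahedron $\calp_1$. Since $\calp_2$ is right-angled and ideal, each $h_v\cap\calp_2$ is a Euclidean rectangle; and since the isometry group of the right-angled ideal cuboctahedron realizes its combinatorial symmetry group, which is vertex-transitive, all twelve of these rectangles are congruent, so it suffices to understand $h_\infty\cap\calp_2$. Four faces meet at the ideal vertex $\infty$, two triangular (external) and two square (internal), alternating. Reading off the embedding of Figure \ref{fig:cuboct}: the triangular faces are the geodesic planes over the real axis (which carries the face $C$ over $[0,1]$) and over $\mathbb{R}-i\sqrt2$ --- since these external faces lie in $\bound\,\mathrm{Hull}(\Gamma_{T_0})$, whose components near $\infty$ are $\calh$ and $\k(\calh)$ by Lemmas \ref{M_TandH} and \ref{M_T0boundary} --- while the square faces are the vertical planes over $\Re z=0$ and $\Re z=1$. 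Thus $h_\infty\cap\calp_2$ is a rectangle whose two sides in the triangular faces have hyperbolic length $\tfrac12$ in the height-$2$ horosphere and whose two sides in the square faces are $\sqrt2$ times as long.

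Next I will identify the cusps of $M_{T_0}$ and the combinatorics of each annulus by computing the orbits of the ideal-vertex set $\calv_2$ of $\calp_2$ under $\langle \f^{\pm1},\g^{\pm1},\h^{\pm1}\rangle$, exactly as the proof of Lemma \ref{lem:M_S_moduli} does using the action of $\s$ and $\t$ on $\calv_1$ (compare Figure \ref{fig:M_Scusp}). I expect four orbits, of sizes $1,1,5,5$, and will match them to $\p_1,\p_2,\p_3,\p_4$ using what is already known: these parabolics fix the ideal points $0,\infty,5/3,3/2$ respectively, $\p_2$ is $z\mapsto z-5$, and $\phi_\k$ interchanges the cusps of $\p_1$ and $\p_3$ while fixing those of $\p_2$ and $\p_4$; so the size-$5$ orbits are those of $\p_2$ and $\p_4$ and the size-$1$ orbits those of $\p_1$ and $\p_3$. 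As in the octahedral case, the projection to $M_{T_0}$ of the rectangles over an orbit is the corresponding annulus $B_j$, obtained by gluing those rectangles along their internal sides into a cyclic chain, with the external sides assembling into the two geodesic boundary circles.

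Finally, a cyclic chain of $n$ congruent copies of this rectangle glued along internal sides is a Euclidean annulus with geodesic boundary whose core has length $n$ times the external side length and whose two boundary components are a distance $\sqrt2$ times the external side length apart; its real modulus is therefore $\sqrt2/n$. With $n=1$ for $B_1$ and $B_3$ and $n=5$ for $B_2$ and $B_4$, this yields $m(B_1)=m(B_3)=\sqrt2$ and $m(B_2)=m(B_4)=\sqrt2/5$, and disjointness of the annuli $p_{T_0}(B_j)$ follows from the choice of horospheres as in Lemma \ref{lem:M_S_moduli}. I expect the main obstacle to be the orbit computation: the cuboctahedron has fourteen faces and $\g,\h$ are given by unwieldy matrices, so verifying that there are exactly four vertex-orbits of sizes $1,1,5,5$, and pairing each correctly with its parabolic, will require careful bookkeeping; reading off the $1:\sqrt2$ aspect ratio of the vertex rectangle is the other point needing care.
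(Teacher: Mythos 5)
Your overall route is the paper's: assemble each cusp cross-section from the rectangles $h_v\cap\calp_2$, which are congruent by vertex-transitivity; read off from the rectangle at $\infty$ that the external sides (in the triangular faces lying in $\calh$ and $\k(\calh)$) have length $1/2$ and the internal sides length $\sqrt2/2$; compute that the vertex classes under the face pairings have sizes $1,1,5,5$; and conclude that a cyclic chain of $n$ rectangles has modulus $\sqrt2/n$. This is exactly the bookkeeping of the paper's proof (its Figure \ref{fig:M_Tcusp}), with the same side lengths and the same final arithmetic, and the identifications of the cusps of $\p_1$ and $\p_2$ with the classes of $0$ and $\infty$ are direct, as in the paper.

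The one place you genuinely deviate is the matching of the remaining classes to $\p_3$ and $\p_4$, and there your written justification has a gap at $\p_4$. For $\p_3$ your symmetry argument is fine: $\k$ normalizes $\Gamma_{T_0}$ and $\p_1^{\k}$ is conjugate to $\p_3$ (both recorded before the lemma, below Lemma \ref{M_T0boundary}), so $\phi_\k$ carries the cusp of $\p_1$ to that of $\p_3$ and $m(B_3)=m(B_1)=\sqrt2$ follows, whichever size-$1$ class it is. But for $\p_4$ you invoke ``$\phi_\k$ fixes the cusp of $\p_4$,'' which is not among the previously established facts (only $\p_1^{\k}\sim\p_3$ and $\p_2^{\k}=\p_2^{-1}$ are), and even granted it, being $\phi_\k$-invariant does not determine the class size, so ``the size-$5$ orbits are those of $\p_2$ and $\p_4$'' does not follow as stated; note that if the cusp of $\p_4$ were a size-$1$ class then $m(B_4)$ would be $\sqrt2$, so this is precisely what must be proved. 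The paper instead pins $B_3$ directly ($\p_3$ is conjugate in $\Gamma_{T_0}$ to $\g^{-1}$, which fixes the vertex $1-i\sqrt2/2$) and traces the $B_2$ chain of five rectangles starting at $\infty$, leaving $B_4$ as the remainder. To make your (or the paper's) elimination step legitimate you should add the observation, available from your own bookkeeping, that each rectangle $h_v\cap\calp_2$ has one external side in a triangle projecting to $\partial_- M_{T_0}$ and one in a triangle projecting to $\partial_+ M_{T_0}$; hence each of the four cusp annuli has exactly one boundary circle on $\partial_- M_{T_0}\cong F^{(0)}$, the four cusps are in bijection with $\p_1,\hdots,\p_4$, and only then does ``the leftover size-$5$ class is the cusp of $\p_4$'' follow.
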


\begin{figure}%[ht]
\begin{center}
\input{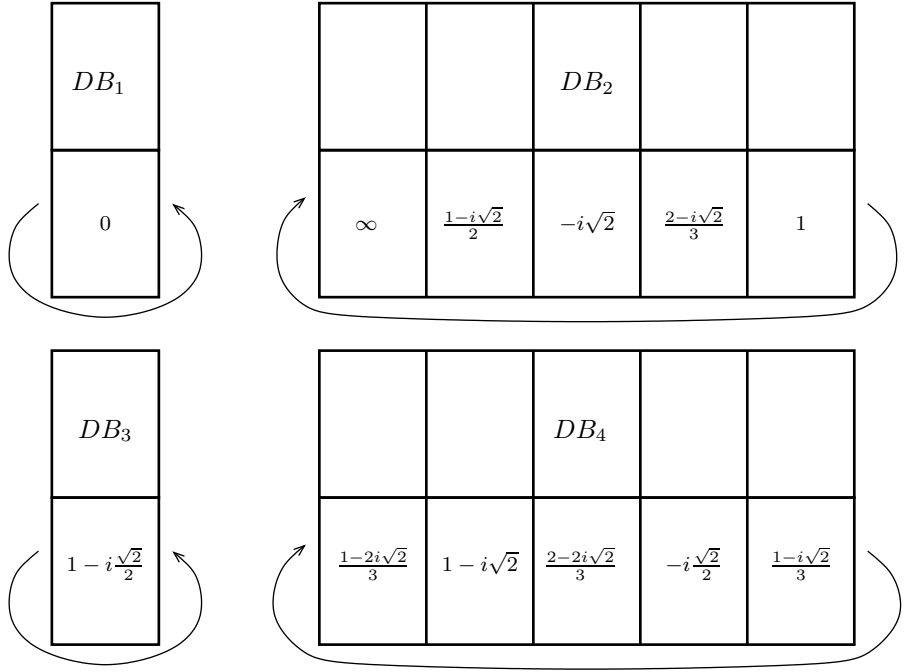}
\end{center}
\caption{Cross sections of the cusps of $M_T$}
\label{fig:M_Tcusp}
\end{figure}

\begin{proof}  For $v \in \calv_2$, we again call a side of $h_v \cap \mathcal{P}_2$ external if it is contained in an external face of $\calp_2$ and internal otherwise.  Each cusp cross section of $M_{T_0}$ is the projection of a subcollection of the $h_v \cap \mathcal{P}_2$, identified along their internal faces.  From Figure \ref{fig:cuboct}, we find that $h_{\infty} \cap \mathcal{P}_2$ is a Euclidean rectangle with two opposite internal sides and two external.  Since the symmetry group of $\mathcal{P}_2$ is transitive on its set of ideal vertices, this holds for the other $h_v$ as well.  It follows that each cusp cross section of $M_{T_0}$ is a Euclidean annulus with geodesic boundary.

In Figure \ref{fig:M_Tcusp}, the lower rectangles of each annulus $DB_j$ are labeled by vertices $v$ such that $h_v \cap \mathcal{P}_2$ projects to a subrectangle of cross section of the cusp of $M_{T_0}$ whose image under $p_{T_0}$ corresponds $\p_j$.  Then $B_j$ is the lower half of $DB_j$.  The reasons for this picture will become clear after the current proof.

The isometries $\f$, $\g$, and $\h$ defined in Corollary \ref{M_T0andH0} identify the internal sides of $\mathcal{P}_2$ in pairs, yielding the manifold $M_{T_0}$ with totally geodesic boundary.  The parabolic $\p_1 = \f^{-1}$ fixes $0$, identifying the internal sides of $\mathcal{P}_2$ sharing this ideal vertex.  Thus in $M_{T_0}$, $B_1$ consists of $h_0 \cap \mathcal{P}_2$ with its internal sides identified.  The description of the $\p_i$ in terms of $\f$, $\g$, and $\h$ above Lemma \ref{M_Sboundary} shows that $\p_3$ is a conjugate of $\g^{-1}$.  Since $\g^{-1}$ fixes $1-i\frac{\sqrt{2}}{2}$, identifying the internal edges of $\mathcal{P}_2$ which abut it, $h_{1-i\sqrt{2}/2}$ projects to $B_3$ in $M_{T_0}$.  This justifies the depictions of $B_1$ and $B_3$ in Figure \ref{fig:M_Tcusp}.  

Since $\p_2$ fixes $\infty$, $h_{\infty} \cap \mathcal{P}_2$ projects to a subrectangle of $B_2$.  Since $\g$ takes the internal side $Y_3$ to $Y_1'$  and $\infty$ to $(1-i\sqrt{2})/2$, in $B_2$ the projection of $h_{\infty} \cap \mathcal{P}_2$ meets the projection of $h_{(1-i\sqrt{2})/2} \cap \mathcal{P}_2$ along a side contained in the projection of $Y_3$ to $M_{T_0}$.  Since the internal face of $\calp_2$ meeting $Y_1'$ at $(1-i\sqrt{2})/2$ is $Y_3'$, and this is taken to $Y_2'$ by $\h^{-1}$, the rectangle meeting the projection of $h_{(1-i\sqrt{2})/2}$ in $B_2$ on the internal side opposite its intersection with $h_{\infty}$ is $h_{-i\sqrt{2}}$.  Carrying this line of argument to completion yields the depictions of $B_2$ and $B_4$ in the figure.  

From Figure \ref{fig:cuboct}, we find that the internal sides of $h_{\infty} \cap \mathcal{P}_2$ have length $\sqrt{2}/2$ and the external sides length $1/2$.  Since the symmetry group of $\mathcal{P}_2$ is transitive on its ideal vertices, the same holds for each rectangle $h_v \cap \mathcal{P}_2$.  Thus the cores of $B_1$ and $B_3$ have length $1/2$, and the cores of $B_2$ and $B_4$ have length $5/2$.  For any square $h_v \cap \calp_2$, an internal side projects to a perpendicular arc joining opposite sides of the cusp cross section in $M_{T_0}$ containing $h_v \cap \calp_2$.  The moduli are thus as described.
\end{proof}

By Corollary \ref{the real M_T}, $p_{T_0}\co M_{T_0}\to C(\Gamma_{T_0})$ determines a reflection-invariant map from the double $M_T$ of $M_{T_0}$ across $\partial_+ M_{T_0}$ to $C(\Gamma_T)$.  Furthermore, as we remarked below Lemma \ref{M_T0boundary}, each cusp of $C(\Gamma_{T_0})$ joins one component of $\partial C(\Gamma_{T_0})$ to the other.  Therefore taking $DB_j \subset M_T$, $j \in \{1,2,3,4\}$, to be the double of $B_j$ across its component of intersection with $\partial_+ M_{T_0}$, we have:

\begin{lemma}\label{real M_T moduli}  For each $j\in \{1,2,3,4\}$, the image in $C(\Gamma_T)$ of $DB_j$ is a cross section of the cusp corresponding to $\p_j$, and $m(DB_1) = m(DB_3) = 2\sqrt{2}$, $m(DB_2) = m(DB_4) = 2\sqrt{2}/5$.  \end{lemma}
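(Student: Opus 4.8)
The plan is to deduce this directly from Lemma~\ref{lem:M_T_moduli} together with the reflection symmetry of $M_T$ recorded in Corollary~\ref{the real M_T} and Lemma~\ref{M_TandH}. First I would recall that $p_{T_0}$ extends to an isometry $p_T\co M_T\to C(\Gamma_T)$ on the double $M_T$ of $M_{T_0}$ across $\partial_+M_{T_0}$, and that under $p_T$ the doubling involution of $M_T$ corresponds to $\phi_{\c^{-2}\r}$, which by Lemma~\ref{M_TandH} is an isometry of $C(\Gamma_T)$ fixing $F'$ and interchanging the images of the two copies of $M_{T_0}$. As noted below Lemma~\ref{M_T0boundary}, every cusp of $C(\Gamma_{T_0})$ joins $\partial_-C(\Gamma_{T_0})$ to $\partial_+C(\Gamma_{T_0})$; hence the horoball cross section $B_j$ of the $j$th cusp of $M_{T_0}$ meets $F'=\partial_+M_{T_0}$ in a single geodesic circle, and its reflection $\phi_{\c^{-2}\r}(B_j)$ meets $B_j$ exactly along that circle. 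It follows (this is the picture already anticipated in Figure~\ref{fig:M_Tcusp} and in the proof of Lemma~\ref{lem:M_T_moduli}) that $DB_j = B_j\cup\phi_{\c^{-2}\r}(B_j)$ is a Euclidean annulus with geodesic boundary whose image in $C(\Gamma_T)$ is a horospherical cross section of the cusp of $C(\Gamma_T)$ containing the image of the $j$th cusp of $C(\Gamma_{T_0})$; since $\Gamma_{T_0}<\Gamma_T$ and $\p_j$ remains peripheral, this is the cusp corresponding to $\p_j$.

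Next I would compute $m(DB_j)$. The core circle $B_j\cap F'$ of $DB_j$ decomposes it into the two sub-annuli $B_j$ and $\phi_{\c^{-2}\r}(B_j)$, each with geodesic boundary and with that same circle as core. Arguing exactly as in the proof of Lemma~\ref{lem:annulus_sum}, a geodesic arc perpendicular to $\partial DB_j$ joining its two boundary circles is cut by $B_j\cap F'$ into perpendicular arcs across $B_j$ and across $\phi_{\c^{-2}\r}(B_j)$, so the distances between boundary components add and $m(DB_j)=m(B_j)+m(\phi_{\c^{-2}\r}(B_j))$. Since $\phi_{\c^{-2}\r}$ is an isometry, $m(\phi_{\c^{-2}\r}(B_j))=m(B_j)$, whence $m(DB_j)=2\,m(B_j)$. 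Plugging in the values $m(B_1)=m(B_3)=\sqrt{2}$ and $m(B_2)=m(B_4)=\sqrt{2}/5$ from Lemma~\ref{lem:M_T_moduli} yields $m(DB_1)=m(DB_3)=2\sqrt{2}$ and $m(DB_2)=m(DB_4)=2\sqrt{2}/5$.

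The only genuinely delicate point is the first step: verifying that $DB_j$ is a bona fide horospherical cross section of a single cusp of $C(\Gamma_T)$ — that is, that the horoball defining $B_j$ and its reflection fit together across $F'$ to bound a horoball in $C(\Gamma_T)$, and that no two distinct cusps of $M_{T_0}$ are identified under doubling. Both follow from the fact that every cusp of $M_{T_0}$ runs between the two boundary components, together with the reflection-equivariance of $p_T$ from Corollary~\ref{the real M_T}; once this bookkeeping is in place the modulus computation is immediate, and I expect no further obstacle.
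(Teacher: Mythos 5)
Your proposal is correct and takes essentially the same approach as the paper: the paper likewise defines $DB_j$ as the double of $B_j$ across its boundary circle in $\partial_+ M_{T_0}$, invokes the fact that every cusp of $C(\Gamma_{T_0})$ joins the two boundary components together with the reflection equivariance from Corollary \ref{the real M_T}, and concludes that the modulus doubles, giving the stated values from Lemma \ref{lem:M_T_moduli}. (Only a wording quibble: the circle $B_j\cap F'$ is a common boundary component, not the core, of the two sub-annuli, but your additivity argument is unaffected.)
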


It is a well known consequence of Margulis' lemma that each cusp $C$ of a hyperbolic manifold $M = \mathbb{H}^3/\Gamma$ of finite volume is foliated by similar Euclidean tori, the projections to $M$ of horospheres in $\mathbb{H}^3$ centered at the fixed point of a parabolic subgroup of $\Gamma$ corresponding to $C$.

\begin{dfn}  The \textit{parameter} of a cusp $C$ of a finite--volume complete hyperbolic manifold is the complex modulus of a horospherical cross--section of $C$.  \end{dfn}

By Lemma \ref{commensurable moduli} the $\mathrm{PSL}_2(\mathbb{Z})$ orbit of the cusp parameter is an invariant of the \textit{cusp shape}, the Euclidean similarity class of a cross-section.

\begin{prop} \label{M_nmoduli}  For $j=1,2$, let $T_j$ be a cusp cross section of $M_n$ containing the annular cusp cross-section $A_j$ of $M_S$ (cf.~Lemma \ref{lem:M_S_moduli}).  Then $m(T_1) = i(2+4n\sqrt{2}),$ and $m(T_2)$ is $\mathrm{PGL}_2(\mathbb{Q})$-equivalent to $m(T_1)$.  \end{prop}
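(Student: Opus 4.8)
The plan is to cut each cusp cross-section $T_j$ of $M_n$ along the totally geodesic four-punctured spheres $F^{(0)},\dots,F^{(n)}$ into a cyclic union of Euclidean annuli, and then to compute $m(T_j)$ from Lemmas \ref{mod_decomp} and \ref{lem:annulus_sum} together with the annular moduli recorded in Lemmas \ref{lem:M_S_moduli} and \ref{real M_T moduli}.

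First I would pin down the combinatorics of $T_1$. Here $A'_1\subset M_n$ is the image of the cusp cross-section $A_1$ of $C(\Gamma_S)$ under the inclusion of Corollary \ref{splitting}. Since $\p_1$ and $\p_3$ are conjugate in $\Gamma_S$ while $\p_2$ and $\p_4$ are not conjugate to them, the cusp of $C(\Gamma_S)$ corresponding to $\p_1$ meets $\partial C(\Gamma_S)=F^{(0)}$ exactly in the two punctures carrying $\p_1$ and $\p_3$, so $A'_1$ meets $F^{(0)}$ in two circles. The homeomorphism $j$ carries the Wirtinger generators $a,u,v$ to themselves (the Remark after Definitions \ref{L_n dfns}), and the remaining assembling homeomorphisms are built from $j$, $r_S$, $r_T$, and the involutions $\phi_{\c^{-2}\r}$ and $\phi_\r$; one checks from this that no gluing identifies a puncture of $\p_1$- or $\p_3$-type with one of $\p_2$- or $\p_4$-type. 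Hence $T_1$ meets every $F^{(i)}$ in exactly two circles, and in each piece $C(\Gamma_T^{(i)})$ the cusp $T_1$ occupies the cross-sections of two cusps, each corresponding to $\p_1$ or $\p_3$ and so isometric to $DB_1$ or $DB_3$. Cutting $T_1$ along $F^{(0)},\dots,F^{(n)}$ therefore yields $2n+2$ annuli: $A'_1$, its mirror image $\overline{A'_1}\subset C(\overline{\Gamma}_S)$, and $2n$ copies of $DB_1$ or $DB_3$. The $2(n+1)$ cutting circles $T_1\cap F^{(i)}$ are disjoint essential simple closed curves on the torus $T_1$, hence pairwise isotopic, so this is exactly the situation of Lemma \ref{lem:annulus_sum} with $\beta$ parallel to $T_1\cap F^{(0)}$.

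Writing $m(T_1)=\tau_\beta+i\mu_\beta$ as in Lemma \ref{mod_decomp}, the sum formula of Lemma \ref{lem:annulus_sum}, the values $m(A_1)=1$ (Lemma \ref{lem:M_S_moduli}) and $m(DB_1)=m(DB_3)=2\sqrt{2}$ (Lemma \ref{real M_T moduli}), and the invariance of the real modulus of a Euclidean annulus under reflection give
\[ |\mu_\beta| \ = \ m(A'_1)+m(\overline{A'_1})+2n\cdot m(DB_1) \ = \ 1+1+2n\cdot 2\sqrt{2} \ = \ 2+4n\sqrt{2}. \]
It remains to show $\tau_\beta=0$. For this I would observe that the horospherical cross-sections of Lemmas \ref{lem:M_S_moduli}--\ref{real M_T moduli} are tiled by the Euclidean faces $h_v\cap\calp_1$ and $h_v\cap\calp_2$, and that in each such face the sides lying on internal faces of $\calp_1$ or $\calp_2$ are geodesic arcs perpendicular to $\beta$ running from one boundary circle to the other, with endpoints precisely the intersections of the horocycles with edges of the triangulations $\Delta_S$, $\Delta^{\pm}_{T_0}$, $\Delta_F$. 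Every identification used to assemble $M_n$ --- the face pairings inside the pieces, the doublings, and the gluings of Proposition \ref{geometric model} --- carries these triangulations to one another (Corollaries \ref{M_SandG} and \ref{M_T0andH0}, Lemma \ref{M_Sboundary}(3), Lemma \ref{M_TandH}, and (\ref{reflection i's})). Consequently the perpendicular arcs concatenate across the $F^{(i)}$ into a closed geodesic on $T_1$ meeting $\beta$ orthogonally and exactly once, and Lemma \ref{mod_decomp} forces $\tau_\beta=0$. Thus $m(T_1)=i(2+4n\sqrt{2})$, up to the $\mathrm{PGL}_2(\mathbb{Z})$ ambiguity in the modulus (which contains $z\mapsto -z$).

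Finally I would run the identical argument on $T_2\supset A'_2$, whose intersection with each $F^{(i)}$ consists of the two punctures carrying $\p_2$ and $\p_4$; with $m(A_2)=1/5$ and $m(DB_2)=m(DB_4)=2\sqrt{2}/5$ this gives
\[ m(T_2)\ =\ i\left(\tfrac15+\tfrac15+2n\cdot\tfrac{2\sqrt{2}}{5}\right)\ =\ \tfrac15\,m(T_1), \]
which is the image of $m(T_1)$ under $\left(\begin{smallmatrix}1&0\\0&5\end{smallmatrix}\right)\in\mathrm{PGL}_2(\mathbb{Q})$. The main obstacle I anticipate is the first step: confirming that $T_1$ is cut by the $F^{(i)}$ into exactly the claimed annuli --- that the cusp never passes into a $\p_2$- or $\p_4$-type cusp of an $M_T$-piece, and that no $F^{(i)}$ meets it in more than two circles --- together with checking that the triangulation-compatibility of the gluings genuinely annihilates the twist parameter rather than merely constraining it.
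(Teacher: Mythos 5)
Your treatment of $T_1$ follows the paper's argument closely and is essentially correct: the decomposition of $T_1$ along the $F^{(i)}$ into $p_S(A_1)$, its mirror image, and the $2n$ annuli $DB_1^{(i)}$, $DB_3^{(i)}$; the computation $\Im(m(T_1))=\pm(2+4n\sqrt{2})$ via Lemma \ref{lem:annulus_sum}; and the single perpendicular closed geodesic assembled from the internal-edge arcs, which meets $\beta_1$ exactly once because the cusps of $F^{(0)}$ at $\p_1$ and $\p_3$ each contain only one end of an edge of the triangulation inherited from $\calf$ (Figure \ref{Sface}) --- all of this is the paper's proof.

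The gap is in your final step, where you ``run the identical argument on $T_2$.'' The annulus $A_2$ is tiled by \emph{five} squares, so the projections of internal edges give a collection $\mathcal{A}_2$ of five evenly spaced perpendicular arcs rather than one, and likewise $DB_2$ and $DB_4$ each carry five such arcs. Concatenating these across the $F^{(i)}$ produces a disjoint union of up to five closed geodesics, each meeting $\beta_2$ perpendicularly in as many as five points, and there is no reason a given strand closes up after a single circuit. Hence Lemma \ref{mod_decomp} does not apply to the pair $(\alpha_2,\beta_2)$, and you cannot conclude $\Re(m(T_2))=0$, let alone the exact value $m(T_2)=\tfrac{1}{5}m(T_1)$; your argument only controls $\Im(m(T_2))$. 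The paper handles this differently: it fixes one component $\alpha_2$, passes to the $k$-fold cover of $T_2$ dual to $\alpha_2$ (where $k$ is the intersection number with $\beta_2$), computes the modulus of the cover as $\pm k\cdot i(2+4n\sqrt{2})/5$, and invokes Lemma \ref{commensurable moduli} to obtain only the $\mathrm{PGL}_2(\mathbb{Q})$-equivalence asserted in the proposition --- indeed the remark after the statement notes that the exact value of $m(T_2)$ ``requires more work.'' Your conclusion can be repaired along these lines (perpendicularity of a closed geodesic meeting $\beta_2$ in $k$ points still forces the twist to be rational, which suffices for $\mathrm{PGL}_2(\mathbb{Q})$-equivalence), but as written the claim that the $T_1$ argument transfers verbatim is not justified.
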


\noindent {\it Remarks.}\ \ 
1.~It is not hard to show that $m(T_2)=i(2+4n\sqrt{2})/5$, but this is not necessary for our purposes and requires more work. \smallskip \\
  2.~The cusps $T_1$ and $T_2$ are labeled in Figure \ref{linkspic}.  
  
  %\bigskip

\begin{proof}  By Proposition \ref{geometric model} $M_n = C(\Gamma_S) \cup C(\Gamma^{(1)}_T) \cup \hdots \cup C(\Gamma^{(n)}_T) \cup C(\overline{\Gamma}_S)$,
with gluing maps that factor through the inclusion induced isometries $\iota_{\pm}^{(i)}$ defined on $F^{(i)}$ for $0 \leq i \leq n-1$, and final gluing $\phi_{\r}\iota_-^{(0)}\phi_{n+1}^{-1}(\iota_-^{(n)})^{-1}\co\partial_+ C(\Gamma_T^{(n)})\to\partial C(\overline{\Gamma}_S)$.

For $j\in\{1,2,3,4\}$ and $i \in \mathbb{N}$, define $DB_j^{(i)} =\phi_i \circ p_T(DB_j) \subset C(\Gamma_T^{(i)})$, with $\phi_i$ as in Definitions \ref{geometric pieces}.  We also refer by  $DB_j^{(i)}$ to its image in $C(\Gamma_n)$ under the natural map, or in $M_n$ under inclusion.  Let $\partial_{\pm} DB_j^{(i)} = DB^{(i)}_j \cap \partial_{\pm} C(\Gamma_T^{(i)})$.   

By Lemma \ref{lem:M_S_moduli}, $p_S(A_1)$ is a cross section of the cusp of $C(\Gamma_S)$ corresponding to $\p_1$, and by Lemma \ref{real M_T moduli}, $DB_1^{(1)}$ is a cross section of the cusp of $C(\Gamma_T)$ corresponding to $\p_1$.  Lemma \ref{M_Sboundary} thus implies that $\iota^{(0)}_+(\iota^{(0)}_-)^{-1}$ takes one component of $p_S(\partial A_1)$ to $\partial_- DB_1^{(0)}$.  In Remark 1 below that lemma, we note that $\p_1$ and $\p_3$ are conjugate in $\Gamma_S$.  It follows that the other component of $p_S(\partial A_1)$ is a cross section of the cusp of $\partial C(\Gamma_S)$ corresponding to $\p_3$, so $\iota^{(0)}_+(\iota^{(0)}_-)^{-1}$ takes this component to $\partial_- DB_3^{(1)}$.

The doubling involution of $M_T$ preserves $DB_j$ by construction, exchanging its boundary components.  Therefore by Corollary \ref{the real M_T}, $\phi_{\c^{-2}\r}$ preserves $p_T(DB_j)$ and exchanges boundary components.  It follows that $\iota^{(i)}_+(\iota^{(i)}_-)^{-1}$ takes $\partial_+ DB^{(i)}_j$ to $\partial_- DB^{(i+1)}_j$ for each $i$ between $1$ and $n-1$, upon recalling the identity (\ref{reflection i's}) from the proof of Proposition \ref{geometric model}:  
\begin{align*}
  \iota^{(i)}_+(\iota^{(i)}_-)^{-1} = \phi_{i+1} \iota^{(0)}_+ \phi_{2}^{-1} \left(\iota_-^{(1)}\right)^{-1} \phi_{i}^{-1} = \phi_{i+1} \phi_{\c^{-2}\r} \phi_i^{-1}.  \end{align*} 

One finds that $\phi_{\r}\iota_-^{(0)}\phi_{n+1}^{-1}(\iota_-^{(n)})^{-1}$ takes $\partial_+ DB_1^{(n)}\sqcup\partial_+ DB_3^{(n)}$ to the components of $\phi_{\r}\circ p_S(\partial A_1)$, arguing as above and applying (\ref{model i's}) from Proposition \ref{geometric model}. Therefore $T_1$ is decomposed by its intersection in $M_n$ with the separating spheres $F^{(i)}$ into the following collection of Euclidean annuli with geodesic boundary.
$$  p_S(A_1) \cup DB_1^{(1)} \cup \hdots \cup DB_1^{(n)} \cup \phi_{\r}\circ p_S(A_1) \cup DB_3^{(n)} \cup \hdots DB_3^{(1)} $$
Similarly, we find that $T_2$ decomposes into the union of $p_S(A_2)$, $\phi_{\r}\circ p_S(A_2)$, and $DB^{(i)}_j$ for $1 \leq i \leq n$ and $j = 2,4$.  We may take $\beta_1$ to be the geodesic $\partial_- DB_1^{(1)}$ on $T_1$ and $\beta_2 = \partial_- DB_2^{(1)} \subset T_2$.  Then we obtain the following from Lemma \ref{lem:annulus_sum}, applying lemmas \ref{lem:M_S_moduli} and \ref{real M_T moduli}.
\begin{align*}  \Im(m(T_1)) = \pm (2+4n\sqrt{2}) & & \Im(m(T_2)) = \pm \frac{2+4n\sqrt{2}}{5} \end{align*}

We will show $m(T_1)$ and $m(T_2)$ have real part equal to $0$ by describing geodesics $\alpha_j$, $j = 1,2$, which meet the $\beta_j$ once, perpendicularly.  Let $a_1$ be the arc in $A_1$ which is the  projection of the internal edges of $h_0 \cap \mathcal{P}_1$ (the vertical arcs on the left-hand square in Figure \ref{fig:M_Scusp}).  Recall that the internal edges of $h_0 \cap \mathcal{P}$ are its intersection with internal faces of $\mathcal{P}_1$.  In particular, $p_S(\partial a_1)$ is the intersection of $p_S(A_1)$ with the one-skeleton of the triangulation $\Delta_S$ defined below Corollary \ref{M_SandG}.

Let $b_1 \subset B_1$ and $b_3 \subset B_3$ similarly be projections of internal edges of $h_0 \cap \mathcal{P}_2$ and $h_{1-i\sqrt{2}/2} \cap \mathcal{P}_2$, respectively (see Figure \ref{fig:M_Tcusp}), and let $db_1$ and $db_3$ be the geodesic arcs of $DB_1$ and $DB_3$ containing them.  Let $db_j^{(i)} = \phi_i \circ p_T(db_j)$, and let $\partial_{\pm} db_j^{(i)} = db_j^{(i)} \cap \partial_{\pm} DB^{(i)}_j$, $j =1,3$ and $i \in \mathbb{N}$.  Let $\Delta^{-}_T$ be the image of the triangulation $\Delta_{T_0}^-$ defined below Corollary \ref{M_T0andH0} under the inclusion $M_{T_0} \to M_T$, and let $\Delta_T^+$ be its image under the doubling involution of $M_T$.  Then $\partial_{\pm} db_j^{(i)}$ is the intersection of $\partial DB_j^{(i)}$ with the one-skeleton of $\phi_i  (\Delta^{\pm}_T)$.

By Lemma \ref{M_Sboundary}, $\iota^{(0)}_+(\iota^{(0)}_-)^{-1}$ preserves triangulations, and the discussion above implies that the other gluing maps do as well.  From Figure \ref{Sface} it is apparent that the cusps of $F^{(0)}$ corresponding to $p_1$ and $p_3$ each contain only one end of an edge of the triangulation that $F^{(0)}$ inherits from the pictured fundamental domain $\mathcal{F}$.  Therefore $\iota_0(\partial a_1) = \partial_- db_1^{(1)} \cup \partial_- db_3^{(1)}$.  It then follows as before that 
$$ \alpha_1 = p_S(a_1) \cup db_1^{(1)} \cup \hdots \cup db_1^{(n)} \cup \phi_{\r}\circ p_S(a_1) \cup db_3^{(n)} \cup \hdots \cup db_3^{(1)}  $$
is a closed geodesic on $T_1$ which meets $\beta_1$ once, at right angles.  Therefore by Lemma \ref{mod_decomp} $\Re(m(T_1)) = 0$, so $m(T_1) = i(2+4n\sqrt{2})$.
%up to the action of $\mathrm{PGL}_2(\mathbb{Z})$

A similar argument will give $m(T_2)$.  Let $\mathcal{A}_2$ be the collection of arcs in $A_2$ which are the projections of internal edges of the squares $h_v$, $v\in\mathcal{V}_1-\{0\}$.  From Figure \ref{fig:M_Scusp}, $\mathcal{A}_2$ consists of five arcs evenly spaced around $A_2$, each joining one component of $\partial A_2$ to the other and perpendicular to $\partial A_2$ at each endpoint.  For $j = 2,4$, we define a collection of arcs $D\mathcal{B}_j \subset DB_j$ analogously, and take $D\mathcal{B}_j^{(i)} = \phi_i \circ p_T (D\mathcal{B}_j)$.  Let $\partial_{\pm} D\mathcal{B}_j^{(i)} = D\mathcal{B}_j^{(i)} \cap \partial_{\pm} DB^{(i)}_j$, and note that the points of $\partial_{\pm} D\mathcal{B}_j^{(i)}$ are the points of intersection of $\partial DB_j^{(i)}$ with the one-skeleton of $\phi_i  (\Delta^{\pm}_T)$.

For the same reasons as above, $i^{(0)}_+ (i^{(0)}_-)^{-1}$ takes $p_S(\partial \mathcal{A}_2)$ to $\partial_- D\mathcal{B}_2^{(1)} \cup \partial_- D\mathcal{B}_4^{(1)}$, and the other gluing maps take the $\partial_+ D\mathcal{B}_j^{(i)}$ to $\partial_- D\mathcal{B}_j^{(i+1)}$ for the appropriate $i$ and $j$.  Then the collection
$$ p_S(\mathcal{A}_2) \cup D\mathcal{B}_2^{(1)} \cup \hdots \cup D\mathcal{B}_2^{(n)} \cup \phi_{\r}\circ p_S(\mathcal{A}_2) \cup D\mathcal{B}_4^{(n)} \cup \hdots \cup D\mathcal{B}_4^{(1)} $$
consists of a disjoint union of up to five closed geodesics, each meeting $\beta_2$ perpendicularly in at most $5$ points.  

Fix a component $\alpha_2$ of the collection above, let $k$ be the intersection number of $\alpha_2$ with $\beta_2$, and let $\widetilde{T}_2$ be the $k$-fold cover of $T_2$ dual to $\alpha_2$.  Then $\beta_2$ lifts to $\widetilde{T}_2$, and any lift intersects the preimage $\widetilde{\alpha}_2$ of $\alpha$ once, perpendicularly.  Computing the modulus of $\widetilde{T}_2$ using this pair, we obtain $\pm k \cdot i(2+4n\sqrt{2})/5$.  This is $\mathrm{PGL}_2(\mathbb{Q})$--equivalent to $m(T_1)$, so the result follows from Lemma \ref{commensurable moduli}.
\end{proof}

\begin{lemma} \label{PGL action}  Suppose $z = i(m+n\sqrt{2})$ is $\mathrm{PGL}_2(\mathbb{Q})$--equivalent to $z' = i(m+n'\sqrt{2})$, where $m,n,n' \in \mathbb{Q}$ and $m \neq 0$.  Then $n' = \pm n$.  \end{lemma}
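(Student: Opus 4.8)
The plan is to recast the hypothesis as a statement about a single fractional linear transformation with rational coefficients and then exploit that $z$ and $z'$ are purely imaginary. Write $z = it$ and $z' = it'$, where $t = m+n\sqrt{2}$ and $t' = m+n'\sqrt{2}$ are real; because $m\neq 0$ and $\sqrt{2}$ is irrational, $t\neq 0$, $t'\neq 0$ and $m^{2}-2n^{2}\neq 0$. By hypothesis there is a fractional linear transformation $g$ with rational coefficients, $g(w)=(aw+b)/(cw+d)$ with $a,b,c,d\in\mathbb{Q}$ and $ad-bc\neq 0$, such that $g(z)=z'$.

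I would first dispose of the degenerate case $n=0$. Then $z=im\in\mathbb{Q}(i)$, and since $g$ has rational coefficients it carries $\mathbb{Q}(i)\cup\{\infty\}$ into itself; hence $z'=g(z)\in\mathbb{Q}(i)$. Writing $z'=x+iy$ with $x,y\in\mathbb{Q}$ and comparing with $z'=i(m+n'\sqrt{2})$ forces $m+n'\sqrt{2}=y\in\mathbb{Q}$, so $n'=0=\pm n$.

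The substantive case is $n\neq 0$, and the key step is to pin $g$ down to a short list. First one checks that $z$ has degree greater than $2$ over $\mathbb{Q}$: $z$ is non-real, and from any relation $cz^{2}+ez+f=0$ with $c,e,f\in\mathbb{Q}$ not all zero, taking imaginary parts (and using that $z=it$ with $t$ real and $z^{2}=-t^{2}$ real) gives $et=0$, hence $e=0$; then $c\neq 0$ and $t^{2}=-f/c\in\mathbb{Q}$, i.e.\ $(m^{2}+2n^{2})+2mn\sqrt{2}\in\mathbb{Q}$, contradicting $mn\neq 0$. Now let $\sigma$ be the fractional linear transformation $w\mapsto -w$, which has rational coefficients. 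Since $g$ has real coefficients, $\overline{g(w)}=g(\overline{w})$; combining this with $\overline{z}=-z=\sigma(z)$ and $\overline{z'}=-z'=\sigma(z')$ yields $g(\sigma(z))=\overline{g(z)}=\overline{z'}=\sigma(g(z))$. Therefore $h\doteq g^{-1}\sigma^{-1}g\sigma$, a fractional linear transformation with rational coefficients, fixes $z$. But the fixed points of a nontrivial such transformation are the roots of a nonzero rational polynomial of degree at most $2$, and $z$ is not such a root; hence $h$ is the identity, i.e.\ $g$ commutes with $\sigma$. Since $\sigma\neq\mathrm{id}$ has fixed-point set $\{0,\infty\}$, any $g$ commuting with $\sigma$ preserves $\{0,\infty\}$, so $g(w)=\lambda w$ or $g(w)=\mu/w$ for some $\lambda,\mu\in\mathbb{Q}^{*}$. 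In the first case $i(m+n'\sqrt{2})=\lambda\,i(m+n\sqrt{2})$, so comparing rational parts gives $\lambda=1$ (using $m\neq 0$) and then $n'=n$. In the second case $i(m+n'\sqrt{2})=\mu/(it)=-i\mu(m-n\sqrt{2})/(m^{2}-2n^{2})$, so comparing rational parts gives $\mu=-(m^{2}-2n^{2})$ and then $n'=-n$. In all cases $n'=\pm n$.

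I expect the only real subtlety to be bookkeeping around conventions and the degenerate case: confirming that ``$\mathrm{PGL}_2(\mathbb{Q})$--equivalent'' refers to the holomorphic fractional linear action used earlier in this section (so that $g$ genuinely has rational entries and commutes with complex conjugation), and noticing that when $n=0$ the degree argument collapses and must be replaced by the observation that $g$ preserves $\mathbb{Q}(i)$. The remaining pieces --- the irrationality estimates and the description of the centralizer of $\sigma$ --- are routine.
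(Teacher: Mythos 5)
Your proof is correct, but it takes a different route from the paper's. The paper argues by brute force: it clears denominators in $\frac{ai(m+n\sqrt{2})+b}{ci(m+n\sqrt{2})+d}=i(m+n'\sqrt{2})$, separates real and imaginary parts, and uses the linear independence of $1$ and $\sqrt{2}$ over $\mathbb{Q}$ to get $m(a-d)=0$ and $an-dn'=0$, whence $a=d$ and either $n'=n$ or $a=d=0$, in which case the $\sqrt{2}$-coefficient of the other equation gives $n'=-n$; this handles all values of $n$ (including $n=0$) uniformly in a few lines. You instead prove a stronger structural statement when $n\neq 0$: since $z=i(m+n\sqrt{2})$ satisfies no rational polynomial of degree at most two, and since a rational M\"obius map intertwines complex conjugation, the commutator $g^{-1}\sigma^{-1}g\sigma$ (with $\sigma(w)=-w$) fixes $z$ and hence is trivial, so $g$ centralizes $\sigma$ and must be $w\mapsto\lambda w$ or $w\mapsto\mu/w$ with $\lambda,\mu\in\mathbb{Q}^*$; the conclusion then follows by comparing rational parts. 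Your approach buys more information (it pins down the possible equivalences up to a rational scalar, and explains conceptually why the lemma holds: $z$ generates a degree-four field, so only the centralizer of $w\mapsto -w$ can relate two such purely imaginary numbers), at the cost of the degree computation and a separate degenerate case $n=0$, which you correctly dispose of via $g(\mathbb{Q}(i)\cup\{\infty\})\subseteq\mathbb{Q}(i)\cup\{\infty\}$. The paper's computation is shorter and case-free, but yields only exactly what the lemma asserts. Both proofs are sound, and your reading of ``$\mathrm{PGL}_2(\mathbb{Q})$-equivalent'' as the holomorphic M\"obius action with rational entries agrees with the paper's convention.
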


\begin{remark}Since commensurable hyperbolic manifolds have commensurable cusps, the collection of $\mathrm{PGL}_2(\mathbb{Q})$-orbits of cusp parameters is a commensurability invariant (cf.~Lemma \ref{commensurable moduli}).  Thus Proposition \ref{M_nmoduli} and Lemma \ref{PGL action} imply Proposition \ref{M_n incomm}.\end{remark}

\begin{proof}  Suppose $\left(\begin{smallmatrix} a & b \\ c & d \end{smallmatrix}\right) \in \mathrm{PGL}_2(\mathbb{Q})$ takes $z$ to $z'$.  After clearing denominators (which does not change the action by M\"obius transformations), we may assume that $a,b,c,d \in \mathbb{Z}$.  We have
$$ \frac{ai(m+n\sqrt{2}) + b}{ci(m + n\sqrt{2}) + d} = i(m + n' \sqrt{2}). $$
Multiplying by the denominator on the left, and collecting the real and imaginary parts, we find  \begin{align*}  m(a-d) + (an - dn')\sqrt{2} = 0 & & b + c(m^2 + 2nn') + cm(n'+n) \sqrt{2} = 0 \end{align*}
Since $1$ and $\sqrt{2}$ are linearly independent over $\mathbb{Q}$, the left-hand equation above implies that $m(a-d)=0$ and $an - dn' = 0$.  Since $m \neq 0$, the first equation implies $a = d$.  Then the second equation implies $n = n'$ unless $a = d =0$.  But in this case, $c \neq 0$ since $\left(\begin{smallmatrix} a & b \\ c & d \end{smallmatrix}\right) \in \mathrm{PGL}_2(\mathbb{Q})$.  Hence, using the coefficient of $\sqrt{2}$ in the right-hand equation above, we find $n' = -n$.  \end{proof}

%%%%%%%%%%%%%%%%%%%%%%%%
%Mutation sections:
%%%%%%%%%%%%%%%%%%%%%%%%

%%%%%%%%%%%%%%%%%%%%%%%%
\section{Mutants}  \label{sec:mutants}
%%%%%%%%%%%%%%%%%%%%%%%%

In the remaining sections, we will consider links obtained from $L_n$ by mutation along the separating spheres $S^{(i)}$, $0 \leq i \leq n$, from Definition \ref{L_n dfns}(3).  

\begin{dfn}\label{mutn defn}  For marked points $1,2,3,4\in S^2$, a \textit{mutation} of $(S^2,\{1,2,3,4\})$ is a mapping class of order $2$ which acts on $\{1,2,3,4\}$ by an even permutation.\end{dfn}

Above a \textit{mapping class} is the isotopy class, rel $\{1,2,3,4\}$, of an orientation-preserving self-homeomorphism of the pair $(S^2,\{1,2,3,4\})$.  The set $\mathrm{Mod}_{0,4}$ of such classes inherits the structure of a group from its bijection with the quotient of the group of orientation-preserving homeomorphisms by its identity component.  See eg.~\cite{FM} for an introduction to the study of mapping class groups; here we need only the following fact on recognizing mutations using the symmetric group $S_4$:

\begin{prop}\label{prop:mutation}  The homomorphism $\theta\co\mathrm{Mod}_{0,4}\to S_4$ that records the action on $\{1,2,3,4\}$ takes the set of mutations bijectively to $\{(12)(34),(13)(24),(14)(23)\}$.\end{prop}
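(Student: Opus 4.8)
The plan is to deduce the statement from the standard structure of $\mathrm{Mod}_{0,4}$ together with an explicit list of three mutations, the only nonelementary input being the description of $\ker\theta$. First I would record the needed facts from \cite{FM}: $\theta$ is surjective; its kernel is the pure mapping class group $\mathrm{PMod}_{0,4}$, which is free of rank two; and every essential simple closed curve on $S^2\setminus\{1,2,3,4\}$ is isotopic to exactly one of $\gamma_{12},\gamma_{13},\gamma_{14}$, where $\gamma_{jk}$ separates $\{j,k\}$ from its complement. Since $\mathrm{PMod}_{0,4}$ is generated by Dehn twists and those are the only curves, $\mathrm{PMod}_{0,4}=\langle T_{\gamma_{12}},T_{\gamma_{13}},T_{\gamma_{14}}\rangle$.

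Because $\mathrm{PMod}_{0,4}\cong F_2$ is torsion-free, $\theta$ is injective on every finite subgroup of $\mathrm{Mod}_{0,4}$. Hence for a mutation $\phi$ the element $\theta(\phi)$ has order exactly $2$ in $S_4$: it is nontrivial, since otherwise $\phi\in\mathrm{PMod}_{0,4}$ would be torsion and hence trivial, contradicting $\mathrm{ord}(\phi)=2$. So $\theta(\phi)$ is a transposition or a double transposition, and being even it must lie in $\{(12)(34),(13)(24),(14)(23)\}$. This gives the inclusion $\theta(\{\text{mutations}\})\subseteq\{(12)(34),(13)(24),(14)(23)\}$.

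Next I would exhibit three mutations hitting all three permutations, and prove uniqueness. Realizing $S^2$ as $\mathbb{C}\cup\{\infty\}$ with $1,2,3,4$ placed at $1,i,-1,-i$, the M\"obius involutions $m_1\co z\mapsto -z$, $m_2\co z\mapsto i/z$, $m_3\co z\mapsto -i/z$ are orientation-preserving, preserve $\{1,i,-1,-i\}$, have their fixed points off that set, and act on it as $(13)(24)$, $(12)(34)$, $(14)(23)$ respectively; so each $m_i$ is a mutation and $\theta$ is onto the three-element set. For injectivity, suppose $\phi$ is a mutation with $\theta(\phi)=\theta(m_i)$. Each element of $\{(12)(34),(13)(24),(14)(23)\}$ carries every partition of $\{1,2,3,4\}$ into two pairs to itself (possibly swapping the parts), so $m_i$ fixes the isotopy class of each $\gamma_{jk}$; since $m_i$ is orientation-preserving, the change-of-coordinates principle for Dehn twists gives $m_i T_{\gamma_{jk}} m_i^{-1}=T_{m_i(\gamma_{jk})}=T_{\gamma_{jk}}$, and as these twists generate $\mathrm{PMod}_{0,4}$ the mapping class $m_i$ centralizes $\mathrm{PMod}_{0,4}$. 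Putting $g=\phi\,m_i^{-1}\in\ker\theta=\mathrm{PMod}_{0,4}$ and using $m_i=m_i^{-1}$,
$$1=\phi^2=g\,m_i\,g\,m_i=g\,(m_i\,g\,m_i^{-1})=g^2,$$
so $g=1$ by torsion-freeness and $\phi=m_i$. Therefore $m_1,m_2,m_3$ are exactly the mutations of $(S^2,\{1,2,3,4\})$, and $\theta$ maps them bijectively onto $\{(12)(34),(13)(24),(14)(23)\}$.

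The argument is essentially formal once the structure of $\mathrm{PMod}_{0,4}$ is in hand, so I do not expect a genuine obstacle; the point to get right — and the reason $\mathrm{Mod}_{0,4}$ admits no "extra" order-two element over a given double transposition — is the centralizing claim, i.e.\ that a mutation fixes every isotopy class of essential simple closed curve and hence commutes with a generating set of Dehn twists for $\mathrm{PMod}_{0,4}$, after which torsion-freeness of $\mathrm{PMod}_{0,4}\cong F_2$ forces the preimage to be unique.
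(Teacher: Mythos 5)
Your overall skeleton (image of a mutation must be a double transposition by torsion-freeness of $\mathrm{PMod}_{0,4}\cong F_2$; exhibit three explicit involutions; uniqueness over each double transposition via $\phi=m_i h$, $h^2=1$, $h=1$) is exactly the paper's argument. But the step you yourself single out as the crux — the centralizing claim — rests on a false statement about curves. It is not true that every essential simple closed curve on the four-punctured sphere is isotopic to one of three curves $\gamma_{12},\gamma_{13},\gamma_{14}$: there are infinitely many isotopy classes, parametrized by slopes in $\mathbb{Q}\cup\{\infty\}$ via the torus double cover (equivalently by the Farey tessellation, which is precisely what appears in the paper's Figure of the fundamental domain $\mathcal{F}$); each of the three partitions of the punctures into pairs is realized by infinitely many non-isotopic curves, differing by twisting. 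Consequently two of your steps lose their justification: (i) ``$\mathrm{PMod}_{0,4}=\langle T_{\gamma_{12}},T_{\gamma_{13}},T_{\gamma_{14}}\rangle$ because those are the only curves'' — generation by twists does hold, but only for suitably chosen representatives (e.g.\ two curves meeting in two points, as in \cite[\S 4.2.4]{FM}), not for arbitrary curves realizing the three partitions; and (ii) the inference ``$\theta(m_i)$ preserves every partition of $\{1,2,3,4\}$ into pairs, hence $m_i$ fixes the isotopy class of each $\gamma_{jk}$'' is a non sequitur once one knows the partition does not determine the isotopy class. A mutation could a priori permute the infinitely many classes within a partition type, and nothing in your argument rules this out.

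The conclusion you need is nevertheless true for your specific M\"obius involutions, and the repair is what the paper does: fix concrete generating curves $\alpha,\beta$ with $i(\alpha,\beta)=2$ whose twists generate $\mathrm{PMod}_{0,4}$, and verify \emph{directly} that each chosen involution preserves $\alpha$ and $\beta$ up to isotopy (in the paper's round model the three $180$-degree coordinate rotations preserve $\alpha$ and $\beta$ setwise; in your model one can take $\alpha$ a curve around the segment $[-1,1]$ and $\beta$ a curve around the unit-circle arc from $1$ to $i$ and check each $m_i$ carries each to an isotopic curve, or lift each $m_i$ to the double cover $y^2=z^4-1$ and note the lift acts as $-I$ on $H_1$ of the torus, hence fixes every curve class). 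With that verification in place, $m_i T_{\alpha}m_i^{-1}=T_{m_i(\alpha)}=T_{\alpha}$ and likewise for $\beta$, the centralizing claim holds, and the rest of your proof goes through as in the paper.
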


\begin{proof}  Here we will embed $S^2$ as the unit sphere in $\mathbb{R}^3$ and take
$$ \{1,2,3,4\} = \left\{ \left(\frac{\pm1}{\sqrt{2}},\frac{\pm1}{\sqrt{2}},0\right),\left(\frac{\pm1}{\sqrt{2}},\frac{\mp1}{\sqrt{2}},0\right)\right\}. $$
The definitions imply that $\theta$ takes any mutation into the subset of $S_4$ listed above, and the $180$-degree rotations $m_x$, $m_y$, and $m_z$ in the three coordinate axes of $\mathbb{R}^3$  determine mutations of $(S^2,\{1,2,3,4\})$ taken by $\theta$ to each of its distinct elements.

The kernel of $\theta$ is the \textit{pure mapping class group} $\mathrm{PMod}_{0,4}$.  This group is free on two generators: Dehn twists in essential simple closed curves $\alpha,\beta\subset S^2-\{1,2,3,4\}$ that intersect exactly twice.  See the beginning of \cite[\S 4.2.4]{FM} for a proof of this fact, and for the definition of a Dehn twist see \cite[\S 3.1.1]{FM}.  With $S^2$ as above we can take $\alpha$ to be its intersection with the $xz$-plane and $\beta$ the intersection with the $yz$-plane; then it is clear that each of $m_x$, $m_y$, and $m_z$ takes each of $\alpha$ and $\beta$ to itself.  It follows that $m_x$, $m_y$ and $m_z$ centralize $\mathrm{PMod}_{0,4}$ (see \cite[Fact 3.8]{FM}).

For an arbitrary mutation $m\in\mathrm{Mod}_{0,4}$ we have $\theta(m) = \theta(m_x)$, $\theta(m) = \theta(m_y)$ or $\theta(m) = \theta(m_z)$.  Assuming (without loss of generality) that the first case holds, it follows that $m = m_x h$ for some $h\in\mathrm{PMod}_{0,4}$.  Since $m$ has order two we have:
$$ \mathit{id} = m^2 = (m_x h)^2 = m_x^2 h^2 = h^2 $$
Thus since $\mathrm{PMod}_{0,4}$ is a free group, $h = \mathit{id}$ and $m = m_x$.\end{proof}

It is easy to see that every mutation of $(S^2,\{1,2,3,4\})$ is isotopic to the identity as a self-homeomorphism of $S^2$, so cutting $S^3$ along a smoothly embedded copy and re-gluing by a mutation recovers $S^3$.  This motivates:

\begin{dfn}\label{mutn defn deux} For a link $L\subset S^3$ and a smoothly embedded two-sphere $S\subset S^3$ intersecting $L$ in four points, let $B^{\pm}$ be the closures of the components of $S^3 - S$ and $T^{\pm} = L\cap B^{\pm}$.  For a mutation $m$ of $(S,S\cap L)$, we define $(S^3,L') = (B^-,T^-)\cup_m(B^+,T^+)$ and say $L'$ is \textit{obtained from $L$ by mutation along $S$}.\end{dfn}

The lemma below describes the change in projection from $L$ to a link $L'$ obtained from it by mutation.  Below we refer to mutations by their images under $\theta$.  

\begin{figure}
\begin{center}
\input{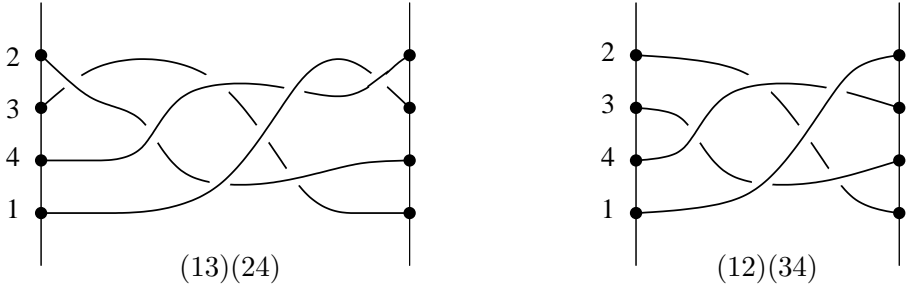}
\end{center}
\caption{The mutations as braids.}
\label{mutatebraid}
\end{figure}

\begin{lemma}\label{mutated links}  For a link $L$ projected to $\mathbb{R}^2$, if a two-sphere $S\subset S^3$ intersects $\mathbb{R}^2$ in a vertical line and $L$ in four points, label them $2$, $3$, $4$, and $1$, reading top-to-bottom.  The link obtained by the mutation $(13)(24)$ (respectively, $(12)(34)$) along $S$ has projection obtained by cutting $L$ along $S$ and inserting the braid on the left (resp. right) of Figure \ref{mutatebraid}.  \end{lemma}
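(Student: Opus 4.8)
The plan is to realize mutation along $S$ as a purely local modification of the link diagram, carried out in a box neighborhood of $S$, and then to read off that the local modification is exactly the braid insertion of Figure \ref{mutatebraid}. First I would put the data in a standard local model. Since $S$ meets $\mathbb{R}^2$ in a vertical line $\ell$ and meets $L$ in four points $p_1,p_2,p_3,p_4$ of $\ell$ (labeled $2,3,4,1$ from the top), an ambient isotopy of $S^3$ carrying the projection to itself puts a product neighborhood $N\cong D\times[-1,1]$ of $S=D\times\{0\}$ into position so that $L\cap N=\{p_1,\dots,p_4\}\times[-1,1]$ is four vertical arcs, the $p_j$ lie in order on the arc $D\cap\mathbb{R}^2$, and $N\cap\mathbb{R}^2$ is a vertical rectangle in which $L\cap N$ projects to four parallel vertical segments. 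Writing $B^\pm$ for the two sides, we have $(S^3,L)=(B^-,T^-)\cup_{\mathrm{id}}(B^+,T^+)$, and by Definition \ref{mutn defn deux}, $(S^3,L')=(B^-,T^-)\cup_m(B^+,T^+)$ for a mutation $m$ of $(S,S\cap L)$ which, by Proposition \ref{prop:mutation}, acts on $\{1,2,3,4\}$ by $(13)(24)$ or $(12)(34)$.

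Next I would rewrite the re-gluing as a braid insertion. Every mapping class of $(S,S\cap L)$, the mutation included, has a representative supported in a sub-disk $D_0\subset D\times\{0\}$ containing the four points, since the natural map from the mapping class group of the four-punctured disk $(D_0,\partial D_0)$ onto that of the four-punctured sphere is surjective; fix such a representative $m$. Forgetting the marked points, $m|_D$ is a self-homeomorphism of the disk fixing $\partial D$, so by the Alexander trick it is isotopic rel $\partial D$ to $\mathrm{id}_D$ through an isotopy $H_t$ with $H_0=m|_D$ and $H_1=\mathrm{id}_D$, and the space of such isotopies is connected. Using $H_t$ to build $\tilde m\co B^+\to B^+$ equal to $m$ on $S$ and to the identity outside the collar $D\times[0,1]$ (by $\tilde m(x,t)=(H_t(x),t)$ there), the homeomorphism $\mathrm{id}_{B^-}\cup\tilde m^{-1}\co B^-\cup_m B^+\to B^-\cup_{\mathrm{id}}B^+=S^3$ carries $L'$ to the link that agrees with $L$ outside $D\times[0,1]$ and there replaces the trivial braid $\{p_j\}\times[0,1]$ by the geometric braid $\beta_m=\{(H_t^{-1}(p_j),t):1\le j\le4,\ t\in[0,1]\}$. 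Thus $L'$ is obtained from $L$ by cutting along $S$ and inserting $\beta_m$ (a genuine braid, since each strand is monotone in $t$), whose underlying permutation of positions is the action of $m$.

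Finally I would compute $\beta_m$ in the two cases and check it is the pictured braid. Here Proposition \ref{prop:mutation} does the essential bookkeeping: since there is exactly one mutation with each even-permutation action, it suffices to produce one order-two mapping class with the correct permutation, supported in $D_0$, and to compute its associated braid; the resulting mutant is then determined unambiguously. For $(12)(34)$ the four points, in position order $2,3,4,1$, have their order reversed, so the mutation is realized by a $\pi$-rotation of $D_0$ about the midpoint of the $p_j$, damped to the identity on a shrinking annulus; tracking the four points through this explicit isotopy produces, on the vertical rectangle, the half-twist braid on the right of Figure \ref{mutatebraid}. For $(13)(24)$ the top pair of strands is exchanged with the bottom pair by a corresponding order-two homeomorphism, and its swept-out braid is the one on the left of the figure. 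Combining with Step 2 completes the proof.

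The genuinely non-routine point is this last step: verifying that the braid traced by the chosen isotopy $H_t$ is precisely the braid drawn in Figure \ref{mutatebraid}, and not merely some braid with the same underlying permutation — a given permutation is realized by infinitely many braids, differing by pure braids, so one must pin down the correct order-two representative of the mutation (e.g.\ transporting a coordinate rotation as in the proof of Proposition \ref{prop:mutation} into $D_0$) and draw the trace of an explicit isotopy to the identity. Steps 1 and 2 are standard cut-and-paste topology, the only subtlety being the use of the Alexander trick to guarantee that $\beta_m$ is well-defined by the mapping class of $m$ and has the asserted permutation.
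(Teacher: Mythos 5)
Your proposal is correct and follows essentially the same route as the paper: absorb the regluing map into a collar of $S$ via an isotopy of an explicit order-two representative to the identity, so that mutation becomes insertion of the braid traced by the four marked points, with Proposition \ref{prop:mutation} guaranteeing that any order-two class with the given even permutation action \emph{is} the mutation. The only difference is packaging: the paper uses a single rigid $180$-degree rotation to handle $(12)(34)$ and then obtains the $(13)(24)$ braid by conjugating with a half-twist exchanging the points $2$ and $3$, whereas you treat each case directly with its own disk-supported representative and an Alexander-trick isotopy.
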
 

\begin{proof}  We may assume $L$ is arranged so that there is an axis in $\mathbb{R}^2$ intersecting $S$ perpendicularly, midway between points $3$ and $4$ and so that points $2$ and $1$ are also equidistant from it.  The $180$-degree rotation in this axis restricts on $S$ to an involution acting on the marked points by the permutation $(12)(34)$.  

There is a homeomorphism $R \co S \times I \to S \times I$, which preserves slices $S \times \{t\}$ and restricts on each to rotation by $-180\cdot t$ degrees in the horizontal axis.  This interpolates between the identity, on $S \times \{0\}$, and the inverse of $(12)(34)$ on $S \times \{1\}$, although it does not preserve marked points for $0 < t <1$.

Let $(B^{\pm},T^{\pm})$ be as in Definition \ref{mutn defn deux}, and let $C$ be a collar of $S$ in $B^-$ that is small enough that it intersects $T^-$ in the collection of horizontal arcs $\{\{j\} \times I\,|\, j \in \{1,2,3,4\}\}$.  There is a homeomorphism $h\co B^-\cup_{(12)(34)} B^+ \to S^3$ defined as the identity on $B^+$ and the complement of $C$ in $B^-$, and as $R$ on $C$.  By the definition of $R$, the image of $T^- \cap C$ under $R$ is as pictured on the right-hand side of Figure \ref{mutatebraid}, thus the image of $L'$ in $S^3$ under $h$ is as stated in the lemma. 

Note that the braid on the left-hand side of Figure \ref{mutatebraid} is the conjugate of the braid on the right by a left-handed half-twist exchanging the points $2$ and $3$.  This reflects the fact that the conjugate of $(12)(34)$, by any homeomorphism of $(S,\{1,2,3,4\})$ which exchanges $2$ and $3$ and fixes $1$ and $4$, is a mapping class of order $2$ acting on the marked points as $(13)(24)$; hence such a conjugate is $(13)(24)$.  The conjugating braid in Figure \ref{mutatebraid} tracks the marked points under an isotopy $S \times I \to S$ taking the simplest such conjugator to the identity.  The conclusion for $(13)(24)$ thus follows as it did above for $(12)(34)$.
\end{proof}

The numbering of marked points from Lemma \ref{mutated links} and Figure \ref{mutatebraid} agrees with the numbering of the $S^{(i)} \cap L_n$ from Definition \ref{L_n dfns}(3).  This in turn was chosen to agree with the numbering of parabolics of $\Lambda$ from Lemma \ref{M_Sboundary}.  To be more precise:

Let $S$ be the sphere obtained by compactifying each cusp of $F^{(0)} = \calh/\Lambda$ with a single point.  Label each new point by a number between $1$ and $4$, according to the parabolic $\p_i$ corresponding to the cusp it compactifies.  With the points of $S^{(0)}\cap L_n$ numbered as in Definition \ref{L_n dfns}(3), it follows from Proposition \ref{T0map} that the restriction of $f_T$ (as in Proposition \ref{Tmap}) to $S^{(0)}-T$ extends to a map $S^{(0)}\to S$ that preserves numbering.  Corollary \ref{geodesic spheres} and the definition of $F^{(i)}$ (see Definitions \ref{geometric pieces}) now imply that for each $i$ between $0$ and $n$, $\phi_{\sfc^{2i}}\circ\iota_{i}^{-1} \circ f_n$ extends to a homeomorphism $S^{(i)} \to S$ that takes marked points to marked points preserving numbering.

By \cite[Theorem 2.2]{Ru}, each mutation of $(S,\{1,2,3,4\})$ is {\em realized} by an isometry of $F^{(0)}$;   that is, there exists an isometry of $F^{(0)}$ whose extension to $(S, \{ 1,2,3,4\})$ represents the mutation mapping class.  The lemma below identifies lifts to $\mathrm{PSL}_2(\mathbb{R})$ of the isometries realizing $(13)(24)$ and $(12)(34)$.

\begin{lemma}\label{mutators}  Define \begin{align*}
 & \sfm_1 =  \begin{pmatrix}  -3 & 5 \\ -2 & 3 \end{pmatrix} &
 & \sfm_2 = \begin{pmatrix}  0 & \sqrt{5} \\ \frac{-1}{\sqrt{5}} & 0 \end{pmatrix}  \end{align*}
Each of $\sfm_1$ and $\sfm_2$ normalizes $\Lambda$ (from Lemma \ref{M_Sboundary}), and the induced isometries $\phi_{\sfm_1}$ and $\phi_{\sfm_2}$ of $F^{(0)}$ realize $(13)(24)$ and $(12)(34)$, respectively.  \end{lemma}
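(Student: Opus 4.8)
The plan is to verify directly that $\sfm_1$ and $\sfm_2$ normalize $\Lambda$, to read off from those same computations the permutations they induce on the four cusps of $F^{(0)}$, and then to identify the induced isometries with the mutations by a formal argument using Proposition \ref{prop:mutation}.

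First I would record the easy structural facts: both matrices lie in $\mathrm{PSL}_2(\mathbb{R})$, have determinant $1$ and trace $0$, hence each is an orientation-preserving involution of $\mathbb{H}^3$ preserving the hyperplane $\calh$ lying over $\mathbb{R}\cup\{\infty\}$; and since $\Lambda$ is free on $\p_1,\p_2,\p_3$ (Lemma \ref{M_Sboundary}) it is torsion-free, so neither $\sfm_i$ lies in $\Lambda$. Next, to show $\sfm_i$ normalizes $\Lambda$, it suffices — because $\sfm_i^2=1$ — to check that conjugation by $\sfm_i$ carries each generator $\p_1,\p_2,\p_3$ into $\Lambda$. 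For $\sfm_1$, which has integer entries, conjugation preserves $\mathrm{PSL}_2(\mathbb{Z})\supseteq\Lambda$ and one checks $\sfm_1\p_j\sfm_1^{-1}\in\Lambda$ for each $j$; as a M\"obius transformation $\sfm_1$ simply interchanges the ideal fixed points of the four boundary parabolics in pairs, $0\leftrightarrow 5/3$ and $\infty\leftrightarrow 3/2$. For $\sfm_2$, which acts on $\mathbb{R}\cup\{\infty\}$ by $z\mapsto -5/z$, one computes $\sfm_2\p_1\sfm_2^{-1}=\p_2$, $\sfm_2\p_2\sfm_2^{-1}=\p_1$, and $\sfm_2\p_3\sfm_2^{-1}=\left(\begin{smallmatrix}-16 & -45\\ 5 & 14\end{smallmatrix}\right)$, a parabolic fixing $-3=\sfm_2(5/3)$, which one must then recognize as $\Lambda$-conjugate to $\p_4$.

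With normalization in hand, $\phi_{\sfm_i}$ is an isometry of $F^{(0)}=\calh/\Lambda$, and it permutes the four cusps, which by Lemma \ref{M_Sboundary} and Remark 1 after it are labeled $1,2,3,4$ with cusp $k$ corresponding to the $\Lambda$-conjugacy class of $\p_k$ (these fix $0,\infty,5/3,3/2$ respectively). From the action of $\sfm_1$ on those ideal points, $\phi_{\sfm_1}$ acts on the cusps as $(13)(24)$; from the conjugation computation for $\sfm_2$, conjugation by $\sfm_2$ interchanges $[\p_1]$ with $[\p_2]$ and $[\p_3]$ with $[\p_4]$, so $\phi_{\sfm_2}$ acts on the cusps as $(12)(34)$. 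Finally, since $\sfm_i\notin\Lambda$ and $\sfm_i^2=1$, and since an isometry of a finite-area hyperbolic surface is nontrivial in the mapping class group whenever it is nontrivial as an isometry, $\phi_{\sfm_i}$ extends over the compactification $S$ to an element of $\mathrm{Mod}_{0,4}$ of order exactly two acting on $\{1,2,3,4\}$ by an even permutation — that is, to a mutation in the sense of Definition \ref{mutn defn}. By Proposition \ref{prop:mutation}, $\theta$ is injective on mutations, so $\phi_{\sfm_1}$ is the unique mutation with $\theta$-image $(13)(24)$ and $\phi_{\sfm_2}$ the unique mutation with $\theta$-image $(12)(34)$; equivalently, they realize $(13)(24)$ and $(12)(34)$.

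The hard part will be the explicit conjugation computations, and in particular verifying that $\sfm_2\p_3\sfm_2^{-1}$ is $\Lambda$-conjugate to $\p_4$ rather than to $\p_3$ — this is precisely what forces the cusp permutation of $\phi_{\sfm_2}$ to be the product of two transpositions $(12)(34)$ rather than the single transposition $(12)$, and hence is the step on which the identification with a mutation hinges; everything after it is formal.
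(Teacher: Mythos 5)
Your proposal is correct and follows essentially the same route as the paper: order two from trace zero, normalization of $\Lambda$ by conjugating the generators $\p_1,\p_2,\p_3$ (with the key identification that $\p_3^{\sfm_2}$ is $\Lambda$-conjugate to $\p_4$ — the paper records it as $\p_3^{\sfm_2}=\p_4^{\p_1^{-1}}$, and likewise $\p_1^{\sfm_1}=\p_3^{-1}$, $\p_2^{\sfm_1}=\p_4^{-1}$), then reading off the cusp permutations and invoking Proposition \ref{prop:mutation}. The only cosmetic difference is that you infer the $(13)(24)$ action of $\sfm_1$ from its M\"obius swap of the parabolic fixed points rather than from the explicit conjugation identities, which is fine once normalization is checked.
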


\begin{proof}  Since each of $\sfm_1$ and $\sfm_2$ has trace equal to zero, it has order $2$ in $\mathrm{PSL}_2(\mathbb{C})$.  Their actions by conjugation described below, on the generators $\p_1$, $\p_2$, and $\p_3$ for $\Lambda$ defined above Lemma \ref{M_Sboundary}, may be verified by direct computation.  \begin{align*}
  & \p_1^{\sfm_1} = \p_3^{-1} && \p_2^{\sfm_1} = \p_4^{-1} \\
  & \p_1^{\sfm_2} = \p_2 && \p_3^{\sfm_2} = \p_4^{\p_1^{-1}}  \end{align*}
Here $\p_4 = \p_1\p_2\p_3^{-1}$ is as described in Remark 1 below Lemma \ref{M_Sboundary}.  Therefore $\sfm_1$ and $\sfm_2$ normalize $\Lambda$ and induce isometries $\phi_{\sfm_1}$ and $\phi_{\sfm_2}$, respectively, of $F^{(0)} = C(\Lambda)$.  

Each of $\phi_{\sfm_1}$ and $\phi_{\sfm_2}$ has order $2$, since $\sfm_1$ and $\sfm_2$ have order $2$, and their extensions to $S$ act on the set of marked points  as described in the statement of the lemma.  Its conclusion therefore follows from Proposition \ref{prop:mutation}.  \end{proof}

\begin{cor}\label{indexed mutators}  For $j = 1,2$ and $i \in \mathbb{Z}$, let $\sfm_j^{(i)} = \c^{-2i}\sfm_j\c^{2i}$.  Each of $\sfm_1^{(i)}$ and $\sfm_2^{(i)}$ normalizes $\Lambda^{(i)}$ (from Definitions \ref{geometric pieces}), and the induced isometries of $F^{(i)}$ realize $(13)(24)$ and $(12)(34)$, respectively. %under the restriction of $f_n$ described in Corollary \ref{geodesic spheres}.  
\end{cor}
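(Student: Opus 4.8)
The plan is to deduce the corollary from Lemma \ref{mutators} by conjugating everything in sight by $\c^{-2i}$. First I would record that, by Definitions \ref{geometric pieces}, $\Lambda^{(i)} = \Lambda^{\c^{-2i}}$; so conjugating by $\c^{-2i}$ the assertion of Lemma \ref{mutators} that $\sfm_j$ normalizes $\Lambda$ shows immediately that $\sfm_j^{(i)} = \c^{-2i}\sfm_j\c^{2i}$ normalizes $\Lambda^{(i)}$, for $j \in \{1,2\}$ and every $i \in \mathbb{Z}$. Moreover conjugation preserves trace, so $\mathrm{tr}\,\sfm_j^{(i)} = \mathrm{tr}\,\sfm_j = 0$, and hence $\sfm_j^{(i)}$ has order $2$ in $\mathrm{PSL}_2(\mathbb{C})$; and since $\sfm_j$ preserves $\calh$, the element $\sfm_j^{(i)}$ preserves $\c^{-2i}(\calh)$, so it induces an order-two isometry $\phi_{\sfm_j^{(i)}}$ of $F^{(i)} = \c^{-2i}(\calh)/\Lambda^{(i)}$.

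Next I would compare $\phi_{\sfm_j^{(i)}}$ with $\phi_{\sfm_j}$ through the isometry $\phi_{\c^{2i}}\co F^{(i)}\to F^{(0)}$ induced by $\c^{2i}$. The element $\c^{2i}$ carries $\c^{-2i}(\calh)$ to $\calh$, conjugates $\Lambda^{(i)}$ to $\Lambda$, and conjugates $\sfm_j^{(i)}$ to $\sfm_j$, so $\phi_{\c^{2i}}$ conjugates $\phi_{\sfm_j^{(i)}}$ to $\phi_{\sfm_j}$. Extending over the points that compactify the cusps, $\phi_{\c^{2i}}$ takes the marked point of $F^{(i)}$ numbered $k$ to the marked point of $F^{(0)}$ (that is, of $S$) numbered $k$, where the numbering on the compactification of $F^{(i)}$ is the one transported from $S^{(i)}$ by the numbering-preserving homeomorphism $\phi_{\sfc^{2i}}\circ\iota_i^{-1}\circ f_n\co S^{(i)}\to S$ recorded in the discussion preceding Lemma \ref{mutators} --- for $i<0$, where no geometric $F^{(i)}\subset M_n$ exists, one simply \emph{defines} the numbering on $F^{(i)}$ to be the transport of that of $S$ under $\phi_{\c^{2i}}$. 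Therefore, regarded as a mapping class of the marked sphere compactifying $F^{(i)}$, $\phi_{\sfm_j^{(i)}}$ acts on the marked points by precisely the permutation by which $\phi_{\sfm_j}$ acts on those of $S$ --- namely $(13)(24)$ for $j=1$ and $(12)(34)$ for $j=2$, by Lemma \ref{mutators}. Since $\phi_{\sfm_j^{(i)}}$ has order $2$, Proposition \ref{prop:mutation} then identifies it with the corresponding mutation; equivalently, conjugating an isometry that realizes a mutation by a numbering-preserving isometry produces one realizing the same mutation.

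The content here is entirely in Lemma \ref{mutators}; the one point I expect to require care is the bookkeeping in the second paragraph, namely verifying that the numbering of the marked points of $F^{(i)}$ transported from $S^{(i)}$ via $\phi_{\sfc^{2i}}\circ\iota_i^{-1}\circ f_n$ coincides with the one transported from $F^{(0)}$ via $\phi_{\c^{2i}}$. This should be immediate from the setup preceding Lemma \ref{mutators}, where the numbering on $S$ was defined using the parabolics $\p_k$ of $\Lambda$ and shown to be compatible with the geometric numbering of $S^{(i)}\cap L_n$; so I do not anticipate a genuine obstacle, only the need to keep the chain of identifications straight.
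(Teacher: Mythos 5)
Your proposal is correct and matches the paper's (implicit) argument: the paper states this corollary with no separate proof, treating it as the immediate consequence of Lemma \ref{mutators} obtained by conjugating by $\c^{-2i}$, which is exactly what you do. Your extra care about the cusp numbering on $F^{(i)}$ is consistent with the paper's convention, since the numbering there is transported through $\phi_{\c^{2i}}$ (equivalently, via the parabolics $\p_k^{\c^{-2i}}$ of $\Lambda^{(i)}$), so no further argument is needed.
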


Lemma \ref{mutated links} gives a prescription for describing links obtained from $L_n$ by the mutations $(13)(24)$ and $(12)(34)$.  The result below describes hyperbolic manifolds to which their complements are homeomorphic, analogous to Proposition \ref{geometric model}.

\begin{prop}\label{mutated geometric model}  For $I =(a_0,a_1,\hdots,a_n) \in \{0,1,2\}^{n+1}$, let $L_I$ be the link obtained from $L_n$ by the following prescription: for $0 \leq i \leq n$, if $a_i = 0$, do not mutate along $S^{(i)}$; if $a_i = 1$, mutate by $(13)(24)$; and if $a_i = 2$, mutate by $(12)(34)$.  Let $M_I = C(\Gamma_S) \cup C(\Gamma_T^{(1)}) \cup \hdots \cup C(\Gamma_T^{(n)}) \cup C(\overline{\Gamma}_S)$, where for each $i$ such that $a_i = 0$ the gluing is as in Proposition \ref{geometric model}, and otherwise is given by   
\begin{align*} 
  & \iota^{(i)}_+ \phi_{\sfm^{(i)}_j} (\iota^{(i)}_-)^{-1}\ \mbox{for}\ 0 \leq i < n,\ \mbox{where}\ a_i = j \in \{1,2\};\ \mbox{and}\\
  & \phi_{\r}\iota^{(0)}_-\phi_{\c^{2n}}\phi_{\sfm_j^{(n)}}(\iota^{(n)}_-)^{-1}\ \mbox{if}\ a_n = j \in \{1,2\}.  \end{align*}
Then there is a homeomorphism $f_I \co S^3-L_I \to M_I$ whose restriction to each complementary component of the collection $\{S^{(i)}\}$ agrees with that of $f_n$.  \end{prop}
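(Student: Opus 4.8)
The plan is to mirror the proof of Proposition \ref{geometric model} closely, since the manifold $M_I$ differs from $M_n$ only in the choice of gluing maps along the separating spheres, and the gluing map for each mutated $S^{(i)}$ is the original one post-composed by the isometry $\phi_{\sfm_j^{(i)}}$ of $F^{(i)}$ realizing the relevant mutation (Corollary \ref{indexed mutators}). First I would define $f_I$ by fiat: set it equal to $f_n$ on each tangle complement, i.e.\ $f_S$ on $B^3 - S$, $\phi_i f_T h^{-i+1}$ on $S^2\times[i-1,i] - T^{(i)}$, and $\phi_{\r}f_S r_S$ on $\overline{B}^3-\overline{S}$. By Propositions \ref{Smap} and \ref{Tmap}, as in the proof of Proposition \ref{geometric model}, these restrictions are homeomorphisms onto $C(\Gamma_S)$, $C(\Gamma_T^{(i)})$, and $C(\overline{\Gamma}_S)$ respectively. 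So the only thing to check is that $f_I$ is well defined across each sphere $S^{(i)}-\{1,2,3,4\}$, and the whole argument reduces to handling those $i$ with $a_i \neq 0$, since for $a_i = 0$ the computation is verbatim that of Proposition \ref{geometric model}.

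For an index $i$ with $0\le i < n$ and $a_i = j\in\{1,2\}$, the key point is that $L_I$ is obtained from $L_n$ by mutating along $S^{(i)}$, so by Definition \ref{mutn defn deux} and Lemma \ref{mutated links}, the complement $S^3 - L_I$ is obtained from $S^3 - L_n$ by cutting along $S^{(i)}-\{1,2,3,4\}$ and regluing by the mutation homeomorphism $\mu_j$ of that four-punctured sphere. Under the identification of $S^{(i)}$ with the compactified $F^{(i)}$ discussed just above Lemma \ref{mutators} (via $\phi_{\sfc^{2i}}\circ\iota_i^{-1}\circ f_n$, which preserves marked-point numbering), $\mu_j$ corresponds to the isometry realizing $(13)(24)$ if $j=1$ and $(12)(34)$ if $j=2$; by Corollary \ref{indexed mutators} this is exactly $\phi_{\sfm_j^{(i)}}$ on $F^{(i)}$. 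Since $f_n$ is well defined across $S^{(i)}$ with gluing $\iota_+^{(i)}(\iota_-^{(i)})^{-1}$ (Proposition \ref{geometric model}), inserting the mutation on the $S^{(i)}-\{1,2,3,4\}$ side of the cut precisely pre-composes with $\mu_j$ on that four-punctured sphere, which translates to post-composing the gluing map by $\phi_{\sfm_j^{(i)}}$. Thus $f_I$ is well defined across $S^{(i)}$ precisely because the gluing map for $M_I$ along that sphere has been chosen to be $\iota_+^{(i)}\phi_{\sfm_j^{(i)}}(\iota_-^{(i)})^{-1}$. The case $i = n$ is handled the same way, now post-composing the final gluing map $\phi_{\r}\iota^{(0)}_-\phi_{\c^{2n}}(\iota^{(n)}_-)^{-1}$ of Proposition \ref{geometric model} by $\phi_{\sfm_j^{(n)}}$, using $r_T h^{-n+1} = h^{-n}$ on $S^{(n)}$ as before, which matches the stated gluing $\phi_{\r}\iota^{(0)}_-\phi_{\c^{2n}}\phi_{\sfm_j^{(n)}}(\iota^{(n)}_-)^{-1}$.

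I would also need to record, as a preliminary, that a mutation homeomorphism of a four-punctured sphere is properly isotopic to an isometry and hence that we may arrange $f_n$ and the mutated $f_I$ to literally agree off a neighborhood of the relevant sphere — this is the content of \cite[Theorem 2.2]{Ru} quoted above, together with the standard fact (also used in the proof of Proposition \ref{geometric model}) that two homeomorphisms of four-punctured spheres inducing the same map on $\pi_1$, or here differing by a mutation, are properly isotopic. Concretely: $f_n j$ and $f_T\circ(\text{mutated }j)$ differ by $\mu_j$ on $S^{(i)}$, and after an isotopy supported near $S^{(i)}$ we may assume the pieces of $f_I$ match up there.

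The main obstacle is bookkeeping: getting the direction of the mutation (which side of the cut it lands on), the marked-point labelling conventions, and the conjugations by $\c^{\pm 2i}$ all consistent, so that the isometry realizing the mutation on $F^{(i)}$ is $\phi_{\sfm_j^{(i)}}$ on the correct side and the post-composition lands in the gluing map exactly as written in the statement. There is no new geometric input beyond Proposition \ref{geometric model}, Lemma \ref{mutated links}, and Corollaries \ref{indexed mutators} and \ref{geodesic spheres}; the proof is a careful definition-chase parallel to that of Proposition \ref{geometric model}, and I would write it by pointing to that proof and indicating only the modifications forced by the extra factor $\phi_{\sfm_j^{(i)}}$.
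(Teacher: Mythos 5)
Your proposal is correct and follows essentially the same route as the paper, which simply observes that the proposition "follows immediately from Proposition \ref{geometric model} and Corollary \ref{indexed mutators}" — i.e.\ repeat the definition-chase of Proposition \ref{geometric model}, identifying the mutation homeomorphism of $S^{(i)}$ with the isometry $\phi_{\sfm_j^{(i)}}$ of $F^{(i)}$ and absorbing it into the gluing map. Your extra remarks on proper isotopy near $S^{(i)}$ and on the $\c^{\pm 2i}$ bookkeeping are exactly the details the paper leaves implicit.
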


Proposition \ref{mutated geometric model} follows immediately from Proposition \ref{geometric model} and Corollary \ref{indexed mutators}.  Below we note a couple of ``obvious'' isometry relations on the $\{M_I\}$.

\begin{figure}[ht]
\setlength{\unitlength}{.1in}
\begin{picture}(40,15)
\put(11.5,0) {\includegraphics[height= 1.45in]{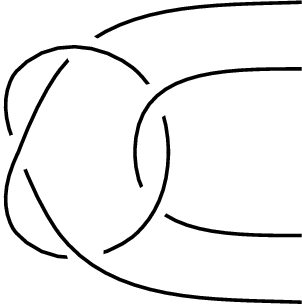}}

\put(27,14){$3$}

\put(27,11){$2$}

\put(27, 3){$4$}

\put(27, 0){$1$}

\end{picture}

\caption{The tangle $S$ admits an order two rotational symmetry which restricts to the mutation $(13)(24)$ on its boundary.}  
\label{fig:symmetric_S}

\end{figure}

\begin{lemma}\label{mutation extension}  For fixed $(a_1,\hdots,a_n)\in\{0,1,2\}^n$ let $I_0 = (0,a_1,\hdots,a_n)$ and $I_1 = (1,a_1,\hdots,a_n)$.  $M_{I_0}$ is isometric to $M_{I_1}$.\end{lemma}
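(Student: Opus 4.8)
The plan is to absorb the single difference between $M_{I_0}$ and $M_{I_1}$ --- a mutation by $(13)(24)$ along $S^{(0)}$ --- into an isometry supported on the piece $C(\Gamma_S)$, using the fact that $\phi_{\sfm_1}$ extends over $C(\Gamma_S)$. First I would recall from Proposition \ref{mutated geometric model} that $M_{I_0}$ and $M_{I_1}$ are built from the same collection of pieces $C(\Gamma_S), C(\Gamma_T^{(1)}),\dots,C(\Gamma_T^{(n)}),C(\overline{\Gamma}_S)$ by the same gluing maps except along $S^{(0)}$: the $M_{I_0}$-identification $\partial C(\Gamma_S)\to\partial_- C(\Gamma_T^{(1)})$ is $\iota^{(0)}_+(\iota^{(0)}_-)^{-1}$, while the $M_{I_1}$-identification is $\iota^{(0)}_+\phi_{\sfm_1}(\iota^{(0)}_-)^{-1}$ (here $\sfm^{(0)}_1=\sfm_1$, from Corollary \ref{indexed mutators}).

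The crux is to show that $\sfm_1$ normalizes $\Gamma_S$. By Lemma \ref{mutators} we have $\p_1^{\sfm_1}=\p_3^{-1}$ and $\p_2^{\sfm_1}=\p_4^{-1}$, so $\sfm_1$ normalizes $\Lambda$; since $\Gamma_S$ is free on $\s,\t$ by Corollary \ref{M_SandG}, it suffices to check that $\s^{\sfm_1}$ and $\t^{\sfm_1}$ lie in $\Gamma_S$. The first is immediate, since $\s=\p_1^{-1}$ gives $\s^{\sfm_1}=\p_3\in\Gamma_S$. The remaining assertion, $\t^{\sfm_1}\in\Gamma_S$, is a direct (if slightly lengthy) matrix computation, exhibiting $\t^{\sfm_1}$ as a word in $\s$ and $\t$; equivalently, it records the topological fact that the tangle $(B^3,S)$ admits an orientation-preserving involution acting on $(\partial B^3,\partial S)$ by a representative of the mutation $(13)(24)$. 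I expect this verification to be the main obstacle, since it is the only ingredient not already packaged in Lemma \ref{mutators}, Corollary \ref{M_SandG}, and Proposition \ref{mutated geometric model}.

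Granting that $\sfm_1$ normalizes $\Gamma_S$, it induces an isometry $\mathbb{H}^3/\Gamma_S\to\mathbb{H}^3/\Gamma_S$ that preserves the convex core, hence an isometric involution $\psi\co C(\Gamma_S)\to C(\Gamma_S)$ (recall $\sfm_1$ has order two in $\mathrm{PSL}_2(\mathbb{C})$, its trace being $0$). Because $\sfm_1\in\mathrm{PSL}_2(\mathbb{R})$ preserves the plane $\calh$, and $\iota^{(0)}_-\co F^{(0)}=\calh/\Lambda\to\partial C(\Gamma_S)$ is induced by the inclusion $\calh\hookrightarrow\mathbb{H}^3$, quotienting the commuting square $\calh\xrightarrow{\sfm_1}\calh$, $\mathbb{H}^3\xrightarrow{\sfm_1}\mathbb{H}^3$ (with vertical inclusions) by $\Lambda$ and $\Gamma_S$ respectively shows $\psi|_{\partial C(\Gamma_S)}=\iota^{(0)}_-\phi_{\sfm_1}(\iota^{(0)}_-)^{-1}$.

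Finally I would define $F\co M_{I_0}\to M_{I_1}$ to be $\psi$ on the copy of $C(\Gamma_S)$ and the identity on every other piece, and check it descends to the glued quotients. The only seam to verify is $S^{(0)}$: for $x\in\partial C(\Gamma_S)$, its $M_{I_0}$-partner is $y=\iota^{(0)}_+(\iota^{(0)}_-)^{-1}(x)$, and using $\psi(x)=\iota^{(0)}_-\phi_{\sfm_1}(\iota^{(0)}_-)^{-1}(x)$ together with $\phi_{\sfm_1}\circ\phi_{\sfm_1}=\mathrm{id}$ on $F^{(0)}$ one computes $\iota^{(0)}_+\phi_{\sfm_1}(\iota^{(0)}_-)^{-1}\big(\psi(x)\big)=y$, so $F(x)$ and $F(y)=y$ are identified in $M_{I_1}$; all other gluings are untouched since $C(\Gamma_S)$ has a single boundary component. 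As $F$ is a local isometry (being an isometry on each piece, compatible with the identifications) and a bijection, it is an orientation-preserving isometry $M_{I_0}\to M_{I_1}$, as desired.
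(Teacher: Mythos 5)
Your outline is correct in structure but takes a genuinely different route from the paper, and it leaves its one load-bearing step unproved. Structurally: granting that $\sfm_1$ normalizes $\Gamma_S$, your argument goes through --- $\psi$ restricts on $\partial C(\Gamma_S)$ to $\iota^{(0)}_-\phi_{\sfm_1}(\iota^{(0)}_-)^{-1}$ because $\sfm_1$ preserves $\calh$ and $\iota^{(0)}_-$ is inclusion-induced, and your seam computation using $\phi_{\sfm_1}^2=\mathrm{id}$ correctly shows that $\psi$ on $C(\Gamma_S)$ together with the identity on the remaining pieces descends to an isometry $M_{I_0}\to M_{I_1}$. The paper, by contrast, never touches the Kleinian groups here: it observes that inserting the braid realizing $(13)(24)$ next to the tangle $S$ produces a tangle isotopic to $S$ rel $\partial B^3$, because the ``extraneous'' crossing between the arcs labeled $e$ and $v$ in Figure \ref{SandT_0} conceals a $180$-degree rotational symmetry of $S$ about the axis from the proof of Lemma \ref{mutated links}; hence $L_{I_0}$ and $L_{I_1}$ are the same link and Mostow rigidity gives the isometry. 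Your approach buys an explicit isometry compatible with the decomposition into $C(\Gamma_S)$ and the $C(\Gamma_T^{(i)})$; the paper's buys a two-line proof.

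The gap is the claim that $\sfm_1$ normalizes $\Gamma_S$, i.e.\ that $\t^{\sfm_1}\in\langle\s,\t\rangle$. Lemma \ref{mutators} only gives normalization of the boundary subgroup $\Lambda$, and your computation $\s^{\sfm_1}=\p_3$ handles one generator; the other is asserted to be ``a direct matrix computation'' and is never carried out, yet it is exactly the mathematical content of the lemma (as you note, it is equivalent to the mutation $(13)(24)$ extending over $(B^3,S)$ --- which is precisely what the paper's proof establishes and then records in the remark following it). Expressing $\sfm_1\t\sfm_1^{-1}$ as a word in $\s$ and $\t$ is not a consequence of anything quoted so far, so it must be exhibited, either by an actual reduction against the fundamental domain $E(\calp_1)$ or, more cleanly, by first producing the tangle involution: the hidden rotational symmetry gives a self-homeomorphism of $B^3-S\cong C(\Gamma_S)$ inducing $(13)(24)$ on the boundary; Mostow rigidity makes it an isometry $\psi$, and a lift of $\psi$ preserving the boundary plane $\calh$ agrees on $\calh$ with $\lambda\sfm_1$ for some $\lambda\in\Lambda$, hence (both being orientation-preserving) equals $\lambda\sfm_1$, so $\sfm_1$ normalizes $\Gamma_S$. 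Of course, once the links are seen to be isotopic this detour is unnecessary: Mostow rigidity alone already yields $M_{I_0}\cong M_{I_1}$, which is the paper's shortcut.
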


\begin{proof}  It is evident from Figure \ref{fig:symmetric_S} that the mutation
$(13)(24)$ extends to a homeomorphism on $B^3-S$.  Thus $(S^3,L_{I_0})$ is homeomorphic to $(S^3,L_{I_1})$, and the result follows from Mostow rigidity.  \end{proof}

%There is an isotopy of $B^3$ that fixes $\partial B^3$ and carries the union of the tangle $S$ of Figure \ref{SandT_0} with the braid realizing $(13)(24)$ back to $S$: the ``extraneous'' crossing between the arcs labeled $e$ and $v$ in Figure \ref{SandT_0} hides a $180$-degree rotational symmetry of $S$ about the axis described in the proof of Lemma \ref{mutated links}.  Thus $(S^3,L_{I_0})$ is homeomorphic to $(S^3,L_{I_1})$, and the result follows from Mostow rigidity.

%Note that we proved above that the mutation $(13)(24)$ extends over $(B^3,S)$.

\begin{lemma}\label{reflection isometry}For $I =(a_0,a_1,\hdots,a_n) \in \{0,1,2\}^{n+1}$, let $\bar{I} = (a_n,a_{n-1},\hdots,a_0)$.  There is an orientation-reversing isometry $M_I \to M_{\bar{I}}$ that, for each $i \in \{1,\hdots,n\}$, takes the image of $C(\Gamma_T^{(i)})$ in $M_I$ to the image of $C(\Gamma_T^{(n-i)})$ in $M_{\bar{I}}$.\end{lemma}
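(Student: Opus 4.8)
The plan is to produce an orientation-reversing self-homeomorphism $\Phi$ of $(S^3,L_n)$ that reverses the order of the separating spheres $S^{(0)},\dots,S^{(n)}$, check that it carries $(S^3,L_I)$ to $(S^3,L_{\bar I})$, and then invoke Mostow rigidity exactly as in the proof of Lemma \ref{mutation extension}.

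First I would build $\Phi$. On the middle piece $\big(S^2\times[0,n],\,\bigcup_{i=1}^n T^{(i)}\big)$ of the decomposition in Definitions \ref{L_n dfns}(2), set $\Phi(p,x)=(p,n-x)$. Since $T^{(i)}=h^{i-1}(T)$ lies in $S^2\times[i-1,i]$ and the tangle $T$ is carried to itself by its mirror symmetry $r_T(p,x)=(p,1-x)$, the map $\Phi$ carries $T^{(i)}$ onto $T^{(n+1-i)}$ and $S^{(i)}=S^2\times\{i\}$ onto $S^{(n-i)}$, restricting to the identity on the $S^2$-factor of each $S^{(i)}$. On the two outer pieces $\Phi$ exchanges $(B^3,S)$ with $(\overline{B}^3,\overline{S})=r_S(B^3,S)$ via $r_S$; a routine check against the gluing maps $j$ and $j_n=r_S j^{-1} r_T h^{-n+1}$ of Definitions \ref{L_n dfns}(2) confirms that these prescriptions patch to a homeomorphism $\Phi\co(S^3,L_n)\to(S^3,L_n)$ (indeed $j_n\sigma j = r_S$ on $\partial B^3$, where $\sigma(p,0)=(p,n)$). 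Because $\Phi$ reverses the $[0,n]$-direction while acting as the identity on each $S^2$-slice it is orientation-reversing. The essential feature to record is that $\Phi$ respects the labelings of $S^{(i)}\cap L_n$ and $S^{(n-i)}\cap L_n$ from Definitions \ref{L_n dfns}(3): both are indexed via $h$ from the labeling of $S^{(0)}\cap L_n$, and $\Phi$ identifies the slices $S^{(i)}$ and $S^{(n-i)}$ by the identity, so the marked point of $S^{(i)}$ over the $\ell$-labeled point of $S^{(0)}$ goes to the marked point of $S^{(n-i)}$ with label $\ell$.

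Next I would pass to mutants. Cutting $S^3$ along the disjoint union $\bigcup_i S^{(i)}$ presents both $(S^3,L_I)$ and $(S^3,L_{\bar I})$ as the same ordered list of cut-open pieces, reassembled by gluing maps that differ from those producing $L_n$ precisely by a mutation mapping class $\mu$ at each $S^{(i)}$ with $a_i\neq 0$ --- of $\theta$-type $a_i$ for $L_I$ and of type $b_i=a_{n-i}$ for $L_{\bar I}$, where $\bar I=(b_0,\dots,b_n)$. Since $\Phi$ is a homeomorphism of the cut-open pieces sending $S^{(i)}$ to $S^{(n-i)}$ compatibly with labelings, conjugating the $S^{(i)}$-gluing of $L_I$ by $\Phi$ produces an $S^{(n-i)}$-gluing whose mutation part is $\Phi\mu\Phi^{-1}$, again an order-two mapping class acting on the four marked points by the \emph{same} element of $S_4$ as $\mu$; by Proposition \ref{prop:mutation} this is the unique mutation of type $a_i$ on $S^{(n-i)}$, which is exactly the one used in forming $L_{\bar I}$ there, since $b_{n-i}=a_i$. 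Hence $\Phi$ descends to an orientation-reversing homeomorphism $(S^3,L_I)\to(S^3,L_{\bar I})$ carrying $S^2\times[i-1,i]-T^{(i)}$ onto $S^2\times[n-i,n-i+1]-T^{(n+1-i)}$ for each $i$ and exchanging $B^3-S$ with $\overline{B}^3-\overline{S}$.

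Finally, since $M_I\cong S^3-L_I$ and $M_{\bar I}\cong S^3-L_{\bar I}$ are finite-volume hyperbolic manifolds (Proposition \ref{mutated geometric model}), Mostow rigidity upgrades this homeomorphism to an orientation-reversing isometry $M_I\to M_{\bar I}$, properly isotopic to $\Phi$, which therefore takes the image of $C(\Gamma_T^{(i)})$ in $M_I$ --- namely $f_I(S^2\times[i-1,i]-T^{(i)})$ --- onto the image of $C(\Gamma_T^{(n+1-i)})$ in $M_{\bar I}$, and exchanges the images of $C(\Gamma_S)$ and $C(\overline{\Gamma}_S)$. I expect the only real obstacle to be the bookkeeping of the previous two paragraphs: one must be certain $\Phi$ is label-preserving on each pair $S^{(i)}\leftrightarrow S^{(n-i)}$, since it is precisely this that pins $\Phi\mu\Phi^{-1}$ down to the \emph{same} mutation rather than another member of $\{(12)(34),(13)(24),(14)(23)\}$, and one must track the index correspondence $a_i\leftrightarrow b_{n-i}$ carefully through the cut-and-reglue description. (Alternatively, one could verify directly that $\c^{-2n}\r$ normalizes $\Gamma_n$ and deduce the statement for the $M_I$ from an algebraic model built as in Proposition \ref{algebraic model} with the mutation isometries $\sfm^{(i)}_{a_i}$ inserted, but the topological route above is cleaner since it handles all the $L_I$ uniformly.)
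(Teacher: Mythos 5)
Your argument is correct and takes essentially the paper's route: the paper also produces an orientation-reversing homeomorphism of pairs $(S^3,L_I)\to(S^3,L_{\bar I})$ and then invokes Mostow rigidity, the only difference being that it gets the homeomorphism by observing that the mutation braids of Figure \ref{mutatebraid} are isotopic to their mirror images, whereas you build the reflection explicitly from $r_S$, $r_T$ and the product structure and identify the conjugated mutations via Proposition \ref{prop:mutation}. A minor point in your favor: your index $C(\Gamma_T^{(n+1-i)})$ (the piece over $S^2\times[n-i,n-i+1]$) is the correct one, the ``$n-i$'' in the statement being an off-by-one slip, as the way the lemma is used in Proposition \ref{isometric mutants} confirms.
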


\begin{proof}  It is straightforward to check that the braids in Figure \ref{mutatebraid} are isotopic (in $S^2 \times I$) to their mirror images.  Therefore, there is an orientation reversing homeomorphism $L_I \to L_{\bar{I}}$.  By composing this homeomorphism with $f_I^{-1}$ and $f_{\bar{I}}$ we get a homeomorphism $M_I \to M_{\bar{I}}$.  The result follows by Mostow rigidity.
\end{proof}

Below we describe the change effected at the level of Kleinian groups by cutting a hyperbolic manifold along an embedded, separating totally geodesic surface and regluing by an isometry.

\begin{lemma}\label{sliced Maskit}  Suppose $\Gamma_0$ and $\Gamma_1$ \meetcute along a plane $\mathcal{K}$, and take $\Theta=\Gamma_0\cap\Gamma_1$, $E = \calk/\Theta$, and $\iota_0$ and $\iota_1$ as in Lemma \ref{Maskit}.  If $\sfn$ normalizes $\Theta$ and preserves components of $\mathbb{H}^3-\mathcal{K}$, then $\langle \Gamma_0, \Gamma_1^{\sfn} \rangle$ is a Kleinian  group, and there is an isometry 
$$  C(\Gamma_0) \cup_{\iota_1 \phi_{\sfn}^{-1} \iota_0^{-1}} C(\Gamma_1) \to C(\langle \Gamma_0,\Gamma_1^{\sfn} \rangle) $$
which restricts on $C(\Gamma_0)$ to the natural map, and on $C(\Gamma_1)$ to $\phi_{\sfn} \co C(\Gamma_1) \to C(\Gamma_1^{\sfn})$ followed by the natural map.  \end{lemma}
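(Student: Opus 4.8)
The plan is to reduce this to Lemma~\ref{Maskit} by conjugating one of the amalgamands. First observe that $\sfn$ normalizes $\Theta = \mathrm{Stab}_{\Gamma_0}(\calk) = \mathrm{Stab}_{\Gamma_1}(\calk)$ and preserves $\calk$ (since it preserves the complementary components of $\calk$, it fixes $\calk$ setwise). Hence $\sfn$ preserves $\mathrm{Hull}(\Theta)\subset\calk$, so $\calk$ is still a component of $\partial\mathrm{Hull}(\Gamma_1^{\sfn})$ and $\mathrm{Stab}_{\Gamma_1^{\sfn}}(\calk) = \Theta^{\sfn} = \Theta$. The other boundary component of $\mathrm{Hull}(\Gamma_1)$ gets carried by $\sfn$ to the other boundary component of $\mathrm{Hull}(\Gamma_1^{\sfn})$, which — because $\sfn$ preserves the side of $\calk$ containing $\mathrm{Hull}(\Gamma_1)$ — still lies on the same side of $\calk$ as before. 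Therefore $\mathrm{Hull}(\Gamma_0)\cap\mathrm{Hull}(\Gamma_1^{\sfn}) = \calk$ and $\mathrm{Stab}_{\Gamma_0}(\calk) = \Theta = \mathrm{Stab}_{\Gamma_1^{\sfn}}(\calk)$, so $\Gamma_0$ and $\Gamma_1^{\sfn}$ \meetcute along $\calk$.

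Now apply Lemma~\ref{Maskit} to the pair $(\Gamma_0,\Gamma_1^{\sfn})$. Since $\Theta = \Gamma_0\cap\Gamma_1^{\sfn}$ is the common stabilizer of $\calk$, the quotient $\calk/\Theta$ is again our surface $E$. The natural map $\iota_0\co E \to C(\Gamma_0)$ is unchanged, while the natural embedding $E \to C(\Gamma_1^{\sfn})$ is exactly $\phi_{\sfn}\circ\iota_1$: indeed $\phi_{\sfn}\co C(\Gamma_1)\to C(\Gamma_1^{\sfn})$ is induced by $\sfn$, which carries $\calk$ to itself and $\Theta$ to $\Theta$, so it takes the boundary component $\iota_1(E)$ of $C(\Gamma_1)$ isometrically onto the boundary component $\phi_{\sfn}(\iota_1(E))$ of $C(\Gamma_1^{\sfn})$, and one checks on the level of covering spaces that this composite equals the natural map. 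Lemma~\ref{Maskit} then gives that $\langle\Gamma_0,\Gamma_1^{\sfn}\rangle$ is Kleinian and that the natural maps induce an isometry
$$ C(\Gamma_0) \cup_{(\phi_{\sfn}\iota_1)\iota_0^{-1}} C(\Gamma_1^{\sfn}) \to C(\langle\Gamma_0,\Gamma_1^{\sfn}\rangle). $$
Precomposing the $C(\Gamma_1^{\sfn})$ factor with $\phi_{\sfn}\co C(\Gamma_1)\to C(\Gamma_1^{\sfn})$ rewrites the domain as $C(\Gamma_0)\cup C(\Gamma_1)$, where the gluing map becomes $\phi_{\sfn}^{-1}\circ(\phi_{\sfn}\iota_1)\circ\iota_0^{-1} = \iota_1\phi_{\sfn}^{-1}\iota_0^{-1}$ once we account for the identification of the $\calk$-slice via $\phi_{\sfn}$ — more carefully, the boundary of $C(\Gamma_1)$ is $\iota_1(E)$, it is glued to $\iota_0(E)\subset\partial C(\Gamma_0)$ by the map that sends $\iota_1(x)$ to the point of $\partial C(\Gamma_0)$ identified with $(\phi_{\sfn}\iota_1)(x')$ where $x' = \phi_{\sfn}^{-1}(x)$ under the isometry above, i.e.\ by $\iota_0\phi_{\sfn}^{-1}\iota_1^{-1}$; its inverse is $\iota_1\phi_{\sfn}^{-1}\iota_0^{-1}$ as claimed. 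This yields the desired isometry restricting to the natural map on $C(\Gamma_0)$ and to $\phi_{\sfn}$ followed by the natural map on $C(\Gamma_1)$.

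The only subtle point — and the step I expect to need the most care — is the bookkeeping in the last paragraph: correctly tracking which boundary component is glued by which map once we conjugate by $\sfn$ and then ``undo'' the conjugation by composing with $\phi_{\sfn}$, so that the stated formula $\iota_1\phi_{\sfn}^{-1}\iota_0^{-1}$ for the gluing map comes out with the factors and inverse in the right order. Everything else is a direct consequence of the \meetcute condition being preserved under the conjugation, which is the routine geometric observation in the first paragraph.
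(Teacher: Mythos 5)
Your overall route is the same as the paper's: show that $\Gamma_0$ and $\Gamma_1^{\sfn}$ still \meetcute along $\calk$, apply Lemma \ref{Maskit} to that pair, and then rewrite the resulting isometry using $\phi_{\sfn}\co C(\Gamma_1)\to C(\Gamma_1^{\sfn})$. Your first paragraph is fine. The gap sits exactly at the step you flagged: the claim that the natural embedding $E\to C(\Gamma_1^{\sfn})$ ``is exactly $\phi_{\sfn}\circ\iota_1$'' is false. The two maps have the same image (the boundary component of $C(\Gamma_1^{\sfn})$ covered by $\calk$), but as maps they differ by the self-isometry of $E$ induced by $\sfn$: on the level of $\calk$, the natural map is induced by the identity together with the inclusion $\Theta<\Gamma_1^{\sfn}$, whereas $\phi_{\sfn}\circ\iota_1$ is induced by $x\mapsto\sfn x$. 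Writing $\phi_{\sfn}$ also for the isometry of $E=\calk/\Theta$ induced by $\sfn$ (which is genuinely nontrivial in the intended application, where it is the mutation of $F^{(i)}$), the correct identity is $(\text{natural map})\circ\phi_{\sfn}=\phi_{\sfn}\circ\iota_1$, i.e.\ the natural map equals $\phi_{\sfn}\,\iota_1\,\phi_{\sfn}^{-1}$; verifying this is the entire content of the bookkeeping.

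The error is not harmless. If your claimed equality were right and you carried the final computation through consistently, pulling back by $\phi_{\sfn}$ would give the gluing map $\phi_{\sfn}^{-1}\circ(\phi_{\sfn}\iota_1)\circ\iota_0^{-1}=\iota_1\iota_0^{-1}$, so the lemma would assert that $C(\langle\Gamma_0,\Gamma_1^{\sfn}\rangle)$ is isometric to $C(\langle\Gamma_0,\Gamma_1\rangle)$ --- that is, that mutation along the totally geodesic surface has no effect at all, which is false (compare Propositions \ref{prop:integraltraces} and \ref{nonintegral trace}). You recover the stated formula only via the substitution ``$x'=\phi_{\sfn}^{-1}(x)$,'' which is precisely the missing factor and is incompatible with the equality you asserted one sentence earlier, so it is not justified by anything you proved. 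With the corrected identity the computation is immediate: $\iota_1(x)$ maps under $\phi_{\sfn}$ to the natural image of $\phi_{\sfn}(x)\in E$, which in $C(\langle\Gamma_0,\Gamma_1^{\sfn}\rangle)$ is identified with $\iota_0(\phi_{\sfn}(x))$, yielding the gluing $\iota_1\phi_{\sfn}^{-1}\iota_0^{-1}$ as in the statement.
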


\begin{proof}  Since $\sfn$ normalizes $\Theta$, it preserves $\calk$; hence our hypotheses ensure that $\Gamma_0$ and $\Gamma_1^{\sfn}$ \meetcute along $\calk$, and Lemma \ref{Maskit} applies.  Thus $\langle \Gamma_0, \Gamma_1^{\sfn} \rangle$ is a Kleinian group, and in particular, the natural maps $C(\Gamma_0) \to C(\langle \Gamma_0, \Gamma_1^{\sfn} \rangle)$ and $C(\Gamma_1^{\sfn}) \to C(\langle \Gamma_0, \Gamma_1^{\sfn} \rangle)$ determine an isometry
$$ C(\Gamma_0) \cup_{n\iota_1 \iota_0^{-1}} C(\Gamma_1^{\sfn}) \to C(\langle \Gamma_0,\Gamma_1^{\sfn} \rangle). $$ 
Here we are using $n\iota_1 \co E \to C(\Gamma_1^{\sfn})$ to refer to the natural map.  It is now an exercise in definition-chasing to show that $n\iota_1 \circ \phi_{\sfn} = \phi_{\sfn} \circ \iota_1$, whence the map
$$ C(\Gamma_0) \cup_{\iota_1 \phi_{\sfn}^{-1} \iota_0^{-1}} C(\Gamma_1) \to C(\Gamma_0) \cup_{n\iota_1 \iota_0^{-1}} C(\Gamma_1^{\sfn}), $$
defined as the identity on $\C(\Gamma_0)$ and $\phi_{\sfn}$ on $C(\Gamma_1)$, is well-defined.  The lemma follows.  \end{proof}

Since $\sfm_1$ and $\sfm_2$ have order $2$, $\phi_{\sfm_i} = \phi_{\sfm_i}^{-1}$ for $i = 1,2$.  Lemma \ref{sliced Maskit} thus yields the result below, which describes how the algebraic model for $M_n$ from Proposition \ref{algebraic model} changes under mutation.  %\comment{This should replace the former Proposition 6.2, which would now be Prop 7.2.}

\begin{prop}\label{mutated algebraic model}  For $I = (a_0,a_1,\hdots,a_n) \in \{0,1,2\}^{n+1}$ let $\sfq_{i+1}=\sfm_{a_0}^{(0)} \cdots \sfm_{a_i}^{(i)}$ for $0 \leq i \leq n$, with $\sfm_0^{(j)}:=id$ and $\sfm_1^{(j)}$, $\sfm_2^{(j)}$ as in Corollary \ref{indexed mutators} for every $j$.  Define\[ \Gamma_I \ =\  \left\langle \Gamma_S, \Big( \Gamma_T^{(1)}\Big)^{\sfq_1}, \ldots, \Big( \Gamma_T^{(n)}\Big)^{\sfq_n}, \Big(\overline{\Gamma}_S^{\c^{-2n}}\Big)^{\sfq_{n+1}}\right\rangle\] There is an isometry $M_I \to C(\Gamma_I)$ that restricts on $C(\Gamma_S)$ to the natural map, and on $C\Big(\Gamma_T^{(i)}\Big)$ to $\phi_{\sfq_i}\circ \phi_i$ followed by the natural map, for $1 \leq i \leq n$.\end{prop}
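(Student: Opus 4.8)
The plan is to rerun the proof of Proposition \ref{algebraic model} almost verbatim, feeding each of the $n+1$ combination steps through Lemma \ref{sliced Maskit} rather than Lemma \ref{Maskit}. Put $\sfq_0 = \mathit{id}$ and, for $0 \le i \le n$, set $\Gamma^{(i)}_{I,-} = \langle \Gamma_S, (\Gamma_T^{(1)})^{\sfq_1},\ldots,(\Gamma_T^{(i)})^{\sfq_i}\rangle$, so that $\Gamma^{(0)}_{I,-} = \Gamma_S$ and $\Gamma^{(n)}_{I,-}$ is one amalgamand of $\Gamma_I$. I would prove by induction on $i$ the exact analogue of the claim inside the proof of Proposition \ref{algebraic model}: that $\Gamma^{(i)}_{I,-}$ is Kleinian; that the natural maps determine an isometry $C(\Gamma_S)\cup C(\Gamma_T^{(1)})\cup\cdots\cup C(\Gamma_T^{(i)}) \to C(\Gamma^{(i)}_{I,-})$ with the domain glued as in Proposition \ref{mutated geometric model} and restricting on the summands as in the statement; that $\mathrm{Stab}_{\Gamma^{(i)}_{I,-}}(\sfq_i(\c^{-2i}(\calh))) = (\Lambda^{(i)})^{\sfq_i}$, with the induced natural map onto $\partial C(\Gamma^{(i)}_{I,-})$ being the appropriate $\phi_{\sfq_i}$-twist of $\iota^{(i)}_-$ followed by the natural map; and that $\mathrm{Hull}(\Gamma^{(i)}_{I,-})$ lies in the $\sfq_i$-image of the half-space bounded by $\c^{-2i}(\calh)$ that appears in assertion (2) of that claim.

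The base case $i=0$ is Lemma \ref{M_Sboundary} together with the positioning of $\calp_1$ recorded at the start of the proof of Proposition \ref{algebraic model}. For the inductive step I would first combine $\Gamma^{(i)}_{I,-}$ with the \emph{unmutated} conjugate $(\Gamma_T^{(i+1)})^{\sfq_i}$. Because $\sfq_i^{-1}\sfq_{i+1} = \sfm_{a_i}^{(i)}$ normalizes $\Lambda^{(i)}$ by Corollary \ref{indexed mutators}, hence fixes the plane $\c^{-2i}(\calh)$ it stabilizes, the group $(\Gamma_T^{(i+1)})^{\sfq_i}$ has incoming boundary plane $\sfq_i(\c^{-2i}(\calh))$ with stabilizer $(\Lambda^{(i)})^{\sfq_i}$, and its hull lies in the $\sfq_i$-image of the half-space complementary to the one containing $\mathrm{Hull}(\Gamma^{(i)}_{I,-})$; so $\Gamma^{(i)}_{I,-}$ and $(\Gamma_T^{(i+1)})^{\sfq_i}$ \meetcute along $\sfq_i(\c^{-2i}(\calh))$ exactly as in Proposition \ref{algebraic model}, and Lemma \ref{Maskit} applies. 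Now feed this into Lemma \ref{sliced Maskit} with twisting element $\sfn = (\sfm_{a_i}^{(i)})^{\sfq_i}$: it normalizes $\Theta = (\Lambda^{(i)})^{\sfq_i}$ since $\sfm_{a_i}^{(i)}$ normalizes $\Lambda^{(i)}$, and it preserves each side of $\sfq_i(\c^{-2i}(\calh))$ because $\sfm_1$ and $\sfm_2$ lie in $\mathrm{PSL}_2(\mathbb{R})$ — so $\sfm_j$ preserves $\calh$ and its two sides, $\sfm_j^{(i)} = \c^{-2i}\sfm_j\c^{2i}$ preserves $\c^{-2i}(\calh)$ and its sides, and conjugating by $\sfq_i$ transports this to $\sfq_i(\c^{-2i}(\calh))$. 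Since $\sfn\sfq_i = \sfq_i\sfm_{a_i}^{(i)}$, one checks $((\Gamma_T^{(i+1)})^{\sfq_i})^{\sfn} = (\Gamma_T^{(i+1)})^{\sfq_{i+1}}$, so Lemma \ref{sliced Maskit} gives that $\Gamma^{(i+1)}_{I,-}$ is Kleinian and that the natural maps, with the gluing twisted by $\phi_{\sfn}^{-1} = \phi_{\sfn}$ (recall $\sfn^2 = \mathit{id}$), extend the isometry; identifying $\phi_{\sfn}$ on $C(\Theta)$ with the mutation isometry $\phi_{\sfm_{a_i}^{(i)}}$ of $F^{(i)}$ via $\phi_{\sfq_i}$ shows the gluing is the one prescribed for $M_I$ at $F^{(i)}$. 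The stabilizer and hull statements at level $i+1$ then follow as in Proposition \ref{algebraic model}: $C(\Gamma^{(i+1)}_{I,-})$ has a unique totally geodesic boundary component, namely the image of $\partial_+ C((\Gamma_T^{(i+1)})^{\sfq_{i+1}})$, which corresponds to $\sfq_{i+1}(\c^{-2(i+1)}(\calh))$ with stabilizer $(\Lambda^{(i+1)})^{\sfq_{i+1}}$, and $\mathrm{Hull}(\Gamma^{(i+1)}_{I,-})$ must lie on the side of that plane containing $\mathrm{Hull}((\Gamma_T^{(i+1)})^{\sfq_{i+1}})$. When $a_i = 0$ the twist is trivial and the step is literally the one in Proposition \ref{algebraic model}.

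A final application of the same mechanism combines $\Gamma^{(n)}_{I,-}$ with $(\overline{\Gamma}_S^{\c^{-2n}})^{\sfq_n}$ along $\sfq_n(\c^{-2n}(\calh))$ and twists by $(\sfm_{a_n}^{(n)})^{\sfq_n}$ to produce $\Gamma_I$; since both amalgamands now have a unique boundary component, $C(\Gamma_I)$ is closed, equal to $\mathbb{H}^3/\Gamma_I$, and the isometry in the statement is assembled from the pieces. I expect the only real obstacle to be bookkeeping: verifying that the twist $\sfn = (\sfm_{a_i}^{(i)})^{\sfq_i}$ is exactly the element whose insertion converts the natural gluing into the $M_I$ gluing of Proposition \ref{mutated geometric model}, and carrying out the definition-chase — like the one at the end of the proof of Lemma \ref{sliced Maskit} — that reconciles $\phi_{\sfn}$ acting on $C(\Theta) = C((\Lambda^{(i)})^{\sfq_i})$ with $\phi_{\sfm_{a_i}^{(i)}}$ acting on $F^{(i)}$ under the natural identifications. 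No new geometry arises; the content is confirming that the inductive scaffolding of Proposition \ref{algebraic model} survives the conjugations intact.
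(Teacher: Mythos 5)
Your proposal is correct and is essentially the argument the paper intends: the paper explicitly states that Proposition \ref{mutated algebraic model} is proved by rerunning the induction of Proposition \ref{algebraic model}, invoking Lemma \ref{sliced Maskit} at each gluing stage, and declines to write the details. Your expansion — tracking the conjugated planes $\sfq_i(\c^{-2i}(\calh))$, stabilizers $(\Lambda^{(i)})^{\sfq_i}$, and hull positions, and verifying that the twist $\sfn=(\sfm_{a_i}^{(i)})^{\sfq_i}$ normalizes the edge group and preserves sides because $\sfm_1,\sfm_2\in\mathrm{PSL}_2(\mathbb{R})$ — fills in exactly the bookkeeping the paper omits.
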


The proof of Proposition \ref{mutated algebraic model} follows the inductive approach of that of Proposition \ref{algebraic model}, but at each stage appeals to Lemma \ref{sliced Maskit} for instructions on how to change the construction.  We will not write the details, as it is very similar.

%%%%%%%%%%%%%%%%%%%%%%%%%%%%%%%%
\section{Commensurable mutants}\label{sec:commensurablo!}
%%%%%%%%%%%%%%%%%%%%%%%%%%%%%%%%

Here we show that $M_n$ is commensurable to each of its mutants by $(13)(24)$ and with this fact classify the $M_I$ up to isometry for $I\in\{0,1\}^{n+1}$, proving Theorem \ref{omnibus1.5}.  In the process, we show that our polyhedral decomposition of $M_n$ is ``canonical'' in the sense of \cite[\S 2]{GHH}; i.e.~produced by a construction of Epstein--Penner \cite{EP}.  This allows us to identify the commensurator for $\Gamma_n$ and the minimal orbifold quotient of $M_n$.  In practice, it is a challenge to find Epstein-Penner decompositions, commensurators, and commensurator quotients.  Infinite families where these are known are rare. 

Below, let $\calb_0$ be the open half-ball in $\mathbb{H}^3$ bounded by the Euclidean hemisphere of unit radius centered at $0 \in \mathbb{C}$, and let $\calb_j = \c^{-j}(\calb_0)$, where $\c$ is as defined in Lemma \ref{M_TandH}.  Recall that we have defined $\calh$ as the geodesic hyperplane of $\mathbb{H}^3$ with ideal boundary $\mathbb{R} \cup \{\infty\}$.  If $w$ and $z$ are complex numbers, we will take $w\calh+z$ to be the hyperplane with ideal boundary $\left(w\mathbb{R} + z\right) \cup \{\infty\}$.  

\begin{dfns}\label{some isoms}
\[\] \vspace{-.40in}
 \begin{enumerate}  \item  Let $\f_0$ be obtained by first reflecting in $i\calh$ and then in $i\calh+1/2$.
  \item  Let $\sfb_0$ be obtained by first reflecting in $\calh+i/2$ and then in $\partial \calb_0$.
  \item  For $j \geq 0$, let $\sfa_j$ be obtained by reflecting in $i\calh+1/2$ and then in $\partial \calb_j$.  
\end{enumerate}  \end{dfns}

Since $i\calh$ and $i\calh+1/2$ are parallel and share the ideal point $\infty$, $f_0$ is a parabolic isometry fixing $\infty$.  $\calh+i/2$ meets $\partial \calb_0$ at an angle of $\pi/3$, so $\sfb_0$ is an elliptic isometry of order $3$ rotating around the geodesic of intersection.  For the same reason, $\sfa_i$ is elliptic of order $3$, rotating around the geodesic $i\calh+1/2 \cap \partial \calb_i$, for each $i\geq 0$.  

\begin{lemma}\label{Gamma_n commensurator}  Let $G_n$ be the group generated by reflections in the face of $P_n$, where
$$ P_n = \left\{ (z,t) \in \mathbb{H}^3\, |\ 0 \leq \Re(z) \leq 1/2, -n\sqrt{2} \leq \Im(z) \leq 1/2 \right\} - \left( \bigcup_{k=0}^n \calb_k \right).  $$
Then $G_n$ contains $\sfa_i$ for $0 \leq i \leq 2n$, as well as $\f_0$ and $\sfb_0$, and $O_n\doteq\mathbb{H}^3/G_n$ is a one-cusped hyperbolic orbifold.  \end{lemma}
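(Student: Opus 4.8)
The plan is to recognize $P_n$ as a Coxeter polyhedron and $G_n$ as the associated reflection group, read off the listed isometries as products of reflections in faces of $P_n$, and then deduce the orbifold structure from Poincar\'e's polyhedron theorem.

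First I would list the faces of $P_n$. Writing $z = x + iy$, the ``box'' part contributes the vertical faces $i\calh = \{x=0\}$, $i\calh + \tfrac12 = \{x = \tfrac12\}$, $\calh + \tfrac{i}{2} = \{y = \tfrac12\}$, and $\calh - n\sqrt2\, i = \{y = -n\sqrt2\}$, while for $0 \le k \le n$ the hemisphere $\partial\calb_k$ of Euclidean radius $1$ centered at $-k\sqrt2\,i$ (recall $\c^{-k}$ acts on $\mathbb{C}$ as $z \mapsto z - k\sqrt2\,i$) also contributes a face: its apex $\big(-k\sqrt2\,i,\,1\big)$ lies in $\overline{P_n}$, since a point of $\partial\calb_k$ lies outside $\calb_{k-1}\cup\calb_{k+1}$ exactly when $|\Im z + k\sqrt2| \le \tfrac{\sqrt2}{2}$. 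Given these faces, $\f_0$ is the product of the reflections in $i\calh$ and $i\calh + \tfrac12$; $\sfb_0$ is the product of the reflections in $\calh + \tfrac{i}{2}$ and $\partial\calb_0$; and for $0 \le i \le n$, $\sfa_i$ is the product of the reflections in $i\calh + \tfrac12$ and $\partial\calb_i$. All of these therefore lie in $G_n$. For $n < i \le 2n$, let $\rho \in G_n$ be the reflection in the face $\{y = -n\sqrt2\}$; it acts on $\mathbb{C}$ as $(x,y) \mapsto (x, -2n\sqrt2 - y)$, hence preserves $i\calh + \tfrac12$ and carries $\partial\calb_{2n-i}$ to $\partial\calb_i$. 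Since $2n - i \in \{0,\dots,n-1\}$, conjugating the two--reflection expression for $\sfa_{2n-i}$ by $\rho$ gives $\rho\,\sfa_{2n-i}\,\rho = \sfa_i \in G_n$, completing the case $0 \le i \le 2n$.

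Next I would compute the dihedral angles of $P_n$. Box faces meet each other at right angles or in parallel pairs sharing only $\infty$. Each plane $\{x=0\}$ passes through every center $-k\sqrt2\,i$, so $\partial\calb_k \perp \{x=0\}$; likewise $\partial\calb_n \perp \{y=-n\sqrt2\}$. The plane $\{x=\tfrac12\}$ is at Euclidean distance $\tfrac12$ from each center, so it meets $\partial\calb_k$ at angle $\arccos\tfrac12 = \tfrac{\pi}{3}$, and the same computation gives angle $\tfrac{\pi}{3}$ between $\partial\calb_0$ and $\{y=\tfrac12\}$. Finally the centers of consecutive hemispheres are at distance $\sqrt2 = \sqrt{1^2 + 1^2}$, so $\partial\calb_k \perp \partial\calb_{k+1}$, while non--consecutive ones are disjoint. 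Thus every dihedral angle of $P_n$ is $\tfrac{\pi}{2}$, $\tfrac{\pi}{3}$, or $0$, so Poincar\'e's polyhedron theorem in its reflection--group form (cf.\ \cite{Ra}) shows $G_n$ is discrete with exact fundamental domain $P_n$, and $\mathbb{H}^3/G_n$ is a hyperbolic orbifold with underlying polyhedron $P_n$ and mirrored faces.

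It remains to see this orbifold is one--cusped and of finite volume. Every point $z = x+iy$ of the rectangle $[0,\tfrac12]\times[-n\sqrt2,\tfrac12]$ lies in some $\calb_k$, because choosing $k$ with $|y+k\sqrt2|\le \tfrac{\sqrt2}{2}$ gives $x^2 + (y+k\sqrt2)^2 \le \tfrac14 + \tfrac12 < 1$; hence $\overline{P_n}\cap\mathbb{C}$ is a finite union of circular arcs, so $\infty$ is the only ideal vertex of $P_n$, and its link is the Euclidean rectangle $[0,\tfrac12]\times[-n\sqrt2,\tfrac12]$ with mirrored sides. A short check -- no finite point lies on two of the circles $\partial\calb_k$ together with a box face, nor on three box faces -- confirms there are no other ideal vertices. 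Therefore $P_n$ is a finite--sided, finite--volume polyhedron with a single cusp, and $\mathbb{H}^3/G_n$ is a one--cusped hyperbolic orbifold of finite volume. The part of the argument needing real care is the dihedral--angle computation together with the verification that each hemisphere is genuinely a face of $P_n$; the rest is bookkeeping and a standard appeal to Poincar\'e's theorem.
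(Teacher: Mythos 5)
Your proposal is correct and takes essentially the same route as the paper: check that the dihedral angles of $P_n$ are integer submultiples of $\pi$ and apply the reflection form of Poincar\'e's theorem, observe that $\infty$ is the only ideal point (hence one cusp), read off $\f_0$, $\sfb_0$, and $\sfa_0,\hdots,\sfa_n$ from their defining reflections in faces of $P_n$, and get $\sfa_i$ for $n<i\leq 2n$ by conjugating by the reflection in $\calh-n\cdot i\sqrt{2}$ --- your $\rho$ is exactly the paper's $\c^{-n}\r\c^{n}$, so your geometric conjugation of the two-reflection factorization coincides with the paper's identity $\c^{-2n}\bar{\sfa}_{2n-i}\c^{2n}=\sfa_i$. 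One cosmetic slip: your covering estimate $x^2+(y+k\sqrt{2})^2\leq 3/4<1$ actually shows $\overline{P_n}\cap\mathbb{C}=\emptyset$ rather than a union of circular arcs, which is precisely what is needed for the one-cusp claim.
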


\begin{proof}  By its definition, $P_n$ is cut out by $\calh + i/2$, $i\calh$, $i\calh+1/2$, $\calh-n\cdot i\sqrt{2}$, and the $\partial \calb_k$, $0 \leq k \leq n$.  It is not hard to show directly that the dihedral angle between any two of these planes that intersect is an integer submultiple of $\pi$, whence by the Poincar\'e polyhedron theorem $G_n$ is discrete and $\mathbb{H}^3/G_n$ is an orbifold isometric to $P_n$ with mirrored sides (cf.~\cite[Theorem 13.5.1]{Ra}).  In particular, since $P_n$ has a single ideal point $\mathbb{H}^3/G_n$ has one cusp.

One finds that $G_n$ contains $\f_0$, $\sfb_0$, and the $\sfa_i$, for $0 \leq i \leq n$, by direct appeal to Definitions \ref{some isoms}.  It remains to establish that $G_n$ contains $\sfa_i$ for $n < i \leq 2n$.  Note that $\calh-n\cdot i\sqrt{2}$ is the image of $\calh$ under $\c^{-n}$, so reflection in $\calh-n\cdot i\sqrt{2}$ is given by $\c^{-n}\r\c^n$, where $\r$ is the reflection through $\calh$.  By the property of $\r$ observed above Lemma \ref{M_TandH}, conjugating an element $\sfx \in \mathrm{PSL}_2(\mathbb{C})$ by reflection in $\calh-n\cdot i\sqrt{2}$ gives: \begin{align}\label{back conjugation}
  \c^{-n}\r\c^n \sfx \c^{-n}\r \c^{n} = \c^{-2n} \bar{\sfx} \c^{2n}  \end{align}
We further observe that $\c$ conjugates $\sfa_i$ to $\sfa_{i-1}$ for $i \geq 1$, since $\c(i\calh+1/2) = i\calh+1/2$ and $\c(\calb_i) = \calb_{i-1}$, and we note that $\bar{\sfa}_0 = \sfa_0$.  Thus: \begin{align}\label{a vs abar}
  \overline{\c^{i}\sfa_i\c^{-i}} = \bar{\sfa}_0 = \sfa_0 = \c^{i}\sfa_i\c^{-i}\ \ \Rightarrow\ \ \c^{-2i}\bar{\sfa}_i\c^{2i} = \sfa_i  \end{align}
For $0 \leq i \leq n$, it follows that the conjugate of $\sfa_i$ by reflection in $\calh - n\cdot i\sqrt{2}$ is:
$$  \c^{-2n}\bar{\sfa}_i\c^{2n} = \c^{-2(n-i)} \sfa_i \c^{2(n-i)} = \sfa_{2n-i} \in G_n.  $$
Therefore $G_n$ contains $\sfa_i$ for $n \leq i \leq 2n$ as well, and the lemma is proved.  \end{proof}

Since $\calh$ meets both $\partial \calb_0$ and $i\calh+1/2$ at right angles, it does the same for the fixed geodesic of $\sfa_0$ and is therefore preserved by $\sfa_0$.  In fact, the following description of $\sfa_0 \in \mathrm{PSL}_2(\mathbb{C})$ is easily obtained from its definition:
$$ \sfa_0 = \left( \begin{smallmatrix}  0 & 1 \\ -1 & 1 \end{smallmatrix} \right)  $$
In particular, $\sfa_0$ acts on the ideal points of $\calp_1 \cap \calp_2$ by $0 \mapsto 1 \mapsto \infty \mapsto 0$.  Similarly, it is easy to see that $f_0(z,t)= (z+1,t)$

Then the face pairings $\f$ (defined in Lemma \ref{M_T0andH0}) and $\s$ (defined in Lemma \ref{M_SandG}), which are equal, are obtained from $\f_0$ by conjugating by $\sfa_0$:  \begin{align}\label{f vs f0}
  \s = \f = \sfa_0\, \f_0\, \sfa_0^{-1}.  \end{align}
One may use similar analyses to establish the following.  \begin{align}\label{g and h vs f0}
  &  \t = (\sfb_0\sfa_0)^{-1}\f_0(\sfb_0\sfa_0)\,\sfa_0 && \g = (\sfa_0^{-1} \sfa_1) \f_0^{-1} (\sfa_0^{-1} \sfa_1)^{-1} && \h = \sfa_1 \sfa_0 \f_0^{-1} \sfa_1  \end{align}

The main group-theoretic fact of this section extends these observations.

\begin{prop}\label{G_n is hungry}  For each $n \in \mathbb{N}$, $G_n$ contains $\Gamma_n$ and $\sfm_1^{(i)}$ for $0 \leq i \leq n$.  \end{prop}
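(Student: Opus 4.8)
The plan is to proceed in two stages, first handling the generators of the various $\Gamma_T^{(i)}$ and $\Gamma_S$, then handling the mutators $\sfm_1^{(i)}$.

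\textbf{Step 1: containment of $\Gamma_n$.} Recall that $\Gamma_n = \langle \Gamma_S, \Gamma_T^{(1)},\hdots,\Gamma_T^{(n)},\overline{\Gamma}_S^{\c^{-2n}}\rangle$, and that $\Gamma_S = \langle \s,\t\rangle$, $\Gamma_{T_0} = \langle \f,\g,\h\rangle$ with $\Gamma_T = \langle \Gamma_{T_0},\overline{\Gamma}_{T_0}^{\c^{-2}}\rangle$. Equations (\ref{f vs f0}) and (\ref{g and h vs f0}) express $\s=\f$, $\t$, $\g$, and $\h$ as words in $\f_0$, $\sfb_0$, $\sfa_0$, and $\sfa_1$, all of which lie in $G_n$ by Lemma \ref{Gamma_n commensurator}. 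Hence $\Gamma_S \le G_n$ and $\Gamma_{T_0}\le G_n$. To get $\overline{\Gamma}_{T_0}^{\c^{-2}}\le G_n$, I would observe that conjugation by $\c^{-2}\r$ is realized inside $G_n$: indeed $\c^{-2}\r$ is the reflection in $\calh - i\sqrt2$ (using the identity $\c^{-j}\r\c^{j}$ is reflection in $\calh - j\cdot i\sqrt2$ from the proof of Lemma \ref{Gamma_n commensurator} with $j=1$), and one checks this plane is a face of $P_1 \subset P_n$, or more simply that $G_n$ contains $\sfa_2$ and the other reflected conjugates appearing in the expressions for $\overline{\f},\overline\g,\overline\h$ conjugated by $\c^{-2}$. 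Then $\Gamma_T\le G_n$, and since $\Gamma_T^{(i)} = \Gamma_T^{\c^{-2(i-1)}}$ and $\c$ conjugates $\sfa_i\mapsto\sfa_{i-1}$ (and $G_n$ contains $\sfa_i$ for $0\le i\le 2n$), all the $\Gamma_T^{(i)}$ with $1\le i\le n$ lie in $G_n$; finally $\overline{\Gamma}_S^{\c^{-2n}}$ lies in $G_n$ by the same reflection-conjugation argument applied at depth $n$, exactly as in the last display of the proof of Lemma \ref{Gamma_n commensurator}. This gives $\Gamma_n\le G_n$.

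\textbf{Step 2: containment of the mutators.} By Lemma \ref{mutators}, $\sfm_1 = \left(\begin{smallmatrix}-3&5\\-2&3\end{smallmatrix}\right)\in\mathrm{PSL}_2(\mathbb{R})$, so it preserves $\calh$, and $\sfm_1^{(i)} = \c^{-2i}\sfm_1\c^{2i}$ preserves $\c^{-2i}(\calh)$. The key is to identify $\sfm_1$ geometrically as a product of reflections (or an order-two rotation) about axes that are already edges of $P_n$. Since $\sfm_1$ has trace $0$ it is an involution, rotation by $\pi$ about the geodesic fixed setwise; I would compute its fixed geodesic and recognize it as lying in the $1$-skeleton of $P_n$ — I expect it to be the geodesic $\partial\calb_0 \cap (i\calh+1/2)$ composed with another, i.e. $\sfm_1$ equals $\sfa_0$ composed with reflections in faces of $P_n$, or perhaps $\sfm_1 = \f_0\sfa_0\cdots$ in terms of the generators of $G_n$. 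Alternatively, and more robustly, I would simply exhibit $\sfm_1$ as an explicit word in $\f_0,\sfb_0,\sfa_0,\sfa_1$ by matrix multiplication in $\mathrm{PSL}_2(\mathbb{C})$ — this is a finite check. Then $\sfm_1^{(i)} = \c^{-2i}\sfm_1\c^{2i}$ is the corresponding word in the conjugates $\c^{-2i}\f_0\c^{2i}$ etc.; since conjugation by $\c$ sends $\sfa_j\mapsto\sfa_{j-1}$ and $\f_0$ (translation by $1$) to itself, and $G_n$ contains $\sfa_j$ for all $0\le j\le 2n$, each such word lies in $G_n$ provided $0\le i\le n$ keeps the indices in range, which it does.

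\textbf{Main obstacle.} The genuinely computational heart is Step 2: pinning down the precise expression for $\sfm_1$ in terms of the reflection generators of $G_n$ (equivalently, identifying its rotation axis among the edges of the polyhedron $P_n$), and then checking that conjugating by $\c^{2i}$ keeps everything inside $G_n$ for the full range $0\le i\le n$. This is analogous to the verifications (\ref{f vs f0}) and (\ref{g and h vs f0}), just one more of the same kind, so I would present it as a direct matrix computation with a sentence of geometric interpretation rather than grinding through it in detail.
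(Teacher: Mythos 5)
Your Step 1 is the paper's argument almost verbatim: (\ref{f vs f0}) and (\ref{g and h vs f0}) give $\Gamma_S,\Gamma_{T_0}<G_n$, and the conjugation identities (\ref{a vs abar}) and (\ref{back conjugation}) handle the barred and $\c$-shifted copies. One side remark there is wrong, though: for $n\geq 2$ the map $\c^{-2}\r=\c^{-1}\r\c$, reflection in $\calh-i\sqrt{2}$, does \emph{not} lie in $G_n$ --- that plane passes through the interior of the fundamental polyhedron $P_n$, and a nontrivial element of $G_n$ cannot fix interior points of $P_n$ (it is not a face of $P_n$, and ``$P_1\subset P_n$'' does not make its face reflections elements of $G_n$). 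This is precisely why the paper treats $\overline{\Gamma}_{T_0}^{\c^{-2}}$ generator-by-generator via (\ref{a vs abar}), reserving the conjugation-by-a-reflection trick for depth $n$, where $\calh-n\cdot i\sqrt{2}$ genuinely is a face of $P_n$. Your fallback clause (``more simply, $G_n$ contains $\sfa_2$ and the conjugated expressions for $\bar{\f},\bar{\g},\bar{\h}$'') is exactly the paper's argument, so keep that version and drop the first.

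Step 2 has the same spirit as the paper but a genuine gap as written: the claim that conjugating your word by $\c^{2i}$ ``keeps the indices in range, which it does'' fails if the word for $\sfm_1$ involves $\sfb_0$ or $\sfa_1$. Conjugation by $\c^{-2i}$ sends $\sfa_1\mapsto\sfa_{2i+1}$, which at $i=n$ is $\sfa_{2n+1}$, outside the range covered by Lemma \ref{Gamma_n commensurator}, and it sends $\sfb_0$ to a rotation whose axis lies far down the strip and is not among the listed elements of $G_n$ at all; so with that generating set the argument does not close. The paper avoids any explicit word: $\sfm_1\in\mathrm{PSL}_2(\mathbb{Z})$, which is generated by $\sfa_0$ together with the order-two rotation $\left(\begin{smallmatrix}0&-1\\1&0\end{smallmatrix}\right)$ obtained by reflecting first in $\partial\calb_0$ and then in $i\calh$, both in $G_n$; and for the conjugates it notes $\c^{-2i}$ fixes $i\calh$ and carries $\calb_0$ to $\calb_{2i}$, where reflection in $\partial\calb_{2i}$ lies in $G_n$ for all $0\leq i\leq n$ --- directly when $2i\leq n$, and for $n<2i\leq 2n$ only after conjugating the face reflection in $\partial\calb_{2n-2i}$ by the reflection in $\calh-n\cdot i\sqrt{2}$; that last geometric step is the one your sketch does not anticipate. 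Alternatively, your plan is repaired with no new geometry by choosing the generating set $\{\f_0,\sfa_0\}$: since $\sfa_0\f_0=\left(\begin{smallmatrix}0&1\\-1&0\end{smallmatrix}\right)$, these already generate $\mathrm{PSL}_2(\mathbb{Z})\ni\sfm_1$, and conjugation by $\c^{-2i}$ then produces only $\f_0$ and $\sfa_{2i}$ with $2i\leq 2n$, all in $G_n$.
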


\begin{proof}  We recall from Proposition \ref{algebraic model} that $\Gamma_n = \langle \Gamma_S, \Gamma_T^{(1)},\hdots,\Gamma_T^{(n)},\overline{\Gamma}_S^{\c^{-2n}} \rangle$, where by  Definition \ref{geometric pieces}(2), $\Gamma_T^{(i)} \doteq \Gamma_T^{\c^{-2(i-1)}}$ for each $i$ between $1$ and $n$.  Furthermore, by Lemma \ref{M_TandH}, $\Gamma_T = \langle \Gamma_{T_0},\overline{\Gamma}_{T_0}^{\c^{-2}}\rangle$.

It is a direct consequence of the descriptions (\ref{f vs f0}) and (\ref{g and h vs f0}) above that $\Gamma_S < G_n$ and $\Gamma_{T_0} < G_n$.  Furthermore, since $\f_0$ commutes with $\c$ and $\bar{\f}_0 = \f_0$, (\ref{a vs abar}) implies for instance that
$$ \c^{-2}\overline{\f}\c^2 = \c^{-2}(\bar{\sfa}_0 \bar{\f}_0 \bar{\sfa}_0^{-1})c^2 = \sfa_2 \f_0 \sfa_2^{-1} \in \Gamma_n,  $$
since $\bar{\sfa}_0 = \sfa_0$ and $\c^{-2}\sfa_0c^2 = \sfa_2$.  Using the same strategy, we find:
$$  \c^{-2}\bar{\g}c^{2} = (\sfa_2^{-1}\sfa_1)\f_0^{-1}(\sfa_2^{-1}\sfa_1)^{-1} \in G_n\ \mbox{and}\ \c^{-2}\bar{\h}\c^2 = \sfa_1 \sfa_2 \f_0^{-1} \sfa_1 \in G_n  $$
Thus $G_n$ contains $\Gamma_T = \Gamma_T^{(1)}$.  Since conjugation by $\c^{-1}$ takes $\sfa_i$ to $\sfa_{i+1}$, and $\sfa_i \in \Gamma_n$ for each $i$ between $0$ and $2n$, it follows from the descriptions above and in (\ref{f vs f0}) and (\ref{g and h vs f0}) that each $\Gamma_T^{(i)}$, $1 \leq i \leq n$.  Finally the relation (\ref{back conjugation}) immediately implies that $\overline{\Gamma}_S^{\c^{-2n}} < G_n$, and we have established that $\Gamma_n < G_n$.

To show that $G_n$ contains the elements $\sfm_1^{(j)}$ for each $j$ between $0$ and $n$, we observe that the element obtained by reflecting first in $\partial \calb_0$ and then in $i\calh$ is the rotation of order $2$ described by $\left( \begin{smallmatrix} 0 & -1 \\ 1 & 0 \end{smallmatrix} \right)$.  This is well known to generate $\mathrm{PSL}_2(\mathbb{Z})$, along with $\sfa_0$.  Since $\sfm_1 \in \mathrm{PSL}_2(\mathbb{Z})$, it follows that $\sfm_1 \in G$.  

We note that $\c^{-2j}$ preserves $i\calh$ and takes $\calb_0$ to $\calb_{2j}$, and that $\calb_{2j}$ intersects $P_n$, for $j\leq n/2$, and intersects its image under reflection in $\calh - n\cdot i\sqrt{2}$ for $n/2 \leq j \leq n$.  Thus for each $j$ between $0$ and $n$, the rotation obtained by reflecting first in $\partial \calb_{2j}$ and then in $i\calh$ is contained in $G_n$.  If $\sfm_1$ is expressed as a word in the two elements described in the paragraph above, then $\c^{-2j}\sfm_1\c^{2j}$ is expressed as the same word in $\sfa_{2j}$ and the rotation obtained from $\partial \calb_{2j}$ as above.  The lemma follows.
\end{proof}

It is now easy to prove the first part of Theorem \ref{omnibus1.5}, that the complement of each link obtained from $L_n$ using only the mutation $(13)(24)$ is commensurable to $M_n$.

\begin{prop}\label{commensurable mutants}  $M_n$ branched covers $O_n$, as does $M_I$ for any $I \in \{0,1\}^{n+1}$. Hence these are commensurable.  \end{prop}

\begin{proof}  Since $G_n$ is a discrete reflection group, it is enough to show that $\Gamma_I \subset G_n$.  This is immediate from Propositions \ref{mutated algebraic model} and \ref{G_n is hungry}.
\end{proof}

To finish the proof of Theorem \ref{omnibus1.5} we need an isometry classification of the link complements that fall under the purview of Proposition \ref{commensurable mutants}.  Our first step is to show that $G_n$ is the commensurator of $\Gamma_n$.  

The \textit{commensurator} of a Kleinian group $\Gamma$ is the group
\[ \text{Comm}(\Gamma) \ = \ \left\{ {\sf g} \in \text{Isom}(\mathbb{H}^3) \, | \, [\Gamma : {\sf g}\Gamma {\sf g}^{-1}] < \infty \right\}\]
It follows easily from the definition that since $\Gamma_n$ is a finite-index subgroup of $G_n$,  $G_n$ is contained in $\text{Comm}(\Gamma_n)$.  Since $\Gamma_n$ is non-arithmetic (by Proposition \ref{trace fields}),  by a famous theorem of Margulis $\text{Comm}(\Gamma_n)$ is discrete (see \cite[(1) Theorem]{Ma}).

Let  $O_n'$ be the hyperbolic orbifold $\mathbb{H}^3/\text{Comm}(\Gamma_n)$.  Since $G_n<\mathrm{Comm}(\Gamma_n)$, $O'_n$ is finitely covered by $O_n$.  Recall from Lemma \ref{Gamma_n commensurator} that $O_n$ and therefore also $O_n'$ has exactly one cusp.  It is our goal to show that $G_n=\mathrm{Comm}(\Gamma_n)$; hence $O_n = O_n'$.

We use the strategy of \cite{GHH}.  Recall the \textit{hyperboloid} model for $\mathbb{H}^3$.  The \textit{Lorentz inner product} on $\mathbb{R}^4$ is the indefinite bilinear form 
\[ \langle {\bf v, w} \rangle \ = \ v_1 w_1+v_2w_2+v_3w_3-v_4w_4.\]
We let $\mathbb{H}^3=\{ \mathbf{v} \, | \,  \langle {\bf v, v} \rangle=-1, \, v_4>0 \}$ and equip $T_{\bv}\mathbb{H}^3$ with the Riemannian metric determined by the Lorentz inner product.  The \textit{positive light cone} is the set $L^+=\{  {\bf v} \, | \,  \langle {\bf v,v} \rangle=0, \, v_4\geq0 \}$.  The \textit{ideal point} of $\mathbb{H}^3$ represented by $\bv\in L^+$ is the set $[\bv]$ of scalar multiples of $\bv$ in $L^+-\{\mathbf{0}\}$.  $\text{Isom}(\mathbb{H}^3)$ is the group of matrices in $\text{GL}_4(\mathbb{R})$ which act on $\mathbb{R}^4$ preserving the Lorentz inner product and the sign of the last coordinate, hence acting on $\mathbb{H}^3$ by isometries.  Those in $\mathrm{Isom}^+(\mathbb{H}^3)\subset \mathrm{Isom}(\mathbb{H}^3)$ preserve orientation on $\mathbb{H}^3$.

For ${\bf v} \in L^+-\{0\}$ the set $H_{\bf v} = \{ {\bf w} \in \mathbb{H}^3 \, | \, \langle {\bf v, w} \rangle =-1\}$ is a horosphere centered at the ideal point $[\bv]$.  If $\alpha \in \mathbb{R}^+$ then $H_{\alpha {\bf v}}$ is a horosphere centered at $[\alpha\bv]=[\bv]$, and if $\alpha \leq 1$ then $H_{\bf v}$ is contained in the horoball $\{\bw\,|\,\langle\alpha\bv,\bw\rangle\geq-1\}$ determined by $\alpha {\bf v}$.  This determines a bijective correspondence between vectors in $L^+$ and horospheres in $\mathbb{H}^3$, so we call the vectors in $L^+$ \emph{horospherical vectors}.

We use the hyperboloid model to construct certain canonical tilings of $\mathbb{H}^3$ associated to $M_n$ as in \cite{EP}.  First, choose a horospherical vector ${\bf v} \in L^+$ fixed by a peripheral element of $\Gamma_n$, so that under the covering map $\mathbb{H}^3 \to O_n'$ the horosphere $H_{\bf v}$ projects to a cross section of the cusp.  Then $V_n=\text{Comm}(\Gamma_n) \cdot{\bf v}$ is $\text{Comm}(\Gamma_n)$-invariant and determines a $\text{Comm}(\Gamma_n)$-invariant set of horospheres.  The convex hull of $V_n$ in $\mathbb{R}^4$ is called the \emph{Epstein--Penner convex hull}, we denote it as $C_n$.  Epstein and Penner show that $\bound C_n$ consists of a countable set of 3-dimensional faces $F_i$, where each $F_i$ is a finite sided Euclidean polyhedron in $\mathbb{R}^4$.  Furthermore, this decomposition of $\bound C_n$ projects along straight lines through the origin to a $\text{Comm}(\Gamma_n)$-invariant tiling $\mathcal{T}_n$ of $\mathbb{H}^3$ by ideal polyhedra \cite[Prop. 3.5 and Thm. 3.6]{EP}.   %The quotient of this tiling by $\Gamma_n$ gives a cell decomposition of $M_n$ referred to as the canonical cell decomposition of $M_n$.  
We refer to the tiling so obtained as a {\it canonical tiling}.   (It is easy to see that a different choice for the vector ${\bf v}$ yields a convex hull which differs from $C_n$ by multiplication by a positive scalar.  Therefore it projects to the same canonical tiling as $C_n$.)

Consider the group of symmetries $\text{Sym}(\mathcal{T}_n) < \text{Isom}(\mathbb{H}^3)$.  Since $\mathcal{T}_n$ is $\text{Comm}(\Gamma_n)$-invariant we have that $\text{Comm}(\mathcal{T}_n) < \text{Sym}(\mathcal{T}_n)$.  On the other hand, $\text{Sym}(\mathcal{T}_n)$ is discrete \cite[Lemma 2.1]{GHH} and since $\Gamma_n$ is non-arithmetic $\text{Comm}(\Gamma_n)$ is the maximal discrete group containing $\Gamma_n$.  Therefore $\text{Sym}(\mathcal{T}_n)=\text{Comm}(\Gamma_n)$.  Below we will first identify the tiling $\mathcal{T}_n$ and then show that $G_n = \mathrm{Sym}(\calt_n)$.

\begin{thm} \label{tiling}  With $\mathcal{S}$ as in Lemma \ref{poly decomp}. $\mathcal{T}_n=\Gamma_n \cdot \bigcup\{\calp\in\mathcal{S}\}$ is the canonical tiling for $\mathrm{Comm}(\Gamma_n)$.
\end{thm}

\proof
For a $4 \times n$ matrix $X$ below, let $x_i$ be the $i^{\text{th}}$ column of $X$.  Each $x_i$ below lies in $L^+$ and so represents an ideal point of $\mathbb{H}^3$.  We will call $\calp_X$ the convex hull  in $\mathbb{H}^3$ of the $[x_i]$.
\[M \ = \ \left(
\begin{array}{llllllllllll}
 2 & 1 & 0 & 1 & 0 & -1 & -2 & -1 & 1 & -1 & -1 & 1 \\
 0 & 1 & 2 & 1 & -2 & -1 & 0 & -1 & -1 & 1 & 1 & -1 \\
 0 & \sqrt{2} & 0 & -\sqrt{2} & 0 & \sqrt{2} & 0 & -\sqrt{2} & -\sqrt{2} & -\sqrt{2} & \sqrt{2} & \sqrt{2} \\
 2 & 2 & 2 & 2 & 2 & 2 & 2 & 2 & 2 & 2 & 2 & 2
\end{array}
\right)\]
\[N \ = \ \left(
\begin{array}{llllll}
 \sqrt{2} & 0 & 0 & -\sqrt{2} & 0 & 0 \\
 0 & \sqrt{2} & 0 & 0 & -\sqrt{2} & 0 \\
 0 & 0 & \sqrt{2} & 0 & 0 & -\sqrt{2} \\
 \sqrt{2} & \sqrt{2} & \sqrt{2} & \sqrt{2} & \sqrt{2} & \sqrt{2}
\end{array}
\right)\]
$M$ and $N$ were chosen so that $\calp_M$ is a right-angled ideal cuboctahedron and $\calp_N$ a regular ideal octahedron, and furthermore:\begin{itemize}
  \item For $X=M,N$, each member of $\text{Isom}(\calp_X)$ fixes $(0,0,0,1)^T \in \mathbb{H}^3$ and the set of columns of $X$ is $\text{Isom}(\calp_X)$-invariant.  
  \item There exists ${\sf h}\in\mathrm{Isom}^+(\mathbb{H}^3)$ with ${\sf h}(n_1) = m_1$, ${\sf h}(n_2)=m_9$ and ${\sf h}(n_3)=m_4$, so that ${\sf h}(\calp_N) \cap \calp_M$ is the face $(m_1,m_9,m_4)$ with ideal vertices at $[m_1]$, $[m_9]$ and $[m_4]$.\end{itemize}  
Let $\calp_1={\sf h}(\calp_N)$ and $\calp_2=\calp_M$.  There is an isometry $p$ from the upper half-space to the hyperboloid model taking $\calp_i$ (as in Corollaries \ref{M_SandG} and \ref{M_T0andH0}) to $\calp_i$ as above for $i=1,2$, and $\infty$ to the center $[m_1]$ of the horosphere $H_{m_1}$.   We again refer by $\mathcal{S}$ to the image under $p$ of the set $\mathcal{S}$ from Lemma \ref{poly decomp}.  Also, $p$ conjugates each of the isometries we've used thus far in our constructions to elements of $\mathrm{GL}_4(\mathbb{R})$, to which we'll refer by the same names.

From the explicit description in Lemma \ref{Gamma_n commensurator} it is clear that $[m_1]$ is a parabolic fixed point of $G_n$.  Since $G_n$ is discrete, each element fixing $[m_1]$ actually fixes $m_1$, so the orbit $V_n = G_n.m_1$ is a $G_n$-invariant collection of horospherical vectors bijective to the set of parabolic fixed points of $G_n$.  Since $O_n=\mathbb{H}^3/G_n$ has one cusp and the same holds for $O_n'=\mathbb{H}^3/\mathrm{Comm}(\Gamma_n)$ it follows that $V_n$ is also $\mathrm{Comm}(\Gamma_n)$-invariant.

Lemma \ref{poly decomp} implies that $\Gamma_n.\bigcup\{\calp\in\mathcal{S}\}$ is a $\Gamma_n$-invariant tiling of $\mathbb{H}^3$.  We claim that it is identical to the canonical tiling $\calt_n$, the projection to $\mathbb{H}^3$ of the boundary of the convex hull of $V_n$ in $\mathbb{R}^4$.  Note that $\calt_n$ is also $\Gamma_n$-invariant, since it is $G_n$-invariant by construction and $\Gamma_n<G_n$.

We will use \cite[Proposition 6.1]{GHH} to prove the claim.  The proposition requires for each element of $\mathcal{S}$ that the horospherical vectors representing its vertices be coplanar in $\mathbb{R}^4$, and that the angle between this plane and the plane determined by each neighboring tile be convex.  Equivalently, if $v_1, \ldots v_k \in V_n$ represent the ideal vertices of an element of $\mathcal{S}$ and $w\in V_n - \{v_1, \ldots v_k\}$ represents a vertex of a neighboring tile, then there exists a vector ${\bf n} \in \mathbb{R}^4$ such that
\begin{enumerate}
\item (coplanarity) ${\bf n} \cdot v_i =1$ for every $i=1, \ldots k$, and
\item (convex angles) ${\bf n} \cdot w > 1$.
\end{enumerate}
(See the proof of \cite[Proposition 6.1]{GHH}.)  Observe that these conditions are invariant under $\text{Isom}(\mathbb{H}^3)$, for if ${\bf n} \cdot v=\alpha$ and $A \in \text{Isom}(\mathbb{H}^3)$ then $({\bf n} A^{-1}) \cdot Av= \alpha$.  

For each member $\calp$ of $\cals$, we note that the subset of $V_n$ representing the set of ideal points of $\calp$ contains $m_1$ and is $\mathrm{Isom}(\calp)$-invariant.  This is because the members of $\cals$ all share the ideal vertex $[m_1]$, and the stabilizer in $G_n$ of any $\calp\in\cals$ acts transitively on its set of ideal vertices.  (The latter assertion can be proved by directly examining $P_n\cap \calp$, for $P_n$ as in Lemma \ref{Gamma_n commensurator}.)  In particular, the ideal vertices of $\calp_1$ are represented in $V_n$ by $\{\sfh(n_i)\}_{i=1}^6$ and those of $\calp_2$ by $\{m_i\}_{i=1}^{12}$, by the properties bulleted above.

Take ${\bf n} = (0,0,0,1/2)^T$.  Then ${\bf n} \cdot m_i =1$ for $i=1, \ldots 12$, so the $m_i$ are coplanar.  The $n_i$ (and hence also the $\sfh(n_i)$) are also coplanar, since $\sqrt{2} {\bf n} \cdot n_i =1$ for $i=1,\ldots, 6$ and the same $\mathbf{n}$.  Coplanarity follows for the other elements of $\cals$, since for example $\{\c^{-1}(m_i)\}_{i=1}^{12}$ is an $\mathrm{Isom}(\c^{-1}(\calp_2))$-invariant collection of horospherical vectors containing $m_1 = \c^{-1}(m_1)$ and representing the ideal vertices of $\c^{-1}(\calp_2)$.

Consider all pairs $(\mathcal{Q}, \calp_X)$ where $X\in \{ M,N\}$ and $\mathcal{Q}$ is a regular ideal octahedron or cuboctahedron which meets $\calp_X$ in a face.  Choose horospherical vectors for $\mathcal{Q}$ to agree with those chosen for $\calp_X$ and to be $\mathrm{Isom}(\mathcal{Q})$-invariant.  Since the convexity condition (2) is invariant under isometries, to finish the proof it suffices to check this condition for each possible pair $(\mathcal{Q}, \calp_X$).

If $\mathcal{Q}$ is a cuboctahedron adjacent to $\calp_M$ sharing the triangular face $(m_1, m_9, m_4)$ then $w=(7, 1, -5 \sqrt{2}, 10)^T$ is a horospherical vector for $\mathcal{Q}$ which is not shared by $\calp_M$.  We have ${\bf n} \cdot w = 5 >1$.  
If $\mathcal{Q}$ is a cuboctahedron adjacent to $\calp_M$ sharing the square face $(m_1, m_2, m_3, m_4)$ then $w=(3, 5, - \sqrt{2}, 6)^T$ is a horospherical vector for $\mathcal{Q}$ which is not shared by $\calp_M$.  We have ${\bf n} \cdot w = 3 >1$.
If $\mathcal{Q}$ is an octahedron adjacent to $\calp_N$ sharing the face $(n_1, n_2, n_3)$ then $w=\sqrt{2} (1, 2, 2, 3)^T$ is a horospherical vector for $\mathcal{Q}$ which is not shared by $\calp_N$.  We have $\sqrt{2} {\bf n} \cdot w = 3>1$.
   By construction, $\calp_1={\sf h}(\calp_N)$ is an octahedron intersecting $\calp_M$ in $(m_1, m_9, m_4)$.  For $w=\sfh(n_1) = (2+2\sqrt{2}, 0, -2-2\sqrt{2}, 4+4\sqrt{2})^T$ we have ${\bf n} \cdot w = 2+\sqrt{2}>1$.

With coplanarity and convex angles thus established, Proposition 6.1 of \cite{GHH} implies that $\Gamma_n.\bigcup\{\calp\in\cals\}$ implies the claim, and hence the result.\endproof

By construction $G_n$ is a subgroup of the symmetry group for $\mathcal{T}_n$.  We complete the proof that $G_n=\mathrm{Comm}(\Gamma_n)$ below, showing that it is the full symmetry group.

\begin{cor}\label{mutants' commensurator}  $G_n$ is the commensurator of $\Gamma_n$ and $O_n$ is the minimal orbifold quotient of $M_n$.  If $I \in \{ 0,1\}^{n+1}$, then $\text{Comm}(\Gamma_I)=G_n$ and $O_n$ is the minimal quotient of $M_I$.\end{cor}

\proof
Proposition \ref{commensurable mutants} implies $\text{Comm}(\Gamma_I)=\text{Comm}(\Gamma_n)$.  Take ${\sf x} \in \text{Comm}(\Gamma_n)$.  We want to show that ${\sf x} \in G_n$.  Recall that $\c^{-n} {\sf r} \c^n \in G_n$ exchanges $\calp_1$ and $\c^{-2n}\r\calp_1$.  Therefore the octahedral tiles of $\mathcal{T}_n$ lie in a single $G_n$-orbit, and we may assume that ${\sf x}$ fixes $\calp_1$.

Recall, for instance from Corollary \ref{M_SandG}, that $\calp_1$ is checkered and its face $A$ spanned by the vertices $0$, $1$, and $\infty$ is external, with $A = \calp_1\cap\calp_2$.  We have that ${\sf a}_0, {\sf b}_0 \in \text{Isom}(\calp_1) \cap G_n$.  The internal faces of $\calp_1$ are paired by elements of $\Gamma_S$, so every internal face of $\calp_1$ meets an octahedron in $\mathcal{T}_n$.  Since $\calp_2$ is a cuboctahedron, ${\sf x}(A)$ must be an external face of $\calp_1$.  

It follows immediately from the definitions of ${\sf a}_0$ and ${\sf b}_0$ that $\langle {\sf a}_0, {\sf b}_0 \rangle$ acts transitively on the external faces of $\calp_1$.  Hence after multiplying by an element of $\langle {\sf a}_0, {\sf b}_0 \rangle < G_n$, we may assume that ${\sf x}(A)=A$.  By construction it is clear that $G_n$ contains the stabilizer of $A$ in $\text{Isom}(\calp_1)$, so we have ${\sf x} \in G_n$ as desired.
\endproof

The second half of Theorem \ref{omnibus1.5} follows %by counting 
from the isometry classification below.

\begin{prop}\label{isometric mutants}  Suppose $I=(0,a_1 \ldots, a_{n-1},0)$ and $J=(0,b_1, \ldots , b_{n-1},0)$ are elements of $\{ 0, 1\}^{n+1}$.   $M_I$ is isometric to $M_J$ if and only if $J = I$ or $J = \bar{I}$. %(as defined in Lemma \ref{reflection isometry}).  
\end{prop}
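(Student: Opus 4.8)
The forward implication is immediate: if $J=I$ there is nothing to prove, and if $J=\bar I$ then Lemma \ref{reflection isometry} provides an (orientation-reversing) isometry $M_I\to M_J$. For the converse, suppose $\psi\co M_I\to M_J$ is an isometry and lift it to $\sfx\in\mathrm{Isom}(\mathbb{H}^3)$ with $\sfx\Gamma_I\sfx^{-1}=\Gamma_J$. By Proposition \ref{commensurable mutants} the groups $\Gamma_I$ and $\Gamma_J$ are each commensurable with $\Gamma_n$, hence with one another; since conjugation carries commensurators to commensurators, $\sfx\in\mathrm{Comm}(\Gamma_I)=G_n$ by Proposition \ref{mutants' commensurator}. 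As $\calt_n$ is $G_n$-invariant by construction, $\sfx$ preserves it, so $\psi$ carries the polyhedral decomposition $\calt_n/\Gamma_I$ of $M_I$ onto $\calt_n/\Gamma_J$, sending octahedral cells to octahedral cells and cuboctahedral cells to cuboctahedral cells.

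I would then read off the combinatorics of $\calt_n/\Gamma_I$ from Lemmas \ref{poly decomp} and \ref{tiling}: its cells are the images of $\calp_1$ and $\c^{-2n}\r\calp_1$ (each a single ideal octahedron) together with the $2n$ cuboctahedra $\c^{-k}\calp_2$, $0\le k\le 2n-1$, and a direct face count (the four external faces of $\calp_1$ glue to one adjacent cuboctahedron while its internal faces self-glue; the six square faces of $\calp_2$ self-glue while its eight triangular faces split $4+4$ between its two neighbors) shows that each octahedral cell abuts exactly one other cell and each cuboctahedral cell abuts exactly two. Hence the dual graph of $\calt_n/\Gamma_I$ is a path with the two octahedra as its endpoints. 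Since the only automorphisms of a path are the identity and the reversal, $\psi$ either preserves or reverses the linear order of the cells; in the reversing case I would replace $\psi$ by its composite with the order-reversing isometry $M_J\to M_{\bar J}$ of Lemma \ref{reflection isometry}, so it suffices to show that an \emph{order-preserving} isometry $M_I\to M_J$ forces $J=I$ (applied to $M_I\to M_{\bar J}$ this yields $\bar J=I$, i.e.\ $J=\bar I$).

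So assume $\psi\co M_I\to M_J$ is order-preserving. Then $\psi$ carries $C(\Gamma_S)\subset M_I$ isometrically onto $C(\Gamma_S)\subset M_J$, each block $C(\Gamma_T^{(i)})$ onto the corresponding block while preserving both of its boundary four-punctured spheres, and $C(\overline\Gamma_S)$ onto $C(\overline\Gamma_S)$. The plan is to prove $a_i=b_i$ for $1\le i\le n-1$ by induction on $i$ (the cases $i=0,n$ being vacuous since $a_0=a_n=b_0=b_n=0$, and in general reduced away by Lemma \ref{mutation extension}). By Proposition \ref{mutated geometric model}, the gluing of $M_I$ across $F^{(i)}$ differs from that of $M_n$ precisely by the mutation isometry $\phi_{\sfm_{a_i}^{(i)}}$ of $F^{(i)}$, which realizes the mapping class $(13)(24)$ when $a_i=1$ and is the identity when $a_i=0$. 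Compatibility of $\psi$ with the gluings across $F^{(i)}$ then forces $\phi_{\sfm_{b_i}^{(i)}}^{-1}\circ\phi_{\sfm_{a_i}^{(i)}}$ to coincide, as a mapping class of $F^{(i)}$, with a product of mapping classes induced by self-isometries of the two adjacent blocks $C(\Gamma_T^{(i)})$, $C(\Gamma_T^{(i+1)})$ that preserve each boundary component. Such mapping classes form a subgroup of $\mathrm{Mod}_{0,4}$ that does not contain $(13)(24)$ — this is exactly the assertion from the introduction that $(13)(24)$ does not extend over the closure of either complementary component of $S^{(i)}$ when $0<i<n$ — and so $a_i=b_i$, completing the induction.

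The main obstacle is this last step: one must make precise, and determine, the subgroup of $\mathrm{Mod}_{0,4}(F^{(i)})$ consisting of mapping classes induced by boundary-component-preserving self-isometries of $C(\Gamma_T)$, and verify that $(13)(24)$ is never among them. This is where the explicit structure of $C(\Gamma_T)$ — its reflective symmetry $\phi_{\c^{-2}\r}$ and the involution $\phi_\k$ of $C(\Gamma_{T_0})$ — enters, and it is the geometric heart of this half of Theorem \ref{omnibus1.5}.
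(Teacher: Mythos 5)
Your first half runs parallel to the paper's argument: lifting the isometry to $\sfx\in\mathrm{Comm}(\Gamma_I)=G_n$ via Propositions \ref{commensurable mutants} and \ref{mutants' commensurator}, observing that $\sfx$ preserves $\calt_n$ and hence the induced polyhedral decompositions, and using Lemma \ref{reflection isometry} to reduce to the ``order-preserving'' case is essentially the paper's reduction (there phrased as: $\sfx\calp_1$ is $\Gamma_I$-equivalent to $\calp_1$ or to $\sfq_n\c^{-2n}\r\calp_1$, and after composing with the reflection one may assume the former). The genuine gap is the last step of your induction. You assert that the mapping classes of $F^{(i)}$ induced by boundary-component-preserving self-isometries of the adjacent blocks form a subgroup of $\mathrm{Mod}_{0,4}$ not containing $(13)(24)$, citing the introduction's remark about hidden symmetries --- but that remark is a consequence of this very analysis, not an available input, and you yourself flag the determination of this subgroup as an unresolved ``main obstacle.'' Worse, the blanket claim is false for the end blocks: by Lemma \ref{mutation extension} the mutation $(13)(24)$ \emph{does} extend over $(B^3,S)$, so there is a self-isometry of $C(\Gamma_S)$ inducing $(13)(24)$ on $\partial C(\Gamma_S)$; your inductive step therefore needs a more careful formulation (the permutation induced on $\partial_+C(\Gamma_T^{(i)})$ is constrained by the one on $\partial_-C(\Gamma_T^{(i)})$ through the block, and it is exactly the symmetry group of $C(\Gamma_T)$ relative to its boundary that must be computed), and no such computation is given.

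The paper supplies precisely this missing content by a different, more algebraic route: after pinning down the conjugating element (an element of $G_n$ fixing $\calp_1$ and preserving the internal face-pairings of Corollary \ref{M_SandG} must be the identity, since the alternative $180$-degree rotation fails to respect the vertex identifications), the question reduces to showing $J\neq I\Rightarrow\Gamma_J\neq\Gamma_I$. That is done by the explicit face-pairing computation showing $\Gamma_{T_0}^{\sfm_1}\neq\Gamma_{T_0}$ --- indeed that no group containing $\Gamma_{T_0}$ with embedded convex core can also contain $\Gamma_{T_0}^{\sfm_1}$ --- applied at the first index where $I$ and $J$ differ. Some concrete argument of this kind (e.g.\ exhibiting, as the paper does, an element such as $(\f\g)^{-1}\h\f\g$ whose face-pairing behavior is incompatible with $\Gamma_{T_0}^{\sfm_1}$) is what your proposal still needs in order to close the inductive step; without it the proof is incomplete.
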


\begin{remark}
We have assumed that the first and last entries of $I$ and $J$ are all zero to make the proposition easier to state.  By Lemma \ref{mutation extension}, changing the first or last entry of either $I$ or $J$ to ``1'' yields another isometric manifold. 
\end{remark}

\begin{proof} Any two distinct tiles of $\calt_n$ which meet the interior of the fundamental domain $P_n$ from Lemma \ref{Gamma_n commensurator} have distinct $G_n$-orbits.  On the other hand, any tile that does not, is contained in the orbit of one that does.  It follows that $G_n$ has a unique orbit of octahedral tiles (that of $\calp_1$) and exactly $n$ of cuboctahedral tiles, those of $\calp_2,\c^{-1}(\calp_2),\hdots,\c^{-n+1}(\calp_2)$, since $P_n$ has an open subset in each of these and is contained in their union.

The planes $\c^{-i}(\calh)$ meet the interior of $P_n$ for $i\in\{0,1,\hdots,n-1\}$, so their $G_n$-orbits are also distinct.  We note that the $G_n$-orbit of $\calh$ is distinct from that of $i\calh$ since $\calh$ contains points of the interior of $P_n$ but $i\calh$ contains a face.  Since $i\calh\cap P_n$ is contained in an internal face of $\calp_1$ and $\calh\cap P_n$ in an external face, it follows that the  $G_n$-orbit of an internal face of $\calp_1$ is distinct from that of an external face.

For $I$ as in the hypothesis, it follows as in Lemma \ref{poly decomp} that the members of: 
$$\cals_I=\{\calp_1,\calp_2,\c^{-1}\calp_2,\sfq_2\c^{-2}\calp_2,\hdots,\sfq_n\c^{-2n+1}\calp_2,\sfq_{n}\c^{-2n}\r\calp_1\}$$
project to a polyhedral decomposition of $M_I$, where the $\sfq_i$ are as defined in Proposition \ref{mutated algebraic model}. (In particular, $\sfq_1=1$ and $\sfq_{n+1}=\sfq_n$ since $I$ has first and last entries equal to $0$.)  This is because $\sfq_i(\c^{-2(i-1)}\calp_2\cup\c^{-2i+1}\calp_2)$ projects to a decomposition of $C\left((\Gamma_T^{(i)})^{\sfq_i})\right)$ for each $i$ (see the proof of Lemma \ref{poly decomp}), and $\phi_{\sfm_1}$ preserves the triangulation $\Delta_{\calf}$ of Lemma \ref{M_Sboundary}.  Therefore $\cals_I$ is in bijective correspondence with the set of $\Gamma_I$-orbits of the top-dimensional tiles of $\calt_n$.

Clearly $\sfq_i(\c^{-2(i-1)}\calp_2)$ is $G_n$-equivalent to $\c^{-2(i-1)}\calp_2$ for each $i$ between $1$ and $n$, and $\sfq_i(\c^{-2i+1}\calp_2)$ to $\c^{-2i+1}\calp_2$.  The reflection $\sfu$ through $\calh-n\cdot i\sqrt{2}$, also in $G_n$, exchanges $\calp_1$ with $\c^{-2n}\r\calp_1$ and $\c^{-i}\calp_2$ with $\c^{-2n+i+1}\calp_2$ for each $i$ between $0$ and $2n-1$.  It follows that each $G_n$-orbit of top-dimensional tiles of $\calt_n$ is the union of exactly two $\Gamma_I$-orbits.  

Now suppose for some $J$ as in the hypothesis that there is an isometry $M_J\to M_I$.  This lifts to $\sfx\in\mathrm{Isom}(\mathbb{H}^3)$ with the property that $\Gamma_J^{\sfx}=\Gamma_I$.  Since $\Gamma_I$ and $\Gamma_J$ are each finite-index subgroups of $G_n$ they are commensurable, by definition $\sfx\in\mathrm{Comm}(\Gamma_J) = G_n$.  By the above, $\sfx\calp_1$ is $\Gamma_I$-equivalent to one of $\calp_1$ or $\sfq_{n}\c^{-2n}\r\calp_1$.  

The reflection isometry of Lemma \ref{reflection isometry} determines $\rho\in\mathrm{Isom}(\mathbb{H}^3)$ that conjugates $\Gamma_I$ to $\Gamma_{\bar{I}}$ and takes $\sfq_{n}\c^{-2n}\r\calp_1$ into the $\Gamma_{\bar{I}}$-orbit of $\calp_1$, so replacing $\sfx$ by $\rho\sfx$ (and $I$ by $\bar{I}$) if necessary, we may ensure that there exists $\gamma\in\Gamma_I$ with $\gamma\sfx\calp_1=\calp_1$.   By the above $\gamma\sfx$ takes internal faces of $\calp_1$ to internal faces.  Because it conjugates $\Gamma_J$ to $\Gamma_I$ and $\calp_1$ is contained in a fundamental domain for each, $\gamma\sfx$ preserves the internal face-pairings induced by projections to $M_I$ and $M_J$, respectively. 

It follows from Proposition \ref{mutated algebraic model} that each of these is the pairing described in Lemma \ref{M_SandG}.  The combinatorial description there implies that $\gamma\sfx$ preserves the pairs $\{X_1,X_2\}$ and $\{X_3,X_4\}$ (cf.~Figure \ref{fig:idealoct}), so it is either the identity or $180$-degree rotation in the axis joining the ideal vertex $0$ (the ``intersection'' $X_1\cap X_2$ on the sphere at infinity) to $1+i = X_3\cap X_4$.  However the latter map does not preserve equivalence classes of the ideal vertices of $X_3$ and $X_4$ under face pairing, so $\gamma\sfx=1$.  It follows that $\sfx\in\Gamma_I$, so $\Gamma_J = \Gamma_I$.

We claim however that if $J\neq I$ then $\Gamma_J\neq\Gamma_I$.  The key fact here is that $\Gamma_{T_0}^{\sfm_1} \neq\Gamma_{T_0}$: for instance the face $(\f\g)^{-1}(Y_2')$ of $(\f\g)^{-1}(\calp_2)$ is taken by $(\f\g)^{-1}\h\f\g\in\Gamma_{T_0}$ to $(\f\g)^{-1}(Y_3')$ (see the proof of Lemma \ref{M_T0andH0}), but $(\f\g)^{-1}(Y_2') = \sfm_1\g^{-1}(Y_1')$ is taken by $\sfm_1\g^{-1}\sfm_1^{-1}\in\Gamma_{T_0}^{\sfm_1}$ to $\sfm_1\g^{-1}(Y_3)$.  (This description follows from the fact that $\sfm_1$ preserves the polygon $\calf$ from Lemma \ref{M_Sboundary}, acting on it as a rotation exchanging $\g^{-1}(E)$ with $(\f\g)^{-1}(D)$.)  In fact this further implies that no group $\Gamma$ containing $\Gamma_{T_0}$ also contains $\Gamma_{T_0}^{\sfm_1}$, as long as the natural map $C(\Gamma_{T_0})\to C(\Gamma)$ is embedding.

If $J\neq I$ then for the minimal $i$ such that $b_i\neq a_i$ we have $\Gamma_{T_0}^{\sfw}<\Gamma_I$ and $\left(\Gamma_{T_0}^{\sfm_1}\right)^{\sfw}<\Gamma_J$, where $\sfw = \sfq_i\c^{-2(i-1)}$ (see Proposition \ref{mutated algebraic model}).  The claim, and hence also the result, thus follows from Proposition \ref{mutated algebraic model} and Lemma \ref{M_TandH}.\end{proof}

%%%%%%%%%%%%%%%%%%%%%%%%%%%%%%%%%%
\section{Incommensurable mutants}\label{sec:in-commensurablo!}
%%%%%%%%%%%%%%%%%%%%%%%%%%%%%%%%%%

Lemma \ref{mutators} might lead one to suspect that the mutations $(13)(24)$ and $(12)(34)$ of $F^{(0)}$ act very differently at the level of Kleinian groups.  Indeed, it follows from Proposition \ref{nonintegral trace} below, together with Proposition \ref{prop:integraltraces}, that $S^3 - L_n$ is incommensurable with the complement of any link obtained from it by the mutation $(12)(34)$ along a subcollection of the $S^{(i)}$.  In fact, we consider it likely that no two such mutants are commensurable unless they are isometric.

We lack the tools to fully prove this assertion --- mutants are notoriously difficult to distinguish --- but in this section we will describe large families of mutants whose members have different cusp parameters and are mutually incommensurable.  We begin with traces, however.  By \cite{NeR1} the $M_I$ all have trace field $\mathbb{Q}(i,\sqrt{2})$.

\begin{prop}\label{nonintegral trace} For fixed $n$ and any $I = (a_0,\hdots,a_n) \in \{0,1,2\}^{n+1}$ such that $a_i = 2$ for some $i$, $\Gamma_I$ has a nonintegral trace.  \end{prop}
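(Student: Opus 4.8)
The plan is to exhibit an explicit element of $\Gamma_I$ whose trace is not an algebraic integer, by working with the algebraic model $\Gamma_I = \langle \Gamma_S, (\Gamma_T^{(1)})^{\sfq_1},\ldots,(\Gamma_T^{(n)})^{\sfq_n},(\overline{\Gamma}_S^{\c^{-2n}})^{\sfq_{n+1}}\rangle$ from Proposition \ref{mutated algebraic model}. Since conjugation does not change traces, and since the trace ring is a commensurability invariant by the remarks after Proposition \ref{prop:integraltraces}, it suffices to find one nonintegral trace; by Lemma \ref{reflection isometry} (which gives an isometry $M_I\to M_{\bar I}$) and Lemma \ref{mutation extension} (which lets us absorb a $1$ in the first or last slot) we may assume the first index $i$ with $a_i=2$ satisfies $0<i<n$, though in fact the argument should localize entirely to a neighborhood of $S^{(i)}$. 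The first step is to reduce to understanding a single mutated gluing: by Corollary \ref{splitting}, splitting $M_n$ along $F^{(i)}$ realizes $\Gamma_n$ as a free product $\Gamma_-^{(i)} *_{\Lambda^{(i)}} \Gamma_+^{(i)}$ amalgamated over the $4$-punctured-sphere group $\Lambda^{(i)}$, and the mutation by $(12)(34)$ replaces this with $\Gamma_-^{(i)} *_{\Lambda^{(i)}} (\Gamma_+^{(i)})^{\sfm_2^{(i)}}$ via Lemma \ref{sliced Maskit}. So I want a word in $\Gamma_-^{(i)}$ and $(\Gamma_+^{(i)})^{\sfm_2^{(i)}}$, each factor parabolic, whose product has fractional trace.

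Concretely, after conjugating by $\c^{2i}$ I may take $i=0$ and work with $\Gamma_S *_\Lambda (\Gamma_+^{(0)})^{\sfm_2}$, where $\sfm_2 = \begin{pmatrix} 0 & \sqrt5 \\ -1/\sqrt5 & 0\end{pmatrix}$. The key point is that $\sfm_2$ has entries in $\mathbb{Q}(\sqrt5)$, which is not contained in the trace field $\mathbb{Q}(i,\sqrt2)$ of the $M_I$ (indeed $\sqrt5\notin\mathbb{Q}(i,\sqrt2)$, since $\mathbb{Q}(i,\sqrt2)$ has no real quadratic subfield other than $\mathbb{Q}(\sqrt2)$). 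The second step is to compute the trace of a carefully chosen element: take $\gamma\in\Gamma_S$ not lying in $\Lambda$ (so that $\gamma$ moves the hyperplane $\calh$), e.g. $\gamma=\t$ or $\sfb_0$, and form $\gamma\cdot\p^{\sfm_2}$ where $\p$ is one of the parabolic generators $\p_1,\p_2,\p_3$ of $\Lambda$. Since $\sfm_2$ normalizes $\Lambda$, $\p^{\sfm_2}\in\Lambda\subset\Gamma_+^{(0)}$, but I should instead conjugate an element of $\Gamma_+^{(0)}\setminus\Lambda$: the cleanest choice is $\gamma_+\in\Gamma_T\setminus\Lambda$ with explicit matrix (e.g. using $\f,\g,\h$ from Corollary \ref{M_T0andH0}), and then $\tr(\gamma_-\cdot\gamma_+^{\sfm_2})$ will generically involve $\sqrt5$ multiplied by something nonzero, hence cannot be an algebraic integer in any ring where the only algebraic integers have traces in $\mathbb{Q}(i,\sqrt2)$ — more precisely, if every trace were an algebraic integer, the trace ring would be a finitely generated $\mathbb{Z}$-module, hence a finitely generated abelian group, contained in the ring of integers of the number field it generates; producing a trace of the form $a+b\sqrt5$ with $b\neq 0$ when combined with $\mathbb{Q}(i,\sqrt2)$ traces then forces the trace field to strictly contain $\mathbb{Q}(i,\sqrt2)$, contradicting \cite{NeR1}. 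Actually the slicker route: show directly that the chosen trace has a denominator, i.e. a prime of the relevant number field at which it is not integral, by examining the matrix entries $2$-adically or $5$-adically.

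The main obstacle is the bookkeeping of the conjugating elements $\sfq_i$ and verifying that the bad term genuinely survives — i.e. that no miraculous cancellation makes the $\sqrt5$-coefficient vanish. I expect to handle this by choosing $\gamma_-\in\Gamma_S$ and $\gamma_+\in\Gamma_T$ with small integer (or $\mathbb{Z}[i]$, $\mathbb{Z}[i\sqrt2]$) entries so that the product $\gamma_-\,\sfm_2\,\gamma_+\,\sfm_2^{-1}$ can be multiplied out by hand; because $\sfm_2$ is anti-diagonal with entries $\sqrt5,-1/\sqrt5$, conjugation by $\sfm_2$ sends $\begin{pmatrix}a&b\\c&d\end{pmatrix}$ to $\begin{pmatrix}d & -5c \\ -b/5 & a\end{pmatrix}$, so the off-diagonal entries of $\gamma_+$ get scaled by $5^{\pm1}$ while the diagonal is swapped — this makes it transparent which products pick up a factor of $5$ in the denominator. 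Then $\tr(\gamma_-\gamma_+^{\sfm_2})$ will have a $5$ in a denominator provided one off-diagonal entry of $\gamma_+$ is prime to $5$, which is easy to arrange (e.g. $\f$ has off-diagonal entry $\pm1$). Once such an element is found, its trace is visibly nonintegral, completing the proof; the reduction steps (absorbing $1$'s, conjugating to put the mutation at index $0$, and using Corollary \ref{splitting}/Lemma \ref{sliced Maskit} to see the relevant subgroup sits inside $\Gamma_I$) are routine given the machinery already developed.
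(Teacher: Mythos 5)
Your plan is, at bottom, the paper's own proof: by Proposition \ref{mutated algebraic model}, $\Gamma_I$ contains (up to a common conjugation by the $\sfq_i$'s, which does not affect traces) a product $\gamma_-\cdot\sfm_2\gamma_+\sfm_2^{-1}$ with $\gamma_-,\gamma_+$ drawn from the tangle groups on the two sides of the mutated sphere, and the paper simply writes such products down explicitly --- $\t\sfm_2\g\sfm_2^{-1}$ when the mutation is at the first sphere, a conjugate of $\bar{\g}\sfm_2\bar{\t}\sfm_2^{-1}$ at the last, and $\bar{\h}(\sfm_2\h\sfm_2^{-1})$ in all remaining cases --- then multiplies out the matrices and observes the denominator $5$.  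So the mechanism you isolate (conjugation by the anti-diagonal $\sfm_2$ rescales off-diagonal entries by $5^{\pm1}$) is exactly the right one, and your remarks about the $\sfq_i$ bookkeeping being harmless are fine.

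However, your verification step as written fails.  First, the $\sqrt5$ digression is empty: your own formula $\sfm_2\left(\begin{smallmatrix}a&b\\c&d\end{smallmatrix}\right)\sfm_2^{-1}=\left(\begin{smallmatrix}d&-5c\\-b/5&a\end{smallmatrix}\right)$ shows the $\sqrt5$'s cancel, so no trace in $\Gamma_I$ ever involves $\sqrt5$ and no contradiction with the trace field $\mathbb{Q}(i,\sqrt2)$ is available.  Second, and more seriously, writing $\gamma_-=\left(\begin{smallmatrix}p&q\\r&s\end{smallmatrix}\right)$ one gets $\tr\bigl(\gamma_-\gamma_+^{\sfm_2}\bigr)=pd+sa-5rc-\frac{qb}{5}$, so the only possibly nonintegral term is the product of the two \emph{upper-right} entries divided by $5$; the correct condition is that $qb$ not be divisible by $5$ (equivalently, nonzero modulo some prime above $5$), not merely that ``one off-diagonal entry of $\gamma_+$ is prime to $5$.''  Your proposed witness $\gamma_+=\f$ fails outright: its upper-right entry is $0$, so the trace is integral for every choice of $\gamma_-$; worse, $\f=\s=\p_1^{-1}$ lies in $\Lambda$, which $\sfm_2$ normalizes (indeed $\p_1^{\sfm_2}=\p_2$), so $\gamma_-\f^{\sfm_2}$ lies in $\Gamma_n$, already known to have integral traces by Proposition \ref{prop:integraltraces}.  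The gap is repaired by taking $\gamma_+$ outside $\Lambda$ with upper-right entry prime to $5$ and checking the pairing, e.g.\ the paper's pairs $(\t,\g)$ and $(\bar\h,\h)$, and then actually carrying out the multiplication.  (A smaller quibble: Lemma \ref{mutation extension} only absorbs the mutation $(13)(24)$ in an end slot, so it cannot be used to move an entry $a_i=2$ away from $i=0$ or $i=n$; those end cases need separate, though entirely analogous, witnesses, as in the paper.)
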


\begin{proof}  Suppose $I = (a_0,\hdots,a_n) \in \{0,1,2\}^{n+1}$ satisfies the hypothesis, and fix $i_0$ with $a_{i_0} = 2$.  By Proposition \ref{mutated algebraic model}, if $i_0 = 1$ then $\Gamma_I$ contains the matrix below.
\[ \t\sfm_2\g\sfm_2^{-1}\ = \ \begin{pmatrix} \frac{1}{5}(-2+12\sqrt{2}+31i+4i\sqrt{2}) & -2+\sqrt{2}+21i+2i\sqrt{2} \\ \frac{1}{5}(-1+7\sqrt{2}+16i+2i\sqrt{2}) & -1+\sqrt{2}+11i+i\sqrt{2} \end{pmatrix} \]
(Recall from Corollary \ref{indexed mutators} that $\sfm_2^{(0)} = \sfm_2$.)  The trace of $\t\sfm_2\g\sfm_2^{-1}$ is not an algebraic integer, since the ring of integers of $\mathbb{Q}(i,\sqrt{2})$ is $\mathbb{Z}[i,\sqrt{2}]$.  If $i_0 = n$ then $\Gamma_I$ contains a conjugate of 
$\bar{\g}\sfm_2\bar{\t}\sfm_2^{-1}=(\sfm_2\bar{g})^{-1}\left(\overline{\t\sfm_2\g\sfm_2^{-1}}\right)\sfm_2\bar{g}$.
%whose trace has minimal polynomial $$  25x^4 - 60 x^3 - 106 x^2 + 836 x + 13673  $$

In all other cases, Proposition \ref{mutated algebraic model} implies that $\Gamma_I$ contains an element with the same trace as the matrix below.
$$ \bar{\h}(\sfm_2\h\sfm_2^{-1}) = \begin{pmatrix} -2i\sqrt{2} & -3+i\sqrt{2} \\ -3-i\sqrt{2} & 3i\sqrt{2} \end{pmatrix}\begin{pmatrix} -3i\sqrt{2} & 15-5i\sqrt{2} \\ \frac{3+i\sqrt{2}}{5} & 2i\sqrt{2}  \end{pmatrix}, $$
$$   = \begin{pmatrix} -71/5 & -20 -30i\sqrt{2} \\ \frac{18}{5}(-2 + 3i\sqrt{2}) & 55 \end{pmatrix}  $$
The trace of this matrix is evidently not an algebraic integer.  
\end{proof}

For fixed $n$ and any $I \in \{0,1,2\}^{n+1}$, since $M_n$ and $M_I$ decompose along totally geodesic surfaces into isometric pieces, they have the same volume.  (In fact, \cite[Theorem 1.3]{Ru} asserts that hyperbolic volume is always invariant under mutation.)  It would follow from the classical ``Dehn invariant sufficiency" conjecture that any two hyperbolic manifolds with the same volume are scissors congruent (again see \cite{Ne}, for instance).  In our situation we will verify this explicitly.

\begin{prop} \label{mutators Bloch}  For fixed $n$ and any $I \in \{0,1,2\}^{n+1}$, $M_n$ and $M_I$ have the same Bloch invariant.  \end{prop}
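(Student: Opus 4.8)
The plan is to exhibit an ideal triangulation of $M_I$ whose cross‑ratio parameters agree, term by term, with those of the triangulation of $M_n$ from Lemma \ref{triangulations}. By Proposition \ref{mutated geometric model}, $M_I = C(\Gamma_S)\cup C(\Gamma_T^{(1)})\cup\cdots\cup C(\Gamma_T^{(n)})\cup C(\overline\Gamma_S)$ is assembled from exactly the same pieces as $M_n$, with each gluing along an $F^{(i)}$ for which $a_i\neq 0$ replaced by its composition with the mutation isometry $\phi_{\sfm_{a_i}^{(i)}}$. Each piece occurs in $M_I$ as an orientation‑preserving isometric copy of the corresponding piece of $M_n$ (the conjugating elements $\sfq_i$ of Proposition \ref{mutated algebraic model} lie in $\mathrm{PSL}_2(\mathbb{C})$), so the cross‑ratio parameters of the tetrahedra inside a given piece, and hence its contribution to the Bloch invariant, are unchanged; in particular the mirror piece $C(\overline\Gamma_S)$ still contributes the term $-\bar\beta_1$, by the mirror‑image computation in the proof of Lemma \ref{triangulations}. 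Thus it suffices to produce an ideal triangulation of $M_I$ that restricts on each piece to the triangulation of Lemma \ref{triangulations}, and the only thing to arrange is that the induced triangulations match on the glued four‑punctured spheres.

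I would first treat the case in which each $\phi_{\sfm_j^{(i)}}$ preserves the triangulation $\Delta_F$. By Corollary \ref{indexed mutators} and conjugation by powers of $\c$, this reduces to showing that $\phi_{\sfm_1}$ and $\phi_{\sfm_2}$ preserve $\Delta_F$ on $F^{(0)}$. For $\sfm_1$ this is already noted in the proof of Proposition \ref{isometric mutants}, since $\sfm_1$ preserves the fundamental polygon $\calf$. For $\sfm_2$ one regards the preimage of $\Delta_F$ in $\calh$ as the $\Lambda$‑invariant sub‑tessellation of the Farey tessellation spanned by the $\Lambda$‑orbit of the edges of $\calf$; since $\sfm_2$ normalizes $\Lambda$ and acts on $\calh$ by $z\mapsto -5/z$, it is enough to verify by direct computation that $\sfm_2$ carries each edge of $\calf$ into this orbit. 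When this holds, the triangulation of Lemma \ref{triangulations} descends verbatim to $M_I$, and $\beta(M_I)=\beta_1-\bar\beta_1+n\beta_2=\beta(M_n)$ exactly.

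The robust argument, which sidesteps that computation, inserts a layered block of ideal tetrahedra between the pieces meeting along each mutated $F^{(i)}$. Using that any two ideal triangulations of a four‑punctured sphere are joined by a sequence of diagonal flips, and that each flip is realized by attaching one or two ideal tetrahedra, one builds a block $L_i\cong F^{(i)}\times[0,1]$ interpolating from $\Delta_F$ to $\phi_{\sfm_j^{(i)}}(\Delta_F)$; inserting these blocks yields an honest ideal triangulation of $M_I$ with $\beta(M_I)=\beta(M_n)+\sum_i\beta(L_i)$, so it remains to see $\beta(L_i)=0$ (modulo torsion, which by the discussion following Borel's theorem is all that matters). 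The key point is that $\phi_{\sfm_j^{(i)}}$ is an orientation‑preserving isometric involution of $F^{(i)}$ (\cite{Ru}): stacking $\phi_{\sfm_j^{(i)}}(L_i)$ on top of $L_i$ gives a layered block interpolating from $\Delta_F$ back to $\Delta_F$, whose Bloch invariant vanishes by the flip‑invariance of the Bloch invariant (\cite{NeYang}), while $\beta\big(\phi_{\sfm_j^{(i)}}(L_i)\big)=\beta(L_i)$ because an orientation‑preserving isometry preserves cross‑ratios; hence $2\beta(L_i)=0$. The main obstacle is precisely this last step — checking that a layered block joining an ideal triangulation of a punctured surface to itself contributes trivially to the Bloch invariant — which is the flip‑invariance statement underlying \cite{NeYang} adapted to a manifold with boundary; I expect it to follow from the standard fact that a flip and its inverse cancel in $\mathcal{P}(\mathbb{C})$.
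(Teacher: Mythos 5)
Your overall strategy matches the paper's: the piecewise decomposition of $M_I$ contributes the same terms as for $M_n$, and the only issue is the mismatch of induced triangulations on the glued four-punctured spheres, which you repair by inserting flat interpolating tetrahedra. But two points need attention. First, your ``preferred'' case is vacuous: $\phi_{\sfm_2}$ does \emph{not} preserve $\Delta_F$. From the description of $\Delta_F$ (six edges joining the cusps in the pattern $1$--$2$, $3$--$4$, two edges $2$--$4$, and one edge from each of $2$ and $4$ to itself), the image under $\phi_{\sfm_2}$ of the edge joining cusp $2$ to itself joins cusp $1$ to itself, and no such edge exists in $\Delta_F$; equivalently, $\sfm_2$ acts by $z\mapsto -5/z$ and does not lie in $\mathrm{PGL}_2(\mathbb{Z})$, so it does not preserve the Farey tessellation. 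So the ``robust argument'' is the only argument, and there the main obstacle you flag is a genuine gap, in two ways. (a) The vanishing of the Bloch contribution of a layered block joining an ideal triangulation of a punctured surface to itself is not the ``flip followed by its inverse'' cancellation: the double $L_i\cup\phi_{\sfm_j^{(i)}}(L_i)$ is an arbitrary closed loop in the flip graph, and to see its contribution vanish you must invoke the pentagon (five-term) and commuting-square relations, i.e.\ a relative version of triangulation-independence; this is precisely the nontrivial content one would have to prove (it is essentially \cite[Theorem 2.13]{Neumann}, which the paper deliberately avoids relying on). (b) Even granting that step, your doubling trick only yields $2\beta(L_i)=0$, i.e.\ agreement of $\beta(M_I)$ with $\beta(M_n)$ up to $2$-torsion, which is weaker than the stated equality (though enough for the commensurability applications via the Borel regulator).

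The paper closes the gap by a short direct computation instead of a general invariance principle: the specific interpolation from $\Delta_F$ to $\phi_{\sfm_2}(\Delta_F)$ uses six flat tetrahedra per mutated gluing, each with cross-ratio parameter $[2]$ because the relevant edges come from the Farey picture, and relation (\ref{permute}) gives $[2]=-\left[\tfrac{2}{2-1}\right]=-[2]$, so $2\cdot[2]=0$ and an even number of such tetrahedra contributes exactly nothing. If you want to salvage your symmetry argument, you would need to either prove the relative flip-invariance statement in (a) or, more economically, just compute the parameters of your interpolating tetrahedra as the paper does and observe they are $2$-torsion elements occurring in pairs.
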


\begin{proof}  Recall from Lemma \ref{M_Sboundary} that $F^{(0)}$ inherits a triangulation $\Delta_F$ from the fundamental domain $\mathcal{F}$ for the action of $\Lambda$ on $\mathbf{H}$ pictured in Figure \ref{Sface}.  From the figure, one finds that $\Delta_F$ has six edges, each a geodesic arc joining cusps of $F$.  For example, the geodesic joining $0$ and $\infty$ projects to an edge which joins cusp $1$ to cusp $2$.  Of the other five edges, one joins $3$ to $4$, two join $2$ to $4$, and for each of $2$ and $4$ there is an edge joining it to itself.

Since $\sfm_1 \in \mathrm{PSL}_2(\mathbb{Z})$ it preserves the Farey tesselation of $\calh$, which restricts  on $\mathcal{F}$ to the triangulation pictured in Figure \ref{Sface}.  Therefore $\phi_{\sfm_1}$ preserves $\Delta_F$.  On the other hand, since $\phi_{\sfm_2}$ exchanges $1$ with $2$ and $3$ with $4$ it does not preserve $\Delta_F$.  For instance, if $e$ is the edge joining $2$ to itself then $\phi_{\sfm_2}(e)$ joins $1$ to itself.  

Fix $I = (a_0,\hdots,a_n) \in \{0,1,2\}^{n+1}$ and suppose $a_i=2$ for some $0<i<n$.  The gluing map $C(\Gamma_T^{(i)})\to C(\Gamma_T^{(i+1)})$ factors through $\phi_{\sfm_2^{(i)}}\co F^{(i)}\to F^{(i)}$ by Proposition \ref{mutated geometric model}.  This is conjugate to $\phi_{\sfm_2}$ by the inverse of $\phi_{i+1}$ from Definitions \ref{geometric pieces}, so the gluing does not preserve the triangulations of $F^{(i)}$ induced by its intersections with external faces of the cuboctahedra on either side (cf. Lemma \ref{M_Sboundary}(3)).  The cases $i=0$ and $i=n$ are analogous, and show that if $a_i = 2$ for any $i$ then the division of $M_I$ into octahedra and cuboctahedra is not a true ideal polyhedral decomposition.

It is possible to rectify this by gluing ``flat'' tetrahedra between copies of $C(\Gamma_S)$ and/or $C(\Gamma_T)$ joined by the mutation $\phi_{\sfm_2}$.  If $\calt$ is a flat tetrahedron glued to, say, $C(\Gamma_S)$ along two adjacent triangles in $\partial C(\Gamma_S)$, then $C(\Gamma_S) \cup \calt$ is homeomorphic to $C(\Gamma_S)$, but in the induced triangulation of the boundary, the edge separating the triangles along which $\calt$ is glued has been replaced by an edge joining their two opposite vertices.  For a more thorough exposition, see \cite[\S 4]{NeYang}.

\begin{figure}[ht]
\begin{center}
\input{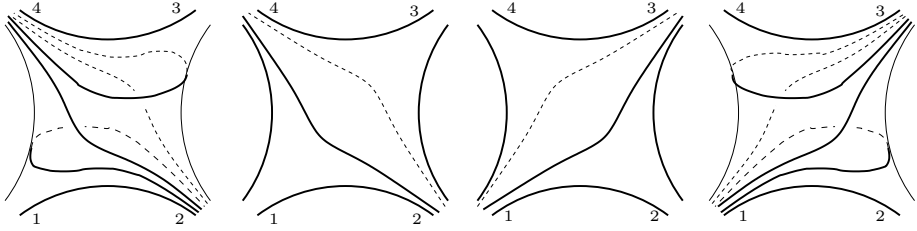}
\end{center}
\caption{Interpolating between $\Delta_F$, on the left, and $\phi_{\sfm_2}(\Delta_F)$.}
\label{fig:interpolate}
\end{figure}

Figure \ref{fig:interpolate} illustrates a process by which $\Delta_F$ may be changed to its image under $\phi_{\sfm_2}$ by a sequence of moves on edges.  The edges of $\Delta_F$ are pictured on the left in bold.  Moving left-to-right, at each stage two edges are replaced by edges transverse to them and disjoint from the remaining edges.  After three such moves, the original triangulation has been changed to its image under $\phi_{\sfm_2}$.

Now suppose $I = (a_1,\hdots,a_n) \in \{0,1,2\}^{n+1}$.  For each $i< n$ such that $a_i =2$, replace $C(\Gamma_T^{(i+1)})$ by its union with $6$ flat tetrahedra, glued successively along $\partial_- C(\Gamma_T^{(i+1)})$ to realize the change of triangulations illustrated in Figure \ref{fig:interpolate}.  The result is homeomorphic to $C(\Gamma_T^{(i+1)})$, since adding a flat tetrahedron does not change the homeomorphism type, but the gluing induced by $\phi_{\sfm_2^{i+1}}$ now preserves the triangulation.  The case $i=n$ is similar, but $C(\overline{\Gamma}_S)$ is changed instead.

It follows from the above that the Bloch invariant $\beta(M_I)$ may be calculated using the resulting polyhedral decomposition.  This differs from the original by the addition of the cross ratio parameters of the flat tetrahedra.  Each of these is equal to $2$, since the triangulation of $F$ is a projection of the Farey tessellation of $\mathbf{H}$.  But in the Bloch group, $2 \cdot [2] = 0$ is a consequence of the relation $[z] = \left[ \frac{z}{z-1} \right]$.  Since the number of flat tetrahedra is a multiple of $6$, the sum of their cross ratio parameters contributes nothing to the Bloch invariant.  \end{proof}

The proposition below tracks the change of cusp parameters under mutation.  To simplify our task, we restrict our attention to complements of links obtained by mutating only with $(12)(34)$ along a subcollection of the $S^{(i)}$ and note in passing that since those obtained by mutating only with $(13)(24)$ are commensurable with $M_n$, their cusp parameters are $\mathrm{PGL}_2(\mathbb{Q})$-equivalent to those of $M_n$.

\begin{prop} \label{M_Imoduli}  For $I = (t_0,t_1,\hdots,t_n) \in \{0,2\}^{n+1}$ and $j \in \{0,1,\hdots,n\}$, define 
$$c_j = \sum_{k = 0}^j \frac{t_k}{2}\ \ (\mathrm{mod}\ 2).$$  
Let $T_1$ be a cross section of the cusp of $M_I$ such that $T_1 \cap C(\Gamma_S) =p_S(A_1)$ (as defined in Lemma \ref{lem:M_S_moduli}), and let $T_2$ be a cross section of the cusp of $M_I$ with $T_2 \cap C(\Gamma_S) = p_S(A_2)$.  Up to the action of $\mathrm{PGL}_2(\mathbb{Q})$, their complex moduli are:  \begin{align*}
 & m(T_1) = i\left[ 1 + 2\sum_{j = 1}^{n} \frac{2\sqrt{2}}{5^{c_{j-1}}} + \frac{1}{5^{c_n}} \right]  \\
 & m(T_2) =  i\left[ \frac{1}{5} + 2\sum_{j = 1}^{n} \frac{2\sqrt{2}}{5^{(1-c_{j-1})}} + \frac{1}{5^{(1- c_n)}} \right]
\end{align*}  \end{prop}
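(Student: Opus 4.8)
The plan is to follow the strategy used to prove Proposition~\ref{M_nmoduli}, adapting its bookkeeping to the mutations $\phi_{\sfm_2^{(i)}}$. As there, I would decompose each cusp cross section $T_k$ by its intersections with the totally geodesic spheres $F^{(i)}$ into a cyclic union of Euclidean annuli with geodesic boundary, one or two contributed by each of the pieces $C(\Gamma_S)$, $C(\Gamma_T^{(1)}),\ldots,C(\Gamma_T^{(n)})$, $C(\overline{\Gamma}_S)$ that make up $M_I$ (Proposition~\ref{mutated geometric model}). The new ingredient is tracking which annulus of each piece appears: since $\phi_{\sfm_2^{(i)}}$ realizes $(12)(34)$ (Corollary~\ref{indexed mutators}), crossing $S^{(i)}$ when $t_i=2$ interchanges the pair of cusps $\{\p_1,\p_3\}$ with $\{\p_2,\p_4\}$, and an induction on the number of mutations encountered shows that the cusp $T_1$ of $M_I$ containing $p_S(A_1)$ meets $C(\Gamma_T^{(j)})$ in annuli conjugate to $DB_1^{(j)},DB_3^{(j)}$ when $c_{j-1}$ is even and to $DB_2^{(j)},DB_4^{(j)}$ when $c_{j-1}$ is odd, and meets $C(\overline{\Gamma}_S)$ in an annulus isometric to $p_S(A_1)$ or to $p_S(A_2)$ according to the parity of $c_n$; the analogous statement with the two cusp pairs interchanged holds for $T_2$. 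The reflective symmetry of $M_T$ used in Proposition~\ref{M_nmoduli} shows the ``return'' annuli have the same moduli as the ``outgoing'' ones, so feeding the values $1,\ 1/5,\ 2\sqrt2,\ 2\sqrt2/5$ from Lemmas~\ref{lem:M_S_moduli} and~\ref{real M_T moduli} into Lemma~\ref{lem:annulus_sum} yields exactly the claimed formulas for $|\Im m(T_1)|$ and $|\Im m(T_2)|$.

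It then remains to show $\Re m(T_k)$ vanishes up to $\mathrm{PGL}_2(\mathbb{Q})$; as in Proposition~\ref{M_nmoduli} this reduces to producing, for a suitable longitude $\beta_k$ of $T_k$, a closed geodesic perpendicular to $\beta_k$ meeting it in finitely many points, after which Lemma~\ref{mod_decomp} applied in a cyclic cover dual to that geodesic, together with Lemma~\ref{commensurable moduli}, gives a purely imaginary representative, necessarily a rational multiple of the imaginary part computed above and hence $\mathrm{PGL}_2(\mathbb{Q})$-equivalent to it. In Proposition~\ref{M_nmoduli} the perpendicular geodesic was built from the edges of the induced triangulations $\Delta_S$ and $\Delta^{\pm}_T$, using that the gluing maps respect these triangulations. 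Here the obstruction is that, although $\phi_{\sfm_1}$ preserves $\Delta_F$ (which is why the $(13)(24)$-mutants are handled by the remark preceding the proposition, being commensurable with $M_n$), the map $\phi_{\sfm_2}$ does not (see the proof of Proposition~\ref{mutators Bloch}). I would get around this exactly as in that proof: at each $i$ with $t_i=2$, insert the six interpolating flat tetrahedra of Figure~\ref{fig:interpolate} between the adjacent pieces, producing a genuine ideal triangulation of a manifold homeomorphic to $M_I$. A flat tetrahedron meets each of its cusps in a degenerate annulus of zero width, so this changes none of the moduli computed above while repairing the triangulations along the mutated spheres, and the argument of Proposition~\ref{M_nmoduli} then applies: the ``vertical'' triangulation edges form a finite subgraph of $T_k$ that contains a cycle crossing $\beta_k$ perpendicularly, meeting $\beta_1$ once and $\beta_2$ in boundedly many points.

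The main difficulty is precisely this last step: checking that, with the interpolating flat tetrahedra in place, the perpendicular triangulation arcs close up into a geodesic (equivalently, that the perpendicular foliation of $T_k$ has a closed leaf, i.e.\ $\tau_{\beta_k}\in\mathbb{Q}$) and counting how many times it crosses the chosen longitude. Everything else — the cusp-tracking combinatorics governed by the parities $c_j$ and the annulus-sum computation of the imaginary part — is a routine, if slightly lengthy, adaptation of the corresponding parts of the proof of Proposition~\ref{M_nmoduli}.
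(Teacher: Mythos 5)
Your cusp-tracking argument and the computation of the imaginary parts are sound and match the paper's proof: the induction on the parities $c_{j-1}$, the identification of $T_1\cap C(\Gamma_T^{(j)})$ with $DB_1^{(j)}\cup DB_3^{(j)}$ or $DB_2^{(j)}\cup DB_4^{(j)}$, and the application of Lemmas \ref{lem:M_S_moduli}, \ref{real M_T moduli}, and \ref{lem:annulus_sum} are exactly what is done there. The gap is in the real part, and you have flagged it yourself: you never actually prove that the perpendicular arcs close up, which is the one genuinely new point of this proposition compared with Proposition \ref{M_nmoduli}. Moreover, the mechanism you propose for this step does not do the work. Inserting the six flat tetrahedra of Figure \ref{fig:interpolate} is a purely combinatorial device (it is what makes the Bloch-invariant computation of Proposition \ref{mutators Bloch} go through); it moves no geometric data. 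The arcs $db_k^{(j)}$ and their analogues are projections of the \emph{internal} edges of the horoball rectangles $h_v\cap\calp_2$, and whether their endpoints on the two sides of a mutated sphere are matched by the gluing isometry $\phi_{\sfm_2^{(i)}}$ is a geometric question that the flat tetrahedra leave untouched: after inserting them the induced boundary triangulations agree combinatorially, but the relevant edges on the $\phi_{\sfm_2}$-side are still the $\phi_{\sfm_2}$-images of the old ones, so you are back to asking whether $\phi_{\sfm_2}$ carries arc endpoints to arc endpoints.

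The paper resolves this differently and concretely. Instead of single arcs it uses the full collections of five evenly spaced perpendicular arcs in each annulus ($\mathcal{A}_1$, $\mathcal{A}_2$, $D\mathcal{B}_k^{(j)}$), and observes that although $\phi_{\sfm_2}$ does not preserve $\Delta_T$, it \emph{does} preserve the single triangulation edge joining cusps $1$ and $2$ (the top edge in Figure \ref{fig:interpolate}), exchanging its endpoints. Hence one endpoint of one arc is matched correctly across a mutated sphere, and the even spacing of the five-arc collections (the gluing is an isometry) forces the entire endpoint sets to match: $l_j(\partial_+ D\mathcal{B}_1^{(j)})=\partial_- D\mathcal{B}_2^{(j+1)}$ and so on. The union of all these arcs is then a disjoint union of closed geodesics meeting each $F^{(j)}\cap T_k$ perpendicularly in at most five points, and the passage to the cyclic cover dual to one component, together with Lemmas \ref{mod_decomp} and \ref{commensurable moduli}, gives the purely imaginary representative up to $\mathrm{PGL}_2(\mathbb{Q})$. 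To complete your proof you would need to supply an argument of this kind (or some other proof that the twist parameter is rational); as written, the step you call the main difficulty is precisely what is missing.
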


\begin{proof}  To simplify notation, we will identify $A_k$ with $p_S(A_k)$ and view $A_k \subset C(\Gamma_S)$ for $k=1,2$.  Recall the decomposition of $M_I$, along the surfaces $F^{(j)}$, into a union of isometric copies of $C(\Gamma_S)$ and $C(\Gamma_T)$ as described in Proposition \ref{mutated geometric model}:
$$ C(\Gamma_S) \cup C(\Gamma_T^{(1)}) \cup \hdots \cup C(\Gamma_T^{(n)}) \cup C(\overline{\Gamma}_S) \to M_I  $$
We will denote by $l_j$ the gluing map supplied by Proposition \ref{mutated geometric model}, taking $\partial_+ C(\Gamma_T^{(j)})$ to $\partial_- C(\Gamma_T^{(j+1)})$ when $1 \leq j < n$.  The map $l_0$ takes $\partial C(\Gamma_S)$ to $\partial_- C(\Gamma_{T}^{(1)})$, and $l_n \co \partial_+ C(\Gamma_T^{(n)}) \to \partial C(\overline{\Gamma}_S)$.  

For $1\leq j\leq n$ and $k\in\{1,2,3,4\}$ we take $DB^{(j)}_k=\phi_j\circ p_T(DB_k)$ as in the proof of Proposition \ref{M_nmoduli}.  $DB_k$ is defined above Lemma \ref{real M_T moduli}, which implies that $DB^{(j)}_k$ is an annular cross section of the cusp of $C(\Gamma_T^{(j)})$ corresponding to $\p_k^{\c^{-2j}}$.  Each of $T_1$ and $T_2$ meets each of the $C(\Gamma_T^{(j)})$ in a collection of cusp cross sections parallel to a subcollection of the $DB_k^{(j)}$, $k \in \{ 1,2,3,4 \}$.  Similarly, each of $T_1 \cap C(\overline{\Gamma}_S)$ and $T_2 \cap C(\overline{\Gamma}_S)$ is parallel to one of the  cross sections $\overline{A}_1$ or $\overline{A}_2$.

By the proof of Proposition \ref{M_nmoduli}, for  $1\leq j<n$, if $t_j= 0$ then $l_j = \iota_+^{(j)} (\iota_-^{(j)})^{-1}$ takes $\partial_+ DB_k^{(j)}$ to $\partial_- DB_k^{(j+1)}$ for each $k\in\{1,2,3,4\}$.  However if $t_j = 2$ then $l_j$ acts on the indices $k$ by the permutation $(12)(34)$, since it uses $\phi_{\sfm_2}^{(j)}$.  Likewise if $t_0 = 0$ then $l_0(\partial A_k)=\partial_- DB_k^{(1)} \sqcup \partial_- DB_{k+2}^{(1)}$ for $k = 1,2$ by the proof of Proposition \ref{M_nmoduli}; hence if $t_0 = 2$, then $l_0(\partial A_k) = \partial_- DB_{3-k}^{(1)} \sqcup \partial_- DB_{5-k}^{(1)}$.  A similar dichotomy holds for $l_n$.

\begin{remark}  The definitions of the annular cusp cross-sections in Lemmas \ref{lem:M_S_moduli} and \ref{lem:M_T_moduli} depended on a particular collection of horospheres centered at the ideal vertices of $\calp_1$ and $\calp_2$.  These give rise to a particular collection of horospherical cross-sections of the cusps of $F^{(0)}$, which is not preserved by $\phi_{\sfm_2}$.  

It is more accurate to say, for example, that when $t_j=2$ and $1\leq j<n$, $l_j(\partial_+ DB^{(j)}_1)$ is a cusp cross-section of $\partial_- C(\Gamma_T^{(j+1)})$ {\it parallel} (and therefore similar) to $\partial_- DB^{(j+1)}_2$.  Since the modulus is unaffected by similarities, we have largely ignored this distinction above and will continue to do so below.  \end{remark}

\begin{claim}  For each $j \in \{ 1, \ldots , n\}$, 
$$T_1 \cap C(\Gamma_T^{(j)}) = \left\{ \begin{array}{ll} DB_1^{(j)} \cup DB_3^{(j)} & \mbox{if}\  c_{j-1} = 0  \\
  DB_2^{(j)} \cup DB_4^{(j)} & \mbox{if}\  c_{j-1} = 1. \end{array} \right.  $$  
Furthermore, $T_1 \cap C( \overline{\Gamma}_S) = \overline{A}_1$ if $c_n = 0$ and $\overline{A}_2$ if $c_n =1$.  \end{claim}

\begin{proof}[Proof of claim]  This is proved by induction on $j$.  In the base case $j=1$, since $c_0 = t_0/2$ and $T_1 \cap C(\Gamma_S) = A_1$, the conclusion in this case follows directly from the dichotomy in the behavior of $l_0$ recorded above the claim.

Suppose now that the claim holds for some $j<n$, and note that therefore $T_1 \cap M_T^{(j)}$ has components $DB^{(j)}_k$ and $DB^{(j)}_{k'}$, where $k,k' \in \{1,2,3,4\}$ have the same parity, which is opposite that of $c_{j-1}$.  By definition, $c_j$ has the opposite parity from $c_{j-1}$ if and only if $t_j = 2$.  Writing $l_j(\partial_+DB^{(j)}_k) = \partial_-DB^{(j+1)}_{k''}$, the above implies that $k''$ has parity opposite that of $k$ if and only if $t_j = 2$.  A similar assertion holds for $k'$, and the claim follows for $j+1$.

By induction, the claim holds for each $j\leq n$.  The final statement in the claim follows by an argument that mimics the one used in the inductive step.  \end{proof}

The moduli of $A_1$, $A_2$, $\overline{A}_1$, and $\overline{A}_2$ are described in Lemma \ref{lem:M_S_moduli}, and those of the $DB_j^{(i)}$ are described in Lemma \ref{real M_T moduli}.  Using these descriptions and Lemma \ref{lem:annulus_sum}, the claim above shows that the imaginary part of $m(T_1)$ is as described in the statement of the proposition.  The description of the imaginary part of $m(T_2)$ follows similarly.

Now recall the definitions of the arcs $a_1$ and $db_k^{(j)}$ for $1 \leq j \leq n$ and $k = 1,3$, and the collections of arcs $\mathcal{A}_2$ and $D\mathcal{B}_k^{(j)}$ for $1 \leq j \leq n$ and $k = 2,4$, from the proof of Proposition \ref{M_nmoduli}.  For our purposes here, we additionally define $\mathcal{A}_1$ to be a collection of five arcs evenly spaced around $A_1$, each perpendicular to $\partial A_1$ at each of its endpoints, such that $a_1 \in \mathcal{A}_1$.  We analogously define $D\mathcal{B}_k^{(j)}$ as a collection of evenly spaced arcs in $DB_k^{(j)}$ containing $db_k^{(j)}$ for $1 \leq j \leq n$ and $k = 1,3$.

\begin{claim}   If $t_0=0$ then $l_0(\partial \mathcal{A}_k) = \partial_- D\mathcal{B}_{k}^{(1)} \cup \partial_- D\mathcal{B}_{k+2}^{(1)}$ for $k=1,2$, and if $t_0 =2$ then $l_0(\partial \mathcal{A}_k) = \partial_- D\mathcal{B}_{3-k}^{(1)} \cup \partial_- D\mathcal{B}_{5-k}^{(1)}$.  Similarly, for $1 \leq j \leq n-1$,  \begin{align*}  
   l_j(\partial_+ D\mathcal{B}_k^{(j)}) & = \partial_- D\mathcal{B}_{k}^{(j+1)}\  
    \mbox{for}\ k=1,2,3,4,\ \mbox{if}\ t_j = 0, \\
   l_j(\partial_+ D\mathcal{B}_k^{(j)}) & = \left\{  \begin{array}{ll}
    \partial_- D\mathcal{B}_{3-k}^{(j+1)} & \mbox{for}\ k=1,2 \\
    \partial_- D\mathcal{B}_{7-k}^{(j+1)} & \mbox{for}\ k=3,4  \end{array} \right. ,   \mbox{if}\ t_j=2.
\end{align*}
Also, if $t_n=0$ then $l_n^{-1}(\partial \overline{\mathcal{A}}_k) = \partial_+ D\mathcal{B}_{k}^{(n)} \cup \partial_+ D\mathcal{B}_{k+2}^{(n)}$ for $k=1,2$, and if $t_n=2$ then $l_n^{-1}(\partial \overline{\mathcal{A}}_k) = \partial_+ D\mathcal{B}_{3-k}^{(n)} \cup \partial_- D\mathcal{B}_{5-k}^{(n)}$.  \end{claim}

In the discussion above the first claim, we recorded the analogous dichotomy to that of the claim above for the action of the gluing maps $l_j$ on boundaries of annular cusp cross-sections.  The substance of this claim is thus that the gluing maps preserve arc endpoints.

\begin{proof}[Proof of claim.]  Suppose first that $t_j= 0$, so by its definition $l_j = \iota^{(j)}_+(\iota^{(j)}_-)^{-1}$.  The proof of Proposition \ref{M_nmoduli} directly addresses the cases of $\mathcal{A}_2$, $\overline{\mathcal{A}}_2$, and $D\mathcal{B}_k^{(j)}$, where $k = 2$ or $4$.  In the remaining case of $\mathcal{A}_1$, the definition implies that $\partial \mathcal{A}_1$ consists of ten points, five evenly spaced around each component of $\partial A_1$, with each such collection containing a point of $\partial a_1$.  Also by definition, $\partial_- D\mathcal{B}_k^{(1)}$ is a collection of five points spaced evenly around $\partial_- DB_k^{(1)}$, one of which is $\partial_- db_k^{(1)}$ for $k=1,3$.  By the proof of Proposition \ref{M_nmoduli}, $\iota^{(0)}_+(\iota^{(0)}_-)^{-1}$ takes $\partial a_1$ to $\partial_- db_1^{(1)} \cup \partial_- db_3^{(1)}$; hence the entire collection $\partial \mathcal{A}_1$ is taken to $\partial_- D\mathcal{B}_1^{(1)} \cup \partial_- D\mathcal{B}_3^{(1)}$ since $\iota^{(0)}_+(\iota^{(0)}_-)^{-1}$ is an isometry.  The remaining cases when $t_j = 0$, $j \geq 1$, follow similarly.

To illustrate the case $t_j = 2$ we focus on the subcase $1\leq j<n$.  When $t_0=2$, $l_j$ takes $\partial_+ DB_1^{(j)}$ to $\partial_- DB_2^{(j+1)}$, for example.  The crucial observation here is that $l_0(\partial_+ db_1^{(j)})$ is in $\partial_- D\calb_2^{(j+1)}$.  This holds because by definition, $\partial_+ db_1^{(j)}$ is a point in the edge of the triangulation $\Delta_T$ which exits the ideal vertex $1$.  (This is the top edge in Figure \ref{fig:interpolate}.)  Although $\phi_{\sfm_2}$ does not preserve $\Delta_T$, it  preserves this edge, exchanging its endpoints at $1$ and $2$.  Since $\partial_- D\calb_2^{(j+1)}$ has a point in each edge which exits $2$, it contains $\phi_{\sfm_2}(\partial_+ db_1^{(j)})$.  Since the points of $\partial_+ D\calb_1^{(j)}$ are evenly spaced around $\partial_+ DB_1^{(j)}$ and the same is true for $\partial_- D\calb_2^{(j+1)}$, it follows that $l_0(\partial_+ D\calb_1^{(j)}) = \partial_- D\mathcal{B}_{2}^{(j+1)}$.

Since $\phi_{\sfm_2}$ takes the edge of $\Delta_T$ to itself and exchanges its endpoints, $l_0(\partial_+ db_3^{(j)}) \in \partial_- D\calb_4^{(j+1)}$ in this case.  Then it follows from ``even-spacedness'' that $l_0(\partial_+ D\calb_3^{(j)}) = \partial_- D\mathcal{B}_4^{(j+1)}$.  The same argument implies that $\partial_- db_1^{(j+1)} \in l_0(\partial_+ D\calb_2^{(j+1)})$ and therefore that $l_0(\partial_+ D\calb_2^{(j)}) = \partial_- D\mathcal{B}_1^{(j+1)}$, and similarly that $l_0(\partial_+ D\calb_4^{(j)}) = \partial_- D\mathcal{B}_3^{(j+1)}$.  The same sequence of observations, applied to $\partial \mathcal{A}_k$ and $\partial \overline{\mathcal{A}}_k$, $k=1,2$, completes the claim.
\end{proof}

The second claim implies that the set
$$ \mathcal{A}_1\ \cup\ \mathcal{A}_2\ \cup\ \bigcup_{j,k} D\mathcal{B}_k^{(j)}\ \cup\ \overline{\mathcal{A}}_1\ \cup\ \overline{\mathcal{A}}_2  $$
consists of a disjoint union of closed geodesics, some in $T_1$ and some in $T_2$, each meeting any of the geodesics $F^{(j)} \cap T_1$ or $F^{(j)} \cap T_2$ perpendicularly in up to five points.  That $m(T_1)$ and $m(T_2)$ have real part equal to $0$ (up to the action of $\mathrm{PGL}_2(\mathbb{Q})$) now follows as in the proof of Proposition \ref{M_nmoduli}.  \end{proof}

Proposition \ref{M_Imoduli} allows us to describe arbitrarily large subfamilies of the $M_I$ which have $\mathrm{PGL}_2(\mathbb{Q})$-inequivalent cusp parameters and hence are not commensurable.

\begin{cor} \label{different mods}  For $0 \leq k \leq n$, let $I_k = (t_0, t_1,\hdots,t_n)$ be defined by $t_i = 0$ for $i \neq k$, and $t_k = 2$.  The cusp parameters of $M_{I_k}$ are not $\mathrm{PGL}_2(\mathbb{Q})$--equivalent to those of $M_{I_{k'}}$ for $k \neq k'$, when both are less than $(n+1)/2$.  \end{cor}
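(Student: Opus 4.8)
The plan is to feed the sequences $I_k$ into Proposition~\ref{M_Imoduli} and then reduce everything to the elementary arithmetic of Lemma~\ref{PGL action}. For $I=I_k$ the only nonzero entry is $t_k=2$, so the quantities of Proposition~\ref{M_Imoduli} are $c_j=0$ for $0\le j<k$ and $c_j=1$ for $k\le j\le n$; in particular $c_n=1$. Substituting into the displayed formulas, the sum in $m(T_1)$ has $k$ terms equal to $2\sqrt2$ and $n-k$ terms equal to $\tfrac{2\sqrt2}{5}$ (and the reverse holds for the sum in $m(T_2)$), and after simplification one finds, up to the action of $\mathrm{PGL}_2(\mathbb{Q})$,
\[ m(T_1) = \frac{i}{5}\bigl(6 + 4(4k+n)\sqrt{2}\bigr), \qquad m(T_2) = \frac{i}{5}\bigl(6 + 4(5n-4k)\sqrt{2}\bigr). \]
The point of this computation is that both moduli have the shape $i\bigl(\tfrac{6}{5}+\tfrac{4}{5}d\sqrt{2}\bigr)$ with $d\in\mathbb{Z}$ and the \emph{same} nonzero rational part $\tfrac{6}{5}$, which is exactly the situation Lemma~\ref{PGL action} addresses.

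Next I would argue by contradiction. Suppose $M_{I_k}$ and $M_{I_{k'}}$ have the same cusp parameters, i.e., the same two-element set of $\mathrm{PGL}_2(\mathbb{Q})$-orbits $\{[m(T_1)],[m(T_2)]\}$ (recall that mutation does not change the number of cusps, so $M_{I_k}$ and $M_{I_{k'}}$ each have the two cusps described in Proposition~\ref{M_Imoduli}). Then the orbit $[m(T_1)]$ of $M_{I_k}$ equals $[m(T_1)]$ or $[m(T_2)]$ of $M_{I_{k'}}$, so Lemma~\ref{PGL action} (applied with $m=\tfrac{6}{5}\ne 0$) forces the $\sqrt2$-coefficients to agree up to sign: either $4k+n=\pm(4k'+n)$ or $4k+n=\pm(5n-4k')$. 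It then remains to show each of these four equations forces $k=k'$ under the standing hypothesis $0\le k,k'<(n+1)/2$. Here $4k+n=4k'+n$ is immediate; $4k+n=-(4k'+n)$ is impossible since $n\ge 1$ makes the left side positive and the right side negative; $4k+n=5n-4k'$ rearranges to $k+k'=n$; and $4k+n=-(5n-4k')$ rearranges to $3n=2(k'-k)$.

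The last two cases are the only place the range restriction is used. From $k<(n+1)/2$ and integrality one gets $2k\le n$, hence $k\le n/2$, and likewise $k'\le n/2$; so $k+k'=n$ forces $k=k'=n/2$. And $3n=2(k'-k)$ gives $k'\ge k'-k=3n/2>(n+1)/2$ for $n\ge 1$, contradicting $k'<(n+1)/2$. In every case $k=k'$, so for $k\ne k'$ the cusp parameters of $M_{I_k}$ and $M_{I_{k'}}$ are $\mathrm{PGL}_2(\mathbb{Q})$-inequivalent, which is the assertion.

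I expect the main (really, only) obstacle to be bookkeeping: normalizing the two moduli into the common form $i(\tfrac{6}{5}+\tfrac{4}{5}d\sqrt2)$ so that Lemma~\ref{PGL action} applies verbatim, and then keeping the sign ambiguity from that lemma coupled to the inequality $k,k'<(n+1)/2$ rather than treating the two separately. There is no conceptual difficulty beyond Proposition~\ref{M_Imoduli}, which does the geometric work.
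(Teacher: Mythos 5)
Your proposal is correct and follows essentially the same route as the paper: substitute $I_k$ into Proposition \ref{M_Imoduli} to get $m(T_1)=i\bigl[\tfrac{6}{5}+\tfrac{4}{5}(n+4k)\sqrt{2}\bigr]$ and $m(T_2)=i\bigl[\tfrac{6}{5}+\tfrac{4}{5}(5n-4k)\sqrt{2}\bigr]$, then apply Lemma \ref{PGL action} and rule out $k\neq k'$ by the same elementary case analysis using $k,k'\le n/2$. The only (harmless) difference is that you spell out the negative-sign cases from Lemma \ref{PGL action}, which the paper dismisses implicitly since all the $\sqrt{2}$-coefficients involved are positive.
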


\begin{proof}  By Proposition \ref{M_Imoduli}, the cusps of $M_{I_k}$ have moduli described below.  \begin{align*}
 &  m(T_1) = i \left[ \frac{6}{5} + \frac{4}{5}\left(n + 4k\right) \sqrt{2} \right] &
 &  m(T_2) = i \left[ \frac{6}{5} + \frac{4}{5}\left(5n - 4k\right) \sqrt{2} \right]  \end{align*}
Since $m(T_1)$ and $m(T_2)$ are both of the form described in Lemma \ref{PGL action} for any $k$, if the cusp parameters of $M_{I_k}$ are equivalent to those of $M_{I_{k'}}$, then one of the two cases below holds.  \begin{align*}  
   n + 4k = n + 4k' & & \mbox{and} & & 5n - 4k = 5n - 4k' \\
   n + 4k = 5n - 4k' && \mbox{and} & & 5n - 4k = n + 4k' \end{align*}
In the first case, $k = k'$, and in the second, $k' = n - k$.  Thus as long as $k$ and $k' < (n+1)/2$ are unequal, their cusp parameters are as well.  \end{proof}

There are also arbitrarily large subfamilies which share cusp parameters, even among complements of links obtained by mutating only with $(12)(34)$.  We do not know if these are commensurable, although we suspect they are not.

\begin{cor} \label{same moduli}  For $0 \leq k < n$, let $I_k = (t_0,\hdots, t_n)$ be defined by $t_i = 0$ for $i \neq k, k+1$, and $t_k = t_{k+1} = 2$.  For each $k$, the cusp parameters of $M_{I_k}$ are \begin{align*}
  m(T_1) = i \left[ 2 + 4\left(n - \frac{4}{5}\right)\sqrt{2} \right] & & m(T_2) = i \left[ \frac{2}{5} + \frac{4}{5}\left( n+4 \right)\sqrt{2} \right], \end{align*}  
up to the action of $\mathrm{PGL}_2(\mathbb{Q})$.  \end{cor}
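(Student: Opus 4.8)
The plan is to obtain this as an immediate evaluation of Proposition \ref{M_Imoduli}; there is no new geometry involved, only bookkeeping of the sums appearing there for the specific tuples $I_k$. First I would write down the sequence $c_j$ attached to $I_k$. Since $I_k$ has $t_k=t_{k+1}=2$ and all other entries equal to $0$, the partial sums $\sum_{i=0}^{j} t_i/2$ are $0$ for $j<k$, equal $1$ at $j=k$, and equal $2\equiv 0\pmod 2$ for every $j\geq k+1$. Hence $c_j=1$ exactly when $j=k$, and $c_j=0$ otherwise. In particular $c_n=0$ (this is where the hypothesis $k<n$ enters, since it forces $k\neq n$), and as $j$ ranges over $\{1,\dots,n\}$ the index $j-1$ equals $k$ for exactly one value of $j$, namely $j=k+1$, which lies in range because $1\leq k+1\leq n$.

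Next I would substitute into the two formulas of Proposition \ref{M_Imoduli}. For $m(T_1)$ the sum $\sum_{j=1}^{n} 2\sqrt 2/5^{c_{j-1}}$ has $n-1$ summands equal to $2\sqrt 2$ and a single summand equal to $2\sqrt 2/5$, so it equals $2\sqrt 2\bigl(n-1+\frac{1}{5}\bigr)=2\sqrt 2\bigl(n-\frac{4}{5}\bigr)$; combined with the endpoint contribution $1+1/5^{c_n}=2$ this gives $m(T_1)=i\bigl[2+4\bigl(n-\frac{4}{5}\bigr)\sqrt 2\bigr]$ up to the action of $\mathrm{PGL}_2(\mathbb Q)$. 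For $m(T_2)$ the same count applies after replacing each $c_{j-1}$ by $1-c_{j-1}$ and $c_n$ by $1-c_n$: the sum becomes $(n-1)\bigl(2\sqrt 2/5\bigr)+2\sqrt 2=2\sqrt 2(n+4)/5$, and the endpoint contribution is $1/5+1/5^{1-c_n}=2/5$, yielding $m(T_2)=i\bigl[\frac{2}{5}+\frac{4}{5}(n+4)\sqrt 2\bigr]$.

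Since Proposition \ref{M_Imoduli} already records these moduli only up to $\mathrm{PGL}_2(\mathbb Q)$, this is exactly the assertion of the corollary. There is no genuine obstacle here: the computation is routine, and the only point requiring any care is verifying that the unique index at which $c_j$ is nonzero actually contributes to the relevant sum, which is precisely what $0\leq k<n$ guarantees.
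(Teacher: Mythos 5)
Your computation is correct and is exactly how the paper intends the corollary to be obtained: the paper states it without a separate proof as an immediate substitution into Proposition \ref{M_Imoduli}, and your bookkeeping of the $c_j$ (equal to $1$ only at $j=k$, with $c_n=0$ since $k<n$) and the resulting sums reproduces the stated moduli. Nothing is missing.
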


Corollaries \ref{different mods} and \ref{same moduli} prove parts (2) and (3), respectively, of Theorem \ref{omnibus2}.

%%%%%%%%%%%%%%%%%%%%%%%%%%%
\appendix
%%%%%%%%%%%%%%%%%%%%%%%%

%%%%%%%%%%%%%%%%
\section{Proof of Lemma \ref{convexcore}}  \label{appendix:convexcore}

Following Morgan \cite{Mo}, we define a \textit{pared manifold} to be a pair $(M,P)$, where $M$ is a compact, orientable, irreducible 3-manifold with nonempty boundary which is not a 3-ball, and $P \subseteq \partial M$ is the union of a collection of disjoint incompressible annuli and tori satisfying the following properties: \begin{itemize}

\item  Every noncyclic abelian subgroup of $\pi_1 M$ is conjugate into the fundamental group of a component of $P$.

\item  Every map $\phi\co (S^1\times I, S^1\times \partial I) \rightarrow (M,P)$ which induces an injection on fundamental groups is homotopic as a map of pairs to a map $\psi$ such that $\psi(S^1\times I) \subset P$.

\end{itemize}

This definition describes the topology of the compact manifold obtained by truncating the cusps of the convex core of a geometrically finite hyperbolic 3-manifold by open horoball neighborhoods.  Indeed, Corollary 6.10 of \cite{Mo} asserts that if $(M,P)$ is obtained in this way, where $P$ consists of the collection of boundaries of the truncating horoball neighborhoods, then $(M,P)$ is a pared manifold.

Lemma \ref{convexcore} from the body of this paper asserts that if $(M,P)$ has the pared homotopy type of a geometrically finite hyperbolic manifold $\mathbb{H}^3/\Gamma$ where $\Gamma$ is not Fuchsian and $\partial C(\Gamma)$ is totally geodesic, then $M-P$ is homeomorphic to $C(\Gamma)$.  The key point of the proof is that the geometric conditions on $\Gamma$ ensure that $(M,P)$ is an acylindrical pared manifold.  Then Johannson's Theorem \cite{Jo}, that pared homotopy equivalences between acylindrical pared manifolds are homotopic to pared homeomorphisms, applies.  We expand on this below.

It is worth noting that Lemma \ref{convexcore} fails in more general circumstances.  Canary-McCullough give examples of this phenomenon in \cite{CM}, where for instance they describe homotopy equivalent non-Fuchsian geometrically finite manifolds with incompressible convex core boundary which are not homeomorphic (Example 1.4.5).  Their memoir \cite{CM} is devoted to understanding the ways in which homotopy equivalences of hyperbolic 3-manifolds can fail to be homotopic to homeomorphisms, and Lemma \ref{convexcore} follows quickly from results therein.

The treatment of Canary-McCullough itself uses the theory of {\it characteristic submanifolds} of manifolds with \textit{boundary pattern} developed in \cite{Jo}.  The characteristic submanifold of a manifold with boundary pattern is a maximal collection of disjoint codimension--zero submanifolds, each an interval bundle or Seifert--fibered space embedded reasonably with respect to the boundary pattern.  Rather than attempting to establish all of the notation necessary to define this formally, we refer the interested reader to \cite{Jo} and \cite{CM}.  Here we simply transcribe the relevant theorem of \cite{CM}, which strongly restricts the topology of the characteristic submanifold of a pared manifold with boundary pattern determined by the pared locus.

For the purposes of Lemma \ref{convexcore} we exclude from consideration certain pared manifolds which never arise from convex cores of geometrically finite hyperbolic 3-manifolds.  We say $(M,P)$ is \textit{elementary} if it is homeomorphic to one of $(T^2\times I, T^2\times\{0\})$, $(A^2 \times I,A^2 \times \{0\})$, or $(A^2 \times I ,\emptyset)$, where $T^2$ and $A^2$ denote the torus and annulus, respectively;  otherwise $(M,P)$ is nonelementary.  Define $\partial_0 M := \overline{M-P}$.  We say an annulus properly embedded in $M-P$ is \textit{essential} in $(M,P)$ if it is incompressible and boundary--incompressible in $M-P$.  For a codimension--0 submanifold $V$ embedded in $M$, we denote by $\mathrm{Fr}(V)$ the \textit{frontier} of $V$ (that is, its topological boundary in $M$), and note that $\mathrm{Fr}(V) = \overline{\partial V - (V \cap \partial M)}$.  With notation thus established, the following theorem combines the definition of the characteristic submanifold with Theorem 5.3.4 of \cite{CM}.

\begin{thm1} Let $(M,P)$ be a nonelementary pared manifold with $\partial_0 M$ incompressible.   Select the fibering of the characteristic submanifold so that no component is an $I$--bundle over an annulus or M\"obius band.  \begin{enumerate}

\item  Suppose $V$ is a component of the characteristic submanifold which is an $I$--bundle over a surface $B$.  Then each component of the associated $\partial I$--bundle is contained in $\partial_0 M$, each component of the associated $I$--bundle over $\partial B$ is either a component of $P$ or a properly embedded essential annulus, and $B$ has negative Euler characteristic.

\item  A Seifert fibered component $V$ of the characteristic submanifold is homeomorphic either to $T^2 \times I$ or to a solid torus.  If $V$ is $T^2 \times I$ then one component of $T^2\times \partial I$ lies in $P$ and the other components of $V \cap \partial M$ are annuli in $\partial_0 M$.  If $V$ is a solid torus, then $V \cap \partial M$ has at least one component, each an annulus either containing a component of $P$ or contained in $\partial_0 M$.  In either case, each component of the frontier $\mathrm{Fr}(V)$ of $V$ in $M$ is a properly embedded essential annulus.  \end{enumerate} 

The characteristic submanifold contains regular neighborhoods of all components of $P$. \label{CM} \end{thm1}

The key claim in the proof of Lemma \ref{convexcore} is a further restriction on the characteristic submanifold of $(M,P)$, in the case that $M$ is obtained from the convex core of a non-Fuchsian geometrically finite manifold with totally geodesic convex core boundary by removing horoball neighborhoods of the cusps.  $P$ is the union of the boundaries of these neighborhoods.

\begin{claim}  $(M,P)$ as above is nonelementary, and $\partial_0 M$ is incompressible.  The characteristic submanifold of $(M, P)$ consists only of (Seifert fibered) regular neighborhoods of the components of $P$, each of whose boundary has a unique component of intersection with $\partial M$.  
\end{claim}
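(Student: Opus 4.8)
The plan is to verify the three hypotheses needed to invoke Theorem~\ref{CM}, then use the totally geodesic boundary to rule out every interval-bundle and Seifert-fibered piece that is not a neighborhood of $P$. First I would check nonelementarity and incompressibility of $\partial_0 M$. Since $\Gamma$ is non-Fuchsian and geometrically finite with totally geodesic convex core boundary, $C(\Gamma)$ is a compact-cored hyperbolic manifold whose boundary $\partial C(\Gamma)$ is a disjoint union of closed totally geodesic surfaces, each of negative Euler characteristic (a totally geodesic surface in $\mathbb{H}^3/\Gamma$ cannot be a sphere or torus, since $\Gamma$ is discrete and the induced metric on the surface is hyperbolic). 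Truncating the cusps by horoball neighborhoods gives $(M,P)$ with $\partial_0 M = \overline{M - P}$ carrying this hyperbolic metric, hence $\partial_0 M$ is incompressible and $(M,P)$ is not one of the three elementary models. Then Theorem~\ref{CM} applies.

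Next, the heart of the argument: showing no interval bundle or solid-torus or $T^2\times I$ piece occurs except neighborhoods of $P$. The geometric input is that $\partial_0 M$ is totally geodesic, which forces $(M,P)$ to be \emph{acylindrical} in the sense that there is no essential (incompressible, boundary-incompressible) annulus properly embedded in $M - P$. Indeed, such an annulus would be homotopic rel boundary into $\partial_0 M$ by a standard geodesic/convexity argument (an essential annulus with both boundary circles on a totally geodesic boundary, lifted to $\mathbb{H}^3$, would have to lie on a single geodesic plane, contradicting essentiality) --- more carefully, its core curve would be a closed geodesic on $\partial_0 M$ whose two ``copies'' bound, which cannot happen on a hyperbolic surface. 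Granting this, I consult the structure theorem: by Theorem~\ref{CM}(1), any interval-bundle component $V$ over a surface $B$ with $\chi(B)<0$ has its $I$-bundle-over-$\partial B$ pieces being components of $P$ or essential annuli; the latter are excluded by acylindricity, so all of $\partial B$ maps into $P$. But then the $\partial I$-bundle (contained in $\partial_0 M$) would be a totally geodesic surface with boundary isotopic to curves in the cusps, which is impossible for a closed $\partial_0 M$, and in any case a nontrivial $I$-bundle over $B$ with $\chi(B)<0$ would contain essential annuli (restrict the bundle over an essential arc or curve in $B$), contradiction. So there are no $I$-bundle components. For Seifert-fibered pieces: a $T^2\times I$ component has a frontier annulus which is essential --- excluded; a solid-torus component likewise has essential frontier annuli unless its frontier is empty or it is a neighborhood of a component of $P$. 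A frontier-empty solid torus would make $M$ itself a solid torus, excluded by nonelementarity and incompressibility of $\partial_0 M$ (a solid torus has compressible boundary). Hence the only surviving components are the regular neighborhoods of the components of $P$, which the final sentence of Theorem~\ref{CM} guarantees are present.

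Finally I would pin down the intersection-with-boundary statement: a regular neighborhood $N(P_i)$ of a torus or annulus component $P_i$ of $P$, fibered as a Seifert piece, meets $\partial M$ in $P_i$ itself together with possibly annuli in $\partial_0 M$; but any such extra annulus in $\partial_0 M$ would be parallel into $P_i$ and its frontier would be an essential annulus in $M-P$, again contradicting acylindricity. Therefore $N(P_i) \cap \partial M = P_i$, a single component of intersection with $\partial M$, as claimed. I expect the main obstacle to be the careful justification that totally geodesic $\partial_0 M$ implies no essential annulus in $M - P$ --- this requires a clean argument (e.g. via the convex core: an essential annulus would give, after pulling tight, a geodesic annulus or a pair of homotopic geodesics on a hyperbolic surface, which is absurd), and getting the boundary-incompressibility hypothesis to interact correctly with the cusps. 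Once acylindricity is in hand, feeding it into Theorem~\ref{CM} to kill every non-$P$-neighborhood component is essentially bookkeeping.
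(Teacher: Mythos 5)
Your overall strategy is the paper's: establish that $(M,P)$ admits no essential annuli and feed this into Theorem \ref{CM}. The genuine gap is precisely the step you flag as "the main obstacle": the justification of acylindricity. Your sketch (lift an essential annulus to $\mathbb{H}^3$ and argue it lies in a geodesic plane, or that "two copies of a closed geodesic bound") is not an argument --- an essential annulus in $M-P$ only gives a free homotopy in $M$ between two essential curves of $\partial_0 M$, possibly lying in different boundary components or homotopic into the cusps, and ruling these out takes more than the observation that a hyperbolic surface contains no such configuration. The paper's proof uses a device you never invoke: double $M$ across the totally geodesic $\partial_0 M$; the double of a cylinder would be an essential torus in the resulting hyperbolic manifold, which is impossible. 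It separately excludes accidental parabolics (annuli with one end on $P$, one on $\partial_0 M$) because every essential non-peripheral curve of the totally geodesic $\partial_0 M$ is represented by a closed geodesic, hence is not parabolic; your proposal never addresses this class of annuli. Relatedly, incompressibility of $\partial_0 M$ should come from the fact that it lifts to geodesic hyperplanes, not from $\partial_0 M$ "carrying a hyperbolic metric," and your assertion that $\partial C(\Gamma)$ consists of \emph{closed} surfaces is false in the generality needed here (in this paper the convex core boundaries are four-punctured spheres, so $\partial_0 M$ has boundary on $P$).

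That last error propagates into your treatment of the $I$-bundle case when every side of the bundle lies in $P$: the appeal to "closed $\partial_0 M$" is vacuous, and the fallback (vertical annuli over essential curves in $B$) fails when, say, $B$ is a pair of pants, since every essential simple closed curve is boundary-parallel and the corresponding vertical annulus is parallel into a collar of a component of $P$, so acylindricity yields no contradiction. The paper handles this case differently: if all sides lie in $P$ then $\mathrm{Fr}(V)=\emptyset$, so $M=V$ is an $I$-bundle, and an $I$-bundle admits a hyperbolic structure with totally geodesic convex core boundary only in the Fuchsian case, which is excluded by hypothesis --- this is exactly where the non-Fuchsian assumption is used, and your proposal never uses it at this point. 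The remaining bookkeeping (Seifert pieces, and the uniqueness of the component of intersection of each regular neighborhood with $\partial M$ via the essentiality of frontier annuli) matches the paper and is fine once the annulus facts are actually in hand.
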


We prove the claim below, but assuming it for now, the proof of Lemma 3 proceeds as follows.  A representation as given in the statement of the lemma induces a pared homotopy equivalence between $(M,P)$ and the pared manifold $(N,Q)$ obtained by truncating $C(\Gamma)$ with open horoball neighborhoods.  Since $C(\Gamma)$ has totally geodesic convex core boundary, $(N,Q)$ is as described by the claim; hence $(M,P)$ is as well (see Theorem 2.11.1 of \cite{CM}, for example).  Johansson's Classification Theorem (cf. \cite{CM}, Theorem 2.9.10) implies that the original pared homotopy equivalence is homotopic to one which maps the complement of the characteristic submanifold of $(M,P)$ homeomorphically to the complement of the characteristic submanifold of $(N,Q)$.  It follows from the claim that these are homeomorphic to $M-P$ and $N-Q$, respectively, and the lemma follows.

\begin{proof}[Proof of claim]  As was mentioned above, the elementary pared manifolds do not arise from geometrically finite hyperbolic manifolds.  Since $(M, P)$ is obtained from the convex core of a geometrically finite manifold with totally geodesic convex core boundary, the following are known not to occur: \begin{enumerate}

\item  A compressing disk for $\partial_0 M$.  

(By definition $\partial_0 M$ lifts to a geodesic hyperplane in $\mathbb{H}^3$, hence the induced map $\pi_1 \partial M_0 \rightarrow \pi_1 M$ is injective.) 

\item  An \textit{accidental parabolic}: an essential annulus properly embedded in $M$ with one boundary component in $P$ and one in $\partial_0 M$, which is not parallel to $P$.  

(Every essential curve on $\partial_0 M$ that is not boundary-parallel is homotopic to a geodesic, but an element of $\pi_1(M)$ corresponding to an accidental parabolic has translation length $0$.)

\item  A \textit{cylinder}; that is, a properly embedded essential annulus in $M-P$.  

(The double $DM$ of $M$ across $\partial_0 M$ is a hyperbolic manifold, but the double of a cylinder in $M$ would be an essential torus in $DM$.)  \end{enumerate}

We show that if the characteristic manifold has any components other than those listed in the claim then at least one of the above facts cannot hold.

For a component $V$ of the characteristic submanifold which is an $I$--bundle over a surface $B$, at least one component of the associated $I$--bundle over $\partial B$ must be properly embedded, since otherwise we would have $M= V$ and it is well known that an $I$--bundle over a surface does not admit a hyperbolic structure with totally geodesic convex core boundary unless the convex core is a Fuchsian surface.  But this annulus violates fact 2 or 3.  Thus there are no $I$--bundle components of the characteristic submanifold.

If $V$ is a Seifert fibered component of the characteristic submanifold homeomorphic to $T^2 \times I$, then one component of $\partial V$ is a torus $P_1 \subset P$, and all other components of $\partial V \cap \partial M$ are annulli in $\bound_0 M$.  If this second class is nonempty then each component of $\mathrm{Fr}(V)$ is an essential annulus properly embedded in %$V=T^2 \times I$ with one boundary component in $\partial_0 M$.  This is not possible by fact 2, 
$M-P$, contradicting fact 3.  Thus $\partial V \cap \partial M=P_1$ and $V$ is a regular neighborhood of $P_1$.

If $V$ is a solid torus and $V \cap \partial M$ contains a component of $P$, then a similar argument shows that this is the unique component of $\partial V \cap \partial M$, so in this case $V$ is a regular neighborhood of an annular component of $P$.  If on the other hand $V \cap \partial M$ does \textit{not} contain any components of $P$, then it has at least two components, for otherwise a meridional disk of $V$ determines a boundary compression of the annulus $\mathrm{Fr}(V)$ in $M-P$.  But then any component of $\mathrm{Fr}(V)$ violates fact 3.  \end{proof}

\bibliographystyle{plain}

\bibliography{commensurability}

%%%%%%%%%%%%%%%%%%%%%%%%%%%%%%%%%%%%%%%
%%%%%%%%%%%%%%%%%%%%%%%%%%%%%%%%%%%%%%%

\end{document}